\theoremstyle{plain}
\newtheorem{lemma}{Lemma}[section]
\newtheorem{proposition}[lemma]{Proposition}
\newtheorem{theorem}[lemma]{Theorem}
\newtheorem{Theorem}{Theorem}
\newtheorem{Example}[Theorem]{Example}
\newtheorem{corollary}[lemma]{Corollary}
\newtheorem*{vacuum-conjecture}{Vacuum stationary conjecture}
\newtheorem*{conjecture}{Bartnik's stationary conjecture}
\numberwithin{equation}{section}
\theoremstyle{remark}
\theoremstyle{definition}
\newtheorem{remark}[lemma]{Remark}
\newtheorem{example}[lemma]{Example}
\newtheorem{definition}[lemma]{Definition}
\newtheorem{Definition}[Theorem]{Definition}
\DeclareMathOperator{\tr}{tr} 
\DeclareMathOperator{\Div}{div}
\DeclareRobustCommand\jksub{jk}
 \DeclareMathOperator{\Det}{det}
 \DeclareMathOperator{\Int}{Int}
 \def\Sc{\sigma}
\newenvironment{enumeratei}
{\begin{enumerate}[\upshape (i)]}{\end{enumerate}}
\begin{document}
\title[Bartnik minimizers and improvability] 
{Bartnik mass minimizing initial data sets and improvability of the dominant energy scalar}

\author{Lan-Hsuan Huang}
\address{Department of Mathematics, University of Connecticut, Storrs, CT 06269, USA}
\email{lan-hsuan.huang@uconn.edu}
\author{Dan A. Lee}
\address{65-30 Kissena Boulevard, Department of Mathematics, Queens College, Queens, NY 11367, USA}
\email{dan.lee@qc.cuny.edu}
\maketitle

\begin{abstract}
We introduce the concept of improvability of the dominant energy scalar, and we derive strong consequences of non-improvability. In particular, we prove that a non-improvable initial data set without local symmetries must sit inside a null perfect fluid spacetime carrying a global Killing vector field. We also show that the dominant energy scalar is always \emph{almost} improvable in a precise sense. Using these main results, we provide a characterization of Bartnik mass minimizing initial data sets which makes substantial progress toward Bartnik's stationary conjecture.

Along the way we observe that in dimensions greater than eight there exist pp-wave counterexamples (without the optimal decay rate for asymptotically flatness) to the equality case of the spacetime positive mass theorem. As a consequence, we find counterexamples to Bartnik's stationary and strict positivity conjectures in those dimensions.
\end{abstract}

\tableofcontents

\section{Introduction}\label{section:introduction}
Let $n\ge 3$ and let $U$ be a connected $n$-dimensional smooth manifold, $g$ a Riemannian metric, and $\pi$ a symmetric $(2,0)$-tensor.  
We refer to such a triple $(U, g, \pi)$ as an \emph{initial data set} and $\pi$ as the \emph{conjugate momentum tensor}. It is related to the usual $(0,2)$-tensor $k$ via the equation
\begin{align}\label{equation:k}
 k_{ij}:=g_{i\ell}g_{jm} \pi^{\ell m} -\tfrac{1}{n-1}(\tr_g \pi)g_{ij}.
 \end{align}
We say that $(U, g, \pi)$ \emph{sits inside} a Lorentzian spacetime $(\mathbf{N}, \mathbf{g})$ of one higher dimension if $(U, g)$ isometrically embeds into $(\mathbf{N}, \mathbf{g})$ with $k$ as its second fundamental form.

One can define the energy and current densities $\mu$ and $J$ in terms of $g$ and $\pi$, see \eqref{equation:mass-current}, and then we say that $(g,\pi)$ satisfies the \emph{dominant energy condition, or DEC,} if $\mu \ge |J|_g$. This condition can be recast as nonnegativity of the quantity
\[ \Sc(g,\pi) := 2(\mu-|J|_g),\]
which we will call the \emph{dominant energy scalar}. Note that in the \emph{time-symmetric case} where $\pi \equiv 0$, $\Sc(g,\pi)$ is just the scalar curvature of $g$, denoted $R_g$. In this paper we will study the question of when one can use compactly supported perturbations of $(g,\pi)$ to increase $\Sc$. This flexibility is important for situations in which one wants to find perturbations that preserve (or reinstate) the DEC.

Consider the following theorem about prescribing compactly supported perturbations of scalar curvature, which is a slight variant of Theorem 1 of~\cite{Corvino:2000}. 
\begin{Theorem}[Corvino]\label{theorem:scalar}
Let $(\Omega, g)$ be a compact Riemannian manifold with nonempty smooth boundary, such that $g\in C^{4,\alpha}(\Omega)$. Assume that $DR|_g^*$ is injective on $\Int\Omega$, where 
$DR|_g^*$ denotes the adjoint of the linearization of the scalar curvature operator at $g$. 
Then there exists a $C^{4,\alpha}({\Omega})$ neighborhood $\mathcal{U}$ of $g$ such that for every $V\subset\subset \Int\Omega$, there exist constants $\epsilon, C>0$ such that the following statement holds: For $\gamma\in \mathcal{U}$ and  $u\in C^{0,\alpha}_c(V)$ with $\| u \|_{C^{0,\alpha}(\Omega)}< \epsilon$, there exists $h\in C^{2,\alpha}_c(\Int\Omega)$ with $\|h\|_{C^{2,\alpha}(\Omega)}\le C\| u\|_{C^{0,\alpha}(\Omega)}$ such that the metric  $\gamma+h$ satisfies
\[
	R_{\gamma+h} =R_\gamma + u.
\]
\end{Theorem}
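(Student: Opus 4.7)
The strategy is the classical Corvino ansatz: rather than solving the underdetermined equation $R_{\gamma+h}=R_\gamma+u$ directly for a tensor $h$, I would seek $h$ in the range of the formal adjoint, namely $h=DR|_\gamma^* f$ for a scalar function $f$. Writing $R_{\gamma+h}=R_\gamma+DR|_\gamma(h)+\mathcal{Q}_\gamma(h)$ where $\mathcal{Q}_\gamma$ is the quadratically small nonlinear remainder, the equation becomes the fourth-order scalar equation
\[
L_\gamma f := DR|_\gamma\, DR|_\gamma^* f \;=\; u-\mathcal{Q}_\gamma(DR|_\gamma^* f).
\]
The plan is to solve this by contraction mapping on a small ball of a weighted Hölder space of functions supported in $\Int\Omega$, then set $h=DR|_\gamma^* f$, which is automatically of the required regularity $C^{2,\alpha}_c(\Int\Omega)$ because $DR|_\gamma^*$ is a second-order differential operator.

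The linear theory is the core step. Fix a weight $\rho\in C^\infty(\Omega)$ that is positive on $\Int\Omega$ and vanishes to a prescribed high order at $\partial\Omega$. In weighted Sobolev/Hölder spaces built from $\rho$, I would establish a coercivity inequality of the form $\|f\|_{H^2_\rho(\Omega)}\le C\|L_g f\|_{L^2_\rho(\Omega)}$. The key input is the injectivity hypothesis on $DR|_g^*$: an integration-by-parts/Poincaré argument, combined with the fact that $DR|_g^* f$ controls $f$ through a first-order system, upgrades injectivity to the weighted a priori estimate. By standard elliptic theory for the fourth-order operator $L_g$ (which has leading symbol of Laplacian-squared type), this estimate yields an isomorphism between appropriate weighted spaces and gives Schauder regularity $f\in C^{4,\alpha}_\rho$, hence $h\in C^{2,\alpha}_c(\Int\Omega)$. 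Because the coefficients of $L_\gamma$ depend continuously on $\gamma$ in $C^{4,\alpha}$, this estimate persists with a uniform constant on a $C^{4,\alpha}$ neighborhood $\mathcal{U}$ of $g$.

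With the linear inverse $L_\gamma^{-1}$ in hand, define $T_\gamma(f):=L_\gamma^{-1}\bigl(u-\mathcal{Q}_\gamma(DR|_\gamma^* f)\bigr)$. Since $\mathcal{Q}_\gamma$ vanishes to second order at $h=0$, on a ball of radius proportional to $\|u\|_{C^{0,\alpha}}$ the map $T_\gamma$ is a contraction provided $\|u\|_{C^{0,\alpha}}<\epsilon$ for $\epsilon$ sufficiently small. The fixed point $f$ yields $h=DR|_\gamma^* f$ satisfying $R_{\gamma+h}=R_\gamma+u$, with the estimate $\|h\|_{C^{2,\alpha}(\Omega)}\le C\|u\|_{C^{0,\alpha}(\Omega)}$ inherited from the linear bound. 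The support condition $u\in C^{0,\alpha}_c(V)$ with $V\subset\subset \Int\Omega$ together with the fact that $f$ lies in the weighted space with prescribed boundary vanishing guarantees $h\in C^{2,\alpha}_c(\Int\Omega)$.

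The main obstacle is the linear step: setting up the weighted function spaces so that (i) $L_\gamma$ is an isomorphism, (ii) the isomorphism estimate is \emph{uniform} for $\gamma$ in a $C^{4,\alpha}$ neighborhood of $g$ and for all choices of $V\subset\subset\Int\Omega$, and (iii) the compact support of $u$ in $V$ forces $h$ to extend by zero across $\partial\Omega$ in $C^{2,\alpha}$. Each of these hinges on the injectivity assumption on $DR|_g^*$ via a weighted Poincaré-type inequality; everything else (nonlinear contraction, Schauder regularity) is then standard. The choice of weight $\rho$ and the verification that injectivity of $DR|_g^*$ implies the required weighted coercivity for the fourth-order composition $DR|_\gamma\,DR|_\gamma^*$ is where the substantive work lies.
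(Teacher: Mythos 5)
Your overall strategy is the right one: this is indeed the Corvino approach, and the paper itself proves the more general Theorem~\ref{theorem:deformationZ} by exactly this template (variational linear step using a coercivity estimate derived from injectivity of the adjoint, then an iteration to handle the nonlinearity). However, two technical details in your write-up are incorrect in a way that would cause the argument to break down if pursued literally.

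First, the ansatz should be $h = \rho\, DR|_\gamma^* f$, \emph{with} the weight factor, not $h = DR|_\gamma^* f$. The weight is not merely a device for defining function spaces; it must appear in the ansatz so that the Euler--Lagrange equation of the Corvino functional
\[
\mathcal{G}(f) = \int_\Omega \left( \tfrac12\, \rho\, |DR|_\gamma^* f|^2_\gamma - uf \right)\, d\mu_\gamma
\]
reads $DR|_\gamma\bigl(\rho\, DR|_\gamma^* f\bigr) = u$, i.e.\ precisely the linearized equation for $h = \rho\, DR|_\gamma^* f$. Equally important, the decay of $\rho$ at $\partial\Omega$ (exponential, say) is what forces $h$ and its derivatives to vanish at $\partial\Omega$ and hence extend by zero in $C^{2,\alpha}$ across the boundary; the weighted estimates on $f$ alone do not guarantee this for the unweighted expression $DR|_\gamma^* f$.

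Second, your coercivity estimate $\|f\|_{H^2_\rho} \le C\|L_g f\|_{L^2_\rho}$ is dimensionally inconsistent: $L_g = DR|_g\, DR|_g^*$ is fourth order, so $L_g f \in L^2$ requires $f\in H^4$, not $H^2$. The estimate you actually need, and the one Corvino establishes, is at the level of the \emph{second-order} adjoint: something of the form $\|f\|_{H^2_{\tilde\rho}} \le C\, \|DR|_g^* f\|_{L^2_\rho}$, obtained from the overdetermined ellipticity of $DR|_g^*$, the Hessian structure of the kernel equation, and a compactness argument using injectivity. This coercivity feeds into the minimization of $\mathcal{G}$ to produce a weak solution, and the weighted Schauder bootstrap then gives regularity. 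Framing the linear step as an isomorphism theorem for the degenerate fourth-order operator $L_g$ on weighted spaces is a possible alternative route, but it is strictly harder to set up cleanly than the variational argument that Corvino and the paper's Theorem~\ref{theorem:deformationZ} use, and you should not expect to get it from an $H^2$-to-$L^2$ estimate.

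With those two corrections the remainder (Picard iteration or contraction in a ball of radius proportional to $\|u\|_{C^{0,\alpha}}$, uniformity of constants on a $C^{4,\alpha}$ neighborhood of $g$) is standard and matches the paper's account of the argument.
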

Since $DR|_g^*$ is heavily overdetermined, the existence of a nontrivial kernel element $f$ is a highly special circumstance; for example, if $f$ is positive on $\Int\Omega$, then $g$ has constant scalar curvature and $(\Int\Omega, g, 0)$ sits inside a static spacetime.


The \emph{constraint operator} on initial data,
\begin{equation}\label{equation:constraint}
 \Phi(g,\pi):=(2\mu, J),
 \end{equation}
may be thought of as a generalization of the scalar curvature operator on metrics. Extending Theorem~\ref{theorem:scalar}, Corvino and Schoen
proved that injectivity of $D\Phi|_{(g,\pi)}^*$ allows one to prescribe small, compactly supported perturbations of $(\mu, J)$ using compactly supported perturbations of $(g,\pi)$~\cite[Theorem 2]{Corvino-Schoen:2006}. While this theorem is useful when dealing with \emph{vacuum} initial data sets (those with $\mu=|J|_g=0$),
it is less useful for dealing with the DEC. The reason for this is that even if you can prescribe $\mu$ and $J$ exactly, that is not enough to know whether  $\mu\ge |J|_g$ holds, because you lose control over $g$. 

The following definition captures the concept of being able to increase~$\Sc$ using compact perturbations.
\begin{Definition}
Let $(\Omega, g, \pi)$ be a compact initial data set with nonempty smooth boundary, such that $(g,\pi)\in C^{4,\alpha}(\Omega)\times C^{3,\alpha}(\Omega)$.
We say that \emph{the dominant energy scalar is improvable in $\Int\Omega$}  if there is a $C^{4,\alpha}({\Omega})\times C^{3,\alpha}({\Omega})$ neighborhood $\mathcal{U}$ of $(g, \pi)$, such that for any $V\subset\subset \Int\Omega$, there exist constants $\epsilon, C>0$ such that the following statement holds: For $(\gamma, \tau)\in \mathcal{U}$ and  $u\in C^{0,\alpha}_c(V)$ with $\| u \|_{C^{0,\alpha}(\Omega)}< \epsilon$, there exists $(h, w)\in C^{2,\alpha}_c(\Int\Omega)$ with $\|(h,w)\|_{C^{2,\alpha}(\Omega)}\le C\| u\|_{C^{0,\alpha}(\Omega)}$ such that the initial data  $(\gamma+h, \tau+w)$ satisfies
\[
	\Sc (\gamma+h, \tau+w) \ge \Sc(\gamma,\tau) + u.
\]
\end{Definition}

Corvino and the first author~\cite{Corvino-Huang:2020}   introduced the \emph{modified constraint operator at $(g,\pi)$}, denoted~$\overline{\Phi}_{(g,\pi)}$, to deal with improvability of the dominant energy scalar, by expanding upon conformal-type perturbations discovered in \cite[Section 6]{Eichmair-Huang-Lee-Schoen:2016}.  On initial data $(\gamma, \tau)$,
\begin{align*}
	\overline{\Phi}_{(g,\pi)}(\gamma,\tau) :=\Phi(\gamma, \tau) +(0, \tfrac{1}{2} \gamma\cdot J).
\end{align*}
The following is a slight variant of Theorem 1.1 of~\cite{Corvino-Huang:2020}. (See also~\cite{Corvino-Schoen:2006, Chrusciel-Delay:2003} for the vacuum case.)

 \begin{Theorem}[Corvino-Huang]\label{theorem:no-kernel}
Let $(\Omega, g, \pi)$ be a compact initial data set with nonempty smooth boundary, such that $(g,\pi)\in C^{4,\alpha}(\Omega)\times C^{3,\alpha}(\Omega)$.
Let $D\overline{\Phi}_{(g,\pi)}^*$ denote adjoint of the linearized operator $\left.D\overline{\Phi}_{(g,\pi)}\right|_{(g,\pi)}$, and assume that it is injective on $\Int\Omega$. Then the dominant energy scalar of $(g, \pi)$ is improvable  in $\Int\Omega$.
\end{Theorem}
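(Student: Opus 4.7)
My plan is to follow the underdetermined-elliptic method of Corvino--Schoen, adapted to the modified operator $\overline{\Phi}_{(g,\pi)}$. Write $L_{(\gamma,\tau)} := D\overline{\Phi}_{(g,\pi)}|_{(\gamma,\tau)}$ with formal adjoint $L^*_{(\gamma,\tau)}$, and fix a weight $\rho$ vanishing to high order on $\partial\Omega$ so that one may work in weighted Sobolev spaces $H^k_{\rho^{-N}}(\Omega)$. The first step is to upgrade the pointwise injectivity of $L^*_{(g,\pi)}$ into the Poincar\'e-type coercivity estimate
\[
\|L^*_{(g,\pi)}\xi\|_{L^2_{\rho^{-N}}} \ge c\,\|\xi\|_{H^2_{\rho^{-N-2}}}.
\]
This I would establish by a contradiction-compactness argument: a normalized sequence $\xi_j$ violating the inequality, combined with interior elliptic estimates for the overdetermined operator $L^*$, would produce a nontrivial element of $\ker L^*_{(g,\pi)}$ on $\Int\Omega$, contradicting the hypothesis. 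Continuity in the coefficients of $L$ then gives the same estimate, with a uniform constant, for every $(\gamma,\tau)$ in a small $C^{4,\alpha}\times C^{3,\alpha}$ neighborhood $\mathcal{U}$ of $(g,\pi)$.

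With coercivity in hand, for each $(\gamma,\tau)\in\mathcal{U}$ the composition $L_{(\gamma,\tau)}L^*_{(\gamma,\tau)}$ is an isomorphism in the weighted spaces, so the linear equation $L_{(\gamma,\tau)}L^*_{(\gamma,\tau)}\xi = (u,0)$ is uniquely solvable for any $u\in C^{0,\alpha}_c(V)$. Setting $(h,w):=L^*_{(\gamma,\tau)}\xi$ produces the linearized perturbation; the choice of weight together with $V\subset\subset\Int\Omega$ forces $(h,w)$ to vanish near $\partial\Omega$ and to lie in $C^{2,\alpha}_c(\Int\Omega)$ with $\|(h,w)\|_{C^{2,\alpha}}\le C\|u\|_{C^{0,\alpha}}$ after Schauder bootstrapping. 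To upgrade to the nonlinear equation
\[
\overline{\Phi}_{(g,\pi)}(\gamma+h,\tau+w) = \overline{\Phi}_{(g,\pi)}(\gamma,\tau) + (u,0),
\]
I would run a Picard iteration by repeatedly solving the linear problem with right-hand side $(u,0)-Q(h_j,w_j)$, where $Q$ denotes the quadratic remainder of $\overline{\Phi}_{(g,\pi)}$; smallness of $\|u\|_{C^{0,\alpha}}$ combined with the uniform linear bound forces the iteration to contract in $C^{2,\alpha}(\Omega)$ and yield a solution $(h,w)\in C^{2,\alpha}_c(\Int\Omega)$ obeying the advertised estimate.

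The final step is to deduce $\Sc(\gamma+h,\tau+w)\ge \Sc(\gamma,\tau)+u$ from the prescribed form of $\overline{\Phi}$. The entire point of the modification $\tfrac{1}{2}\gamma\cdot J$ is that it exactly matches the first-order variation of $|J|_\gamma$ with respect to $\gamma$; consequently, the equation above simultaneously forces $2\mu(\gamma+h,\tau+w)=2\mu(\gamma,\tau)+u$ and $|J(\gamma+h,\tau+w)|_{\gamma+h}\le |J(\gamma,\tau)|_\gamma$, up to a quadratic error bounded by $C\|(h,w)\|^2_{C^{2,\alpha}}\le C\epsilon\,\|u\|_{C^{0,\alpha}}$, which is absorbed by shrinking $\mathcal{U}$ and $\epsilon$. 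Adding the two identities yields the required inequality.

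The principal obstacle is the weighted coercivity estimate for the overdetermined operator $L^*$: the contradiction-compactness argument requires a carefully chosen weight strong enough to kill boundary contributions while still permitting Rellich-type compactness in the interior, and one must verify that the conformal-type correction in $\overline{\Phi}$ does not destroy the unique-continuation structure that underpins the analogous estimate in the unmodified setting of~\cite{Corvino-Schoen:2006}. A secondary difficulty is ensuring uniformity of all constants as $(\gamma,\tau)$ varies over $\mathcal{U}$, which is what allows the Picard iteration to converge at a fixed rate independent of the reference point.
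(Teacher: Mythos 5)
Your overall strategy (coercivity via contradiction-compactness, solving the linearized problem via $LL^*$ in weighted spaces, Picard iteration with uniform constants over a neighborhood $\mathcal{U}$) is exactly the Corvino--Schoen route that the paper takes; in the paper this is packaged as Theorem~\ref{theorem:deformationZ} applied with $(\varphi,Z)=(0,J)$. Steps one through three of your sketch line up with that, so I will focus on the last step.

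The last step has a genuine gap: you cannot absorb the quadratic remainder by shrinking $\epsilon$. The required conclusion $\sigma(\gamma+h,\tau+w)\ge\sigma(\gamma,\tau)+u$ is a \emph{pointwise} inequality, whereas your error bound $C\epsilon\|u\|_{C^{0,\alpha}}$ is a constant. At any point where $u$ vanishes (or is tiny relative to $\|u\|_{C^{0,\alpha}}$), you only get $\sigma(\gamma+h,\tau+w)\ge\sigma(\gamma,\tau)-C\epsilon\|u\|_{C^{0,\alpha}}$, which is strictly weaker than what is claimed no matter how small $\epsilon$ is, since $u$ is allowed to be negative on an open set. There is no slack in the inequality to pay for a uniform negative constant.

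The way the paper (via Lemma~\ref{lemma:sigma_preserve}) fills this gap is that the modification $\tfrac12\gamma\cdot J$ makes the quadratic remainder have a \emph{favorable sign}, so there is no error to absorb at all. Concretely, writing $h=\bar\gamma-\gamma$, the prescribed equation $\overline{\Phi}_{(\gamma,\tau)}(\bar\gamma,\bar\tau)=\overline{\Phi}_{(\gamma,\tau)}(\gamma,\tau)+(u,0)$ unpacks exactly to $\bar\mu=\mu+\tfrac{u}{2}$ and $\bar J = J-\tfrac12 h\cdot J$ (all contractions with $\gamma$). Then one computes, using $|h|_\gamma<1$,
\begin{align*}
|\bar J|^2_{\bar\gamma} &= (\gamma+h)_{ij}\bigl(J^i-\tfrac12(h\cdot J)^i\bigr)\bigl(J^j-\tfrac12(h\cdot J)^j\bigr) \\
&= |J|^2_\gamma - \tfrac34|h\cdot J|^2_\gamma + \tfrac14\langle h\cdot(h\cdot J),\,h\cdot J\rangle_\gamma \\
&\le |J|^2_\gamma - \tfrac12|h\cdot J|^2_\gamma \le |J|^2_\gamma,
\end{align*}
so $|\bar J|_{\bar\gamma}\le|J|_\gamma$ \emph{exactly}, and $\sigma(\bar\gamma,\bar\tau)=2\bar\mu-2|\bar J|_{\bar\gamma}\ge 2\mu+u-2|J|_\gamma=\sigma(\gamma,\tau)+u$ with equality of the $\mu$-contribution and a clean inequality for the $|J|$-contribution. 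This sign-definiteness of the second-order terms, not merely cancellation of the first-order variation, is what the modification $\tfrac12\gamma\cdot J$ buys, and it is the missing ingredient in your write-up.
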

The domain of $D\overline{\Phi}_{(g,\pi)}^*$ consists of pairs $(f, X)$ such that $f$ is a $C^2_{\mathrm{loc}}$ scalar function and $X$ is a $C^1_{\mathrm{loc}}$ vector field. 
 We will refer to these pairs $(f, X)$ as \emph{lapse-shift pairs}. In the case where $(g, \pi)$ happens to be vacuum, there is a nice interpretation of the kernel. (Note that in this case, $\overline{\Phi}_{(g,\pi)}=\Phi$.) 
\begin{Theorem}[Moncrief~\cite{Moncrief:1975}, cf.~\cite{Fischer-Marsden-Moncrief:1980}]\label{theorem:Moncrief}
Let $(U, g, \pi)$ be a vacuum initial data set such that $(g,\pi)\in C^{3}_{\mathrm{loc}}(U)\times C^{2}_{\mathrm{loc}}(U)$, and suppose that there exists a nontrivial lapse-shift pair $(f, X)$ on $U$ solving 
\[ D{\Phi}|_{(g, \pi)}^*(f, X) =0.\]
Then $(U, g, \pi)$ sits inside a vacuum spacetime admitting a unique global Killing vector field $\mathbf{Y}$ such that $\mathbf{Y}= 2f \mathbf{n} + X$ along~$U$, where $\mathbf{n}$ is the future unit normal to $U$.\footnote{The factor of $2$ in front of $f$ is due to the factor of $2$ occurring in the definition of $\Phi$ in~\eqref{equation:constraint} and will appear in many places throughout the paper for this reason.}

Conversely, given a vacuum spacetime equipped with a global Killing vector field $\mathbf{Y}$ and a spacelike hypersurface $U$ with induced initial data $(g, \pi)$,  if we decompose $\mathbf{Y}=2f \mathbf{n} + X$ along~$U$, then the lapse-shift pair $(f, X)$ must lie in the kernel of $D{\Phi}|_{(g, \pi)}^*$.
\end{Theorem}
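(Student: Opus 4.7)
The plan is to handle the two directions separately: the converse (from Killing field to kernel element) reduces to a direct computation, while the forward direction (from kernel element to Killing field) uses a wave-equation uniqueness argument.

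For the converse, let $\mathbf{Y}$ be Killing on a vacuum spacetime and decompose $\mathbf{Y} = 2f\mathbf{n} + X$ along $U$. The Killing equation restricted to $U$ splits into three components---tangential-tangential, tangential-normal, and normal-normal---which via the Gauss-Codazzi relations yield explicit formulas for the infinitesimal variations $\dot g$ and $\dot k$ induced by the flow of $\mathbf{Y}$; these are precisely the ADM evolution equations driven by lapse $2f$ and shift $X$. Because the flow of $\mathbf{Y}$ is an isometry it preserves the constraints, so $D\Phi|_{(g,\pi)}(\dot g, \dot\pi) = 0$. Pairing this identity against an arbitrary compactly supported test pair via integration by parts, and substituting the formulas for $(\dot g,\dot\pi)$ just derived, forces $D\Phi^*|_{(g,\pi)}(f, X) = 0$.

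For the forward direction, I first appeal to the local well-posedness of the vacuum Einstein equations to produce a vacuum spacetime $(\mathbf{N}, \mathbf{g})$ of which $U$ is a Cauchy hypersurface. I then define the candidate vector field $\mathbf{Y}$ as the unique solution of the covariant tensor wave equation $\Box_\mathbf{g} \mathbf{Y} = 0$ (which on a Ricci-flat background is equivalent to a symmetric second-order hyperbolic system for the components of $\mathbf{Y}$) with Cauchy data $\mathbf{Y}|_U = 2f\mathbf{n} + X$, and normal derivative $\nabla_\mathbf{n}\mathbf{Y}|_U$ prescribed from $f$, $X$, $g$, and $\pi$ as the value it would take if $\mathbf{Y}$ were genuinely Killing. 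The crucial observation is that for any solution of this wave equation on a vacuum spacetime, the symmetric tensor $\Pi := \mathcal{L}_\mathbf{Y}\mathbf{g}$ itself satisfies a linear homogeneous wave equation of the schematic form $\Box_\mathbf{g}\Pi = \mathrm{Riem}\ast\Pi$.

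The principal obstacle is to translate the hypothesis $D\Phi^*(f, X) = 0$ into the vanishing of both $\Pi|_U$ and $\nabla_\mathbf{n}\Pi|_U$. This is done by decomposing $\Pi$ and its normal derivative into tangential, mixed, and normal pieces and matching each of them against the scalar and tensor outputs of $D\Phi^*(f,X)$, essentially reversing the Gauss-Codazzi computation performed in the converse direction. Once vanishing of the Cauchy data is established, uniqueness for linear wave equations gives $\Pi \equiv 0$ throughout the domain of dependence of $U$, so $\mathbf{Y}$ is Killing; applying the same wave-equation uniqueness to the difference of two candidate extensions yields the uniqueness claim. Throughout, one must carefully track the factor of two in $\mathbf{Y} = 2f\mathbf{n}+X$ stemming from the $2\mu$ in the definition of $\Phi$, as well as the conversion between $\pi$ and $k$ via \eqref{equation:k}.
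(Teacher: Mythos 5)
The paper does not prove this theorem --- it states it as a cited classical result from Moncrief~\cite{Moncrief:1975} and Fischer-Marsden-Moncrief~\cite{Fischer-Marsden-Moncrief:1980}, so there is no internal proof to compare your proposal against.

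Evaluating your proposal on its own: the forward direction follows Moncrief's standard argument and is essentially correct as a sketch. Building the Cauchy development, defining $\mathbf{Y}$ by $\Box_{\mathbf{g}}\mathbf{Y}=0$ with Cauchy data determined by $(f,X)$, observing that $\Pi:=\mathcal{L}_{\mathbf{Y}}\mathbf{g}$ satisfies a homogeneous linear wave equation, and invoking uniqueness is the right template; the computational content you correctly flag as ``the principal obstacle'' (translating $D\Phi|_{(g,\pi)}^*(f,X)=0$ into vanishing of both $\Pi|_U$ and $\nabla_{\mathbf{n}}\Pi|_U$) is deferred, but the plan is sound.

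Your converse direction, however, has a genuine gap. You reason: the isometry flow preserves the constraints, hence $D\Phi|_{(g,\pi)}(\dot g,\dot\pi)=0$; then integration by parts against test pairs, together with the Gauss-Codazzi formulas for $(\dot g,\dot\pi)$, ``forces'' $D\Phi|_{(g,\pi)}^*(f,X)=0$. This does not follow. On vacuum data the identity $D\Phi|_{(g,\pi)}(\dot g,\dot\pi)=0$ holds for the ADM evolution generated by \emph{any} lapse-shift pair whatsoever --- it is nothing but the linearization of constraint propagation --- and therefore carries no information singling out Killing vectors. Substituting the ADM formulas $(\dot g,\dot\pi)=\mathcal{J}\,D\Phi|_{(g,\pi)}^*(f,X)$ into it merely recovers the closure of the constraint algebra, which holds for all $(f,X)$. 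What the Killing condition actually gives you, and what you need, is the strictly stronger statement that the induced Cauchy data is unchanged along the flow: $\dot g=0$ and $\dot\pi=0$. Combined with the Hamiltonian form of the vacuum ADM equations, $(\dot g,\dot\pi)=\mathcal{J}\,D\Phi|_{(g,\pi)}^*(f,X)$ with $\mathcal{J}$ the pointwise-invertible symplectic matrix, this vanishing immediately yields $D\Phi|_{(g,\pi)}^*(f,X)=0$. Your own opening sentence --- the Gauss-Codazzi decomposition of the Killing equation into tangential-tangential, mixed, and normal-normal parts --- is exactly what establishes $\dot g=\dot k=0$, but your next two sentences discard that information in favor of the weaker, ineffective constraint-preservation statement.
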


In a general non-vacuum setting, the existence of a nontrivial lapse-shift pair in the kernel of either the adjoint $D\Phi_{(g,\pi)}^*$ or the modified adjoint $D\overline{\Phi}_{(g,\pi)}^*$ has less obvious geometric or physical significance. Our first main result establishes a significant consequence of non-improvability. 
\begin{Theorem}\label{theorem:improvable}
Let $(\Omega, g, \pi)$ be a compact initial data set with nonempty smooth boundary, such that $(g,\pi)\in C^{4,\alpha}(\Omega)\times C^{3,\alpha}(\Omega)$ and also $(g,\pi) \in C^5_{\mathrm{loc}}(\Int\Omega)\times C^4_{\mathrm{loc}}(\Int\Omega)$. 
 Then either the dominant energy scalar is improvable in $\Int\Omega$, or else there exists a nontrivial lapse-shift pair $(f, X)$ on $\Int\Omega$ satisfying the system
 \begin{equation}\label{equation:pair}\tag{$\star$}
\begin{aligned}
D\overline{\Phi}_{(g,\pi)}^*(f,X)&=0\\ 
2fJ + |J|_g X&=0.
\end{aligned}
\end{equation}
\end{Theorem}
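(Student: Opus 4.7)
The plan is to argue by contraposition along the lines of Theorem~\ref{theorem:no-kernel}: supposing the dominant energy scalar is not improvable in $\Int\Omega$, I will produce a nontrivial lapse-shift pair $(f, X)$ satisfying $(\star)$. Following the reduction used in the proofs of Theorems~\ref{theorem:scalar} and~\ref{theorem:no-kernel}, improvability in $\Int\Omega$ reduces to the surjectivity, with $C^{2,\alpha}$-estimates, of the linearization $(h, w) \mapsto D\Sc|_{(g,\pi)}(h, w)$ from compactly supported pairs to $C^{0,\alpha}_c$; the quadratic nonlinear error is absorbed by a standard Banach fixed-point iteration, and the one-sided slack in $\Sc \ge \Sc_0 + u$ (as opposed to equality) is precisely what makes the weaker hypothesis of Theorem~\ref{theorem:improvable} suffice for the iteration to close.

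The key algebraic identity, obtained by direct computation using $\langle J, h \cdot J\rangle_g = h(J^\sharp, J^\sharp)$ together with the fact that the $\tfrac{1}{2}\gamma \cdot J$ correction in $\overline{\Phi}$ is tuned precisely to cancel the bare-$h$ part of $D|J|_g$, is the factorization on $\{J \neq 0\} \cap \Int\Omega$
\[
D\Sc|_{(g,\pi)}(h, w) = \mathcal{P}\!\left(D\overline{\Phi}_{(g,\pi)}|_{(g,\pi)}(h, w)\right), \qquad \mathcal{P}(a, b) := a - \tfrac{2\langle J, b\rangle_g}{|J|_g}.
\]
Taking formal $L^2$-adjoints gives $(D\Sc|_{(g,\pi)})^*\phi = D\overline{\Phi}_{(g,\pi)}^*\!\left(\phi,\, -2\phi J/|J|_g\right)$, so the pair $(f, X) := (\phi, -2\phi J/|J|_g)$ \emph{automatically} satisfies the algebraic equation $2fJ + |J|_g X = 0$. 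By the Fredholm alternative as employed in \cite{Corvino-Huang:2020}, non-surjectivity of $D\Sc|_{(g,\pi)}$ on the open set $\{J \neq 0\}$ yields a nontrivial $\phi \in L^2_{\mathrm{loc}}$ with $(D\Sc|_{(g,\pi)})^*\phi = 0$, producing the desired pair satisfying $(\star)$ there; elliptic regularity for the overdetermined system $D\overline{\Phi}^*$, together with the assumed $C^5_{\mathrm{loc}} \times C^4_{\mathrm{loc}}$ regularity of $(g, \pi)$, then upgrades $(f, X)$ to $C^2 \times C^1$.

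The heart of the proof, and what I expect to be the principal obstacle, is handling the zero set $\{J = 0\} \cap \Int\Omega$, where $|J|_g$ is not differentiable and the pointwise operator $\mathcal{P}$ degenerates. On this set the constraint $2fJ + |J|_g X = 0$ is vacuous, so the substantive issue is producing the pair globally on $\Int\Omega$ rather than verifying the constraint there. My plan is to replace $|J|_g$ with the smooth regularization $|J|_{g,\varepsilon} := \sqrt{|J|_g^2 + \varepsilon^2}$, run the Fredholm argument for the regularized dominant energy scalar $\Sc_\varepsilon := 2\mu - 2|J|_{g,\varepsilon}$ globally on $\Int\Omega$ (obtaining, upon non-improvability, dual kernel elements $(f_\varepsilon, X_\varepsilon)$ satisfying an $\varepsilon$-version of $(\star)$), and then pass to an $L^2$-normalized subsequential limit as $\varepsilon \to 0$. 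The hard part will be ensuring the limit is nontrivial rather than concentrating on $\{J = 0\}$ and that the smoothed algebraic constraint passes cleanly to $2fJ + |J|_g X = 0$; I expect both of these to be controlled by elliptic regularity and unique continuation for the overdetermined system $D\overline{\Phi}^*(f, X) = 0$, which is where the higher regularity hypothesis on $(g, \pi)$ should enter in an essential way.
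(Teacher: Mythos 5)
Your factorization is correct: where $J\neq0$, a direct computation indeed gives $D\Sc|_{(g,\pi)}(h,w)=\mathcal{P}\bigl(D\overline{\Phi}_{(g,\pi)}(h,w)\bigr)$ with $\mathcal{P}(a,b)=a-2\langle J,b\rangle_g/|J|_g$, and the adjoint identity $(D\Sc)^*\phi=D\overline{\Phi}^*_{(g,\pi)}\bigl(\phi,-2\phi J/|J|_g\bigr)$ then makes the pair $(f,X)=(\phi,-2\phi J/|J|_g)$ satisfy the $J$-null-vector equation for free. This is a genuinely different route from the paper, which instead introduces the family of modified constraint operators $\Phi^{(\varphi,Z)}_{(g,\pi)}$ and proves (Proposition~\ref{theorem:generic}, via an explicit jet-matrix construction) that for generic $\varphi$ any kernel element of $(D\Phi^{(\varphi,J)}_{(g,\pi)})^*$ satisfies the null-vector equation; the dichotomy of Theorem~\ref{theorem:improvable} is then read off from Theorem~\ref{theorem:deformationZ} and Lemma~\ref{lemma:sigma_preserve}. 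Your approach builds the $J$-null-vector constraint into the ansatz from the start, whereas the paper recovers it as a genericity consequence; the paper's route has the advantage that the operators $\Phi^{(\varphi,Z)}_{(g,\pi)}$ are smooth everywhere (including at $\{J=0\}$), and differ from $\overline{\Phi}_{(g,\pi)}$ only by zero-order terms, so that Theorem~\ref{theorem:deformationZ} follows directly from the coercivity analysis already carried out in Corvino--Huang.

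That difference is precisely where your proposal has an essential gap. The step ``non-improvability $\Rightarrow$ nontrivial $\phi\in\ker(D\Sc)^*$'' is not a freestanding Fredholm alternative; in the Corvino--Schoen--Huang framework it is the contrapositive of a surjectivity/deformation theorem whose proof requires a coercivity estimate of the form $\|\cdot\|_{H^2}\lesssim\|(\cdot)^*\|_{L^2}$ for the relevant adjoint, obtained by exploiting the Hessian-type structure of $D\Phi^*$ (resp.\ $D\overline{\Phi}^*$). Your operator $(D\Sc_\varepsilon)^*$ is not a zero-order modification of $D\overline{\Phi}^*$: it is a scalar-to-tensor operator obtained by pre-composing with $\mathcal{P}_\varepsilon^*$, and the needed weighted coercivity and interior Schauder estimates for it are not supplied by citing \cite{Corvino-Huang:2020}. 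You would have to re-derive the entire deformation machinery (analogues of \cite[Thm.~2]{Corvino-Schoen:2006} and \cite[Thm.~3.1]{Corvino-Huang:2020}) for this new operator, which is the real content your sketch is missing. The second gap, which you flag yourself, is the $\varepsilon\to 0$ limit: you would also need to relate improvability of $\Sc_\varepsilon$ to improvability of $\Sc$ (they differ by an $O(\varepsilon)$ shift that does not have a favorable sign uniformly), and then control the normalized kernel elements $(f_\varepsilon,X_\varepsilon)$ to prevent the limit from being trivial; finite-dimensionality of $\ker D\overline{\Phi}^*$ and a 1-jet normalization make the limit plausible, but the argument that non-improvability of $\Sc$ forces non-injectivity of $(D\Sc_\varepsilon)^*$ for \emph{all} small $\varepsilon$ (rather than for some sequence, or not at all) is not established. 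Until those two pieces are in place the proposal does not constitute a proof.
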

The first equation follows directly from Theorem~\ref{theorem:no-kernel}, so the new content is the second equation, which we will refer to as the \emph{$J$-null-vector equation} for $(f,X)$.


We are able to show that \eqref{equation:pair} has a meaningful physical consequence along the lines of Theorem~\ref{theorem:Moncrief}, and we also generalize the fact that a nontrivial kernel of $DR|_g$ implies that $R_g$ is constant. To state the result, we introduce some terms. We say that a spacetime $(\mathbf{N}, \mathbf{g})$ satisfies the \emph{spacetime dominant energy condition}\footnote{In general, the spacetime DEC along an initial data set $(U, g, \pi)$ is much stronger than the DEC of the initial data set, $\sigma(g, \pi)\ge 0$, which is equivalent to  $G(\mathbf{n}, \mathbf{w})\ge 0$ for any future causal~$\mathbf{w}$.}  (or \emph{spacetime DEC} for short) if $G(\mathbf{u}, \mathbf{w})\ge 0$ for all future causal  vectors $\mathbf{u}, \mathbf{w}$, where $G$ denotes the Einstein tensor of $\mathbf{g}$. 
A spacetime
$(\mathbf{N}, \mathbf{g})$ is said to be a \emph{null perfect fluid spacetime} with velocity $\mathbf{v}$ and pressure $p$ if $\mathbf{v}$ is either future null or zero at each point and the Einstein tensor takes the form:
\[
	G_{\alpha\beta} = p \mathbf{g}_{\alpha\beta} + v_\alpha v_\beta.
\]
We define \emph{null dust} to be null perfect fluid with $p\equiv 0$.

\begin{Theorem}\label{theorem:DEC}
Let $(U, g, \pi)$ be an initial data set such that $(g,\pi)\in C^{3}_{\mathrm{loc}}(U)\times C^{2}_{\mathrm{loc}}(U)$.
Assume there exists a nontrivial lapse-shift pair $(f,X)$ on $U$ solving the system~\eqref{equation:pair}, and assume that $f$ is  nonvanishing in $U$. Then the following holds:
\begin{enumerate}
\item The dominant energy scalar $\sigma(g, \pi)$ is constant on $U$.
\item  $(U, g, \pi)$ sits inside a spacetime $(\mathbf{N}, \mathbf{g})$ that admits a global Killing vector field  $\mathbf{Y}$ equal to $2f\mathbf{n} +X$ along~$U$, where $\mathbf{n}$ is the future unit normal to $U$, and $(\mathbf{N}, \mathbf{g})$ is a null perfect fluid spacetime with velocity $\mathbf{v} = \frac{\sqrt{|J|}}{2f} \mathbf{Y}$  and  pressure $p= -\tfrac{1}{2}\sigma(g, \pi)$. 
\label{item:Bartnik-spacetime}
\item  If $(g,\pi)$ satisfies the dominant energy condition, then $\mathbf{g}$ satisfies the spacetime  dominant energy condition. 
\end{enumerate}

Conversely, let $(\mathbf{N}, \mathbf{g})$ be a null perfect fluid spacetime such that $\mathbf{g}$ is $C^3_{\mathrm{loc}}$, with velocity $\mathbf{v}$ and pressure $p$, admitting a global $C^2_{\mathrm{loc}}$ Killing vector field $\mathbf{Y}$. Assume that $\mathbf{v} = \eta \mathbf{Y}$ for some scalar function $\eta$. Then $p$ is constant, and for any smooth spacelike hypersurface $U$ with induced initial data $(g, \pi)$ and future unit normal $\mathbf{n}$, if we decompose $\mathbf{Y}=2f \mathbf{n} + X$ along~$U$, then the lapse-shift pair $(f, X)$ satisfies the system~\eqref{equation:pair}. 
 \end{Theorem}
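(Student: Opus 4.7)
The plan is to handle the converse direction first, since it lays out the algebraic identities that the forward direction must recover. Given a null perfect fluid spacetime with $G = p\mathbf{g}+\mathbf{v}\otimes\mathbf{v}$, Killing field $\mathbf{Y}$, and $\mathbf{v}=\eta\mathbf{Y}$, I would first use the contracted second Bianchi identity $\nabla^\alpha G_{\alpha\beta}=0$ to write $\nabla_\beta p = -\nabla^\alpha(v_\alpha v_\beta)$. The Killing equation for $\mathbf{Y}$ gives $\nabla^\alpha Y_\alpha=0$ and $Y^\alpha\nabla_\alpha Y_\beta = -\tfrac{1}{2}\nabla_\beta|\mathbf{Y}|^2$, and the null condition on $\mathbf{v}$ forces $|\mathbf{Y}|^2=0$ wherever $\eta\ne 0$; a short calculation then shows that $\nabla p$ is parallel to $\mathbf{Y}$. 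Pairing this with $\mathcal{L}_\mathbf{Y}G = 0$, which together with the null perfect fluid form forces $Y(p)=0$, and invoking connectedness, yields $p$ constant. The rest of the converse is direct: on a spacelike slice $U$ with $\mathbf{Y}=2f\mathbf{n}+X$, one reads off from the form of $G$ that $|J|_g = 4f^2\eta^2$ and $J=-\tfrac{|J|_g}{2f}X$, which is the $J$-null-vector equation, while vanishing of $D\overline{\Phi}^*_{(g,\pi)}(f,X)$ follows from Moncrief-style identities applied to $\mathbf{Y}$ together with the specific form of the matter.

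For the forward direction, I would first expand $D\overline{\Phi}^*_{(g,\pi)}(f,X)=0$ into its scalar and symmetric-tensor components. The modification contributes a term of the form $\tfrac{1}{2}J^{(i}X^{j)}$ in the tensor slot of $D\Phi^*$; substituting $J=-\tfrac{|J|_g}{2f}X$ from the null-vector equation collapses this into a term proportional to $X\otimes X$, so the system becomes a pointwise PDE for $(f,X)$ closely analogous to the vacuum adjoint equations, with explicit source terms built from $\sigma(g,\pi)$, $f$, and $X$. Next, I would construct the ambient spacetime $(\mathbf{N},\mathbf{g})$ by Cauchy-evolving $(g,\pi)$ coupled to a null perfect fluid whose parameters $p=-\tfrac{1}{2}\sigma(g,\pi)$ and $\mathbf{v}=\tfrac{\sqrt{|J|_g}}{2f}\mathbf{Y}$ are prescribed on the initial slice. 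Finally, following Moncrief's strategy, I would verify that the system extracted in the first step expresses precisely the infinitesimal-isometry condition of this coupled evolution, and invoke unique continuation to promote $\mathbf{Y}=2f\mathbf{n}+X$ to a global Killing vector field of $(\mathbf{N},\mathbf{g})$.

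Once the ambient spacetime is in hand, assertion (1) reduces to the Bianchi computation from the converse: since $\sigma(g,\pi)=-2p$ and $p$ is constant on $\mathbf{N}$, $\sigma(g,\pi)$ is constant on $U$. Assertion (3) is then a direct algebraic check: for a null perfect fluid with constant pressure $p$, the spacetime DEC $G(\mathbf{u},\mathbf{w})\ge 0$ for all future causal $\mathbf{u},\mathbf{w}$ reduces to $p\le 0$, which is exactly the hypothesis $\sigma(g,\pi)\ge 0$.

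The main obstacle is the construction of the Killing field in the forward direction. In vacuum, Moncrief's argument is clean because the constraint adjoint matches the vacuum evolution exactly. In our non-vacuum setting, one must first correctly identify the matter model (the null perfect fluid with the specific parameters above) and then show that the modified adjoint equation, after substitution of the null-vector constraint, encodes precisely the symmetry of the coupled evolution. A secondary technical point is the global extension of the Killing field across $\mathbf{N}$, which should follow from standard unique-continuation arguments of the type used in Moncrief's original work, adapted to the coupled Einstein--null-perfect-fluid system.
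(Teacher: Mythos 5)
Your converse direction is essentially on the right track: combining the contracted Bianchi identity with $\mathcal{L}_{\mathbf{Y}} G = 0$ to get $p$ constant matches the paper's Lemma~\ref{lemma:null-perfect-fluid}(3), and reading off the $J$-null-vector equation from the form of $G$ along a slice is the right idea. One omission: the converse statement does not assume $f$ is nonvanishing, so you need to handle the locus $\{f=0\}$, where $\mathbf{Y}=X$ is spacelike and therefore $\mathbf{v}$ (hence $J$) vanishes, reducing the system to $L_X g = L_X\pi = 0$. The paper splits $U$ into the set where $f\ne 0$ and the interior of $\{f=0\}$ and treats them separately; without this your derivation of $J=-\tfrac{|J|_g}{2f}X$ breaks down.

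The forward direction, however, has a genuine gap, and it is exactly the point you flagged as the ``main obstacle.'' You propose to Cauchy-evolve $(g,\pi)$ coupled to a null perfect fluid and then run a Moncrief-style unique-continuation argument to propagate the symmetry. This is not a route that closes: the Einstein--null-perfect-fluid system is not a well-posed hyperbolic evolution without additional matter equations of motion, and Moncrief's linearization-stability argument is tailored to the vacuum constraint operator, so it does not transfer to the modified operator $D\overline{\Phi}^*_{(g,\pi)}$ in any obvious way. The paper avoids both problems with a much more elementary device: the \emph{Killing development} of $(U,g,f,X)$. Since $f$ is nonvanishing, the explicit metric
\[
\mathbf{g} = -4f^2\,du^2 + g_{ij}\,(dx^i + X^i\,du)(dx^j + X^j\,du)
\]
on $\mathbf{N}=\mathbb{R}\times U$ (with $f$, $X$, $g$ extended $u$-independently) is Lorentzian, and $\mathbf{Y}=\partial/\partial u$ is \emph{automatically} a global Killing field because the coefficients do not depend on $u$ --- no evolution and no unique continuation are needed. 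Equation~\eqref{equation:momentum} of the system~\eqref{equation:pair} is precisely the statement that $(U,g,\pi)$ embeds as the $u=0$ slice with the correct second fundamental form. The remaining work is then purely algebraic: one computes the tangential Einstein tensor of the Killing development via~\eqref{equation:Einstein_pi}, adds the $f^{-1}$-rescaled adjoint equation~\eqref{equation:Hamiltonian}, and substitutes the $J$-null-vector equation to land on the null perfect fluid form~\eqref{equation:perfect-fluid}. The nonvanishing of $f$ is not merely a technical hypothesis; it is exactly what makes the Killing development well-defined and Lorentzian, which is why the paper can remove it only under the additional hypothesis $\sigma(g,\pi)\equiv 0$ (Theorem~\ref{theorem:f_not_zero}). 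Without this explicit construction, your forward argument cannot be completed as stated.
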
 

Note the nonvanishing assumption of $f$ in the first half of Theorem~\ref{theorem:DEC}.  If $f$ vanishes on an open subset of $U$, then the $J$-null-vector equation implies that $J$ must vanish there as well.\footnote{Technically, this argument only works where $X\ne0$, but 
 the first sentence of the proof of Corollary~\ref{corollary:one-dimensional} guarantees that $X\ne0$ on a dense subset of the zero set of $f$.}
Thus the equation $D\overline{\Phi}_{(g,\pi)}^*(f,X)=0$ on such an open subset reduces to saying that $X$ is an infinitesimal symmetry of $(g, \pi)$, that is, $L_X g=0$ and $L_X\pi=0$.\footnote{This justifies the second sentence of the abstract. More precisely, the correct statement is: If $(U, g, \pi)$ is a non-improvable initial data set such that no open subset carries an infinitesimal symmetry, then an open dense subset of $U$ must sit inside a null perfect fluid spacetime carrying a global Killing vector field.}
 For the important case where it is already known that $\sigma(g, \pi)\equiv 0$ in $U$, we are able to remove the nonvanishing assumption on $f$ from Theorem~\ref{theorem:DEC}, and we can conclude that $(\mathbf{N}, \mathbf{g})$ is a null dust spacetime. (See Theorem~\ref{theorem:f_not_zero}.) 

The ``converse'' part of Theorem~\ref{theorem:DEC} can be used to construct explicit examples of initial data sets that  admit a nontrivial lapse-shift pair solving the system~\eqref{equation:pair}. In particular, pp-wave spacetimes give rise to the following examples. Refer to Section~\ref{section:asymp_flat} for the definition of \emph{asymptotically flat of type $(q, \alpha)$.} 
\begin{Example}\label{example:pp}
For each $n>8$, there exist 
complete, asymptotically flat initial data sets $(\mathbb{R}^n, g, \pi)$ that satisfy $\sigma(g, \pi)\equiv 0$, admit a nontrivial lapse-shift pair solving the system~\eqref{equation:pair}, and have $E=|P|>0$ where $(E, P)$ is the ADM energy-momentum. These examples have asymptotic decay rate $(q, \alpha)$ with $q>\frac{n-2}{2}$ but not with any $q\ge n-5$, and $(g, \pi) = (g_{\mathbb{E}}, 0)$ outside a slab.
\end{Example}

By \emph{slab}, we just mean any region lying between parallel coordinate planes in $\mathbb{R}^n$.
Despite appearances, Example~\ref{example:pp} does not contradict the existing theorems on the equality case of the spacetime positive mass theorem in~\cite{Chrusciel-Maerten:2006, Huang-Lee:2020} because those results demand a decay rate of $q>n-3$ rather than the general $q>\frac{n-2}{2}$ used in this paper and many others.


As a companion theorem to Theorem~\ref{theorem:improvable}, we show that the dominant energy scalar is always \emph{almost} improvable in the following sense:
\begin{Theorem}\label{theorem:improvability-error}
Let $(\Omega, g, \pi)$ be a compact initial data set with nonempty smooth boundary, such that $(g,\pi)\in C^{4,\alpha}(\Omega)\times C^{3,\alpha}(\Omega)$ and also $(g,\pi) \in C^5_{\mathrm{loc}}(\Int\Omega)\times C^4_{\mathrm{loc}}(\Int\Omega)$.  Let $B$ be an open ball in $\Int\Omega$, and let $\delta>0$.

Then there is a $C^{4,\alpha}(\Omega)\times C^{3,\alpha}({\Omega})$ neighborhood $\mathcal{U}$ of $(g, \pi)$ such that for any $V\subset\subset\Int\Omega$, there exist constants $\epsilon, C>0$ such that the following statement holds: For $(\gamma, \tau)\in \mathcal{U}$ and   $u\in C^{0,\alpha}_c(V)$ with $\| u \|_{C^{0,\alpha}(\Omega)}< \epsilon$, there exists $(h, w)\in C^{2,\alpha}_c(\Int\Omega)$ with $\|(h,w)\|_{C^{2,\alpha}(\Omega)}\le C\| u\|_{C^{0,\alpha}(\Omega)}$ such that the initial data set  $(\gamma+h, \tau+w)$ satisfies
\[
	\Sc(\gamma+h, \tau+w) \ge \Sc(\gamma,\tau)+u -\delta {\bf 1}_B.
\]
where ${\bf 1}_B$ is the indicator function of $B$, which equals $1$ on $B$ and $0$ outside $B$.
\end{Theorem}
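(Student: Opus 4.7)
The plan is to adapt the Corvino-Huang proof of Theorem~\ref{theorem:no-kernel} by allowing a finite-dimensional correction supported in $B$ that absorbs any obstruction coming from the kernel of $D\overline{\Phi}_{(g,\pi)}^*$ on $\Int\Omega$. First, overdetermined-elliptic theory (for instance by composing $D\overline{\Phi}_{(g,\pi)}^*$ with its formal adjoint to obtain a determined elliptic operator on a slightly smaller subdomain containing $V\cup B$) shows that the relevant kernel $K$ is finite-dimensional, with an $L^2$-basis $(f_1,X_1),\dots,(f_N,X_N)$.

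The obstruction to solving the linearized equation $D\overline{\Phi}_{(g,\pi)}(h,w)=(2v,0)$ in compactly supported pairs $(h,w)$ is the collection of pairings $\int f_i\,v$ for $i=1,\dots,N$. I would choose smooth bumps $\chi_1,\dots,\chi_N\in C_c^\infty(B)$ forming a dual system so that the matrix $A_{ij}:=\int \chi_j f_i$ is invertible; the crucial ingredient here is a unique-continuation property ensuring that the $f$-components $f_i|_B$ span a subspace of $C^0(B)$ of the correct dimension (kernel elements of the form $(0,X)$ pair trivially with $(2v,0)$ and can be discarded, while kernel elements with $f\not\equiv 0$ but $f|_B\equiv 0$ must be ruled out). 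Given $u\in C^{0,\alpha}_c(V)$, set $\phi_u:=\sum_i c_i(u)\chi_i$ with $c_i(u):=(A^{-1})_{ij}\int f_j u$; then $u-\phi_u$ is $L^2$-orthogonal to every $f_i$ and $\|\phi_u\|_{C^{0,\alpha}}\le C\|u\|_{C^{0,\alpha}}$ for a constant depending only on $\{\chi_i\}$ and $\{f_i\}$.

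Once the obstruction is eliminated, the Corvino-Huang Newton iteration using the modified constraint operator and conformal-type perturbations, applied to the right-hand side $(2(u-\phi_u),0)$, produces a compactly supported $(h,w)$ with $\|(h,w)\|_{C^{2,\alpha}}\le C\|u\|_{C^{0,\alpha}}$ and
\[
\Sc(\gamma+h,\tau+w)\ \ge\ \Sc(\gamma,\tau)+u-\phi_u,
\]
uniformly for $(\gamma,\tau)$ in a small $C^{4,\alpha}\times C^{3,\alpha}$ neighborhood of $(g,\pi)$; the pairing matrix $A$ depends continuously on the background data and remains invertible after a small perturbation, so the same $\{\chi_i\}$ work with only a slightly different $A^{-1}$. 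Since $\phi_u$ is supported in $B$ and $\|\phi_u\|_{C^0}\le C'\|u\|_{C^0}\le C'\epsilon$, choosing $\epsilon<\delta/C'$ yields $\phi_u\le \delta\mathbf{1}_B$ pointwise, whence $\Sc(\gamma+h,\tau+w)\ge\Sc(\gamma,\tau)+u-\delta\mathbf{1}_B$ as desired.

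The main obstacle is establishing the unique continuation statement used in the second paragraph: a kernel element of $D\overline{\Phi}_{(g,\pi)}^*$ whose scalar part $f$ vanishes on the ball $B$ must have $f\equiv 0$ on $\Int\Omega$. Classical unique continuation does not apply to the coupled system in $(f,X)$ directly, since one a priori loses control over $X$ on $B$; the plan is to exploit the specific block structure of $D\overline{\Phi}_{(g,\pi)}^*$ (substituting out $X$ using part of the system to reduce to a determined elliptic operator on $f$ alone, or applying an Aronszajn-type theorem to the second-order equations that $f$ satisfies after eliminating $X$). Once this structural fact is in hand, the rest of the argument is a routine combination of Fredholm theory with the Newton iteration already developed in~\cite{Corvino-Huang:2020}.
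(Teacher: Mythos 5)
Your proposal takes a genuinely different route from the paper's: you try to project the source $u$ onto the orthogonal complement of the finite-dimensional cokernel of $D\overline{\Phi}_{(g,\pi)}$ by subtracting a combination of bump functions supported in $B$, whereas the paper avoids the cokernel altogether by perturbing the \emph{operator}. Specifically, the paper chooses a vector field $Z$ supported in $B$ so that the $Z$-null-vector expression $2fZ+|Z|_g X$ is not identically zero for any nontrivial $(f,X)\in\ker D\overline{\Phi}_{(g,\pi)}^*$ (this uses only the finite-dimensionality of the kernel and the freedom in choosing $Z$), then invokes Proposition~\ref{theorem:generic} to pick a generic $\varphi$ for which $\bigl(D\Phi^{(\varphi,Z)}_{(g,\pi)}\bigr)^*$ is \emph{injective}, applies Theorem~\ref{theorem:deformationZ} directly, and controls the resulting discrepancy between the prescribed $\Phi^{(\varphi,Z)}$-perturbation and the dominant energy scalar via Lemma~\ref{lemma:sigma_error}. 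The error term there is supported exactly where $Z\neq 0$, i.e.\ in $B$, and is made $\le\delta$ by shrinking $\epsilon$. The structural advantage of this route is that all Fredholm/iteration issues are reduced to the injective case, where uniformity over $(\gamma,\tau)$ is automatic from the coercivity estimates.

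Your approach has two genuine gaps. The first, which you correctly flag, is the unique-continuation claim: that a kernel element $(f,X)$ of $D\overline{\Phi}_{(g,\pi)}^*$ with $f|_B\equiv 0$ must have $f\equiv 0$ on all of $\Int\Omega$. This does not reduce to classical Aronszajn-type unique continuation because $f$ does not satisfy a determined elliptic equation on its own once it is coupled to $X$ via equations~\eqref{equation:Hessian} and~\eqref{equation:second-derivative}; eliminating $X$ would require inverting the conformal Killing operator, which has nontrivial kernel and cokernel. The paper only uses a far weaker statement: if \emph{both} $f$ and $X$ vanish on an open set, then $(f,X)\equiv 0$ (because the kernel element is determined by its 1-jet). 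Knowing $f|_B\equiv 0$ alone does not give you the 1-jet of $(f,X)$, since $X$ could be nonzero on $B$. Neither the paper nor standard PDE theory supplies the statement you need, and it is not clear it is even true.

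The second gap is the iteration. Even if you could make $u-\phi_u$ $L^2$-orthogonal to the kernel, the Newton/Picard scheme in~\cite{Corvino-Huang:2020} is set up under the assumption that the linearized adjoint is injective, so that a weighted coercivity estimate holds and each linearized equation is solvable with uniform control. With nontrivial cokernel, the linearized equation at each iterate $(\gamma_k,\tau_k)$ has an obstruction with respect to $\ker D\overline{\Phi}^*|_{(\gamma_k,\tau_k)}$, which is not the same space as $\ker D\overline{\Phi}^*|_{(g,\pi)}$ and may have different dimension; correcting only the initial right-hand side does not keep the subsequent right-hand sides in range. One could attempt to carry a finite-dimensional corrector as an additional unknown through the iteration, but this requires a nontrivial modification of the functional-analytic setup and uniform control of the corrector, none of which is in place. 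Both gaps are precisely what the paper's ``kill the kernel by a generic modification'' strategy is designed to avoid.
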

In other words, we can specify an arbitrarily small function with arbitrarily small support, namely $\delta {\bf 1}_B$, and then achieve improvability up to that small error. 


Our main application of these results is to Bartnik's stationary conjecture. Let $(\Omega_0, g_0, \pi_0)$ be a compact initial data set with nonempty smooth boundary, satisfying the DEC. The \emph{Bartnik mass} $m_B(\Omega_0, g_0, \pi_0)$ is defined to be the infimum of the ADM masses of all ``admissible'' asymptotically flat extensions $(M, g, \pi)$ of $(\Omega_0, g_0, \pi_0)$. The definition of an admissible extension is given in Definition~\ref{definition:admissible}, including a no-horizon condition in terms of marginally outer trapped  hypersurfaces.
For now we emphasize that in this paper, the extension $M$ refers to an asymptotically flat manifold with boundary $\partial M$ that we think of as being glued to $\partial\Omega_0$. 

An admissible extension $(M, g, \pi)$ whose ADM mass realizes this Bartnik mass is called a \emph{Bartnik mass minimizer} for $(\Omega_0, g_0, \pi_0)$. 
Bartnik conjectured that minimizers should have special properties  (see \cite[p. 2348]{Bartnik:1989}, \cite[Conjecture~2]{Bartnik:1997}, and \cite[p. 236]{Bartnik:2002}).

\begin{conjecture}
Let $(\Omega_0, g_0, \pi_0)$ be a compact initial data set with nonempty smooth boundary, satisfying the dominant energy condition, and suppose that $(M, g, \pi)$ is a Bartnik mass minimizer for $(\Omega_0, g_0, \pi_0)$. Then $(\Int M, g, \pi)$ sits inside a vacuum stationary spacetime. 
\end{conjecture}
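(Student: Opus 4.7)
The plan is to translate Bartnik mass minimality into non-improvability of $\sigma(g,\pi)$ on every relatively compact subdomain of $\Int M$, feed this into Theorems~\ref{theorem:improvable} and~\ref{theorem:DEC} to produce a global Killing vector field on $\Int M$ living inside a null perfect fluid ambient spacetime, and then use asymptotic flatness of the extension to upgrade this structure to \emph{vacuum} stationary.

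First I would show that mass-minimality implies non-improvability of $\sigma$ on every compact $\Omega\subset\subset\Int M$ with smooth boundary, and moreover $\sigma(g,\pi)\equiv 0$ on $\Int M$. Suppose either could fail. Then on some ball $B\subset\Int\Omega$ we could, after applying the improvement (or the almost-improvement Theorem~\ref{theorem:improvability-error}, whose small error $-\delta\mathbf{1}_B$ is harmless once a strictly larger mass decrement is in play), couple an interior perturbation $(h,w)$ to an asymptotic mass-lowering deformation --- for instance a Lichnerowicz-type conformal change near infinity with leading behavior $1-A/r^{n-2}$, $A>0$ --- that strictly decreases the ADM mass at the price of a small, compactly supported violation of the DEC which the interior improvement mops up. The result is an admissible extension of smaller ADM mass, contradicting minimality; arranging the deformation to still be no-horizon as required by admissibility (Definition~\ref{definition:admissible}) is a routine open condition to verify for sufficiently small perturbations.

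Theorem~\ref{theorem:improvable} then supplies, on each such $\Omega$, a nontrivial lapse-shift pair $(f_\Omega,X_\Omega)$ on $\Int\Omega$ solving the system~\eqref{equation:pair}. The operator $D\overline{\Phi}_{(g,\pi)}^*$ is overdetermined elliptic, so its kernel on any bounded region is finite-dimensional and its solutions obey a unique-continuation principle. Taking an exhaustion $\Omega_1\subset\Omega_2\subset\cdots\nearrow\Int M$ and a diagonal/matching argument, I would patch these local pairs into a global nontrivial $(f,X)$ on $\Int M$ satisfying~\eqref{equation:pair}. Combined with $\sigma\equiv 0$, this is the setting of the $\sigma\equiv 0$ refinement of Theorem~\ref{theorem:DEC} (the promised Theorem~\ref{theorem:f_not_zero}, which removes the nonvanishing hypothesis on $f$), yielding a null dust ambient spacetime $(\mathbf{N},\mathbf{g})$ admitting a global Killing vector field $\mathbf{Y}=2f\mathbf{n}+X$ with velocity $\mathbf{v}$ parallel to $\mathbf{Y}$.

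To close, asymptotic flatness forces $\mathbf{Y}$ to approach the coordinate time translation $\partial_t$ at infinity, so $\mathbf{Y}$ is timelike on an asymptotic end of $\mathbf{N}$; since $\mathbf{v}$ is null and proportional to $\mathbf{Y}$, we get $\mathbf{v}\equiv 0$ on that end, hence $G\equiv 0$ there, so the spacetime is vacuum stationary in a neighborhood of spacelike infinity. The main obstacle I anticipate is propagating these two conclusions inward to all of $\mathbf{N}$: the Bianchi identity $\nabla^{\alpha}G_{\alpha\beta}=0$ forces null dust to flow along its own null geodesics, so $\mathbf{v}\equiv 0$ persists on the domain swept by null geodesics from the asymptotic end, but an ergoregion in which $\mathbf{Y}$ becomes null could in principle harbor nontrivial null dust ``hair'' invisible from infinity. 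Example~\ref{example:pp} shows that this is not a vacuous concern: in dimension $n\geq 9$ it exhibits asymptotically flat data with $\sigma\equiv 0$ and a nontrivial lapse-shift pair solving~\eqref{equation:pair} but with $J\not\equiv 0$. Consequently, the final passage from ``null dust with Killing vector'' to ``vacuum stationary'' requires either a dimensional restriction ($n\leq 8$) or an additional geometric hypothesis ruling out such interior ergoregion phenomena, which is why the conjecture as stated is only resolved up to this last step --- the ``substantial progress'' promised in the abstract.
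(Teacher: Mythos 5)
The statement you are attempting to prove is Bartnik's \emph{conjecture}, which the paper does not prove; indeed it is shown to be \emph{false} in dimensions $n>8$ via Example~\ref{example:pp} and Example~\ref{example:Bartnik}, and what the paper establishes is the weaker Theorem~\ref{theorem:Bartnik} (valid for $3\le n\le 7$ with additional regularity and $E\ge|P|$). To your credit, you recognize this: your outline essentially reconstructs the proof of Theorem~\ref{theorem:Bartnik} via Proposition~\ref{proposition:conformal}, Theorem~\ref{theorem:Bartnik2}, Theorem~\ref{theorem:AF}, and Theorem~\ref{theorem:f_not_zero}, and you correctly conclude that this machinery does not close the conjecture. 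Two of your intermediate claims, however, are wrong in a way that matters.

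First, the assertion that ``asymptotic flatness forces $\mathbf{Y}$ to approach the coordinate time translation $\partial_t$ at infinity, so $\mathbf{Y}$ is timelike on an asymptotic end'' is false. Lemma~\ref{lemma:asymptotics} only gives $f\to a$, $X\to b$ with $b_iE=-2aP_i$, so $|X|^2/4f^2 \to |P|^2/E^2$ at infinity; this is $<1$ precisely when $E>|P|$. In the pp-wave examples one has $E=|P|>0$ and the Killing field $\partial/\partial u$ is \emph{null}, not timelike, on the asymptotic end. This is why Theorem~\ref{theorem:Bartnik} must add the hypothesis $E>|P|$ even to conclude vacuum outside a compact set (item~\eqref{item:vanishing-J}). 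The obstruction you describe is therefore not an ``interior ergoregion'' invisible from infinity --- it already shows up at infinity, and Example~\ref{example:Bartnik} is a bona fide counterexample to the conjecture rather than merely a cautionary tale.

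Second, the parenthetical that preserving the no-horizon condition $\mathscr{N}_1$ is ``a routine open condition to verify for sufficiently small perturbations'' undersells the real issue. Openness of $\mathscr{N}_1$ is Proposition~\ref{proposition:openness}, and its proof uses the existence and compactness theory for stable marginally outer trapped hypersurfaces (Andersson--Metzger, Eichmair, Schoen--Simon regularity), which is precisely where the dimension restriction $3\le n\le 7$ in Theorem~\ref{theorem:Bartnik} enters. This is a genuine input, not a formality, and your proposal as written is silent on it. Once these two points are corrected, what remains is exactly the paper's Theorem~\ref{theorem:Bartnik}: under $3\le n\le7$, interior regularity, and $E\ge|P|$, the minimizer sits in a null dust spacetime with a global Killing field, vacuum on the domain of dependence of the vacuum region, and vacuum near infinity when $E>|P|$ --- but not, in general, globally vacuum stationary.
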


For our purposes, we define a \emph{stationary} spacetime $(\mathbf{N}, \mathbf{g})$ containing $(\Int M, g, \pi)$ to be one that admits a global Killing vector field $\mathbf{Y}$ that is ``uniformly'' timelike outside some bounded subset of $\Int M$, meaning that away from that subset, $\mathbf{g}(\mathbf{Y}, \mathbf{Y}) < -\varepsilon$ for some $\varepsilon>0$. See~\cite{Chrusciel-Wald:1994}. (Note that this definition of \emph{stationary} is different from that used in Bartnik's original formulation of the conjecture, in which the Killing vector field must be globally timelike.) One might also expect that if $(\Omega_0, g_0, \pi_0)$ has Bartnik mass zero, then  $(\Int \Omega_0, g_0, \pi_0)$ should sit inside the Minkowski spacetime. We will refer to this as \emph{Bartnik's strict positivity conjecture} because of the analogous conjecture made by Bartnik in the time-symmetric case. Anderson and Jauregui showed that this is false if one demands that all of $(\Omega_0, g_0, \pi_0)$, including the boundary, sits inside the Minkowski spacetime~\cite{Anderson-Jauregui:2019}.


 There is also a time-symmetric version of Bartnik's conjecture which has been affirmed (see references cited for the precise regularity and dimension assumptions):
\begin{Theorem}[Corvino~\cite{Corvino:2000}, Anderson-Jauregui~\cite{ Anderson-Jauregui:2019}, cf.~\cite{Huang-Martin-Miao:2018}]
Let $(\Omega_0, g_0, 0)$ be a compact initial data set with nonempty smooth boundary, satisfying $R_{g_0}\ge0$, and suppose that $(M, g, 0)$ minimizes ADM mass among all admissible extensions with $\pi\equiv0$. Then $(\Int M, g, 0)$ sits inside a vacuum static spacetime. 
\end{Theorem}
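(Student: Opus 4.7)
The plan is to prove two things for a Bartnik mass minimizer $(M, g, 0)$: first, vacuum, $R_g \equiv 0$ in $\Int M$; second, staticity, meaning the existence of a nontrivial $f \in C^{2}_{\mathrm{loc}}(\Int M)$ with $DR|_g^*(f) = 0$. Given both, Moncrief's theorem (Theorem~\ref{theorem:Moncrief}) applied to the time-symmetric lapse-shift pair $(f, 0)$ produces a vacuum development of $(\Int M, g, 0)$ admitting the hypersurface-orthogonal Killing vector field $\mathbf{Y} = 2f\mathbf{n}$, i.e., a vacuum static spacetime.

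For the vacuum step, I argue by contradiction assuming $R_g \not\equiv 0$. Since admissibility forces $R_g \geq 0$, solve the conformal Laplace-type equation $-c_n \Delta_g u + R_g u = 0$ on $M$ with $u = 1$ on $\partial M$ and $u \to 1$ at infinity (where $c_n = 4(n-1)/(n-2)$). The maximum principle forces $0 < u < 1$ in $\Int M$, and the asymptotic expansion $u = 1 - A r^{2-n} + o(r^{2-n})$ has $A > 0$. The conformal metric $\tilde g = u^{4/(n-2)} g$ then satisfies $R_{\tilde g} \equiv 0$, agrees with $g$ on $\partial M$, remains asymptotically flat with $\pi \equiv 0$, and has strictly smaller ADM mass via the standard formula relating the mass change to $A$. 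After a small smoothing if needed to preserve the no-MOTS condition (admissibility is an open condition), $\tilde g$ is an admissible extension with strictly smaller mass, contradicting minimality.

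For the static step, argue by contradiction assuming $DR|_g^*$ is injective on $\Int M$. By Corvino's Theorem~\ref{theorem:scalar}, scalar curvature is fully prescribable by compactly supported perturbations of $g$ in $\Int M$ near $g$, with the quantitative estimate $\|h\|_{C^{2,\alpha}} \leq C\|u\|_{C^{0,\alpha}}$. I would then construct an asymptotic deformation $g'$ of $g$ supported in the complement of a large compact set that strictly decreases the ADM mass (for instance, by adjusting the coefficient of $r^{2-n}$ in the asymptotic expansion through a cutoff construction); this introduces a compactly supported scalar curvature error $u$ in the transition region. Using Corvino's prescription, correct this error by a compactly supported variation $h$ in $\Int M$ so the resulting metric $g' + h$ has $R \equiv 0$. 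For sufficiently small deformations, $g' + h$ is an admissible extension, and since $h$ is compactly supported in $\Int M$ it does not affect asymptotics, so $m_{ADM}(g' + h) < m_{ADM}(g)$, contradicting minimality. Thus a nontrivial static potential $f$ must exist.

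The main technical obstacle throughout is preserving admissibility---particularly the no-MOTS condition---under the constructed deformations. Because admissibility is open in the $C^{2,\alpha}$ topology, all perturbations must be quantitatively controlled. In the static step this requires a careful balance: the asymptotic mass-decreasing deformation $g'$ must be small enough that its scalar curvature error $u$ is small, hence the Corvino correction $h$ is small in $C^{2,\alpha}$ and cannot destroy admissibility, yet the asymptotic mass decrease must survive this smallness limit. An analogous openness/stability argument is needed to pass from $\tilde g$ to a smoothed admissible extension in the vacuum step.
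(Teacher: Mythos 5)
The paper does not prove this theorem; it is cited as background from Corvino (2000) and Anderson--Jauregui (2019), so there is no in-paper proof to compare against. Your outline follows the broad strategy of Corvino's original argument, but as written it has genuine gaps.

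The most serious problem is boundary matching in your ``vacuum step.'' Solving $-c_n\Delta_g u + R_g u = 0$ with $u=1$ on $\partial M$ preserves the induced metric $g|_{\partial M}$, but the conformal transformation law gives $\tilde{H} = H + \tfrac{2(n-1)}{n-2}\partial_\nu u$ along $\partial M$, and the strict maximum principle forces $\partial_\nu u<0$ there (with $\nu$ pointing into $M$) whenever $R_g\not\equiv 0$. So $\tilde{H}<H$ and $\tilde{g}$ is \emph{not} an admissible extension of $(\Omega_0,g_0,0)$ --- the mean-curvature matching in Definition~\ref{definition:admissible} fails, and no small smoothing can restore a strict inequality in the boundary data. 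You cannot impose both $u=1$ and $\partial_\nu u=0$, as this overdetermines the elliptic problem. The standard repair is to force the conformal factor to equal $1$ on a fixed neighborhood of $\partial M$ via a cutoff, as in Proposition~\ref{proposition:conformal}, which pushes the scalar-curvature defect into a compactly supported annulus; but eliminating that defect then requires exactly the injectivity of $DR|_g^*$ that your ``static step'' treats as an independent contradiction hypothesis, so the two steps cannot be run separately.

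Two further points. First, Theorem~\ref{theorem:scalar} requires injectivity of $DR|_g^*$ on $\Int\Omega$ for a \emph{compact} $\Omega$ containing the support of the prescription; injectivity on the noncompact $\Int M$ does not imply this, since a kernel element on $\Int\Omega$ need not extend. The correct negation produces, for an exhaustion $\Omega_j$ of $\Int M$, a nontrivial kernel element $f_j$ on each $\Int\Omega_j$, and one must normalize the $1$-jets and pass to a limit as in Theorem~\ref{theorem:AF} to obtain $f$ on all of $\Int M$. Second, the vacuum step is in fact redundant: once a nontrivial $f$ with $DR|_g^*(f)=0$ exists on $\Int M$, the identity $\Div_g(DR|_g^*(f)) = -\tfrac12\, f\, dR_g$ combined with the fact that $f$ cannot vanish on an open set (it satisfies a Hessian-type ODE and is determined by its $1$-jet) forces $R_g$ to be constant, hence $R_g\equiv 0$ by asymptotic flatness. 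Only after that does Moncrief's theorem apply, and one still needs Lemma~\ref{lemma:asymptotics} to conclude that $f$ is asymptotic to a nonzero constant, which is what makes the resulting Killing development \emph{static} near infinity rather than merely carrying some Killing field.
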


On the other hand, relatively little progress has been made toward the general, non-time-symmetric case of Bartnik's stationary conjecture until recently.  Corvino~\cite{Corvino:2019} used Theorem~\ref{theorem:no-kernel} combined with a conformal argument to say that if $(g,\pi)$ is a Bartnik mass minimizer, then it must admit a nontrivial kernel of $D\overline{\Phi}_{(g,\pi)}^*$. In three dimensions, Zhongshan An~\cite{An:2020} dealt with the case of vacuum minimizers by carrying out the variational approach proposed by Bartnik~\cite{Bartnik:2005} and used by Anderson-Jauregui~\cite{Anderson-Jauregui:2019} in the time-symmetric case.


Using results established in this paper, we are able to prove part of Bartnik's stationary conjecture. 

\begin{Theorem}\label{theorem:Bartnik}
Let $3\le n \le 7$, and let $(\Omega_0, g_0, \pi_0)$ be an $n$-dimensional compact smooth initial data set with nonempty smooth boundary, satisfying the dominant energy condition. Suppose that $(M, g, \pi)$ is a Bartnik mass minimizer for $(\Omega_0, g_0, \pi_0)$ such that $(g, \pi)\in C^5_{\mathrm{loc}}(\Int M)\times C^4_{\mathrm{loc}}(\Int M)$, and it has nonnegative ADM mass (that is, $E\ge |P|$). Then $(M, g, \pi)$ satisfies the following properties:
\begin{enumerate}
\item  $\sigma(g, \pi)\equiv 0$  in $\Int M$.\label{item:sigma_vanish}
\item  $(\Int M, g, \pi)$ sits inside a null dust spacetime $(\mathbf{N}, \mathbf{g})$ which satisfies the spacetime dominant energy condition and also admits a global Killing vector field~$\mathbf{Y}$. 
\item The metric $\mathbf{g}$ is vacuum on the domain of dependence of the subset of $\Int M$ where $(g, \pi)$ is vacuum, and $\mathbf{Y}$ is null on the region of $\mathbf{N}$ where $\mathbf{g}$ is not vacuum.\label{item:spacetime_vac}
\item If we further assume $E>|P|$, then $(g, \pi)$ is vacuum outside a compact subset of $M$, and  thus $(\mathbf{N}, \mathbf{g})$  is vacuum near spatial infinity. \label{item:vanishing-J}
\end{enumerate}
\end{Theorem}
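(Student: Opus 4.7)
The plan is to prove Part~(1) by a conformal-type mass-lowering contradiction, then use the resulting $\Sc \equiv 0$ together with the improvability dichotomy of Theorem~\ref{theorem:improvable} to extract a global lapse-shift pair solving $(\star)$, apply Theorem~\ref{theorem:DEC} (in the $\sigma \equiv 0$ strengthening, Theorem~\ref{theorem:f_not_zero}) for Parts~(2)--(3), and finally use asymptotic analysis of the Killing field for Part~(4). For Part~(1), suppose $\Sc(g,\pi) > 0$ on some open $V \subset \Int M$. Using the conformal density-lowering technique of~\cite{Eichmair-Huang-Lee-Schoen:2016}, solve an elliptic boundary value problem on $M$ with trivial boundary data on $\partial M$ and asymptotic value $1$ at infinity, to produce new data $(\tilde g, \tilde \pi)$ with $\tilde \Sc \equiv 0$ agreeing with $(g, \pi)$ on $\partial M$. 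The maximum principle and strict positivity of $\Sc$ on $V$ force the conformal factor $u$ to satisfy $u < 1$ in $\Int M$ and $u(x) = 1 + c|x|^{-(n-2)} + o(|x|^{-(n-2)})$ at infinity with $c < 0$, strictly decreasing the ADM mass. Since $u \equiv 1$ on $\partial M$ and $\tilde \Sc \ge 0$, the no-horizon admissibility condition is preserved, so we obtain an admissible extension of strictly smaller ADM mass, contradicting Bartnik-minimality. Hence $\Sc \equiv 0$ in $\Int M$.

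For Part~(2), fix a smooth compact $\Omega \subset \Int M$ with smooth boundary and suppose $\Sc$ were improvable on $\Int \Omega$. Choosing a nonnegative bump $u \not\equiv 0$ of small $C^{0,\alpha}$-norm, improvability yields a compactly supported $(h, w)$ with $\Sc(g+h, \pi+w) \ge u > 0$ on $\operatorname{supp} u$; the perturbed data is still admissible (no-MOTS is open under small $C^2$ perturbations trivial on $\partial M$), and applying the same conformal density-lowering yields an admissible extension with $\tilde \Sc \equiv 0$ and strictly smaller ADM mass, again contradicting minimality. So $\Sc$ is non-improvable on $\Int \Omega$, and Theorem~\ref{theorem:improvable} produces a nontrivial lapse-shift pair $(f_\Omega, X_\Omega)$ on $\Int \Omega$ solving $(\star)$. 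Exhausting $\Int M$ by such $\Omega$'s and invoking unique continuation for the overdetermined elliptic system $D\overline{\Phi}^*_{(g,\pi)}(f, X) = 0$ (patching and normalizing local solutions), we produce a global nontrivial lapse-shift pair $(f, X)$ on $\Int M$ satisfying $(\star)$. Theorem~\ref{theorem:f_not_zero} then furnishes the null dust spacetime $(\mathbf{N}, \mathbf{g})$ containing $(\Int M, g, \pi)$ with global Killing field $\mathbf{Y} = 2 f \mathbf{n} + X$, and Theorem~\ref{theorem:DEC}(3) supplies the spacetime DEC, proving Part~(2). For Part~(3), along the vacuum open subset $W \subset \Int M$, $|J|_g = 0$ and $p = -\tfrac{1}{2}\Sc = 0$ force the Einstein tensor to vanish, and standard propagation arguments for the null dust evolution extend this vacuum condition to $D(W)$; on the non-vacuum region of $\mathbf{N}$, the null dust velocity $\mathbf{v} \ne 0$ is null and $\mathbf{v} = \eta \mathbf{Y}$ for some scalar $\eta$ by the converse part of Theorem~\ref{theorem:DEC}, so $\mathbf{Y}$ itself is null there.

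For Part~(4), assume $E > |P|$. Standard asymptotic analysis of solutions of $D\overline{\Phi}^*_{(g,\pi)}(f, X) = 0$ on the asymptotically flat end, combined with the natural pairing between asymptotic Killing fields and the ADM energy-momentum, forces $\mathbf{Y}$ to be asymptotic at infinity to the Minkowski translation aligned with $(E, P)$; under $E > |P|$ this translation is uniformly timelike, so $\mathbf{Y}$ is uniformly timelike outside a compact set. Since $\mathbf{Y}$ is null on the non-vacuum region by Part~(3), the non-vacuum region must be bounded, proving $(g, \pi)$ is vacuum outside a compact subset of $M$. The principal technical difficulty is the conformal density-lowering step used in Parts~(1) and~(2): one must solve a semilinear elliptic problem on an asymptotically flat manifold with the correct Dirichlet and asymptotic data, establish the strict mass decrease via a sign-controlled expansion of the conformal factor at infinity, and maintain the no-MOTS admissibility condition throughout; the dimension bound $n \le 7$ enters via the spacetime positive mass theorem underlying the normalization $E \ge |P|$ and via MOTS regularity.
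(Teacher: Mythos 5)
Your high-level skeleton matches the paper's: a conformal mass-lowering contradiction to force $\Sc\equiv 0$, the improvability dichotomy of Theorem~\ref{theorem:improvable} to extract a global lapse-shift pair solving~\eqref{equation:pair}, Theorem~\ref{theorem:f_not_zero} for Parts~(2)--(3), and asymptotic analysis of $(f,X)$ (Lemma~\ref{lemma:unbounded}) for Part~(4). However, there is a genuine gap in the conformal density-lowering step at the heart of Parts~(1) and~(2).

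You propose to solve an elliptic problem on $M$ with $u=1$ on $\partial M$, $u\to 1$ at infinity, and $\tilde\Sc\equiv 0$, and assert that the resulting $(\tilde g,\tilde\pi)$ is an admissible extension. It is not. Admissibility (Definition~\ref{definition:admissible}) requires matching three quantities at $\partial M$: the induced metric, the mean curvature $H$, and $\pi\cdot\nu$. Imposing only $u=1$ on $\partial M$ preserves the induced metric and $\pi\cdot\nu$, but the conformal mean curvature picks up a term proportional to $\partial_\nu u$. Since your maximum-principle argument forces $u<1$ in $\Int M$, Hopf's lemma gives $\partial_\nu u\neq 0$, so $\tilde H\neq H$ on $\partial M$. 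The deformed data is therefore not in the admissible class, and no contradiction with Bartnik-minimality follows. You cannot instead impose $\partial_\nu u=0$ on $\partial M$ as well without overdetermining the problem. The same obstruction reappears when you invoke "the same conformal density-lowering" in Part~(2).

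The paper's fix is to force the conformal factor to be identically $1$ on an entire collar of $\partial M$ (Proposition~\ref{proposition:conformal}, item~(1)): this preserves admissibility exactly, but the cutoff makes $\Sc(g_t,\pi_t)$ potentially \emph{negative} on a compact annulus. The DEC is then reinstated using the localized deformation machinery. Crucially, the order is "conform first, then repair" — the opposite of your "improve first, then conform." In the case $\Sc>0$ on some ball $B$ (your Part~(1) hypothesis), the repair cannot be done by exact improvability because that is not available without a kernel hypothesis; instead, the paper uses the \emph{almost}-improvability result (Theorem~\ref{theorem:improvability-error}), in which the error term $\delta\mathbf{1}_B$ is absorbed precisely by the assumed positivity of $\Sc$ on $B$. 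Your proposal does not mention Theorem~\ref{theorem:improvability-error} and does not supply any substitute mechanism for repairing the DEC violation that a boundary-preserving conformal change necessarily produces. Parts~(3) and~(4) of your proposal are essentially correct and align with the paper's use of Theorem~\ref{theorem:f_not_zero} and Lemma~\ref{lemma:unbounded}, once Parts~(1)--(2) are established.
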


Under a spin assumption, an admissible extension must have $E\ge|P|$.
 See Remark~\ref{remark:pmt_corners}. 
For $n>8$, Example~\ref{example:pp} leads to the existence of Bartnik mass minimizers that are non-vacuum and do not admit timelike Killing vectors, thereby contradicting Bartnik's stationary and strict positivity conjectures, though it should be noted that Bartnik only considered the case $n=3$.
   On the other hand, these examples are consistent with Theorem~\ref{theorem:Bartnik}.  Although Theorem~\ref{theorem:Bartnik} assumes $n\le7$ for technical reasons (see Proposition~\ref{proposition:openness}),  
   we have no reason to think the theorem fails in higher dimensions.

Finally, we remark that in the presence of a cosmological constant, the corresponding DEC takes the form of a constant lower bound on the object that we have named the ``dominant energy scalar'' (or equivalently, one can re-define ``dominant energy scalar'' it by shifting it by a constant). Because of this, one can see that although our paper has been written in the context of zero cosmological constant, the central findings of Theorems~\ref{theorem:improvable}, \ref{theorem:DEC}, and \ref{theorem:improvability-error} can easily be applied in the context of nonzero cosmological constant.

\medskip
\noindent{\bf Outline of the paper.}
In Section~\ref{section:examples} we provide examples of solutions to the system~\eqref{equation:pair} and present Example \ref{example:pp}.
In Section \ref{section:new-operators}, we introduce a new infinite-dimensional family of deformations of the modified constraint operator and present some useful properties, including a generalization of Theorem~\ref{theorem:no-kernel}. In Section~\ref{section:deformation}, we prove a key result (Proposition~\ref{theorem:generic}) that says, generically, the adjoint linearizations of those modified operators are either  injective, or else kernel elements satisfy a null-vector equation. In Section~\ref{section:improvability}, we apply Proposition~\ref{theorem:generic} to prove Theorems~\ref{theorem:improvable} and~\ref{theorem:improvability-error}.  In Section~\ref{section:null}, we obtain several strong consequences of the $J$-null-vector equation including Theorem~\ref{theorem:DEC} (proved in Section~\ref{section:null_perfect}). 
 Section~\ref{section:Bartnik} deals with application to Bartnik mass minimizers. After constructing suitable  
deformations in Section~\ref{section:decrease-energy}, we define Bartnik mass and prove Theorem~\ref{theorem:Bartnik} in Section~\ref{section:admissibility}. We end with a short discussion of the concept of Bartnik energy.

\medskip
\noindent{\bf Acknowledgement.}
The authors would like to thank Professors Richard Schoen  and Shing-Tung Yau for discussions. The work was completed while the first  author was partially supported by the NSF CAREER Award DMS-1452477 and DMS-2005588, Simons Fellowship of the Simons Foundation, and von Neumann Fellowship at the Institute for Advanced Study.  The authors also thank the referees for their helpful comments.

\section{Examples}\label{section:examples}


It is natural to ask whether there exists any \emph{non}-vacuum initial data admitting nontrivial solutions to the system~\eqref{equation:pair}. (The vacuum case is classical in view of Theorem~\ref{theorem:Moncrief}.)  It suffices to find any non-vacuum null perfect fluid spacetime $(\mathbf{N}, \mathbf{g})$  with  velocity $\mathbf{v}$ and pressure $p$ that carries a global Killing vector field $\mathbf{Y}$ with $\mathbf{v} = \eta \mathbf{Y}$ for some scalar function $\eta$. Then by Theorem~\ref{theorem:DEC}, any spacelike hypersurface in such $(\mathbf{N}, \mathbf{g})$ gives rise to an initial data set that carries a nontrivial lapse-shift pair solving~\eqref{equation:pair}.  

The simplest case is when $J=0$ everywhere. By Theorem~\ref{theorem:DEC}, we can see that this corresponds to when $(\mathbf{N}, \mathbf{g})$ has $G_{\alpha\beta} = p \mathbf{g}_{\alpha\beta}$ with constant $p$ and also admits a Killing vector. Of course, this is the same as being vacuum with respect to some \emph{cosmological constant}\footnote{Everywhere else in this paper, when we say ``vacuum'' we are referring to zero cosmological constant.} $\Lambda=-p$ and admitting a Killing vector. Perhaps the most well-known explicit examples of such spacetimes would be the de Sitter and anti-de Sitter analogs of the Kerr spacetime, discovered by Carter~\cite{Carter:1973}. Many more examples with negative cosmological constant are constructed by Chru\'{s}ciel and Delay~\cite{Chrusciel-Delay:2007}. 

More interesting is what happens where $J$ is nonzero. In this case the $J$-null-vector equation of~\eqref{equation:pair} imposes a stringent condition. In particular, the Killing vector $\mathbf{Y}$ must be null. If we make the simplifying assumption that $\mathbf{Y}$ is actually covariantly constant, then there is a coordinate chart 
 $u, x^1, \dots, x^{n}$  such that $\mathbf{Y} = \frac{\partial}{\partial u}$ and the metric $\mathbf{g}$  can be locally expressed as
\[
	\mathbf{g} = 2 du\, dx^n + S\, (dx^n)^2 +\sum_{a,b=1}^{n-1} h_{ab}dx^a dx^b,
\]
where $S$ and  $h_{ab}$ are functions independent of $u$. (See~\cite{Blau:2020}, for example.) If we  further  assume that $h_{ab}=\delta_{ab}$,  then $(\mathbf{N}, \mathbf{g})$ is called a \emph{\hbox{pp-wave} spacetime}. It is standard to verify that the Einstein tensor is $G_{\alpha\beta}=-\tfrac{1}{2} (\Delta' S) Y_\alpha Y_\beta$, where  $\Delta'$ represents the Euclidean Laplacian in the $x':=(x^1, . . . , x^{n-1})$ variables. It easily follows that the dominant energy condition holds if and only if  $\Delta' S\le0$.

In the special case that $S$ is positive, the constant $u$-slices give rise to examples of initial data sets, which are described in the following lemma, whose proof is given in Appendix~\ref{se:pp}.  
\begin{lemma}\label{lemma:pp-initial} 
Let $U$ be an open subset of $\mathbb{R}^n$ equipped with the Cartesian coordinates $x^1, \dots, x^n$. Let $S$ be a positive function on $U$ satisfying $\Delta' S\le 0$. Define the initial data set $(g, \pi)$ on $U$ by 
\[ g = S\, (dx^n)^2  + \sum_{a=1}^{n-1} (dx^a)^2,\]
and 
\begin{align*}
\pi^{nn} &= 0 \\
\pi^{na}&=\pi^{an} = \tfrac{1}{2}S^{-\frac{3}{2}}  \tfrac{\partial S}{\partial x^a} \\
\pi^{ab}&=- \tfrac{1}{2}S^{-\frac{3}{2}}\tfrac{\partial S}{\partial x^n}\delta^{ab}, 
\end{align*}
where the $a$ and $b$ indices run from $1$ to $n-1$. Then
\begin{align*}
\mu &=  -\tfrac{1}{2} S^{-1} \Delta' S\\
J &= \tfrac{1}{2} S^{-\frac{3}{2}} (\Delta' S)  \tfrac{\partial}{\partial x^n},
\end{align*}
and in particular, $\mu=|J|_g\ge 0$.

Furthermore, if we define 
\begin{align*}
f = \tfrac{1}{2} S^{-\frac{1}{2}}\quad \mbox{ and }\quad  X = S^{-1} \tfrac{\partial}{\partial x^n},
\end{align*}
then $(f,X)$ is a solution to the system~\eqref{equation:pair} on $(U, g, \pi)$. 
\end{lemma}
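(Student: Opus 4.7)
The plan is to realize $(U, g, \pi)$ as the slice $\{u = 0\}$ of the pp-wave spacetime
\[
\mathbf{g} = 2\, du\, dx^n + S\, (dx^n)^2 + \sum_{a=1}^{n-1} (dx^a)^2,
\]
so that $\mu$, $J$, and the lapse-shift pair all arise from the ambient geometry, with the final assertion an immediate application of the converse half of Theorem~\ref{theorem:DEC}. The inverse-metric entries $\mathbf{g}^{uu} = -S$, $\mathbf{g}^{un} = 1$, and $\mathbf{g}^{ab} = \delta^{ab}$ (all others zero) show that the normal covector is proportional to $du$; normalizing and choosing a future orientation gives $\mathbf{n} = \sqrt{S}\,\partial_u - S^{-1/2}\,\partial_n$. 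The induced metric on $\{u=0\}$ is visibly the given $g$, and matching coefficients in the tangent-normal decomposition $\mathbf{Y} := \partial_u = 2f\,\mathbf{n} + X$ immediately yields $f = \tfrac{1}{2} S^{-1/2}$ and $X = S^{-1}\,\partial_n$.

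Next I would confirm that this embedding induces exactly the $\pi$ stated in the lemma. Because $\mathbf{g}$ is $u$-independent, a direct check shows that $\partial_u$ is not merely Killing but in fact covariantly constant in $\mathbf{g}$ (the formula for $\Gamma^n_{\alpha\beta}$ collapses to $0$), so $\nabla_{\partial_i}\mathbf{n} = -\nabla_{\partial_i}(S^{-1/2}\,\partial_n)$ for every slice-tangent $\partial_i$. The Weingarten identity then reduces $k_{ij}$ to a short bookkeeping of Christoffel symbols of $\mathbf{g}$ depending on the first derivatives of $S$, and converting $k$ to $\pi$ via \eqref{equation:k} reproduces the stated $\pi^{nn}$, $\pi^{an}$, $\pi^{ab}$.

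For $\mu$ and $J$, I would invoke the Gauss--Codazzi identities $\mu = G(\mathbf{n}, \mathbf{n})$ and $J_i = G(\mathbf{n}, \partial_i)$, combined with the standard pp-wave formula $G_{\alpha\beta} = -\tfrac{1}{2}(\Delta' S)\, Y_\alpha Y_\beta$, where $Y_\alpha = \mathbf{g}_{\alpha u} = \delta_\alpha^n$. Since $Y(\mathbf{n}) = n^n = -S^{-1/2}$, one reads off $\mu = -\tfrac{1}{2} S^{-1}\Delta' S$ and $J_n = \tfrac{1}{2} S^{-1/2}\Delta' S$; raising with $g$ gives $J^n = \tfrac{1}{2} S^{-3/2}\Delta' S$, and hence $|J|_g = |\Delta' S|/(2S) = \mu$, using $\Delta' S \le 0$.

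Finally, the Einstein tensor formula exhibits $(\mathbf{N}, \mathbf{g})$ as a null perfect fluid, in fact null dust ($p = 0$), with velocity $\mathbf{v} = \sqrt{-\tfrac{1}{2}\Delta' S}\, \mathbf{Y}$, so that $\mathbf{v} = \eta\,\mathbf{Y}$, and $\mathbf{Y}$ is a global Killing field. The converse half of Theorem~\ref{theorem:DEC} therefore applies directly and certifies that the lapse-shift pair $(f, X)$ satisfies the system~\eqref{equation:pair} along $\{u = 0\}$. The only step with any real content is the Christoffel-symbol bookkeeping that produces $\pi$; everything else is either immediate from the embedding or a formal consequence of Theorem~\ref{theorem:DEC}.
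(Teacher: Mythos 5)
Your proof is correct, and it follows the same overall architecture as the paper's: realize $(U,g,\pi)$ as the $\{u=0\}$ slice of the pp-wave, read off the lapse-shift pair from the normal decomposition of $\mathbf Y=\partial_u$, extract the second fundamental form from Christoffel bookkeeping of $\mathbf g$, and invoke the converse half of Theorem~\ref{theorem:DEC} for the final assertion that $(f,X)$ solves the system~\eqref{equation:pair}. The one place you diverge is the computation of $(\mu,J)$: you evaluate the spacetime Einstein tensor $G_{\alpha\beta}=-\tfrac12(\Delta'S)\,Y_\alpha Y_\beta$ on $(\mathbf n,\mathbf n)$ and $(\mathbf n,\partial_i)$ via Gauss--Codazzi, whereas the paper deliberately carries out a direct intrinsic computation from $(g,\pi)$ (scalar curvature of $g$, $|k|^2$, $(\tr_g k)^2$, $\Div_g\pi$). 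The paper even remarks in Appendix~\ref{se:pp} that the Einstein-tensor route you take is available and ``fairly direct,'' so your approach is the acknowledged shortcut; it is shorter and more conceptual, at the cost of relying on the known curvature of pp-waves rather than being self-contained from the slice data.

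One small inaccuracy worth flagging: the displayed claim that $\nabla_{\partial_i}\mathbf n=-\nabla_{\partial_i}(S^{-1/2}\partial_n)$ is not literally correct as an equation of vectors, since from $\mathbf n=\sqrt S\,\partial_u - S^{-1/2}\partial_n$ and $\bm\nabla\partial_u=0$ you get the additional term $(\partial_i\sqrt S)\,\partial_u$, which pairs nontrivially with $\partial_n$ under $\mathbf g$. Carrying out the full bookkeeping, that term is cancelled by a matching $\partial_u$-contribution coming from $\nabla_{\partial_i}(S^{-1/2}\partial_n)$ through $\mathbf\Gamma^u_{in}=\tfrac12 S_{,i}$, so the final $k_{ij}$ agrees with the paper's; but the intermediate identity as stated is a slip that should be fixed or rephrased (for instance, by computing $k_{ij}=\mathbf g(\nabla_{\partial_i}\mathbf n,\partial_j)$ directly rather than asserting a vector identity).
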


In the next lemma, we summarize the properties that $S$ must have in order  for the data from Lemma~\ref{lemma:pp-initial} to lead to Example~\ref{example:pp}. Below, we will adopt the definitions of weighted H\"{o}lder spaces and asymptotic flatness  in Section~\ref{section:asymp_flat}, which the reader may want to review. 
\begin{lemma}\label{lemma:S}
Let $n>3$. There exist nonconstant positive smooth functions $S$ on $\mathbb{R}^{n}$ with the following properties:
\begin{enumerate}
\item $\Delta' S\le 0$ everywhere, strictly negative somewhere, and $\Delta'S$ is integrable on $\mathbb{R}^n$. \label{item:superharmonic}
\item $S \equiv 1$ in  $\{ |x^n|\ge C\}$ for some constant $C>0$.\label{item:slab} 
\item $\displaystyle \lim_{\rho\to \infty} \int_{|x'|=\rho} - \sum_{a=1}^{n-1} \frac{\partial S}{\partial x^a} \frac{x^a}{|x'|} \, d\mu $ exists and is positive. \label{item:mass}
\item For each nonnegative integer $k$ and each $\alpha\in (0, 1)$, we have $S-1\in C^{k,\alpha}_{-q} (\mathbb{R}^{n})$ with $q=n-3-(k+\alpha)$.\label{item:decay}
\end{enumerate}
\end{lemma}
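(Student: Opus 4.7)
The plan is to set $S = 1 + u$ on $\mathbb{R}^n$ where $u$ is a nonnegative perturbation built from a Newtonian potential in the $x'$ variables. Explicitly, choose any $\phi \in C^\infty_c(\mathbb{R}^n)$ with $\phi \ge 0$, $\phi \not\equiv 0$, and $\phi$ supported in the slab $\{|x^n| < C\}$ for some constant $C > 0$. Using $n-1 \ge 3$, define
\[
u(x', x^n) := c_{n-1} \int_{\mathbb{R}^{n-1}} |x' - y'|^{-(n-3)} \phi(y', x^n) \, dy',
\]
with $c_{n-1} > 0$ chosen so that $\Delta' u = -\phi$ on $\mathbb{R}^n$. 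Standard properties of convolution with the Newtonian kernel in $\mathbb{R}^{n-1}$ make $u$ smooth on $\mathbb{R}^n$, nonnegative, and strictly positive on an open set (anywhere in $\mathrm{supp}\, \phi$), so $S := 1 + u$ is smooth, strictly positive, and nonconstant.

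For property (1), the identity $\Delta' S = -\phi$ directly gives $\Delta' S \le 0$ with strict inequality wherever $\phi > 0$, and $\Delta' S$ is integrable since $\phi$ is compactly supported and smooth. For (2), if $|x^n| \ge C$ then $\phi(y', x^n) = 0$ for all $y'$, hence $u(x', x^n) = 0$ and $S = 1$ on $\{|x^n| \ge C\}$. For (3), I apply the divergence theorem to the vector field $W = \sum_{a=1}^{n-1} (\partial_a S)\, \partial_a$ (only the $x'$-components) on the region $\Omega_{\rho, R} := \{|x'| \le \rho,\ |x^n| \le R\}$ with $R > C$; the boundary contributions at $|x^n| = \pm R$ vanish because $W$ has no $x^n$-component, and on the cylindrical side the outward normal is $\sum_a \frac{x^a}{|x'|}\, \partial_a$, yielding
\[
\int_{\{|x'| = \rho,\, |x^n| \le R\}} \sum_a (\partial_a S)\, \tfrac{x^a}{|x'|} \, d\mu \;=\; - \int_{\Omega_{\rho, R}} \phi \, dx.
\]
Both sides are supported in $\{|x^n| \le C\}$, so letting $R \to \infty$ is harmless, and then letting $\rho \to \infty$ gives that the limit in (3) equals $\int_{\mathbb{R}^n} \phi \, dx > 0$, as required.

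For (4), I invoke classical pointwise estimates for Newtonian potentials of smooth compactly supported sources. Since $\phi(\cdot, x^n)$ is uniformly compactly supported in $y'$, for $|x'|$ large we have $|x'-y'| \sim |x'|$ on the support, so $|\partial_{x'}^\beta u(x', x^n)| \le C_\beta\, |x'|^{-(n-3)-|\beta|}$. Derivatives in $x^n$ fall on $\phi$ inside the integral and bring no improvement in $|x'|$-decay, so the slowest decaying mixed derivative of any order $j$ satisfies $|\partial^j u(x)| \le C_j\, |x'|^{-(n-3)}$. Combined with $u \equiv 0$ outside $\{|x^n| \le C\}$, one gets $|\partial^j u(x)| \le C_j\, (1 + |x|)^{-(n-3)}$ on all of $\mathbb{R}^n$, and analogous bounds hold for the Hölder seminorm of $\partial^k u$ via $\partial^{k+1} u$ (inside the slab) and the vanishing of $u$ outside the slab. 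The weight convention $q + k + \alpha = n - 3$ from property (4) exactly accommodates these bounds, giving $S - 1 \in C^{k,\alpha}_{-q}(\mathbb{R}^n)$. The only mildly delicate point in the whole argument is justifying the limit in (3) on the noncompact cylinder $\{|x'| = \rho\}$, which is handled by exploiting that $W$ is supported in the slab.
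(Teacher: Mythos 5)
Your proof is correct and takes essentially the same approach as the paper: build $S-1$ as a Newtonian potential in the $x'$-variables with a slab-supported nonnegative source, so that $\Delta'S$ is exactly $-\phi$ and the decay rate is governed by the $(n-1)$-dimensional Green's function $|x'|^{-(n-3)}$. The only cosmetic differences are that the paper specializes to a separable source $\phi(x^n)F(x')$ and reads off property~(3) from the leading coefficient in the harmonic expansion $\psi(x')=A|x'|^{3-n}+O_1(|x'|^{2-n})$, whereas you allow a general slab-supported $\phi(x',x^n)$ and derive~(3) directly via the divergence theorem; both routes yield the same flux $\int_{\mathbb{R}^n}\phi\,dx$.
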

\begin{remark}
Such a function $S$ cannot exist for $n=3$ because Liouville's theorem says that any superharmonic function on $\mathbb{R}^2$ that is bounded below must be constant.
\end{remark}
\begin{proof}
Let $F$ be a smooth nonnegative function on $\mathbb{R}^{n-1}$ with coordinates $x'=(x^1, \ldots, x^{n-1})$, such that $F= O(|x'|^{-s})$ for some $s>n-1$. We can solve $\Delta'\psi = -F$ on $\mathbb{R}^{n-1}$ via convolution with the fundamental solution of the Laplacian on Euclidean $\mathbb{R}^{n-1}$. As long as $F$ is not identically zero, $\psi(x')$ will be a  positive, globally superharmonic function on  $\mathbb{R}^{n-1}$. For $n>3$, it must have the expansion
$\psi(x')= A|x'|^{3-n}+O_1(|x'|^{2-n})$, and since $\psi$ is positive, the constant $A$ must also be positive. Now define $S(x', x^n) = 1+\phi(x^n) \psi(x')$, where $\phi$ is chosen to be any nontrivial compactly supported smooth nonnegative function on $\mathbb{R}$. Note that $\Delta' S = -\phi(x^n)F(x')\le0$ everywhere and strictly negative somewhere. It is straightforward to verify that $S $ satisfies Items \eqref{item:superharmonic}, \eqref{item:slab}, and \eqref{item:mass}.

Since the derivatives of $S$ in the $x^n$ direction do not decay any faster than $|x'|^{3-n}$,  we can only conclude that $S-1$ and its derivatives  of any order are $O(|x|^{3-n})$ and thus $S-1\in C^{k,\alpha}_{-q} (\mathbb{R}^{n})$ with $q=n-3-k-\alpha$  by the definition of weighted H\"older spaces.

\end{proof}

\begin{proof}[Proof of Example~\ref{example:pp}]
Choose any $S$ as in Lemma~\ref{lemma:S} and use this choice in Lemma~\ref{lemma:pp-initial} to construct initial data $(\mathbb{R}^n, g, \pi)$. We claim that for $n>8$, this is the desired example. Note that by construction, $(g,\pi)$ is clearly complete, $(g, \pi) = (g_{\mathbb{E}}, 0)$ outside of a slab, satisfies $\sigma(g,\pi)=0$, and admits a nontrivial solution to~\eqref{equation:pair}. The main task is to show that $(g,\pi)$ is asymptotically flat.

Recall that our asymptotic flatness condition requires $g_{ij}-\delta_{ij}=C^{2,\alpha}_{-q}(\mathbb{R}^n)$ and $\pi_{ij}=C^{1,\alpha}_{-q}(\mathbb{R}^n)$ for some $\frac{n-2}{2} < q < n-2$, and $(\mu, J)\in L^1(\mathbb{R}^n)$. For our $(g, \pi)$ from Lemma~\ref{lemma:pp-initial}, this is equivalent to requiring that $S-1\in C^{2,\alpha}_{-q}$ for some $\frac{n-2}{2} < q < n-2$ and $\Delta' S$ is integrable. By Item~\eqref{item:decay} of Lemma~\ref{lemma:S}, it imposes the condition on $n$:
\[
 (n-3)-(2+\alpha) > \tfrac{n-2}{2} \text{ for some $\alpha\in(0,1)$},\quad \mbox{or equivalently, } \quad n>8.
\]

To see that $E=|P|>0$, we evaluate the ADM energy-momentum by integrating over large capped cylinders. The caps do not contribute, and we can see that
\begin{align*}
	E = -P_n =  \frac{1}{2(n-1)\omega_{n-1}} \lim_{\rho \to \infty} \int_{|x'|=\rho} -\sum_{a=1}^{n-1}\frac{\partial S}{\partial x^a} \frac{x^a}{|x'|} \, d\mu >0,
\end{align*}
and $P_1=\dots=P_{n-1}=0$. 

\end{proof}


\begin{remark}While the above argument needs $n>8$, it can be easily shown that initial data from  Lemma~\ref{lemma:pp-initial} cannot be asymptotically flat and satisfy the DEC if  $3\le n \le 6$. Previously, Beig and Chru\'{s}ciel observed that \emph{vacuum} pp-waves give rise to initial data sets with the appropriate asymptotic decay rate and  $E=|P|>0$ for $n=3$, but these examples are not complete~\cite[p. 1951]{Beig-Chrusciel:1996}. 
\end{remark} 
In dimensions $n>8$, Example~\ref{example:pp} provides numerous counterexamples to what one might naively expect to be the optimal statement of the equality case of the spacetime positive mass theorem. The ``expected'' statement is that any complete, asymptotically flat initial date set satisfying the DEC and having $E=|P|$ must have $E=|P|=0$ and sit inside Minkowski space. Although the strong decay rate $q=n-2$ might be regarded as the most natural and physically relevant asymptotically flat decay rate, Example~\ref{example:pp} is still surprising in light of the fact that the spacetime positive mass inequality $E\ge|P|$ does indeed hold for all decay rates $q>\frac{n-2}{2}$. Specifically, a density theorem \cite[Theorem 18]{Eichmair-Huang-Lee-Schoen:2016} shows that Example~\ref{example:pp} lies in the \emph{limit} of complete, asymptotically flat initial data sets satisfying the DEC with strong decay rate $q=n-2$. 

It would be interesting to know the lowest decay rate for which the equality case of the spacetime positive mass theorem holds. In dimensions $n=3$ and $4$,  by~\cite{Beig-Chrusciel:1996, Chrusciel-Maerten:2006, Huang-Lee:2020}, the decay rate assumption of $q>\frac{n-2}{2}$ was already known to be sufficient for the equality case to hold. But in dimensions $n=5$ through $8$, there is a gap between $q>n-3$ (for which the equality case of the positive mass theorem is known to hold) and $q>\frac{n-2}{2}$, where counterexamples might or might not exist. In dimensions $n>8$, there is a gap between $q>n-3$ and the counterexamples at $q=n-5-\alpha$ obtained in Example~\ref{example:pp} .

The fact that Example~\ref{example:pp} is Euclidean outside of a slab is reminiscent of the work of Carlotto and Schoen~\cite{Carlotto-Schoen:2016}, who constructed examples of complete, asymptotically flat vacuum initial data sets that are trivial outside of a conical region, but do not sit inside Minkowski space. Those examples also have decay rates less than the strong decay rate $q=n-2$. 

\begin{example}[Bartnik mass minimizers]\label{example:Bartnik}
These examples also give counterexamples to  Bartnik's stationary and strict positivity conjectures for $n>8$. Let $\Omega_0$ be a closed ball centered at the origin in an initial data set $(\mathbb{R}^n, g, \pi)$ from Example~\ref{example:pp}. For large enough $\Omega_0$, it is clear that $(\mathbb{R}^n\smallsetminus \Int\Omega_0, g, \pi)$ is an admissible extension of $(\Omega_0, g, \pi)$. See Definition~\ref{definition:admissible} for the definition of admissible extensions.

 Since the extension $(\mathbb{R}^n\smallsetminus \Int\Omega_0, g, \pi)$ has ADM mass equal to zero, the Bartnik mass of $(\Omega_0, g, \pi)$ is zero and the extension has to be a Bartnik mass minimizer\footnote{ This is assuming that the ADM mass of admissible extensions are always nonnegative. This would be the case if we only consider admissible extensions that are spin (for example, diffeomorphic to $\mathbb{R}^n\smallsetminus \Int\Omega_0$). See Remark~\ref{remark:pmt_corners}.}.  This contradicts Bartnik's stationary and strict positivity conjectures because $(\mathbb{R}^n\smallsetminus \Int\Omega_0, g, \pi)$ and $(\Omega_0, g, \pi)$ are not vacuum, so long as the function $F$ from the proof of Lemma~\ref{lemma:S} was chosen to be positive on $\mathbb{R}^{n-1}$.

On the other hand, if we select $F$, $\phi$, and $\Omega_0$ such that $\phi(x^n)F(x')$ is supported in the ball $\Omega_0$, then the Bartnik minimizing extension $(\mathbb{R}^n\smallsetminus \Int\Omega_0, g, \pi)$ is vacuum, but it cannot sit inside a stationary spacetime, as defined in the introduction. This can be seen as follows: Given a global Killing field that is uniformly timelike outside a bounded subset of $\mathbb{R}^n\smallsetminus \Omega_0$, it induces a lapse-shift pair $(f, X)$ with  $4f^2 > |X|^2_g +\varepsilon$ near infinity, for some $\varepsilon>0$, and by Theorem~\ref{theorem:Moncrief}, it must satisfy $D\Phi|^*_{(g,\pi)}(f, X)=0$ in the vacuum region near infinity.  Applying Lemma~\ref{lemma:asymptotics},
it is not hard to see that the inequality $4f^2 > |X|^2_g \ge0$ is only possible if the constants $c_i$ and $d_{ij}$ appearing in~\eqref{equation:asymptotics-linear} are all zero, and thus  $(f, X)$ must be asymptotic to $(aE, -2aP)$ for some constant $a$. Therefore  $4(aE)^2 > |2aP|^2 +\varepsilon$, but this is  a contradiction since we already know that $E=|P|$ in this example.
\end{example}

Example~\ref{example:Bartnik} may be surprising because it shows that  \emph{asymptotically flat} Cauchy hypersurfaces can exist in pp-wave spacetimes in higher dimensions and satisfy admissibility for the Bartnik mass. Szabados~\cite[p. 125]{Szabados:2009} opined that ``pure radiation'' initial data $(\Omega_0, g_0,  \pi_0)$, as is found in a pp-wave, should have zero ``quasi-local mass'' and suggested that strict positivity of the Bartnik mass is undesirable. While we have computed the Bartnik mass to be zero for the cases in~Example~\ref{example:Bartnik}. the general question remains open. 
In light of Example~\ref{example:Bartnik}, one might want to require a strong decay rate of $q=n-2$ in the definition of admissible extensions, but even with such a modified definition, it is still possible that $(\Omega_0, g, \pi)$ could have Bartnik mass equal to zero, in view of the density theorem mentioned above. 

\section{A new family of modified constraint operators}\label{section:new-operators}
We set some conventions for our usage of variables: When we refer to an initial data set $(U, g, \pi)$,  we assume $U$ is a connected manifold unless otherwise specified, and we do not require $(U, g)$ to be complete since we will often want to think of it as an open subset of some larger manifold. We will typically use $\Omega$ to denote a compact manifold with nonempty smooth boundary and $M$ to denote a complete asymptotically flat manifold which may or may not have boundary. We will \emph{always} assume $(g, \pi)$ is locally $C^3\times C^2$ and be explicit when we assume more regularity.

Define the \emph{energy density} $\mu$ and the \emph{current density} $J$ by
\begin{align} \label{equation:mass-current}
\begin{split}
	\mu&:=\tfrac{1}{2}\left(R_g  + \tfrac{1}{n-1}(\tr_g \pi )^2 - |\pi|_g^2 \right)\\
	J&:=  \Div_g \pi,
\end{split}
\end{align}
where $n$ is the dimension of $U$. The \emph{constraint map} is defined by $\Phi(g, \pi) := (2\mu, J)$. Denote its linearization at $(g, \pi)$ by $D\Phi|_{(g, \pi)}$. Recall the formula for the $L^2$ formal adjoint operator $D\Phi|_{(g, \pi)}^*$ on a \emph{lapse-shift pair} $(f, X)$, where  $f$ is a $C^2_{\mathrm{loc}}$ function and $X$ is a $C^1_{\mathrm{loc}}$ vector field on~$U$: 
\begin{align}	\label{equation:adjoint}
\begin{split}
 	&D\Phi|_{(g,\pi)} ^*(f, X)   = \left(-(\Delta_g f)g_{ij}+ f_{;ij} +\left[-R_{ij} +\tfrac{2}{n-1} (\mbox{tr}_g \pi) \pi_{ij} - 2 \pi_{i\ell} \pi^\ell_j \right] f\right.\\
	& \quad+ \tfrac{1}{2} \left[ (L_X\pi)_{ij} + (\Div_g X) \pi_{ij}- X_{k;m} \pi^{km} g_{ij} - \langle X, J \rangle_g g_{ij} \right]- (X\odot J)_{ij}, \\
	&\quad \left. -\tfrac{1}{2} (L_Xg)^{ij} + \left(\tfrac{2}{n-1} (\mbox{tr}_g \pi ) g^{ij}- 2 \pi^{ij}  \right) f\right), \end{split}
 \end{align}
 where semicolon denotes covariant differentiation, $R_{ij}$ is the Ricci curvature of $g$,  $X\odot J$ is the symmetric tensor product $(X\odot J)_{ij} = \tfrac{1}{2} (X_i J_j+ J_i X_j)$, and all indices are raised and lowered using~$g$. In particular, note that
  $(L_X\pi)_{ij}:=g_{i\ell} g_{jm} (L_X\pi)^{\ell m}$.
 
The \emph{modified constraint operator} at $(g,\pi)$, introduced by Corvino and the first author~\cite{Corvino-Huang:2020}, is defined on initial data $(\gamma,\tau)$ on $U$ by
\begin{align*} 
	\overline{\Phi}_{(g, \pi)}(\gamma,\tau) :=\Phi(\gamma, \tau) +(0, \tfrac{1}{2} \gamma\cdot J)
\end{align*}
where $J$ is the current density of $(g,\pi)$ and $(\gamma\cdot J)^i := g^{ij} \gamma_{jk} J^k$ denotes the contraction of $\gamma$ and $J$ with respect to the background metric $g$. Denote its linearization at $(g, \pi)$ by  $D\overline{\Phi}_{(g, \pi)}:= \left.D\overline{\Phi}_{(g, \pi)}\right|_{(g,\pi)}$.  Then its $L^2$ formal adjoint $D\overline{\Phi}_{(g, \pi)}^*$ on a lapse-shift pair $(f,X)$ satisfies
\begin{align}\label{equation:adjoint-mod}
	D\overline{\Phi}_{(g, \pi)}^* (f,X) = (D\Phi|_{(g,\pi)})^*(f, X) + (\tfrac{1}{2} X\odot J, 0).
\end{align}

\begin{definition}
Given a function $\varphi$ and a vector field $Z$ on $U$, we introduce  
the \emph{$(\varphi, Z)$-modified constraint operator} at $(g, \pi)$, denoted $\Phi^{(\varphi, Z)}_{(g,\pi)}$.
It is defined on initial data $(\gamma,\tau)$ on $U$ by
\begin{align}
		\Phi^{(\varphi, Z)}_{(g,\pi)}  (\gamma, \tau) 
		&:=\overline{\Phi}_{(g,\pi)}(\gamma, \tau) + \left(2\varphi |Z|_\gamma^2, \varphi |Z|_g \gamma \cdot Z\right)\notag \\
		&= \Phi(\gamma, \tau) +(0, \tfrac{1}{2} \gamma\cdot J) + \left(2\varphi |Z|_\gamma^2, \varphi |Z|_g \gamma \cdot Z\right),	\label{equation:modified2}	
\end{align}
where $(\gamma\cdot Z)^i := g^{ij} \gamma_{jk} Z^k$ denotes the contraction of $\gamma$ and $Z$ with respect to $g$. Note that with our notation, $\Phi^{(0,0)}_{(g,\pi)}=\overline{\Phi}_{(g,\pi)}$. 
\end{definition}

Denote the linearization at $(g, \pi)$ by  $D\Phi^{(\varphi, Z)}_{(g, \pi)}:= \left.D\Phi^{(\varphi, Z)}_{(g, \pi)}\right|_{(g,\pi)}$.  Then it is easy to see that its $L^2$ formal adjoint $\left(D\Phi^{(\varphi, Z)}_{(g, \pi)}\right)^*$  on a lapse-shift pair $(f,X)$ satisfies
\begin{align}
	\left(D\Phi^{(\varphi, Z)}_{(g, \pi)}\right)^*(f, X) =D\overline{\Phi}_{(g, \pi)}^*(f, X) + \left( \varphi Z\odot (2fZ+|Z|_g X), 0\right). \label{equation:adjoint-Z-mod}
\end{align}

 \begin{lemma}\label{lemma:Hessian}
 If $\left(D\Phi^{(\varphi, Z)}_{(g, \pi)}\right)^*(f, X) =0$, then 
\begin{align}
\begin{split}\label{equation:Hamiltonian}
0 &=  - (\Delta_g f)g_{ij} + f_{;ij} -R_{ij} f +  \left[ \tfrac{3}{n-1}(\tr \pi)\pi_{ij} - 2\pi_{i\ell}\pi^\ell_j
  \right]f\\
  &\quad  + \left[   -\tfrac{1}{n-1}(\tr_g \pi)^2 + |\pi|^2_g \right] g_{ij} f\\
&\quad +\tfrac{1}{2}(L_X \pi)_{ij} 
 -\tfrac{1}{2}\langle X, J\rangle_g g_{ij} -\tfrac{1}{2}(X\odot J)_{ij} \\
 &\quad + \varphi \left[Z\odot (2f Z + |Z|_g X)\right]_{ij}\end{split}\\
 	0&=-\tfrac{1}{2}(L_Xg)_{ij} + \left( \tfrac{2}{n-1} (\tr_g \pi) g_{ij} - 2\pi_{ij} \right)f.\label{equation:momentum}
\end{align}
Moreover, if we assume \eqref{equation:momentum}, then $\left(D\Phi^{(\varphi, Z)}_{(g, \pi)}\right)^*(f, X) =0$ is equivalent to \eqref{equation:Hamiltonian}.

Furthermore,  If $\left(D\Phi^{(\varphi, Z)}_{(g, \pi)}\right)^*(f, X) =0$, then $(f, X)$ satisfies the Hessian type equations:
\begin{align}
\begin{split}\label{equation:Hessian}
	0&= f_{;ij}  +\left[ -R_{ij} + \tfrac{2}{n-1} (\tr_g \pi) \pi_{ij} - 2 \pi_{ik} \pi^k_j \right]f\\
	&+ \left[ \tfrac{1}{n-1} \left(R_g- \tfrac{2}{n-1} (\tr_g \pi)^2 + 2|\pi|^2\right)g_{ij}\right] f\\
	&+ \tfrac{1}{2} \left[ (L_X\pi)_{ij} + (\Div_g X) \pi_{ij} \right]-\tfrac{1}{2}(X\odot J)_{ij} \\
	&  +\tfrac{1}{2(n-1)} \left[- \tr_g (L_X \pi) -  (\Div_gX) (\tr_g \pi) +  X_{k;m}\pi^{km} + 2 \langle X, J \rangle_g\right]g_{ij} \\
	&+ \varphi \left[Z\odot (2f Z + |Z|_g X)\right]_{ij}- \tfrac{1}{n-1}\varphi (2 f |Z|_g^2 + |Z|_g \langle X, Z\rangle_g) g_{ij},
	\end{split}
\end{align}
\begin{align}
	\begin{split} \label{equation:second-derivative}
	 0&=   X_{i;jk}+\tfrac{1}{2} \left(R^{\ell}_{kji} + R^{\ell}_{ikj} + R^{\ell}_{ijk}\right) X_{\ell}\\
	&\,\,\,-\left[\left(\tfrac{2}{n-1} (\tr_g \pi) g_{ij} -2 \pi_{ij} \right) f\right]_{;k}-\left[\left(\tfrac{2}{n-1} (\tr_g \pi) g_{ki} - 2\pi_{ki} \right) f\right]_{;j} \\
	&\,\,\, +\left[\left(\tfrac{2}{n-1} (\tr_g \pi) g_{jk} - 2\pi_{jk} \right) f\right]_{;i},
	\end{split}
\end{align}
where our convention for the Riemann tensor is such that the Ricci tensor $R_{jk}:=R^\ell_{\ell j k}$.
 \end{lemma}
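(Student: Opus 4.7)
The plan is to unpack the condition $\left(D\Phi^{(\varphi,Z)}_{(g,\pi)}\right)^{*}(f,X)=0$ using \eqref{equation:adjoint-Z-mod} together with \eqref{equation:adjoint-mod} and the explicit formula \eqref{equation:adjoint} for $D\Phi|_{(g,\pi)}^{*}$. Since the two modifications in \eqref{equation:adjoint-mod} and \eqref{equation:adjoint-Z-mod} contribute only to the symmetric $2$-tensor in the first slot, the second slot of the equation is identical to the second slot of $D\Phi|_{(g,\pi)}^{*}(f,X)=0$; after lowering indices with $g$, this is exactly \eqref{equation:momentum}.

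Next I would write out the vanishing of the first slot explicitly and use \eqref{equation:momentum} to eliminate the two $\nabla X$-contractions that appear there. Tracing \eqref{equation:momentum} against $g^{ij}$ gives $\Div_{g}X=\tfrac{2}{n-1}(\tr_{g}\pi)f$, and contracting \eqref{equation:momentum} with $\pi^{ij}$ (the antisymmetric part of $\nabla X$ drops out against the symmetric tensor $\pi$) gives $X_{k;m}\pi^{km}=\bigl(\tfrac{2}{n-1}(\tr_{g}\pi)^{2}-2|\pi|_{g}^{2}\bigr)f$. Substituting these identities into the first-slot equation and combining the two $(X\odot J)_{ij}$ terms (from \eqref{equation:adjoint} and \eqref{equation:adjoint-mod}) into $-\tfrac{1}{2}(X\odot J)_{ij}$ produces \eqref{equation:Hamiltonian}. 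Since both substitutions are purely algebraic rearrangements valid whenever \eqref{equation:momentum} holds, the stated equivalence in the ``moreover'' statement of the lemma follows by reversing them.

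For \eqref{equation:Hessian}, the strategy is to trace \eqref{equation:Hamiltonian} against $g^{ij}$, solve the resulting scalar identity for $\Delta_{g}f$, and substitute the result back in place of the $-(\Delta_{g}f)g_{ij}$ term of \eqref{equation:Hamiltonian}. Scalar curvature appears through $g^{ij}R_{ij}=R_{g}$, while the traces of $(X\odot J)_{ij}$ and $[Z\odot(2fZ+|Z|_{g}X)]_{ij}$ contribute $\langle X,J\rangle_{g}$ and $2f|Z|_{g}^{2}+|Z|_{g}\langle X,Z\rangle_{g}$ respectively. Collecting terms then yields \eqref{equation:Hessian}.

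For \eqref{equation:second-derivative}, I would abbreviate $A_{ij}:=\bigl(\tfrac{2}{n-1}(\tr_{g}\pi)g_{ij}-2\pi_{ij}\bigr)f$ so that \eqref{equation:momentum} reads $X_{i;j}+X_{j;i}=2A_{ij}$. Differentiating this identity and taking the standard Killing-type cyclic combination over $(i,j,k)$ (two copies with plus, one with minus) produces $2X_{i;jk}$ on the left, plus three second-covariant-derivative commutators of $X$, equal to $2(A_{ij;k}+A_{ki;j}-A_{jk;i})$ on the right. Applying the Ricci identity to each commutator and using the first Bianchi identity to symmetrize the resulting curvature contractions gives the coefficient $\tfrac{1}{2}(R^{\ell}_{kji}+R^{\ell}_{ikj}+R^{\ell}_{ijk})X_{\ell}$ in \eqref{equation:second-derivative}. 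The main obstacle will be keeping the paper's sign and index conventions for the Riemann tensor straight through this last step; apart from that, the lemma is a bookkeeping exercise in curvature identities.
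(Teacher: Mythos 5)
Your strategy matches the paper's. The paper reads off $\eqref{equation:momentum}$ from the second slot of $\bigl(D\Phi^{(\varphi,Z)}_{(g,\pi)}\bigr)^*(f,X)=0$ and records the first slot as an intermediate equation (their \eqref{equation:Hamiltonian0}); it then uses $\eqref{equation:momentum}$ and its trace to convert the $\Div_g X$ and $X_{k;m}\pi^{km}$ terms into $\pi$-and-$f$ expressions to get $\eqref{equation:Hamiltonian}$, and derives $\eqref{equation:second-derivative}$ by the usual cyclic differentiation of the Killing-type identity $X_{i;j}+X_{j;i}=2A_{ij}$. Your substitutions $\Div_g X=\tfrac{2}{n-1}(\tr_g\pi)f$ and $X_{k;m}\pi^{km}=\bigl(\tfrac{2}{n-1}(\tr_g\pi)^2-2|\pi|^2_g\bigr)f$ are exactly what the paper uses, and the coefficient of $(X\odot J)_{ij}$ becoming $-\tfrac12$ is correct.

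One small but genuine misstep: to obtain the \emph{literal} equation $\eqref{equation:Hessian}$ you must take the trace of the first-slot equation \emph{before} the $\eqref{equation:momentum}$-based substitutions (i.e.\ the paper's \eqref{equation:Hamiltonian0}, which still contains $(\Div_g X)\pi_{ij}$, $X_{k;m}\pi^{km}g_{ij}$, etc.), solve for $\Delta_g f$, and substitute back into that same equation. You instead propose to trace $\eqref{equation:Hamiltonian}$ and substitute into $\eqref{equation:Hamiltonian}$. Since $\eqref{equation:Hamiltonian}$ has already eliminated $\Div_g X$ and $X_{k;m}\pi^{km}$, your resulting identity will not contain the terms $(\Div_g X)\pi_{ij}$, $(\Div_g X)(\tr_g\pi)$, $X_{k;m}\pi^{km}$ that explicitly appear in $\eqref{equation:Hessian}$; you would get an equation that is equivalent modulo $\eqref{equation:momentum}$ (for instance, the coefficient of $|\pi|^2 g_{ij}f$ works out to $\tfrac{1}{n-1}$ rather than the $\tfrac{2}{n-1}$ in $\eqref{equation:Hessian}$, with the difference precisely accounted for by the discarded $X_{k;m}\pi^{km}$ and $(\Div_g X)(\tr_g\pi)$ contributions), but not the formula as stated. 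Either trace the un-substituted equation as the paper does, or explicitly re-introduce the $X$-derivative terms via $\eqref{equation:momentum}$ at the end.
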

\begin{proof}
From equations~\eqref{equation:adjoint}, \eqref{equation:adjoint-mod}, and~\eqref{equation:adjoint-Z-mod}, we see that $(f, X)$ satisfies \eqref{equation:momentum} and 
 \begin{align}\label{equation:Hamiltonian0}
\begin{split}
	0&= -(\Delta_g f )g_{ij} + f_{;ij} + \left[ -R_{ij} + \tfrac{2}{n-1} (\tr_g \pi) \pi_{ij} - 2 \pi_{i\ell} \pi^\ell_j \right]f  \\
	&\quad + \tfrac{1}{2}  \left[ (L_X\pi)_{ij} + (\Div_g X) \pi_{ij}- X_{k;m} \pi^{km} g_{ij} - \langle X, J \rangle_g g_{ij} \right] \\
	&\quad - \tfrac{1}{2}(X\odot J)_{ij}+ \varphi \left[Z\odot (2f Z + |Z|_g X)\right]_{ij}.
\end{split}
\end{align}
We can use~\eqref{equation:momentum} and its trace to swap out all terms with derivatives of $X$ in the above equation in exchange for terms involving $\pi$ and $f$. If we do this, we obtain~\eqref{equation:Hamiltonian} after straightforward manipulations.

 Equation~\eqref{equation:Hessian} comes from taking the trace of~\eqref{equation:Hamiltonian0} to solve for $\Delta_g f$ and then substituting it back in to~\eqref{equation:Hamiltonian0}. Equation~\eqref{equation:second-derivative} follows from~\eqref{equation:momentum} and commuting second covariant derivatives of $X$ to obtain the curvature terms. (See, for example,~\cite[Lemma B.3]{Huang-Lee:2020} for details.)
 
\end{proof}

By Theorem~\ref{theorem:no-kernel}, injectivity of $D\Phi|_{(g,\pi)}^*$ allows one to prescribe perturbations of $\Phi$ using compactly supported perturbations of $(g,\pi)$~\cite[Theorem 2]{Corvino-Schoen:2006}. Essentially the same reasoning can be used to prove a similar statement for the $(\varphi, Z)$-modified operator $\Phi^{(\varphi, Z)}_{(g, \pi)}$. We say that the operator $\left( D\Phi^{(\varphi, Z)}_{(g, \pi)}\right)^*$ is \emph{injective on $\Int\Omega$} if it is injective when thought of as a linear map from the domain
$C^2_{\mathrm{loc}}(\Int \Omega)\times  C^1_{\mathrm{loc}}(\Int \Omega)$. 
  \begin{theorem}\label{theorem:deformationZ}
Let $(\Omega, g, \pi)$ be a compact initial data set with nonempty smooth boundary, such that $(g,\pi)$ is $C^{4,\alpha}(\Omega)\times C^{3,\alpha}(\Omega)$ for some $\alpha\in (0,1)$, and let $(\varphi, Z)\in$ $C^{2,\alpha}(\Omega)$.
Assume that $\left( D\Phi^{(\varphi, Z)}_{(g, \pi)}\right)^*$ is injective on $\Int\Omega$.

Then there is a $C^{4,\alpha}(\Omega)\times C^{3,\alpha}(\Omega)$ neighborhood $\mathcal{U}$ of $(g, \pi)$ and a $C^{2,\alpha}(\Omega)$ neighborhood $\mathcal{W}$ of $(\varphi, Z)$ such that for any $V\subset\subset \Int\Omega$, there exist constants $\epsilon, C>0$ such that the following statement holds: For $(\gamma, \tau)\in \mathcal{U}$, $(\varphi', Z')\in\mathcal{W}$, and  $(u,\Upsilon )\in C^{0,\alpha}_c(V)\times C^{1,\alpha}_c(V) $ with \hbox{$\| (u, \Upsilon) \|_{C^{0,\alpha}(\Omega)\times C^{1,\alpha}(\Omega)}< \epsilon$}, there exists $(h, w)\in C^{2,\alpha}_c(\Int\Omega)$ satisfying $\|(h,w)\|_{C^{2,\alpha}(\Omega)}\le C\| (u, \Upsilon)\|_{C^{0,\alpha}(\Omega)\times C^{1,\alpha}(\Omega)}$ such that
 \[
 	\Phi^{(\varphi', Z')}_{(\gamma,\tau)} (\gamma+h, \tau+w) = \Phi^{(\varphi', Z')}_{(\gamma,\tau)}(\gamma,  \tau) + (u, \Upsilon).
 \]
  \end{theorem}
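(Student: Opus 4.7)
The plan is to follow the Corvino--Schoen--Huang underdetermined PDE strategy. At the linearized level, $D\Phi^{(\varphi,Z)}_{(g,\pi)}$ is an underdetermined differential operator acting on pairs $(h,w)$; we seek a right inverse of the form $(h,w)=\bigl(D\Phi^{(\varphi,Z)}_{(g,\pi)}\bigr)^{*}(f,X)$ with $(f,X)$ vanishing on $\partial\Omega$ to suitable order, so that $(h,w)$ is automatically supported in $\operatorname{Int}\Omega$. The composite second-order operator $L:=D\Phi^{(\varphi,Z)}_{(g,\pi)}\bigl(D\Phi^{(\varphi,Z)}_{(g,\pi)}\bigr)^{*}$ has the same principal symbol as $D\Phi D\Phi^{*}$ because the differences between $D\Phi^{(\varphi,Z)}$, $D\overline\Phi$, and $D\Phi$ are all of order zero in $(h,w)$, hence purely algebraic multiplication on the adjoint side. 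The principal part of $L$ is the same elliptic fourth-order operator used in \cite{Corvino-Schoen:2006, Corvino-Huang:2020}, so the standard weighted Fredholm framework applies.

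The main analytic step is a weighted coercivity estimate: given injectivity of $\bigl(D\Phi^{(\varphi,Z)}_{(g,\pi)}\bigr)^{*}$ on $\operatorname{Int}\Omega$, one shows
\[
\|(f,X)\|_{H^{2}_{\rho}(\Omega)\times H^{1}_{\rho}(\Omega)}
\le C\,\bigl\|\bigl(D\Phi^{(\varphi,Z)}_{(g,\pi)}\bigr)^{*}(f,X)\bigr\|_{L^{2}_{\rho}(\Omega)},
\]
for a weight $\rho$ vanishing to sufficiently high order on $\partial\Omega$, via the usual compactness-plus-contradiction argument: a sequence violating the estimate subconverges, after normalization, to a nontrivial kernel element, contradicting the injectivity hypothesis. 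Exactly as in the proof of Theorem~\ref{theorem:no-kernel}, the zeroth-order $(\varphi,Z)$-correction in \eqref{equation:adjoint-Z-mod} is absorbed into the lower-order terms of the estimate. Lax--Milgram (or minimization of the energy $\tfrac12\int\rho\,|\bigl(D\Phi^{(\varphi,Z)}_{(g,\pi)}\bigr)^{*}(f,X)|^{2}-\langle (u,\Upsilon),(f,X)\rangle$) then produces a weak solution, and weighted Schauder regularity promotes it to $C^{4,\alpha}\times C^{3,\alpha}$ with the required bound $\|(h,w)\|_{C^{2,\alpha}}\le C\|(u,\Upsilon)\|_{C^{0,\alpha}\times C^{1,\alpha}}$; the support of $(h,w)$ lies in any chosen $V\subset\subset\operatorname{Int}\Omega$ provided $(u,\Upsilon)$ is supported in $V$, by applying the inverse to cut-offs whose support is pushed inward and using the usual exponential weight trick on $\rho$.

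To pass from the linearized right inverse to the nonlinear statement, write
\[
\Phi^{(\varphi',Z')}_{(\gamma,\tau)}(\gamma+h,\tau+w)-\Phi^{(\varphi',Z')}_{(\gamma,\tau)}(\gamma,\tau)=D\Phi^{(\varphi',Z')}_{(\gamma,\tau)}(h,w)+Q_{(\gamma,\tau),(\varphi',Z')}(h,w),
\]
where $Q$ collects the quadratic-and-higher remainder, which is quadratically small in $(h,w)$ on bounded $C^{2,\alpha}$ balls. The equation to solve becomes $(h,w)=S\bigl((u,\Upsilon)-Q(h,w)\bigr)$ where $S$ is the right inverse for $D\Phi^{(\varphi',Z')}_{(\gamma,\tau)}$; the contraction mapping principle closes provided $\epsilon$ is small enough and the operator norm $\|S\|$ is uniformly controlled for $(\gamma,\tau,\varphi',Z')$ in a small neighborhood of $(g,\pi,\varphi,Z)$.

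The main obstacle is therefore to show that the weighted coercivity estimate, and hence the operator norm of $S$, is stable under small $C^{4,\alpha}\times C^{3,\alpha}$ perturbations of $(g,\pi)$ and small $C^{2,\alpha}$ perturbations of $(\varphi,Z)$. This follows from a perturbation argument: the coefficients of $\bigl(D\Phi^{(\varphi',Z')}_{(\gamma,\tau)}\bigr)^{*}$ depend continuously on these data in the relevant norms, so the associated bilinear form in the Lax--Milgram step deviates from the one at $(g,\pi,\varphi,Z)$ by an arbitrarily small bounded operator, which preserves invertibility with a slightly enlarged constant. Combining this uniform right inverse with the contraction argument yields the claimed neighborhoods $\mathcal{U}$ and $\mathcal{W}$ and the uniform constants $\epsilon,C$, completing the proof.
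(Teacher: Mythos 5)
Your proposal follows essentially the same route as the paper's: the paper reduces the theorem to the machinery of Corvino--Huang (Theorem~3.1 and Sections~5.1, 5.6, 5.10--5.11 of that work), observing that $\Phi^{(\varphi,Z)}_{(g,\pi)}$ differs from $\overline\Phi_{(g,\pi)}$ only by zero-order terms, so the weighted coercivity estimate, the variational (energy-minimization) construction of a solution to the linearized problem, the weighted Schauder interior estimates, and the Picard/iteration scheme for the nonlinear problem all carry over verbatim. Your proposal reconstructs exactly these steps, with the minimization of the weighted energy playing the role of the functional $\mathcal{G}$ and the contraction-mapping step playing the role of the iteration scheme, and your perturbation argument for uniformity over $\mathcal{U}\times\mathcal{W}$ is the same observation used implicitly in the cited references (the coercivity constant is stable under small $C^{4,\alpha}\times C^{3,\alpha}$ and $C^{2,\alpha}$ perturbations because the operators differ by bounded lower-order terms).

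One small inaccuracy worth noting: you assert that the solution $(h,w)$ can be taken to be supported in the same $V\subset\subset\Int\Omega$ as $(u,\Upsilon)$. That is not how the Corvino--Schoen scheme works and is not what the theorem claims. The variational solution has the form $(h,w)=\rho\,\bigl(D\Phi^{(\varphi',Z')}_{(\gamma,\tau)}\bigr)^{*}(f,X)$ with an exponentially vanishing weight $\rho$, so its support is essentially all of $\Int\Omega$; what the weight buys is that $(h,w)$ extends by zero past $\partial\Omega$ to a $C^{2,\alpha}$ tensor, which is the meaning of $(h,w)\in C^{2,\alpha}_c(\Int\Omega)$ here. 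This does not affect the validity of the argument, since only compact support in $\Int\Omega$ is needed.
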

 \begin{proof}
The proof is essentially the same as the proof of \cite[Theorem 3.1]{Corvino-Huang:2020} (which is itself based on~\cite[Theorem 2]{Corvino-Schoen:2006}), because the operators involved only differ by inconsequential zero order terms. Note that \cite[Theorem 3.1]{Corvino-Huang:2020} is stated more generally in terms of certain weighted H\"{o}lder spaces, but here we state a simpler version sufficient for our applications.

We outline the steps with explicit references to ~\cite{Corvino-Huang:2020}. For given $(u, \Upsilon)\in C^{0,\alpha}_c(V)\times C^{1,\alpha}_{c}(V)$, we first solve the linearized equation by showing that there exists $(h_0, w_0)\in C^{2,\alpha}_c(\Int \Omega)$ solving $D\Phi^{(\varphi, Z)}_{(\gamma,\tau)}(h_0, w_0) = (u, \Upsilon)$. To do this, we define a functional $\mathcal{G}$ on the space of lapse-shift pairs $(f, X)$ as in \cite[Section 5.1]{Corvino-Huang:2020} by substituting the adjoint operator there with our operator $\left(D\Phi^{(\varphi, Z)}_{(g, \pi)}\right)^*$. The injectivity assumption allows us to derive the coercivity estimate for $\left(D\Phi^{(\varphi, Z)}_{(\gamma,\tau)}\right)^*$, just as \cite[Equation (5.5)]{Corvino-Huang:2020}, because the operators involved differ only by  zero order terms. Therefore, we can minimize the functional $\mathcal{G}$ to obtain a solution $(h_0, w_0)$ to the linearized equation.    Moreover, $(h_0, w_0)$  satisfies the same weighted estimates as in  \cite[Theorem 5.6]{Corvino-Huang:2020} because the Schauder interior estimates apply in the same way in our case. Once the weighted estimates are established, the iteration scheme \cite[Theorem 5.10 and Lemma 5.11]{Corvino-Huang:2020} is applicable in our setting to solve the nonlinear equation as desired.

 \end{proof}

The main usefulness of the modified constraint operator $\overline{\Phi}_{(g,\pi)}$ is that controlling the modified constraints of $(\gamma, \tau)$ near $(g,\pi)$ gives good control over the dominant energy scalar $\sigma(\gamma, \tau)$. Looking at the case $Z=J$, the operator $\Phi^{(\varphi, J)}_{(g, \pi)}$ shares a similar property, and this is what motivated the definition of $\Phi^{(\varphi, J)}_{(g, \pi)}$.

\begin{lemma}\label{lemma:sigma_preserve}
Let $(\gamma, \tau)$ and $(\bar\gamma, \bar\tau)$ be initial data on a manifold $U$ such that $|\bar\gamma-\gamma|_\gamma<1$. Let $\varphi$ be a function on $U$ such that $|\varphi J|_\gamma < (\sqrt{2}-1)/2$, where $J$ is the current density of $(\gamma, \tau)$. Assume that
\begin{equation}\label{equation:prescribe}
\Phi^{(\varphi, J)}_{(\gamma, \tau)} (\bar\gamma, \bar\tau  ) = \Phi^{(\varphi,J)}_{(\gamma, \tau)} (\gamma, \tau ) + (u, 0)
\end{equation}
for some function $u$. Then
\[ 
\Sc(\bar\gamma,\bar\tau) \ge 
  \Sc(\gamma, \tau)+u.
\]
\end{lemma}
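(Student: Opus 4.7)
My plan is to unpack the hypothesis into explicit formulas for $\bar\mu-\mu$ and $\bar J - J$, reduce the desired inequality $\Sc(\bar\gamma,\bar\tau)\ge \Sc(\gamma,\tau)+u$ to a single scalar inequality, and then verify that inequality in a well-chosen basis.

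Expanding the definition of $\Phi^{(\varphi,J)}_{(\gamma,\tau)}$ in \eqref{equation:modified2} and noting that $(\gamma\cdot J)^i = J^i$ when the contraction uses the background metric $\gamma$ itself, the first component of the hypothesis equation yields
\[
2\bar\mu \;=\; 2\mu + u + 2\varphi\bigl(|J|_\gamma^{2}-|J|_{\bar\gamma}^{2}\bigr),
\]
and the second component yields
\[
\bar J \;=\; J + \Bigl(\tfrac{1}{2}+\varphi|J|_\gamma\Bigr)\bigl(J-\bar\gamma\cdot J\bigr),
\]
where $(\bar\gamma\cdot J)^i=\gamma^{ij}\bar\gamma_{jk}J^{k}$. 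Substituting the first identity into $\Sc(\bar\gamma,\bar\tau)=2\bar\mu-2|\bar J|_{\bar\gamma}$, the claim $\Sc(\bar\gamma,\bar\tau)\ge \Sc(\gamma,\tau)+u$ collapses to the scalar estimate
\[
|\bar J|_{\bar\gamma} \;\le\; |J|_\gamma + \varphi\bigl(|J|_\gamma^{2}-|J|_{\bar\gamma}^{2}\bigr). \qquad(\ast)
\]

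To prove $(\ast)$, I would pick a $\gamma$-orthonormal frame in which $\bar\gamma$ is simultaneously diagonal (such a frame exists since $\bar\gamma$ is symmetric with respect to $\gamma$), and let $\lambda_i$ denote the eigenvalues of $\gamma^{-1}\bar\gamma$ and $\nu_i:=1-\lambda_i$. In this frame the hypothesis $|\bar\gamma-\gamma|_\gamma<1$ reads $\sum_i \nu_i^{2}<1$, forcing $|\nu_i|<1$ for each $i$. Setting $\beta:=\tfrac{1}{2}+\varphi|J|_\gamma$, the hypothesis $|\varphi J|_\gamma<(\sqrt2-1)/2$ forces $\beta\in(0,1)$ with $\beta<\tfrac{\sqrt2}{2}$. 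The formula above becomes $\bar J^{i}=J^{i}(1+\beta\nu_i)$, so both sides of $(\ast)$ are nonnegative (the LHS is at least $|J|_\gamma(1-|\varphi J|_\gamma)>0$), and squaring is legitimate. Using $\varphi|J|_\gamma=\beta-\tfrac{1}{2}$ to eliminate the mixed term, a direct expansion gives
\[
\mathrm{LHS}^{2}-\mathrm{RHS}^{2} \;=\; \varphi^{2}\Bigl(\sum_i \nu_i(J^{i})^{2}\Bigr)^{\!2} + \sum_i (J^{i})^{2}\,\beta\,\nu_i^{2}\bigl[(2-\beta)+\beta\nu_i\bigr].
\]
Since $\beta<1$ and $|\nu_i|<1$, the bracket satisfies $(2-\beta)+\beta\nu_i \ge 2-2\beta>0$, so every term on the right is nonnegative and $(\ast)$ follows.

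The main difficulty I expect is bookkeeping rather than a conceptual obstacle: one has to keep careful track of which metric is used in each contraction when expanding $\Phi^{(\varphi,J)}_{(\gamma,\tau)}(\bar\gamma,\bar\tau)$, in particular that $|J|_{\bar\gamma}^{2}$ appears in the first (energy) entry while $|J|_\gamma$ appears as the coefficient in the second (current) entry. Once the correct formulas for $\bar\mu$ and $\bar J$ are in hand, the role of the sharp constant $(\sqrt2-1)/2$ becomes transparent: it is exactly the threshold that forces $\beta<1$, which together with $|\nu_i|<1$ makes the crucial bracket $(2-\beta)+\beta\nu_i$ strictly positive.
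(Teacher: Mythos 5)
Your reduction to the scalar inequality $(\ast)$ is correct and matches the paper's starting point, but your proof of $(\ast)$ is genuinely different. The paper estimates $|\bar J|_{\bar\gamma}^{2}$ in terms of $h=\bar\gamma-\gamma$ by a chain of Cauchy--Schwarz inequalities together with $|h|_\gamma<1$, and the constant $(\sqrt2-1)/2$ appears there precisely because the argument requires $-\tfrac12+2|\varphi J|_\gamma+2|\varphi J|_\gamma^{2}\le 0$. You instead simultaneously diagonalize $\bar\gamma$ against $\gamma$, which turns $(\ast)$ into an exact pointwise algebraic identity followed by term-by-term nonnegativity; this is cleaner and in fact sharper. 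Two small slips should be fixed. First, your display has the sign backwards: what you have computed (and what is correct) is $\mathrm{RHS}^{2}-\mathrm{LHS}^{2}$, not $\mathrm{LHS}^{2}-\mathrm{RHS}^{2}$; the conclusion is unaffected, since showing that quantity is nonnegative is exactly what gives $\mathrm{LHS}\le\mathrm{RHS}$. Relatedly, ``the LHS is at least $|J|_\gamma(1-|\varphi J|_\gamma)$'' should read RHS --- the side $|\bar J|_{\bar\gamma}$ is trivially nonnegative, and it is $|J|_\gamma+\varphi\sum_i\nu_i(J^i)^2$ whose nonnegativity has to be verified before squaring (and one gets ``$\ge 0$'' rather than ``$>0$'' where $J=0$). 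Second, your closing remark about the constant is not right: as your own bracket estimate shows, the argument needs only $0<\beta<1$, i.e.\ $|\varphi J|_\gamma<1/2$, which is strictly weaker than $|\varphi J|_\gamma<(\sqrt2-1)/2$. That threshold is an artifact of the paper's cruder estimation scheme rather than a sharp feature of the inequality, and your diagonalization approach actually improves it --- a fact worth saying explicitly instead of attributing $(\sqrt2-1)/2$ to your own bound.
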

\begin{proof}
 Let
 $(\bar{\mu}, \bar{J})$ and $(\mu, J)$ denote the energy and current densities of $(\bar{\gamma}, \bar{\tau})$ and $(\gamma, \tau)$, respectively, and let $(h, w):=(\bar{\gamma}-\gamma, \bar{\tau}-\tau)$.
  In what follows, we will compute all lengths, inner products, and ``dot contractions'' using the metric $\gamma$, unless otherwise specified.
  
 If we re-write out hypothesis~\eqref{equation:prescribe} using the definition of the $(\varphi, J)$-modified constraint operator~\eqref{equation:modified2} and write as much as we can in terms of  $h$, we can see that 
\[
	\Phi(\bar{\gamma}, \bar{\tau}) = \Phi(\gamma, \tau) + \left(-2\varphi \langle h\cdot J,  J\rangle, - \left(\tfrac{1}{2} + \varphi |J|\right) h\cdot J\right) + (u, 0),
\]
or in other words,
\begin{align*}
	\bar{\mu} &= \mu + \tfrac{u}{2} -\varphi  \langle h\cdot J,  J\rangle \\
	\bar{J} &=J - \left(\tfrac{1}{2} + \varphi |J| \right) h\cdot J.
\end{align*}

We compute
\begin{align*}
	|\bar{J}|^2_{\bar{\gamma}} &= (\gamma+h)_{ij} \left( J^i   -(\tfrac{1}{2} + \varphi|J|)(h\cdot J)^i\right) \left(J^j    -(\tfrac{1}{2} + \varphi|J|)(h\cdot J)^j\right)\\
	&=|J|^2-2 \varphi |J|  \langle h\cdot J,  J\rangle	+ \left(- \tfrac{3}{4}  - \varphi |J| + \varphi^2|J|^2 \right) |h\cdot J|^2  \\
	&\quad +   \left(\tfrac{1}{4} + \varphi|J|  +\varphi^2 |J|^2  \right) \langle h\cdot (h\cdot J) , h\cdot J \rangle \\
	& \le |J|^2 -2 \varphi |J|  \langle h\cdot J,  J\rangle\\
	&\quad + \Big[\left(- \tfrac{3}{4}  - \varphi |J| + \varphi^2|J|^2 \right)+ \left(\tfrac{1}{4} + |\varphi J|  +\varphi^2 |J|^2  \right)|h|\Big] |h\cdot J|^2\\
	& \le |J|^2 -2 \varphi |J|  \langle h\cdot J,  J\rangle+ \left(- \tfrac{1}{2}  +2 |\varphi J| + 2|\varphi J|^2 \right) |h\cdot J|^2,
\end{align*}
where we used $|h|<1$ in the last line. By our assumption that $|\varphi J| <(\sqrt{2}-1)/2$, it follows that 
$- \tfrac{1}{2}  +2 |\varphi J| + 2|\varphi J|^2<0$, and hence
\[ |\bar{J}|^2_{\bar{\gamma}} \le |J|^2 -2 \varphi |J| \langle h\cdot J, J\rangle
\le ( |J| -\varphi \langle h\cdot J, J\rangle)^2.\] 
 Combining the square root of the above inequality with our formula for $\bar{\mu}$, we get the desired inequality, 
 \begin{align*}
  \Sc(\bar\gamma,\bar\tau) &= 2(\bar\mu-|\bar{J}|_{\bar\gamma})\\
& \ge  2\left(\mu+\frac{u}{2}  -\varphi \langle h\cdot J, J\rangle\right) -  2\left(|J| -\varphi \langle h\cdot J, J\rangle\right)\\
& = 
  \Sc(\gamma, \tau)+u.
  \end{align*}
  
\end{proof}

While the general $(\varphi, Z)$-modified constraint operator $\Phi^{(\varphi, Z)}_{(g, \pi)}$ does not share the same good property as in the proposition above, we can compute the error by which it fails.

\begin{lemma}\label{lemma:sigma_error}
Let $(\gamma, \tau)$ and $(\bar\gamma, \bar\tau)$ be initial data on a manifold $U$ such that $|\bar\gamma-\gamma|_\gamma<1$. Let $\varphi$ and $Z$ be a function and a vector field on $U$ such that 
$|\varphi|<1$ and $|Z|_\gamma<1$ on $U$.
Assume that
\begin{equation}\label{equation:prescribeZ}
\Phi^{(\varphi, Z)}_{(\gamma, \tau)} (\bar\gamma, \bar\tau  ) = \Phi^{(\varphi,Z)}_{(\gamma, \tau)} (\gamma, \tau ) + (u, 0)
\end{equation}
for some function $u$. Then 
\[ 
\Sc(\bar{\gamma}, \bar{\tau})  \ge \Sc(\gamma,\tau)+ u - 6|\bar\gamma-\gamma|_\gamma^{\frac{1}{2}}|\varphi|^{\frac{1}{2}} |Z|_\gamma (|J|_\gamma^{\frac{1}{2}} + 1),
\]
where $J$ is the current density of $(\gamma, \tau)$.
\end{lemma}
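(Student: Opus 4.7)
The plan is to mirror the strategy of Lemma~\ref{lemma:sigma_preserve}: use~\eqref{equation:prescribeZ} together with the definition~\eqref{equation:modified2} to write the new energy and current densities $(\bar\mu, \bar J)$ of $(\bar\gamma, \bar\tau)$ explicitly in terms of $(\mu, J, h, \varphi, Z)$, then produce an upper bound on $|\bar J|_{\bar\gamma}$ in terms of $|J|_\gamma$ plus a controlled error, and finally combine with $\sigma(\bar\gamma, \bar\tau) = 2\bar\mu - 2|\bar J|_{\bar\gamma}$. Setting $h := \bar\gamma - \gamma$ and using $\gamma \cdot J = J$ and $\gamma \cdot Z = Z$, the hypothesis~\eqref{equation:prescribeZ} becomes
\begin{align*}
\bar\mu &= \mu + \tfrac{u}{2} - \varphi\langle h \cdot Z, Z\rangle_\gamma, \\
\bar J &= J - \tfrac{1}{2} h\cdot J - \varphi|Z|_\gamma\, h\cdot Z.
\end{align*}

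For the main estimate I would decompose $\bar J = v - B$, where $v := J - \tfrac{1}{2} h \cdot J$ is the part already present when $\varphi = 0$ and $B := \varphi|Z|_\gamma\, h\cdot Z$ is the new $Z$-correction. Directly expanding $|v|^2_{\bar\gamma} = (\gamma_{ij}+h_{ij})v^iv^j$ and using $|h|_\gamma < 1$ to absorb the cubic-in-$h$ remainder yields
\[
|v|^2_{\bar\gamma} = |J|^2_\gamma - \tfrac{3}{4}|h\cdot J|^2_\gamma + \tfrac{1}{4}h_{ij}(h\cdot J)^i(h\cdot J)^j \le |J|^2_\gamma,
\]
which recovers the corresponding bound in the $\varphi = 0$ case of Lemma~\ref{lemma:sigma_preserve}. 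Cauchy-Schwarz combined with $|h|_\gamma < 1$ gives $|B|_{\bar\gamma} \le \sqrt{1+|h|_\gamma}\,|\varphi|\,|h|_\gamma|Z|^2_\gamma \le \sqrt 2\,|\varphi|\,|h|_\gamma|Z|^2_\gamma$. Expanding $|\bar J|^2_{\bar\gamma} = |v|^2_{\bar\gamma} - 2\langle v, B\rangle_{\bar\gamma} + |B|^2_{\bar\gamma}$, controlling the cross term by $2|v|_{\bar\gamma}|B|_{\bar\gamma} \le 2|J|_\gamma |B|_{\bar\gamma}$, and applying the elementary inequalities $\sqrt{a^2 + c} \le a + \sqrt c$ and $\sqrt{x+y} \le \sqrt x + \sqrt y$ then yields
\[
|\bar J|_{\bar\gamma} \le |J|_\gamma + \sqrt{2|J|_\gamma\, |B|_{\bar\gamma}} + |B|_{\bar\gamma}.
\]

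The final step is to substitute the bound for $|B|_{\bar\gamma}$ and to convert the resulting factor $|\varphi|\,|h|_\gamma|Z|^2_\gamma$ into $|\varphi|^{1/2}|h|^{1/2}_\gamma|Z|_\gamma$ using $|\varphi|^{1/2}|h|^{1/2}_\gamma|Z|_\gamma \le 1$, which follows from the assumptions $|\varphi|, |h|_\gamma, |Z|_\gamma < 1$. Both $\sqrt{|J|_\gamma |B|_{\bar\gamma}}$ and $|B|_{\bar\gamma}$ are then dominated by explicit constant multiples of $|\varphi|^{1/2}|h|^{1/2}_\gamma|Z|_\gamma(|J|^{1/2}_\gamma + 1)$, and the remaining error term $-2\varphi\langle h\cdot Z, Z\rangle_\gamma$ coming from $2\bar\mu$ is controlled the same way. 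The only real technical concern is keeping the constant at most $6$; tracking the constants produces an error bounded by $(2 + 2\sqrt 2 + 2^{7/4}|J|^{1/2}_\gamma)|\varphi|^{1/2}|h|^{1/2}_\gamma|Z|_\gamma$, and since $2+2\sqrt 2 < 6$ and $2^{7/4} < 6$, this is dominated by $6|\varphi|^{1/2}|h|^{1/2}_\gamma|Z|_\gamma(|J|^{1/2}_\gamma + 1)$, as required.
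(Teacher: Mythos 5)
Your proof is correct and takes essentially the same route as the paper: both derive the same expressions for $\bar\mu$ and $\bar J$, expand $|\bar J|^2_{\bar\gamma}$ into the same collection of terms, and close the estimate with Cauchy--Schwarz plus the smallness hypotheses $|h|_\gamma, |\varphi|, |Z|_\gamma < 1$ followed by two applications of the subadditivity of the square root. Your decomposition $\bar J = v - B$ is a slightly cleaner packaging of the paper's eight-term expansion; note in fact that the inequality $|\bar J|^2_{\bar\gamma}\le |J|_\gamma^2 + 2|J|_\gamma|B|_{\bar\gamma} + |B|^2_{\bar\gamma} = (|J|_\gamma + |B|_{\bar\gamma})^2$ already yields $|\bar J|_{\bar\gamma}\le |J|_\gamma + |B|_{\bar\gamma}$ directly, which would eliminate the $|J|^{1/2}$ term from your error altogether and give a sharper constant.
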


\begin{proof}
 Again, let
 $(\bar{\mu}, \bar{J})$ and $(\mu, J)$ denote the energy and current densities of $(\bar{\gamma}, \bar{\tau})$ and $(\gamma, \tau)$, respectively, and let $(h, w):=(\bar{\gamma}-\gamma, \bar{\tau}-\tau)$.
  In what follows, we will compute all lengths, inner products, and ``dot contractions'' using the metric $\gamma$, unless otherwise specified.

Again, if we re-write out hypothesis~\eqref{equation:prescribeZ} using the definition of the $(\varphi, Z)$-modified constraint operator~\eqref{equation:modified2} and write as much as we can in terms of  $h$, we can see that 
\[
	\Phi(\bar{\gamma}, \bar{\tau}) = \Phi(\gamma, \tau) + (0,-\tfrac{1}{2} h \cdot J) + 
	(-2\varphi \langle h\cdot Z, Z\rangle , -\varphi |Z| h\cdot Z) + (u, 0), 
\]
or in other words,
\begin{align*}
	\bar{\mu} &= \mu + \frac{u}{2} - \varphi \langle h\cdot Z, Z\rangle\\ 
	\bar{J} &=J- \tfrac{1}{2} h\cdot J -  \varphi |Z| h\cdot Z. 
\end{align*}
After tedious but straightforward computation, we obtain
\begin{align*}
	|\bar{J}|_{\bar{\gamma}}^2 &= |J|^2 - \tfrac{3}{4} |h\cdot J|^2+\tfrac{1}{4} \langle h\cdot (h\cdot J),  h\cdot J\rangle\\
	&\quad
	- 2\varphi |Z| \langle h\cdot J, Z\rangle  
	  - \varphi |Z| \langle h\cdot J, h\cdot Z\rangle + \varphi |Z| \langle h\cdot (h\cdot J) ,h\cdot Z\rangle\\
	  &\quad
	 + \varphi^2 |Z|^2 |h\cdot Z|^2   + \varphi^2 |Z|^2 \langle h\cdot (h\cdot Z), h\cdot Z\rangle.
	\end{align*}
Since $|h|<1$, we can absorb the third term on the right side into the second term on the right side, just as we did in the proof of Lemma~\ref{lemma:sigma_preserve}. For the rest of the terms, we use Cauchy-Schwarz  together with the fact that $|h|$, $|\varphi|$ and $|Z|$ are all less than $1$ to obtain
\[
|\bar{J}|_{\bar{\gamma}}^2	
	\le |J|^2 + |h| |\varphi| |Z|^2 (4|J| + 2),
\]
so by the triangle inequality applied twice, 
\[
	|\bar{J}|_{\bar{\gamma}} \le |J| + 2|h|^{\frac{1}{2}} |\varphi|^{\frac{1}{2}}  |Z| (|J|^{\frac{1}{2}} + 1).
\]
Combining this with our formula for $\bar\mu$, we see that 
\begin{align*}
	\Sc(\bar{\gamma}, \bar{\tau}) &= 2(\bar\mu-|\bar{J}|_{\bar\gamma})\\
	& \ge [2\mu + u - 2|h| |\varphi| |Z|^2] - \left[2|J| +4|h|^{\frac{1}{2}} |\varphi|^{\frac{1}{2}}  |Z| (|J|^{\frac{1}{2}} + 1)\right]\\
	&\ge \Sc(\gamma,\tau)+ u - 6|h|^{\frac{1}{2}}|\varphi|^{\frac{1}{2}} |Z| (|J|^{\frac{1}{2}} + 1).
\end{align*}

\end{proof}

\section{The null-vector equation for kernel elements}\label{section:deformation}



The following theorem is the main result underlying the proofs of Theorem~\ref{theorem:improvable} and~\ref{theorem:improvability-error}. This is where we take advantage of the freedom to choose $\varphi$ in the large family of operators $\Phi^{(\varphi, Z)}_{(g,\pi)}$. In the following theorem and elsewhere, the notation $C^3(U)$ denotes functions in  $C^3_{\mathrm{loc}}(U)$ that have bounded $C_3$ norm.

\begin{theorem}\label{theorem:generic}
Let  $(U, g, \pi)$ be an initial data set such that $(g,\pi)\in C^5_{\mathrm{loc}}(U)\times C^4_{\mathrm{loc}}(U)$. Given any vector field $Z\in C^3_{\mathrm{loc}}(U)$, there is a dense subset $\mathcal{D}_Z$ of $C^{3}(U)$ functions such that for $\varphi\in \mathcal{D}_Z$ and for any lapse-shift pair $(f, X)$ on $U$ solving  
\[
\left(D\Phi_{(g, \pi)}^{(\varphi, Z)}\right)^*(f, X)=0,
\]
it follows that $(f, X)$ further satisfies  both equations 
\begin{align}\label{equation:Zpair}
\begin{split}
D\overline{\Phi}_{(g, \pi)}^*(f, X)&=0 \\
2fZ+|Z|_g X&=0.
\end{split}
\end{align}
\end{theorem}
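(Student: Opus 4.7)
The plan is to exploit the affine dependence of $L_\varphi := (D\Phi^{(\varphi,Z)}_{(g,\pi)})^*$ on $\varphi$ together with a uniform finite-dimensional bound on its kernel. Using~\eqref{equation:adjoint-Z-mod}, I write $L_\varphi = T + \varphi S$, where $T := D\overline{\Phi}^*_{(g,\pi)}$ and $S(f,X) := (Z \odot \tilde{S}(f,X), 0)$ with $\tilde{S}(f,X) := 2fZ + |Z|_g X$. I first note that $S(f,X) \equiv 0$ if and only if $\tilde{S}(f,X) \equiv 0$: at points where $Z \neq 0$, contracting $Z \odot W = 0$ with $Z^i$ forces $W = 0$, while where $Z = 0$ the expression $\tilde{S}$ vanishes trivially. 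Thus the theorem asserts that for a dense set of $\varphi$, every $(f,X) \in V(\varphi) := \ker L_\varphi$ satisfies both $T(f,X) = 0$ and $\tilde{S}(f,X) = 0$.

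By Lemma~\ref{lemma:Hessian}, any $(f,X) \in V(\varphi)$ satisfies the Hessian-type equations~\eqref{equation:Hessian} and~\eqref{equation:second-derivative}, which express $\nabla^2 f$ and $\nabla^2 X$ algebraically in terms of the $1$-jet $(f, \nabla f, X, \nabla X)$ and $\varphi$. Integrating this ODE-like system along paths in the connected manifold $U$ shows that $(f, X)$ is determined by its 1-jet at a fixed base point $p_0 \in U$, giving the uniform bound $\dim V(\varphi) \le N := 1 + 2n + n^2$. The key trivial observation is $V(0) \cap \ker \tilde{S} \subset V(\varphi)$ for every $\varphi$, since $T(f,X) = 0$ and $S(f,X) = 0$ together imply $L_\varphi(f,X) = 0$. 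I set $d_0 := \dim(V(0) \cap \ker \tilde{S})$ and define
\[ \mathcal{D}_Z := \{\varphi \in C^3(U) : \dim V(\varphi) = d_0\}. \]
For $\varphi \in \mathcal{D}_Z$ the inclusion above is an equality, so $V(\varphi) = V(0) \cap \ker \tilde{S}$ and every kernel element satisfies both equations of~\eqref{equation:Zpair}. The theorem therefore reduces to proving $\mathcal{D}_Z$ is dense in $C^3(U)$.

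For density I would argue by iterated perturbation. Given $\varphi_0 \notin \mathcal{D}_Z$, pick $(f_0, X_0) \in V(\varphi_0) \setminus (V(0) \cap \ker \tilde{S})$; were $\tilde{S}(f_0, X_0) \equiv 0$, then $L_{\varphi_0}(f_0, X_0) = 0$ would reduce to $T(f_0, X_0) = 0$, placing $(f_0, X_0) \in V(0) \cap \ker \tilde{S}$, a contradiction. Hence $\tilde{S}(f_0, X_0) \neq 0$ at some $p \in U$. Along a line $\varphi_0 + t\psi$, the family $L_{\varphi_0 + t\psi} = L_{\varphi_0} + t\psi S$ is affine in $t$, and via the injection $V(\varphi) \hookrightarrow J^1_{p_0}$ from the previous step, $t \mapsto \dim V(\varphi_0 + t\psi)$ is upper semi-continuous and constant off a discrete subset of $\mathbb{R}$ by real-analytic perturbation theory. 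If the value stays equal to $\dim V(\varphi_0)$ on a neighborhood of $t = 0$ for every $\psi$, analytic perturbation yields a real-analytic family $e(t) \in V(\varphi_0 + t\psi)$ with $e(0) = (f_0, X_0)$, and differentiating $L_{\varphi_0 + t\psi}(e(t)) = 0$ at $t = 0$ forces
\[ \psi S(f_0, X_0) = -L_{\varphi_0}(\dot{e}(0)) \in \mathrm{image}(L_{\varphi_0}) \]
for every $\psi \in C^3(U)$.

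The main obstacle is ruling out this last possibility, which I would handle by $L^2$-duality. Standard estimates for overdetermined elliptic systems on precompact subdomains give that $L_{\varphi_0}$ has closed image, so $\mathrm{image}(L_{\varphi_0}) = (\ker L_{\varphi_0}^*)^\perp = (\ker D\Phi^{(\varphi_0,Z)}_{(g,\pi)})^\perp$. Since $D\Phi^{(\varphi_0,Z)}_{(g,\pi)}$ is underdetermined, its kernel is very large and contains compactly supported elements $(h, w)$ with essentially arbitrary leading behavior at $p$; choosing $(h, w)$ concentrated near $p$ with $h(p)$ proportional to $Z \odot \tilde{S}(f_0, X_0)(p)$, and $\psi$ a positive bump supported near $p$, gives $\int \psi \langle Z \odot \tilde{S}(f_0, X_0), h\rangle > 0$, a contradiction. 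Consequently there exist $\psi$ and arbitrarily small $t \neq 0$ with $\dim V(\varphi_0 + t\psi) < \dim V(\varphi_0)$. Iterating at most $N - d_0$ times produces $\varphi$ arbitrarily close to $\varphi_0$ in $\mathcal{D}_Z$, completing the density argument.
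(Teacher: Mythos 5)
Your framework is a genuinely different route from the paper's. You define $\mathcal{D}_Z$ as the set of $\varphi$ where $\dim\ker L_\varphi$ attains its universal lower bound $d_0 := \dim\big(\ker D\overline{\Phi}_{(g,\pi)}^* \cap \{2fZ+|Z|_gX \equiv 0\}\big)$, observe that this coincides with the set where the theorem's conclusion holds, and try to establish density by iterated dimension-dropping perturbations in $\varphi$. The paper instead proceeds constructively: it builds an explicit $N\times N$ matrix $Q$ annihilating the $1$-jet $(W, df, T, f)$ of any kernel element, carefully tracks how the entries of $Q$ depend polynomially on $\varphi$ and its first three derivatives (Lemma~\ref{lemma:matrixQ}), and shows near each point (Proposition~\ref{proposition:vanishing}) that off the zero set of a nontrivial polynomial in the $3$-jet of $\varphi$, either $W=0$ or the whole $1$-jet vanishes; the dense set $\mathcal{D}_Z$ is then assembled as a countable intersection of such open dense sets via Baire category. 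Your argument, if it worked, would be considerably more abstract and avoid the heavy explicit construction, but as written it has two substantial gaps.

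The first gap is the appeal to real-analytic perturbation theory to obtain an analytic selection $e(t)\in V(\varphi_0+t\psi)$ with $e(0)=(f_0,X_0)$. The kernels $V(\varphi_0+t\psi)$ are finite-dimensional spaces of \emph{global} solutions of an overdetermined linear PDE on a possibly non-compact $U$; they are not the kernel of an analytic family of Fredholm operators on a fixed Hilbert space, so the Kato--Rellich machinery does not apply directly. Even under the injection into $J^1_{p_0}$, the image subspace $W(\varphi_0+t\psi)$ is carved out by infinitely many integrability and holonomy conditions whose $t$-dependence is not a priori analytic or even $C^1$. Upper semicontinuity of the dimension is plausible by elliptic estimates plus Arzel\`a--Ascoli, but constancy of the dimension on an interval does not by itself produce a differentiable selection $e(t)$, and differentiating $L_{\varphi_0+t\psi}(e(t))=0$ at $t=0$ is precisely what requires such a selection. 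This is the load-bearing claim in your argument and it needs proof.

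The second gap is in the duality step. After integrating by parts against a compactly supported $(h,w)$ in the kernel of $D\Phi^{(\varphi_0,Z)}_{(g,\pi)}$, you correctly reduce to showing that the pointwise pairing $\langle Z\odot(2f_0Z+|Z|_gX_0), h\rangle$ is not identically zero for every such $(h,w)$. Your assertion that this kernel contains compactly supported elements with essentially arbitrary leading behavior at $p$ is heuristic: the surjectivity-with-compact-support technology of Theorem~\ref{theorem:deformationZ} is available exactly when $L_{\varphi_0}$ is injective, which is the opposite of your situation. Ruling out that the evaluation map $(h,w)\mapsto h(p)$ on the compactly supported kernel has image contained in a fixed hyperplane of $S^2 T^*_pU$ needs a dedicated argument, and this pointwise algebraic information is exactly what the paper's matrix $Q$ is designed to extract directly.
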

We refer to the second equation as the \emph{$Z$-null-vector equation} for the following reason:
If $(U, g, \pi)$ sits inside a spacetime with  future unit normal $\mathbf{n}$ and we define a vector field $\mathbf{Y}:=2f\mathbf{n} +X$, then wherever $Z\neq0$, the equation
$2fZ + |Z|_g X=0$ is equivalent to saying that along~$U$, we have
\begin{align*}\label{equation:null-vector}
\mathbf{Y}= \frac{2f}{|Z|_g} (|Z|_g \mathbf{n} - Z),
\end{align*}
which is null. (And of course, the $Z$-null-vector equation is vacuous where $Z=0$.)

The rest of this section is concerned with the proof of this Theorem~\ref{theorem:generic}.
To see why it implies  Theorems~\ref{theorem:improvable} and~\ref{theorem:improvability-error}, skip ahead to the next section.

In this section, we use the abbreviated notation 
\[ L_\varphi:=\left(D\Phi_{(g, \pi)}^{(\varphi, Z)}\right)^*.\]
Although the operator $L_\varphi$ depends on the initial data $(g, \pi)$ and the vector field $Z$ as well as $\varphi$,  in this section we will fix $g$, $\pi$, and $Z$ and focus on the dependence on $\varphi$.

Let us take a moment to motivate our proof of Theorem~\ref{theorem:generic}. Assume that $(f,X)$ is a solution to $L_\varphi(f, X)=0$.  Then $(f, X)$ solves a Hessian-type equation (Lemma~\ref{lemma:Hessian}) so that the second derivatives, and thus higher derivatives, of $(f, X)$ can all be expressed in terms of $(f, df, X, \nabla X)$. By taking divergence of equation \eqref{equation:Hamiltonian} in $L_\varphi(f, X)=0$ and further differentiating, we can construct various linear relations among $(f, df, X, \nabla X)$  so that the vector $(f, df, X, \nabla X)$ evaluated at $p$ lies in the kernel of a square matrix~$Q$.  In general, it is hard to understand the determinant of  $Q$, but it is possible to track its dependence on $\varphi$. One may wish to show that the determinant is a nontrivial polynomial of $\varphi$ and its first few derivatives. However, it is generally not possible because that would imply $L_\varphi$ is injective for generic choice of $\varphi$, which is not true for certain  $Z$.  So instead, we will prove that for generic choice of $\varphi$, either the matrix $Q$ is a nontrivial polynomial of $\varphi$ or the columns of $Q$ have certain linear dependence, which will imply that  the kernel element of $L_\varphi$ must satisfy the $Z$-null-vector equation $2fZ+|Z|_g X=0$. In either case, \eqref{equation:Zpair} of Theorem~\ref{theorem:generic} holds.

In preparation for the proof of Theorem~\ref{theorem:generic}, let us discuss the quantities that will appear in the proof. Since \eqref{equation:Zpair} holds trivially wherever $Z=0$, let us consider the region where $Z\ne0$, and define
\begin{align}\label{equation:W}
 W_i := f\hat{Z}_i +\tfrac{1}{2}X_i ,
\end{align}
where $\hat{Z}=Z/|Z|_g$. Our goal is to show that if $L_\varphi(f, X)=0$, then $W$ must vanish for generic~$\varphi$, and then the other equation $D\overline{\Phi}_{(g, \pi)}^*(f, X)=0$ follows from \eqref{equation:adjoint-Z-mod}.  As discussed above, the proof relies on constructing linear relations among $(f, df, X, \nabla X)$. Observe that by equation~\eqref{equation:momentum},  $\nabla X$ is determined by its antisymmetric part, which we denote by
\begin{align} \label{equation:T}
T_{ij}:=\tfrac{1}{2}(X_{i;j}-X_{j;i}).
\end{align}
Also by replacing $X$ with $W$, any linear system of equations in the quantities $(f, df, X, \nabla X)$ is equivalent to a linear system of equations in the quantities
\[ P:=(W, df, T, f),\]
where we have chosen this ordering for convenience of indexing. (In particular, we order the components of $T$  in some way.) Note that at any point, $P$ is an $N$-dimensional vector, where $N = n + n + \frac{n(n-1)}{2} + 1$, and we will think of $P$ as a column vector. 

Our goal is to construct $N$ linear relations on $P$ so that we obtain a $N\times N$ matrix $Q$ such that $QP=0$ and such that $Q$ is ``as nonsingular as possible.'' 
Fixing a point $p$, if we can arrange that $\det Q$ is a nontrivial polynomial in $\varphi$ and its derivatives, then $L_\varphi$ is generically injective, but as mentioned above, this will not always be the case. In the alternative, we just need to show that for generic $\varphi$, any $P$ in the kernel of $Q$ has vanishing $W$ components. In order to see what must be done, consider the following elementary lemma. 

\begin{lemma}\label{lemma:linear-algebra}
Let $Q$ be an $N\times N$ matrix. Express $Q$ in the following block form:
\[
	Q = \left[\begin{array}{c|c} \widehat{Q}& C\\ 
	\hline R& Q_{NN}
	\end{array}\right]
\]
where $\widehat{Q}$ is the $(N,N)$-minor submatrix of $Q$ of size $(N-1)\times (N-1)$, $Q_{NN}$ is the $(N, N)$-entry of $Q$, $C$ is a $ (N-1)\times 1$ matrix, and $R$ is a $1\times (N-1)$ matrix. Suppose $\widehat{Q}$ is non-singular. Then
\begin{enumerate}
\item  $\Det Q =( \Det\widehat{Q}) (Q_{NN} - R \widehat{Q}^{-1}C)$. \label{item:determinant}
\item  Write $\widehat{Q}^{-1}C = \begin{bmatrix} H_1 \\ \vdots \\ H_{N-1}\end{bmatrix}$. For each $i$ from $1$ to $N-1$, if $H_i=0$, then any  $P=\begin{bmatrix} P_1 \\ \vdots \\ P_N\end{bmatrix}$ solving $QP=0$ must have $P_i=0$.  \label{item:vanish}
\end{enumerate}
\end{lemma}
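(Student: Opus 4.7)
The entire lemma is a Schur-complement computation. The plan is to verify, by direct block multiplication, the factorization
\[
Q = \begin{bmatrix} I_{N-1} & 0 \\ R\widehat{Q}^{-1} & 1 \end{bmatrix}\begin{bmatrix} \widehat{Q} & C \\ 0 & Q_{NN} - R\widehat{Q}^{-1}C \end{bmatrix},
\]
which is valid because $\widehat{Q}$ is invertible. Part~(1) then drops out by taking determinants: the first factor is lower triangular with unit diagonal, so it contributes $1$; the second is block upper triangular, so its determinant is the product of the diagonal block determinants. Hence $\det Q = (\det \widehat{Q})(Q_{NN} - R\widehat{Q}^{-1}C)$.

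For part~(2), I would split any $P \in \ker Q$ as $P = (\hat{P}, P_N)^T$ with $\hat{P} = (P_1,\ldots,P_{N-1})^T$, and read off the two block equations
\[
\widehat{Q}\hat{P} + C P_N = 0, \qquad R\hat{P} + Q_{NN} P_N = 0.
\]
Since $\widehat{Q}$ is invertible, the first equation gives $\hat{P} = -\widehat{Q}^{-1} C\, P_N = -(H_1,\ldots,H_{N-1})^T P_N$; thus $P_i = -H_i P_N$ componentwise, and the hypothesis $H_i = 0$ forces $P_i = 0$.

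There is no genuine obstacle here---the content is pure linear algebra. The one conceptual point worth emphasizing, for the forthcoming application in the proof of Theorem~\ref{theorem:generic}, is that part~(2) is strictly finer than part~(1): it identifies \emph{which} coordinates of a kernel vector vanish, rather than only detecting whether $\det Q$ is zero. This is precisely what will let one conclude that the $W$-components of $P = (W, df, T, f)$ vanish without needing $L_\varphi$ itself to be injective.
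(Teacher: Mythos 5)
Your proof is correct and follows essentially the same approach as the paper: the paper dismisses part~(1) as standard (your Schur-complement factorization is exactly the standard argument) and proves part~(2) by reading off the first block of $QP=0$ and inverting $\widehat{Q}$, just as you do.
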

\begin{proof}
The determinant equality is standard. We prove the second item. The first $(N-1)$ equations of  $QP=0$ say that
\[
	\widehat{Q} \begin{bmatrix} P_1 \\ \vdots \\ P_{N-1} \end{bmatrix} + P_N C=0.
\]
Since $\widehat{Q}$ is nonsingular, this implies that 
\[
	\begin{bmatrix} P_1 \\ \vdots \\ P_{N-1} \end{bmatrix} = -P_N\widehat{Q}^{-1} C = -P_N \begin{bmatrix} H_1 \\ \vdots \\ H_{N-1} \end{bmatrix}.
\]
Thus, $H_i=0$ implies $P_i=0$.

\end{proof}

From this lemma, we can get a good sense of what properties we want our matrix $Q$ (to be constructed) to have.
We would like to show that $\det \widehat{Q}$ is a nontrivial polynomial of $\varphi$ and its derivatives, and that either
\begin{itemize}
 \item $H_1=\cdots=H_n=0$ for all $\varphi$---since the first $n$ components of $P$ are the ones that correspond to $W$; or else
 \item $(Q_{NN} - R \widehat{Q}^{-1}C)$ is a nontrivial rational function of $\varphi$---since that would imply that $\det Q$ is generically nonzero.
 \end{itemize}
 This is the basic description of how the proof works at a single point $p$, and then we use some additional arguments (described in the proofs of Proposition~\ref{proposition:vanishing} and Theorem~\ref{theorem:generic}) to globalize the result. In more detail, the following lemma statement summarizes the specific properties we will need the matrix $Q$ to satisfy, and we will construct $Q$ explicitly within the proof of the lemma.

\begin{lemma}\label{lemma:matrixQ}
Let $(U, g, \pi)$ be an initial data set such that $(g,\pi)\in C^5_{\mathrm{loc}}(U)\times C^4_{\mathrm{loc}}(U)$, and let $Z$ be a vector field in $C^3_{\mathrm{loc}}(U)$ such that $Z\ne0$ on $U$. Suppose $U$ is covered by a single geodesic normal coordinate chart at $p\in U$ so that  at $p$,  $g_{ij} = \delta_{ij}$ and $Z_i=  |Z|_g\delta_i^1$.  Suppose that $\varphi$ is a locally $C^3$ function, and that $(f,X)$ is a lapse-shift pair such that
\[ L_\varphi(f,X)=0\]
on $U$.  In the following, we will use subscripts on functions to denote ordinary differentiation in the coordinate chart, that is, $\varphi_i:= \frac{\partial \varphi}{\partial x^i}$ and similar for $\varphi_{ijk}$, $f_i$, etc., while we will continue to use semicolons to denote covariant derivatives.

Then there exists a $N\times N$ matrix-valued function $Q$ on $U$, which we can express in block form, 
\[
	Q = \left[\begin{array}{c|c} \widehat{Q}& C\\ 
	\hline R& Q_{NN}
	\end{array}\right]
\]
where $\widehat{Q}$ is the $(N,N)$-minor submatrix of $Q$ of size $(N-1)\times (N-1)$, $Q_{NN}$ is the $(N, N)$-entry of $Q$, $C$ is a $ (N-1)\times 1$ matrix, and $R$ is a $1\times (N-1)$ matrix, such that  $Q$ has the following properties:
 \begin{enumerate}
\item $QP=0$, where $P=(W, df, T, f)$ is as described above (thought of as a column vector).
\item The entries of $Q$ are polynomials of $(1, \varphi, \partial\varphi, \partial^2\varphi, \partial^3\varphi)$, whose coefficients depend on up to 5 derivatives of $g$, 4 derivatives of $\pi$, and 3 derivatives of $Z$.

 \item If we decompose 
 \begin{align}\label{equation:widehat-Q0}
 \widehat{Q}= \varphi_1 \widehat{Q}_1+ \widehat{Q}_0,
 \end{align}
 where the entries of $\widehat{Q}_0$ have no dependence on $\varphi_1$, then after evaluating at the point $p$, $\widehat{Q}_1$ has the $[n] + [n] + \left[\frac{n(n-1)}{2}\right]$ block form:
 \begin{equation} \label{equation:widehat-Q}
	\widehat{Q}_1 =  \left[\begin{array}{c|c|c} D_1& 0 & 0\\ 
	\hline * & D_2 & * \\
	\hline * & 0 & D_3
	\end{array}\right]
\end{equation}
where the square matrices $D_1, D_2, D_3$ of size $n, n, \frac{n(n-1)}{2}$, respectively, are diagonal and nonsingular, the $0$'s represent zero matrices, and the asterisks represent arbitrary matrices. In particular, $\widehat{Q}_1$ is nonsingular at $p$.
 \item Third derivatives of $\varphi$ \emph{only} show up in the $R$ block of $Q$, and if we decompose $R=R_1+R_0$, where $R_1$ is linear in $\partial^3 \varphi$ while $R_0$ has no dependence on $\partial^3 \varphi$, then after evaluating at $p$, we have
 \begin{equation*}
 R_1= ( 2\varphi_{111}, \varphi_{211}, \varphi_{311},\ldots, \varphi_{n11}, 0,\ldots, 0 ).
 \end{equation*}
 \end{enumerate}
\end{lemma}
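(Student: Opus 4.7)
The plan is to build $Q$ row by row, generating each row by a specific differential combination of the equations $L_\varphi(f,X)=0$ (equivalently, of (\ref{equation:Hamiltonian}) and (\ref{equation:momentum})), and then reducing all higher derivatives of $f$ and $X$ back to $P=(W,df,T,f)$ by means of the Hessian-type equations (\ref{equation:Hessian}) and (\ref{equation:second-derivative}). The structural key is that $\varphi$ enters the whole system only through the single term $\varphi[Z\odot(2fZ+|Z|_gX)]_{ij}=2\varphi|Z|_g Z_{(i}W_{j)}$ in the Hamiltonian equation, so any $\partial^s\varphi$ that survives in a derived identity must be produced by differentiating this one term exactly $s$ times, and hence is always paired with $\partial^{\le s}W$ together with a factor of $Z$. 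This lets us control the $\partial^s\varphi$-dependence of each row by prescribing the differential order used to generate it.

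For the first $n$ rows (producing $D_1$), I would take the divergence $\nabla^j H_{ij}=0$ of (\ref{equation:Hamiltonian}) and substitute (\ref{equation:Hessian}) and (\ref{equation:second-derivative}) to eliminate all higher derivatives of $f$ and $X$. Differentiating the $\varphi$-term at $p$ produces $|Z|_g^2(\varphi_1 W_i+\delta_i^1\varphi_\ell W_\ell)$ plus terms not involving $\partial\varphi$, so the $\varphi_1$-linear coefficient of $W_j$ in the $i$-th row is $|Z|_g^2(\delta_{ij}+\delta_i^1\delta_j^1)$, a diagonal nonsingular $D_1$; there is no $\varphi_1$-multiple of $df$ or $T$ because those variables would require differentiating $W$ once more, which would require differentiating $\varphi$ once more as well. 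For the next $n$ rows (producing $D_2$), I would take a second derivative, for instance $\nabla^k\nabla^j H_{1j}=0$ indexed by $k$; the extra derivative now hits $W$ through $\varphi_1\nabla_k W_j$, whose $df_k$-content at $p$ equals $\varphi_1\hat Z_j\,df_k=\varphi_1\delta_j^1\,df_k$, yielding a diagonal $D_2$ on the $df$ column while admitting arbitrary (asterisk) entries on the $W$ and $T$ columns. For the last $\binom{n}{2}$ rows (producing $D_3$ with zeros on $df$), I would use an antisymmetrization of the divergence identity used for $D_1$ in two free indices, combined with the second-derivative identity (\ref{equation:second-derivative}): antisymmetrization kills the symmetric $df$-contribution, while the $T$-entry survives with a nonzero $\varphi_1$-multiple of $|Z|_g^2$ and the $W$-entries are harmless asterisks.

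The last row, producing $R$ and $Q_{NN}$, is built by a triply-differentiated scalar contraction of (\ref{equation:Hamiltonian}), for instance $\hat Z^k\nabla_k\nabla^j\nabla^i H_{ij}=0$. Since this is the unique row built by taking three derivatives of the $\varphi$-term, it is the only row in which $\partial^3\varphi$ can appear, which already establishes the structural claim that $\partial^3\varphi$ is confined to $R$. Evaluating the triple-derivative contribution at $p$ with $\hat Z=e_1$ collapses every $\partial^3\varphi$ contraction into the single index pattern $\varphi_{m11}$ paired with the $W_1$ component; tracking the combinatorial factor of $2$ that comes from the double contraction on $Z_{(i}W_{j)}$ when $m=1$ then yields exactly $R_1=(2\varphi_{111},\varphi_{211},\ldots,\varphi_{n11},0,\ldots,0)$. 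The zero entries on the $df$ and $T$ slots reflect the fact that $\partial^3\varphi$ could only reach those slots through $\nabla^{\le2}W$, and the particular triple contraction annihilates the corresponding index combinations at $p$.

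The main obstacle is bookkeeping. After repeatedly substituting (\ref{equation:Hessian}) and (\ref{equation:second-derivative}) to eliminate higher derivatives of $f$ and $X$, one acquires many cross-coupling terms involving curvature, derivatives of $\pi$, and derivatives of $Z$, and one must verify both that the $\varphi_1$-linear part of the assembled $\widehat Q$ really has the block-triangular form (\ref{equation:widehat-Q}) with the stated diagonals, and that $\partial^3\varphi$ truly does not leak into any row other than the last. Both verifications are completely mechanical tensor-algebra computations at $p$ but are lengthy. The normal-coordinate choice with $Z=|Z|_g e_1$ is essential: it makes Christoffel symbols vanish at $p$ (so covariant and ordinary derivatives coincide there) and aligns the geometric direction $\hat Z$ with the coordinate direction $e_1$, which is what turns the tensorial factor $\hat Z^{\otimes s}\cdot\partial^s\varphi$ into the explicit scalar pattern $\varphi_{\cdot 1\cdots 1}$ appearing in $R_1$ and in the diagonals of $D_1, D_2, D_3$.
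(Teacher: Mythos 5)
Your proposal follows the same broad strategy as the paper — build the rows of $Q$ by repeatedly differentiating the divergence of the Hamiltonian equation and reducing via the Hessian-type identities, tracking the $\varphi$-dependence by the structural observation that $\varphi$ enters only through the $Z\odot W$ term — and your treatment of the first two row blocks (rows $1$ to $2n$) matches the paper's. However, there is a genuine gap in your construction of the third block (the $T$-rows, producing $D_3$).

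You claim that ``antisymmetrization kills the symmetric $df$-contribution.'' This is false, because the $df$-contribution is \emph{not} symmetric in the two free indices. After taking $\nabla_k$ of the divergence identity, the $\varphi_1$-linear coefficient of $f_m$ at $p$ is $2\hat Z^i\hat Z_j(\varphi_i\delta_k^m + \varphi_k\delta_i^m)$, which at $p$ (where $\hat Z^i = \delta^{i1}$, $\hat Z_j = \delta_{j1}$) contains only the factor $\delta_{j1}$, with no matching $\delta_{k1}$. Antisymmetrizing over $j,k$ therefore does not annihilate it: for a row indexed by $(j,1)$ with $j>1$, the swapped term $(1,j)$ contributes $-2(\varphi_1\delta_j^m + \varphi_j\delta_1^m)f_m$, whose $\varphi_1$-coefficient $-2\delta_j^m$ is nonzero. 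That breaks the zero block in the middle column of \eqref{equation:widehat-Q}. The paper avoids this not by antisymmetrizing but simply by choosing $(j,k)$ with $j>k\ge 1$, so $j>1$ and hence $\hat Z_j = 0$ at $p$: the $\hat Z_j$ prefactor itself kills the entire $f_m$-term. You should discard the antisymmetrization idea and use that observation instead.

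A secondary, less serious discrepancy: for row $N$ you propose the double trace $\hat Z^k\nabla_k\nabla^j\nabla^i H_{ij}$, whereas the paper fixes indices and takes $\nabla_1\nabla_1\nabla^i H_{i1}$. Your version gives $R_1 = (2\varphi_{111},\,2\varphi_{211},\ldots,2\varphi_{n11},0,\ldots,0)$ at $p$ (the factor $2$ comes from symmetrizing $Z^iW^j$ over $i,j$ in the contraction), rather than the stated $(2\varphi_{111},\,\varphi_{211},\ldots,\varphi_{n11},0,\ldots,0)$. This does not match the lemma as written, although it would still serve the downstream argument in Proposition~\ref{proposition:vanishing}, which only needs the coefficient matrix of $\{\varphi_{j11}\}_j$ in the $W$-slots of $R_1$ to be close to a nonsingular diagonal matrix. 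If you keep your choice, you would need to restate Item~(4) accordingly; otherwise, switch to the paper's index-fixed version.
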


\begin{proof}
In the following, all lengths, products, covariant derivatives, and raising and lowering of indices are computed with respect to $g$.
First we would like to express $(f, df, X, \nabla X)$ in terms of $(W, df, T, f)$: 
\begin{align}
X_i &= 2 (W_i - f \hat{Z}_i)\label{equation:X}\\
X_{i;j} &=  T_{ij}+\left(\tfrac{2}{n-1} (\tr_g \pi) g^{ij} - 2\pi^{ij} \right)f \label{equation:DX}.
\end{align}
The first identity is just the definition of $W$ in \eqref{equation:W}, while the second identity follows from the definition of $T$ in \eqref{equation:T} and~\eqref{equation:momentum}. Next, we also want to be able to express the derivatives of $(W, df, T, f)$ in terms of $(W, df, T, f)$, but keep in mind that our main concern is keeping track of dependence on $\varphi$.
 Differentiating the definition of $W$ and substituting in~\eqref{equation:DX}, we obtain
\begin{align}
	W_{i;j} &= \hat{Z}_i f_j + \tfrac{1}{2} T_{ij} +\left( \hat{Z}_{i;j} + \tfrac{1}{n-1} (\tr_g \pi) g_{ij} - \pi_{ij}\right)f
	\label{equation:DW}\\
	&=  \hat{Z}_i f_j + \tfrac{1}{2} T_{ij} + A_{ij}f,\label{equation:DWalt}
\end{align}
where $A_{ij}$ is a quantity that has no dependence on $\varphi$ or its derivatives, nor any dependence on $(W, df, T, f)$. Meanwhile, using the definition of $W$ and~\eqref{equation:DX}, equation \eqref{equation:Hessian} can be rewritten as
\[
	f_{;ij} = \varphi |Z| \left[ -2(Z\odot W)_{ij} + \tfrac{2}{n-1}  \langle Z, W\rangle g_{ij} \right] +A^{(1)}_{ij}(W, T, f), 
\]
where $A_{ij}^{(1)}(W, T, f)$ is a linear function of $(W, T, f)$ that has no dependence on $\varphi$ or its derivatives. Differentiating the definition of $T$ and using~\eqref{equation:second-derivative} and the definition of $W$, we have 
\[
	T_{ij;k} = {A}^{(2)}_{ijk}(W, df, f), 
\]
where  ${A}^{(2)}_{ijk}(W, df, f)$ is a linear function of $(W, df, f)$ that has no dependence on $\varphi$ or its derivatives. 
\begin{remark}
It will be convenient to keep in mind the fact that when we express the covariant derivatives of $W$, $df$, $T$, and $f$ in terms of   $(W, df, T, f)$, the only $\varphi$ term that shows up is in the coefficient of $W$ in the expression for $f_{;ij}$. 
\end{remark}
We can now begin to construct the desired matrix $Q$. Since $L_\varphi(f, X)=0$, \eqref{equation:Hamiltonian} holds. Using~\eqref{equation:X}  and~\eqref{equation:DX}, equation \eqref{equation:Hamiltonian} can be rewritten in the form
\begin{align*}
	(\Delta f) g_{ij} -f_{;ij} &=
2\varphi |Z| (Z\odot W)_{ij} + A^{(3)}_{ij}(W, T, f)\nonumber\\
&=\varphi |Z| (Z_i\delta_j^\ell+ Z_j \delta_i^\ell)W_\ell + A^{(3)}_{ij}(W, T, f),
\end{align*}
where $A_{ij}^{(3)}(W, T, f)$ is a linear function of $(W, T, f)$ that has no dependence on $\varphi$ or its derivatives. 
By taking the divergence of the equation above (that is, taking $\nabla^i$ of both sides), we obtain
\begin{align*}
\nabla^i \left[(\Delta f) g_{ij} -f_{;ij} \right]
 &=  \varphi_i  |Z|(Z^i \delta_j^\ell+ Z_j g^{i\ell})W_\ell +\varphi |Z| (Z^i \delta_j^\ell+ Z_j g^{i\ell} )W_{\ell; i}  \\
&\quad +\varphi \nabla_i \left[ |Z|  (Z^i \delta_j^\ell +Z_j g^{i\ell} )\right]W_\ell+  g^{i\ell} \nabla_\ell\left[ A^{(3)}_{ij}(W, T, f)\right]\\
-R_{jk}g^{ik} f_i
 &=  \varphi_i  |Z|(Z^i \delta_j^\ell+ Z_j g^{i\ell})W_\ell\\
 &\quad 
 + \varphi |Z| (Z^i \delta_j^\ell+ Z_j g^{i\ell} )(\hat{Z}_\ell f_i + \tfrac{1}{2}T_{\ell i} +A_{\ell i}f)
   \\
&\quad
+ \varphi \nabla_i \left[ |Z|  (Z^i \delta_j^\ell +Z_j g^{i\ell} )\right]W_\ell 
+  g^{i\ell} \nabla_\ell\left[ A^{(3)}_{ij}(W, T, f)\right],
\end{align*}
where we used the Ricci identity on the left side to obtain the Ricci term and 
we used~\eqref{equation:DWalt} to eliminate the $W_{\ell; i}$ term on the right side.
We can use the Remark to understand the dependence of $\nabla_\ell A^{(3)}_{ij}$ term 
on $\varphi$ and combine it with the Ricci term, the $A_{\ell i}$ term, and the second to last term in the equation above in order to obtain (after dividing everything by $|Z|^2$) the equation
\begin{align}\label{equation:divergence2}
\begin{split}
0 &=  \varphi_i  (\hat{Z}^i \delta_j^{\ell}+ \hat{Z}_j g^{i\ell})W_\ell
+ \varphi (\hat{Z}^i \delta_j^{\ell}+ \hat{Z}_j g^{i\ell})(\hat{Z}_\ell f_i + \tfrac{1}{2}T_{\ell i})\\
&\quad 
+ A_j^{(4)}(df, T) +  A_j^{(5)}[\varphi](W, f), 
\end{split}
\end{align}
where $A_j^{(4)}(df, T)$ is a linear function of $(df, T)$ that has no dependence on $\varphi$ or its derivatives and  $A_j^{(5)}[\varphi](W, f)$ is a linear function of $(W, f)$ that depends on $\varphi$ but not any of its derivatives. (Note that our proof requires us to track certain dependencies on $\varphi$ but not all of them.)

\vspace{6pt}
\noindent \textbf{Construction of rows $1$ to $n$.} For each $j$ from $1$ to $n$, we define the $j$-th row of $Q$ to be the coefficients obtained from the linear relation~\eqref{equation:divergence2}. Recall the definition of $\widehat{Q}_1$ in \eqref{equation:widehat-Q0}. Evaluating at $p$, we can compute the first $n$ rows of $\widehat{Q}_1$ by focusing on the $\varphi_i$ term in~\eqref{equation:divergence2} with $i=1$. Since $\varphi_1$ can only appear in coefficients of $W$, we see that for $j=1$ to $n$, we have $ \left(\widehat{Q}_1\right)_j^\ell =0$ for $\ell>n$ (corresponding to the coefficients of $df$, $T$, and  $f$).  And for $\ell=1$ to $n$ (corresponding to the $W_\ell$ coefficients), 
\begin{align*}
 \left(\widehat{Q}_1\right)_j^\ell =
 (\hat{Z}^1 \delta_j^{\ell}+ \hat{Z}_j g^{\ell 1}) 
= \delta_j^{\ell}+ \delta_j^1 \delta^\ell_1,
\end{align*}
showing that the matrix $D_1$ is diagonal with $\det D_1=2$. This verifies the claims made about the top line of blocks in~\eqref{equation:widehat-Q}.

\vspace{6pt}
\noindent \textbf{Construction of rows $n+1$ to $2n$.}
We take the covariant derivative $\nabla_k$ of both sides of~\eqref{equation:divergence2}. We claim that doing this will give us
\begin{align*}
0 &=  \varphi_{;ik}  (\hat{Z}^i \delta_j^{\ell}+ \hat{Z}_j g^{i\ell})W_\ell
+ \varphi_{i}  (\hat{Z}^i \delta_j^{\ell}+ \hat{Z}_j g^{i\ell})W_{\ell;k}\\
&\quad + \varphi_{k} (\hat{Z}^i \delta_j^{\ell}+ \hat{Z}_j g^{i\ell})(\hat{Z}_\ell f_i + \tfrac{1}{2}T_{\ell  i})\\
&\quad+ A_{jk}^{(6)}[\varphi](df, T) +  A_{jk}^{(7)}[\varphi, \partial\varphi](W, f), 
\end{align*}
where $A_{jk}^{(6)}[\varphi](df, T)$ is a linear function of $(df, T)$ that depends on $\varphi$ but not any of its derivatives and  $A_{jk}^{(7)}[\varphi, \partial\varphi](W, f)$ is a linear function of $(W, f)$ that depends on $\varphi$ and its first derivatives, but no higher derivatives. This can be seen by applying the product rule to the first two terms of~\eqref{equation:divergence2} and noting that each resulting term not appearing explicitly above can be taken to be part of the $A^{(6)}$ or $A^{(7)}$ terms. Meanwhile, our Remark implies that $\nabla_k$ of the $A^{(4)}$ and $A^{(5)}$ terms can be taken to be part of the $A^{(6)}$ and $A^{(7)}$ terms. Next we use~\eqref{equation:DWalt} to eliminate the $W_{\ell;k}$ and collect like terms to obtain
\begin{align}\label{equation:jk}
 \tag*{(\theequation)$_{\jksub}$}
\begin{split}
0 &=  \varphi_{;ik}  (\hat{Z}^i \delta_j^{\ell}+ \hat{Z}_j g^{i\ell})W_\ell
+2 \hat{Z}^i \hat{Z}_j (\varphi_{i} \delta_k^m + \varphi_{k} \delta_i^m ) f_m \\
&\quad +\tfrac{1}{2} (\hat{Z}^i \delta_j^{\ell}+ \hat{Z}_j g^{i\ell})( \varphi_{i} \delta_k^m + \varphi_k \delta_i^m)  T_{\ell m} \\
&\quad+ A_{jk}^{(6)}[\varphi](df, T) +  A_{jk}^{(8)}[\varphi, \partial\varphi](W, f), 
\end{split}
\end{align}
where $A_{jk}^{(8)}[\varphi, \partial\varphi](W, f)$ is a linear function of $(W, f)$ that depends on $\varphi$ and its first derivatives, but no higher derivatives.  We will use the above equations to define rows  
$n+1$ to $N-1$ of $Q$. 
 Fixing $j=1$ in \ref{equation:jk}, for each $k=1$ to $n$, we define the $(n+k)$-th row of $Q$  to be the coefficients obtained from the linear relation {\renewcommand\jksub{1k}\ref{equation:jk}}. 
Evaluating at $p$, we can compute $ \left(\widehat{Q}_1\right)_{n+k}^{n+m}$ for $k, m=1$ to $n$ (corresponding to the $f_m$ coefficients) by focusing on the dependence on $\varphi_1$.
 \begin{align*}
\left(\widehat{Q}_1\right)_{n+k}^{n+m} &=
2  \hat{Z}^1 \hat{Z}_1 \delta_k^m + 2\hat{Z}^i \hat{Z}_1  \delta_k^1\delta_i^m \\
&= 2( \delta_k^m +  \delta_k^1\delta^m_1)
 \end{align*}
showing that the matrix $D_2$ is diagonal with $\det D_2=2^{n+1}$, and thus verifying the claim made about the middle line of blocks in~\eqref{equation:widehat-Q}. (Note that $\varphi_1$ will not appear in the $\varphi_{;ik}$ term since we are using normal coordinates at $p$.)

\vspace{6pt}
\noindent \textbf{Construction of rows $2n+1$ to $N-1$.}
To specify the ordering of the components of $T_{jk}$ within the vector $P$, we let $\iota$ be a bijection from $\mathcal{A}:=\{ (j, k)\,|\, n\ge j > k \ge 1\}$ to $\{2n+1, 2n+2,\ldots, N-1\}$ so that  $P_{\iota(j, k)}=T_{jk}$. We define rows $2n+1$ to $N-1$ of $Q$ by defining the $\iota(j,k)$ row of $Q$ using the coefficients of the linear relation  \ref{equation:jk}, for all $(j, k)\in\mathcal{A}$. 

Evaluating at $p$, we will compute $ \left(\widehat{Q}_1\right)^{\iota(\ell, m)}_{\iota(j, k)}$ for $(j, k), (\ell, m)\in\mathcal{A}$, corresponding to the $T_{\ell m}$ coefficients in~\ref{equation:jk}. 
Since each $(j,k) \in\mathcal{A}$ has $j>1$, it follows that $\hat{Z}_j=0$, so we can ignore all of the $\hat{Z}_j$ terms appearing in~\ref{equation:jk}. 
By focusing on the dependence on $\varphi_1$ and taking into account antisymmetry of $T$, we obtain
 \begin{align*}
\left(\widehat{Q}_1\right)^{\iota(\ell, m)}_{\iota(j, k)}&=\tfrac{1}{2}
\left( \hat{Z}^1 \delta_j^\ell \delta_k^m +
  \hat{Z}^i \delta_j^\ell \delta_k^1\delta_i^m\right) -\tfrac{1}{2}
\left(  \hat{Z}^1 \delta_j^m  \delta_k^\ell
 +\hat{Z}^i \delta_j^m  \delta_k^1\delta_i^\ell\right)\\
 &= \tfrac{1}{2} (  \delta_j^\ell \delta_k^m  +   \delta_j^\ell \delta_k^1 \delta^m_1
  - \delta_j^m\delta_k^\ell
 - \delta_j^m \delta_k^1\delta^\ell_1   )\\
 &= \tfrac{1}{2} (  \delta_j^\ell \delta_k^m + \delta_j^\ell \delta_k^1 \delta^m_1 ),
 \end{align*} 
where the last two terms of the second line vanish because $(j, k), (\ell, m)\in\mathcal{A}$. This tells us that the matrix $D_3$ is diagonal with nonzero diagonal entries. The fact that $j>1$ implies $\hat{Z}_j=0$ also explains why  $\left(\widehat{Q}_1\right)^{n+m}_{\iota(j, k)}=0$ for all $(j,k)\in\mathcal{A}$ and $m=1$ to $n$ (corresponding to the $f_m$ coefficients). We have now established all of our claims about $\widehat{Q}_1$.

\vspace{6pt}
\noindent\textbf{Construction of row $N$.} Using the same reasoning as above,  it is easy to see that if we take the covariant derivative $\nabla_q$ of 
\ref{equation:jk}, we obtain
\begin{align*}
\begin{split}
0 &=  \varphi_{;ikq}  (\hat{Z}^i \delta_j^{\ell}+ \hat{Z}_j g^{i\ell})W_\ell + A_{jkq}^{(9)}[\varphi, \partial\varphi, \partial^2\varphi](W, df, T, f),
\end{split}
\end{align*}
where  $A_{jkq}^{(9)}[\varphi, \partial\varphi, \partial^2\varphi](W, df, T, f)$ is a linear function of $(W, df, T, f)$ depending on at most two derivatives of $\varphi$. We define the $N$-th row of $Q$ using the coefficients of the equation above when $j=k=q=1$. Recalling the definition of $R_0$ and $R_1$ in Item (4), 
it is clear that only the first $n$ components of $R_1$ can be nonzero, and more precisely, evaluating $R_1$ at $p$, for $\ell=1$ to $n$, we have
\begin{align*}
 (R_1)^\ell &= \varphi_{i11}( \hat{Z}^i \delta_1^{\ell} + \hat{Z}_1 \delta^{i\ell})\\
&= \varphi_{111} \delta_1^{\ell}  +  \varphi_{i 11} \delta^{i\ell},
\end{align*}
verifying the claim made about the form of $R_1$. (Note that any discrepancy between  $\varphi_{i11}$ and  $\varphi_{;i11}$ lies in $R_0$.)

Observe that Item (1) follows from our construction of $Q$, Item (2) follows from tracking the number of times we differentiated in our construction of $Q$, and we explicitly checked Items (3) and~(4) above, and therefore the proof is complete.

\end{proof}



For a scalar-valued function $\varphi$, let $\mathbb{J}^3_q \varphi$ denote the $3$-jet\footnote{Note that this definition implicitly depends on choice of coordinate chart, but we will see that this causes no problems in our proof.}
 of  $\varphi$ at the point~$q$: 
\[
 \mathbb{J}^3_q \varphi:= (\varphi(q), \partial \varphi(q),  \partial^2 \varphi(q), \partial^3 \varphi(q)) \in \mathbb{R}^{1+n+\binom{n}{2} + \binom{n}{3} }.
 \]

\begin{proposition}\label{proposition:vanishing}
Let $(U, g, \pi)$ be an initial data set such that $(g,\pi)\in C^5_{\mathrm{loc}}(U)\times C^4_{\mathrm{loc}}(U)$, and let $Z$ be a vector field  in $C^3_{\mathrm{loc}}(U)$ such that $Z\ne0$ on $U$. Given $p\in U$, there exists a coordinate chart $U_p\subset U$, a point $q\in U_p$, and a zero set $s_q\subset \mathbb{R}^{1+n+\binom{n}{2}+\binom{n}{3}}$ of a nontrivial polynomial such that  for any $C^3$ function $\varphi$ on $U$ with $\mathbb{J}_q^3\varphi \notin s_q$,  if $(f, X)$ solves $L_\varphi(f, X)=0$ in $U$, then $2fZ+ |Z|_g X=0$ in  $U_p$. 
\end{proposition}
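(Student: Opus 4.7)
The plan is to apply Lemmas~\ref{lemma:linear-algebra} and~\ref{lemma:matrixQ} at $p$ and perform a case analysis on the resulting matrix $Q$. Either $\det Q(p)$ will be a nontrivial polynomial in $\mathbb{J}^3_p\varphi$, forcing the 1-jet of $(f,X)$ at $p$ to vanish and hence $(f,X)\equiv 0$ on $U_p$ by unique continuation via the Hessian equations; or a structural rational-identity at $p$ forces $W := f\hat Z + \tfrac{1}{2} X$ to vanish pointwise, which we then propagate to $U_p$.

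Start by taking $U_p$ to be a geodesic normal coordinate chart at $p$ with $g_{ij}(p)=\delta_{ij}$ and $\hat Z(p)=\partial_1$, which places us in the hypotheses of Lemma~\ref{lemma:matrixQ}. That lemma produces the matrix $Q$ with $QP=0$ for $P=(W, df, T, f)$; decompose $Q$ into blocks $\widehat Q, C, R, Q_{NN}$ as in Lemma~\ref{lemma:linear-algebra}. Item~(3) of Lemma~\ref{lemma:matrixQ} shows $\widehat Q_1(p)$ is block-triangular with nonsingular diagonal blocks $D_1, D_2, D_3$; hence $\widehat Q_1(p)$ is nonsingular and $\det \widehat Q(p)$ is a nontrivial polynomial in $\mathbb{J}^2_p\varphi$, with $\varphi_1^{N-1}$-coefficient equal to $\det\widehat Q_1(p)\neq 0$. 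Form the vector $H := \widehat Q^{-1} C$ of $N-1$ rational functions of $\mathbb{J}^2_p\varphi$, and split on whether at least one of $H_1,\ldots, H_n$ is nontrivial.

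In \emph{Case B}---at least one $H_i$ ($i\le n$) is a nonzero rational function of $\mathbb{J}^2_p\varphi$---item~(4) of Lemma~\ref{lemma:matrixQ} gives
\[
R(p)H(p) = R_0(p)H(p) + 2\varphi_{111}H_1(p) + \sum_{i=2}^n \varphi_{i11}H_i(p),
\]
which is a nontrivial rational function of $\mathbb{J}^3_p\varphi$ via the independent 3-jet variables $\varphi_{i11}$. Combined with Lemma~\ref{lemma:linear-algebra}(1), $\det Q(p)$ is a nontrivial rational function of $\mathbb{J}^3_p\varphi$, so after clearing denominators take $q:=p$ and $s_q$ to be its zero set. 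For $\mathbb{J}^3_q\varphi\notin s_q$, $\det Q(p)\neq 0$ forces $P(p)=0$; using $W(p)=0$ and~\eqref{equation:DX}, the 1-jet $(f, df, X, \nabla X)(p)$ vanishes. The Hessian equations~\eqref{equation:Hessian} and~\eqref{equation:second-derivative} then determine all second (and by iteration all higher) covariant derivatives of $(f, X)$ at $p$ from this 1-jet, so every Taylor coefficient of $(f,X)$ at $p$ vanishes, and $(f,X) \equiv 0$ on $U_p$ by an ODE argument along radial geodesics emanating from $p$.

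In \emph{Case A}---$H_1\equiv\cdots\equiv H_n\equiv 0$ as rational functions of $\mathbb{J}^2_p\varphi$---Lemma~\ref{lemma:linear-algebra}(2) gives $W(p)=0$ whenever $\det\widehat Q(p)\neq 0$; take $q:=p$ and $s_q$ to be (the trivial lift to $\mathbb{J}^3$-space of) the zero set of $\det\widehat Q(p)$ as a polynomial in $\mathbb{J}^2_p\varphi$. The main technical obstacle is propagating $W(p)=0$ to $W\equiv 0$ throughout $U_p$. I expect to address this by a secondary dichotomy: either there are points $q'\in U_p$ arbitrarily close to $p$ where Case B holds (and we replace $q=p$ with such a $q'$, invoking the Case B argument to conclude $(f, X) \equiv 0$ on a smaller $U_p$ around $q'$), or Case A holds on an entire neighborhood of $p$, which we take as a shrunken $U_p$. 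In this latter subcase, the openness of the Lemma~\ref{lemma:matrixQ}(3) nonsingularity ensures $\det\widehat Q$ is a nontrivial polynomial in the 2-jet at every point of $U_p$, so for $\varphi$ with $\mathbb{J}^3_q\varphi\notin s_q$ the pointwise Case A conclusion $W(q')=0$ holds on the open neighborhood of $q$ where $\det\widehat Q(\cdot)\neq 0$; continuity of $W$ then extends this to all of $U_p$.
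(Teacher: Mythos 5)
Your proposal mirrors the paper's proof: same matrix $Q$ from Lemma~\ref{lemma:matrixQ}, same block algebra from Lemma~\ref{lemma:linear-algebra}, and a dichotomy on whether the first $n$ entries $H_i = \eta_i/F$ of $\widehat{Q}^{-1}C$ are nontrivial. Your primary-plus-secondary dichotomy is a valid reorganization of the paper's Case~1 / Case~2 split, and your Case~B at the fixed point $q=p$ is correct and in fact cleaner than the paper's more general treatment.

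The genuine gap is the subcase of your secondary dichotomy where $H_1(p)=\cdots=H_n(p)=0$ but some $H_i$ is nontrivial at points $q'$ arbitrarily close to $p$; this is precisely the paper's Case~2, and it cannot be dispatched by ``invoking the Case~B argument at $q'$.'' Your Case~B argument uses the \emph{exact} form $R_1(p)=(2\varphi_{111},\varphi_{211},\dots,\varphi_{n11},0,\dots,0)$ from Lemma~\ref{lemma:matrixQ}(4), which holds only at the coordinate origin $p$. At $q'\neq p$ it is only approximate; worse, in precisely this subcase all $\eta_i(q')\to 0$ as $q'\to p$, so the surviving $\varphi_{j11}$-dependence of $G(q')=FQ_{NN}-\sum_i R^i\eta_i$ comes from terms with vanishingly small coefficients, and the small off-diagonal $\varphi_{j_0 11}$-coefficients of $R^i(q')$ for $i\neq j_0$ could a priori cancel the $R^{j_0}(q')\eta_{j_0}(q')$ contribution. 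The paper handles this quantitatively: pass to a subsequence $q_k\to p$ and an index $j_0$ such that the largest coefficient of $\eta_{j_0}(q_k)$ dominates every coefficient of every $\eta_i(q_k)$, $i=1,\dots,n$; by continuity the $\varphi_{j_011}$-coefficient of $R^{j_0}(q_k)$ is $\ge\tfrac{1}{2}$ while that of $R^i(q_k)$, $i\neq j_0$, is $\le\epsilon<\tfrac{1}{4}$, and $R^i$ for $i>n$ has no $\varphi_{j_011}$-dependence at all; hence the $\varphi_{j_011}$-linear part of $G(q_k)$ cannot vanish. This domination estimate is essential and is exactly what your sketch is missing. Two smaller remarks: in your Case~A subcase~2, continuity of $W$ only propagates vanishing to the closure of $\{\det\widehat{Q}\neq 0\}$ inside $U_p$, which need not be all of $U_p$ (the paper's Case~1 is written the same way, so I only flag it); and in Case~B, the Hessian-type system of Lemma~\ref{lemma:Hessian} makes $(f,df,X,\nabla X)$ satisfy a first-order linear ODE along any curve, which gives $(f,X)\equiv 0$ on the connected chart directly from vanishing of the $1$-jet at one point — $(f,X)$ is not analytic, so the ``every Taylor coefficient vanishes'' phrasing should be dropped.
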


\begin{proof}
Choose $U_p$ to be a normal coordinate chart at $p$ so that $g_{ij} = \delta_{ij}$ and $Z_i = |Z|_g \delta^1_i$ at $p$ as in Lemma~\ref{lemma:matrixQ}, accepting that we may need to shrink $U_p$ later.
Let $Q$, $Q_{NN}$, $\widehat{Q}$, $C$, and $R$ denote the corresponding matrices constructed in Lemma~\ref{lemma:matrixQ}. By Lemma~\ref{lemma:matrixQ}, $QP=0$, and each entry of $Q$ is a polynomial of $(1, \varphi, \partial \varphi, \partial^2\varphi, \partial^3 \varphi)$ whose coefficients depend on up to 5 derivatives of $g$, 4 derivatives of $\pi$, and 3 derivatives of $Z$.

Using the variables $w, y, z, \xi$ to denote $\varphi, \partial \varphi, \partial^2\varphi, \partial^3 \varphi$, respectively, we can write
\[
 \det \widehat{Q} = F (\varphi, \partial \varphi, \partial^2 \varphi),
\] 
where $F(w, y, z)$ is an $(N-1)$-degree polynomial in $w$, $y$, and $z$, whose coefficients are functions of $x\in U_p$ (which depend on $g$, $\pi$, and $Z$).  We know that $F$ is not the zero polynomial at the point $p$ because the coefficient of $\varphi_1^{N-1}$ in  $\det \widehat{Q}$ is precisely $\det \widehat{Q}_1$, which we saw is nonzero in Item (3) of Lemma~\ref{lemma:matrixQ}. By shrinking $U_p$ as needed, continuity guarantees that we may assume that at every $x \in U_p$, $F$ is not the zero polynomial.  Therefore we may express
\[
 \widehat{Q}^{-1} C =	\begin{bmatrix} H_1(\varphi, \partial \varphi, \partial^2 \varphi)  \\ \vdots \\ H_{N-1}(\varphi, \partial \varphi, \partial^2 \varphi)   \end{bmatrix},
\]
where  $H_1(w, y, z), \dots, H_{N-1} (w, y, z)$ are rational functions of $w$, $y$, and $z$, whose coefficients are functions of $x\in U_p$.  In fact, we can write 
\[ H_i (w, y, z) = \frac{\eta_i(w, y, z)}{F(w, y, z)},\]
 where each $\eta_i$ is a polynomial.
Similarly, we can write 
\begin{align*} 
	Q_{NN} - R \widehat{Q}^{-1}C= \frac{G(\varphi, \partial \varphi, \partial^2 \varphi, \partial^3 \varphi)}{F(\varphi, \partial \varphi, \partial^2 \varphi)},
\end{align*}
where $G(w, y, z, \xi)$ is a polynomial in $w$, $y$, $z$, and $\xi$, whose coefficients are functions of $x\in U_p$.

  We will define $q$ and $s_q$ according to the following cases.\\ 
\noindent{\bf Case 1:} Suppose that the polynomials $\eta_1, \dots, \eta_n$ are identically zero in some small neighborhood of $p$. 

In this case, we take $U_p$ to be this neighborhood, we choose $q=p$, and we define
\[
s_p:= \left\{ \left.(w, y, z, \xi) \in \mathbb{R}^{1+n+\binom{n}{2} + \binom{n}{3}}\,\right|\, F(w, y, z)= 0 \mbox{ at } p\right\}
\]
to be the zero set of the nontrivial polynomial $F$ at $p$.
 As long as $\mathbb{J}_p^3\varphi \notin s_p$, we have $\det \widehat{Q}=F \ne0$ at $p$. Then since $QP=0$ and $H_i = \frac{\eta_i}{F}=0$ at each point of $U_p$, for $i=1$ to $n$, Item~\eqref{item:vanish} of Lemma~\ref{lemma:linear-algebra} implies that $P_1 = \dots= P_n=0$ in $U_p$. This just says that $W=0$ in $U_p$, which is the the same as saying that $2fZ+|Z|_g X=0$ in $U_p$. 

\noindent{\bf Case 2:} The alternative to Case 1 is that there exists a sequence of points $q_k \to p$ so that at least one of the polynomials $\eta_1, \dots, \eta_n$ is not the zero polynomial at $q_k$. 

By our definitions of $G$ and $\eta$,
\begin{equation*}
 G= F Q_{NN} - (R^1 \eta_1 +\dots+ R^{N-1} \eta_{N-1}),
 \end{equation*}
   where we write $R = \begin{bmatrix} R^1 & R^2 & \cdots & R^{N-1}\end{bmatrix}$. We would like to show that $G(x, y, z, \xi)$ is not a zero polynomial at some points near $p$. The subtlety in the following argument is because all $\eta_i$ may converge to the zero polynomial at $p$, so we have to analyze $G$ at the sequence $q_k$, instead of the limit point $p$. To do this,  we take a subsequence of $q_k$ (still denoted by $q_k$) such that one of these polynomials, say $\eta_{j_0}$ for some fixed $j_0$, satisfies both of the following properties, at all $q_k$: (1)  $\eta_{j_0}$   is not the zero polynomial, and (2)  the largest coefficient (in absolute value) of $\eta_{j_0}$ at $q_k$ is greater than or equal to the absolute value of every coefficient of every $\eta_i$ for $i=1$ to $n$.

 We \emph{claim} that $G(w, y, z, \xi)$ is not the zero polynomial at $q_k$ for $k$ large.  Given $\epsilon\in (0, \frac{1}{4})$, by Item~(4) of Lemma~\ref{lemma:matrixQ} and continuity, we know that for $k$ large enough, at $q_k$, the coefficient of $\varphi_{j_0 11}$ in $R^{j_0}$ is bounded below by~$\tfrac{1}{2}$, while the coefficient of $\varphi_{j_0 11}$ in $R^{i}$ for other $i$ is bounded between by~$\pm \epsilon$. Moreover, from the construction of $R$, for all $i>n$,  $R^{i}$ has no dependence on $\varphi_{j_0 11}$.  Putting these facts together and using the fact that $\eta_{j_0}$ at $q_k$ has the largest possible coefficient among the $\eta_i$'s, we can see that the polynomial $R^1 \eta_1 +\dots+ R^{N-1} \eta_{N-1}$ at $q_k$ must have a nonzero coefficient of one of the monomials involving $\varphi_{j_0 11}$. Finally, since $FQ_{NN}$ has no dependence on $\varphi_{j_0 11}$ at any point by construction, it follows that 
  $G(w, y, z, \xi)$ is a nontrivial polynomial function at $q_k$ for sufficiently large $k$. 

Choose $q$ to be one of these $q_k$'s for large enough $k$, and define
\begin{align*}
	s_{q} &= \left\{ \left.(w, y, z, \xi) \in \mathbb{R}^{1+n+\binom{n}{2} + \binom{n}{3}}\,\right|\, F(w, y, z) G(w, y, z, \xi)=0 \mbox{ at } q\right\}
\end{align*}
to be the zero set of the nontrivial polynomial $FG$ at $q$.  If $\mathbb{J}_q^3\varphi \notin s_q$, then we see that both $\det \widehat{Q}\ne0$ and $Q_{NN} - R \widehat{Q}^{-1}C\ne0$ at $q$ by tracking back through the definitions. So by Item \eqref{item:determinant} of Lemma~\ref{lemma:linear-algebra}, $\det Q\ne0$ at $q$.
Since we have $QP=0$, it follows that $P=0$ at $q$. But this means that we have a solution 
$(f, X)$ to $L_\varphi(f, X)=0$ such that $(W, df, T, f)$ vanishes at $q$. As discussed earlier, this implies that $(f, df, X, \nabla X)$ vanishes at $q$.  Hence $(f, X)$ vanishes identically on all of $U_p$ since $(f, X)$ satisfies a Hessian-type equation  (Lemma~\ref{lemma:Hessian}) and is uniquely determined by its 1-jet $(f, df, X, \nabla X)$ at a point. 

\end{proof}

We now prove the main result in this section, Theorem~\ref{theorem:generic}.

\begin{proof}[Proof of Theorem~\ref{theorem:generic}]
We just need to describe the set $\mathcal{D}_Z$ and prove that if $\varphi\in \mathcal{D}_Z$ and \hbox{$L_\varphi(f, X)=0$}, then the $Z$-null-vector equation $2fZ+|Z|_g X=0$ holds everywhere in $U$. By equation~\eqref{equation:adjoint-Z-mod}, this will imply that $D\overline{\Phi}^*_{(g, \pi)}(f, X)=0$. Since the $Z$-null-vector equation holds trivially where $Z=0$, let $V\subset U$ be the open subset where $Z\ne0$.

For each $p\in V$, we apply Proposition~\ref{proposition:vanishing} to obtain a coordinate chart $V_p$, a point $q\in V_p$, and a zero set $s_q\subset \mathbb{R}^{1+n+\binom{n}{2}+ \binom{n}{3}}$ of a nontrivial polynomial as described in the statement of the Proposition. These $V_p$'s cover $V$, so by second countability of manifolds, there is a sequence of points $p_i$ such that the $V_{p_i}$'s cover $V$.
For each~$i$, define $\mathcal{V}_i \subset C^3(U)$  by 
\[
	\mathcal{V}_i := \{ \left. \varphi \in C^3(U)\,\right| \,\mathbb{J}^3_{q_i} \varphi \notin s_{q_i}\}.
\] 
Since $s_{q_i}$ is the zero set of a nontrivial polynomial, it is clear that $\mathcal{V}_i$ is open and dense.  Since $C^3(U)$ is a complete metric space, by the Baire Category Theorem, 
\[ \mathcal{D}_Z := \bigcap_{i=1}^\infty \mathcal{V}_i\] 
is dense in $C^{3}(U)$. By Proposition~\ref{proposition:vanishing},  for  $\varphi\in\mathcal{D}_Z$, any $(f, X)$ solving $L_\varphi(f, X) = 0$  must have $2fZ+|Z|_g X=0$ in each $V_{p_i}$ and hence everywhere in $V$, completing the proof.

\end{proof}

\section{Improvability of the dominant energy scalar}\label{section:improvability}

 We will  use the special case $Z=J$ of Theorem~\ref{theorem:generic} to prove our improvability result, Theorem~\ref{theorem:improvable}, and more general choices of $Z$ to prove our almost improvability result, Theorem~\ref{theorem:improvability-error}. 

\begin{proof}[Proof of Theorem \ref{theorem:improvable}]
We apply Theorem~\ref{theorem:generic} in the case $U=\Int\Omega$ and $Z=J$, where $J$ is the current density of the initial data set $(g, \pi)$. Choose $\varphi$ in the dense set  $\mathcal{D}_J\subset C^3(\Int\Omega)$ as in the statement of Theorem~\ref{theorem:generic}, such that $|\varphi J|_g < (\sqrt{2}-1)/2$. Either $\left(D\Phi^{(\varphi, J)}_{(g, \pi)}\right)^*$ is injective on $\Int\Omega$ or it is not. 
If it is not injective, then there exists a nontrivial lapse-shift pair $(f, X)$ in the kernel, and by Theorem~\ref{theorem:generic}, it follows that the system~\eqref{equation:pair} holds.

On the other hand, if $\left(D\Phi^{(\varphi, J)}_{(g, \pi)}\right)^*$ is injective, we claim that the dominant energy scalar is improvable in $\Int\Omega$.  Assuming injectivity, we may apply Theorem~\ref{theorem:deformationZ} (letting $\Upsilon=0$) to find a neighborhood $\mathcal{U}$ of $(g, \pi)$ in  $C^{4,\alpha}({\Omega}) \times C^{3,\alpha}({\Omega})$ and a neighborhood $\mathcal{W}$ of $(\varphi, J)$ in $C^{2,\alpha}(\Omega)$
such that for any $V\subset\subset \Int\Omega$, there exist constants $\epsilon, C>0$ such that for $(\gamma,\tau)\in \mathcal{U}$, $(\varphi', Z')\in\mathcal{W}$, and $u\in C^{0,\alpha}_c(V)$ with $\| u \|_{C^{0,\alpha}({\Omega})}<\epsilon$, there exists $(h, w)\in C^{2,\alpha}_{c}(\Int\Omega)$ with  $\Omega$ and $\| (h, w) \|_{C^{2,\alpha}(\Omega)} \le C\| u \|_{C^{0,\alpha}(\Omega)}$ such that 
\begin{align} \label{equation:surjective}
\Phi^{(\varphi', Z')}_{(\gamma, \tau)} (\gamma+h, \tau+w  ) = \Phi^{(\varphi',Z')}_{(\gamma, \tau)} (\gamma, \tau ) + (u, 0).
\end{align}
By shrinking $\mathcal{U}$ if necessary, we can guarantee that $(\varphi, J(\gamma, \tau))\in \mathcal{W}$ for all $(\gamma, \tau)\in \mathcal{U}$ and that  $|\varphi J(\gamma, \tau)|_\gamma < (\sqrt{2}-1)/2$. In particular we can solve \eqref{equation:surjective} for the operator $\Phi^{(\varphi, J(\gamma, \tau))}_{(\gamma, \tau)}$. For $\epsilon$ small enough, $|h|_\gamma<1$, and we can invoke Lemma~\ref{lemma:sigma_preserve} to see 
that 
\[ \Sc(\gamma+h, \tau+w) \ge \Sc(\gamma, \tau)  + u.\] 
In summary, we have shown that the dominant energy scalar is improvable in $\Int\Omega$.

\end{proof}


\begin{proof}[Proof of Theorem~\ref{theorem:improvability-error}]
Choose $B$ and $\delta>0$ as in the hypotheses of Theorem~\ref{theorem:improvability-error}. First we will argue that we can select a vector field  $Z$ supported in $B$ with $|Z|_g<1$, such that for all $\varphi\in\mathcal{D}_Z$, $\left(D\Phi^{(\varphi, Z)}_{(g, \pi)}\right)^*$ is injective, where $\mathcal{D}_Z \subset C^3(\Int\Omega)$ is the dense set described in Theorem~\ref{theorem:generic}.  Consider the kernel of $D\overline\Phi_{(g,\pi)}^*$ in $\Int\Omega$. Since this kernel is finite dimensional (since a kernel element is determined by its 1-jet at a point), it is easy to select $Z$ supported in $B$ such that $2fZ+|Z|_g X$ is  \emph{not identically zero} for \emph{all} nontrivial $(f, X)\in \ker D\overline\Phi_{(g,\pi)}^*$. To see how, for each such $X$, all we need is for $Z$ to be linearly independent from $X$ at some point of $B$, and this is easy to arrange since
 $Z$ is free to change however we like from point to point,  giving us infinitely many degrees of freedom in constructing $Z$. (The exception is when $X$ vanishes in $B$, but then $f$ does not vanish, and it is still easy to choose $Z$.) Furthermore, $Z$ can be selected with $|Z|_g<1$ since scaling it down by a constant factor does not affect the direction of $Z$.
 
 We claim that with this choice of $Z$, $\left(D\Phi^{(\varphi, Z)}_{(g, \pi)}\right)^*$ is injective for all  $\varphi\in\mathcal{D}_Z$. If it is not, there exists a nontrivial $(f, X)\in \ker \left(D\Phi^{(\varphi, Z)}_{(g, \pi)}\right)^*$. Then by Theorem~\ref{theorem:generic},  $(f, X)\in \ker D\overline\Phi_{(g,\pi)}^*$ and  $2fZ+|Z|_g X$ is identically zero. But this contradicts our construction of $Z$. 

Select $\varphi\in \mathcal{D}_Z$ small enough so that  $|\varphi|<1$. By injectivity of $\left(D\Phi^{(\varphi, Z)}_{(g, \pi)}\right)^*$, we can apply
Theorem~\ref{theorem:deformationZ} (with $\Upsilon=0$ and $(\varphi', Z')=(\varphi, Z)$). That is, there exists a neighborhood $\mathcal{U}$ of $(g, \pi)$ in $C^{4,\alpha}({\Omega})\times C^{3,\alpha}({\Omega})$ such that for any $V\subset\subset\Int\Omega$, there exist constants $\epsilon, C>0$ such that for $(\gamma,\tau)\in \mathcal{U}$ and for $u\in C_c^{0,\alpha}(V)$ with $\| u \|_{C^{0,\alpha}(\Omega)}<\epsilon$, there exists $(h, w)\in C^{2,\alpha}_c(\Int\Omega)$ with $\| (h,w)\|_{C^{2,\alpha}(\Omega)} \le C \| u \|_{C^{0,\alpha}(\Omega)}$ such that 
\begin{align*} 
	\Phi^{(\varphi, Z)}_{(\gamma,\tau)} (\gamma+h, \tau+w) = \Phi^{(\varphi, Z)}_{(\gamma,\tau)} (\gamma, \tau) + (u, 0).
\end{align*}
By shrinking $\mathcal{U}$ if necessary and choosing $\epsilon$ small enough, we will have $|Z|_\gamma<1$, $|h|_\gamma<1$, and we can apply Lemma~\ref{lemma:sigma_error} to see that
\[ 
\Sc(\bar{\gamma}, \bar{\tau})  \ge \Sc(\gamma,\tau)+ u - 6|h|_\gamma^{\frac{1}{2}}|\varphi|^{\frac{1}{2}} |Z|_\gamma (|J|_\gamma^{\frac{1}{2}} + 1).
\]
By further shrinking $\epsilon$, we can make $|h|_\gamma$ small enough so that the error term above is bounded by $\delta$, and since $Z$ is supported in $B$ it will be bounded by $ \delta {\bf 1}_B$, as desired.

\end{proof}

\section{Consequences of the $J$-null-vector equation} \label{section:null}

In this section, we study properties of an initial data set that carries a lapse-shift pair  $(f, X)$  satisfying the system:
\begin{align*} \tag{\ref{equation:pair}}
\begin{split}
D\overline{\Phi}_{(g, \pi)}^*(f, X)&=0\\
2fJ+|J|_g X&=0.
\end{split}
\end{align*} 

\subsection{Null perfect fluids and Killing developments}\label{section:null_perfect}

Recall that we defined a spacetime   $(\mathbf{N}, \mathbf{g})$ to be a \emph{null perfect fluid spacetime}\footnote{In the literature, a \emph{perfect fluid spacetime}  refers a spacetime whose the Einstein tensor $G_{\alpha\beta} = p \mathbf{g}_{\alpha\beta} + (\rho+p) v_\alpha v_\beta$ with the mass density $\rho$, pressure $p$, and a unit \emph{timelike} vector $\mathbf{v}$ that represents the velocity of the fluid. We define null perfect fluid analogously to perfect fluid but with the velocity $\mathbf{v}$ either future null or zero. We also absorb the coefficient $\rho+p$ into $\mathbf{v}$ as there is no preferred normalization for a null vector.  Note that an alternative   form $G_{\alpha\beta} = p \mathbf{g}_{\alpha\beta} - v_\alpha v_\beta$ (with the minus sign) is not ``physical'' as it cannot satisfy the spacetime DEC.}  with velocity $\mathbf{v}$ and pressure $p$ if the Einstein tensor takes the form:
\begin{align}\label{equation:perfect-fluid-Einstein}
	G_{\alpha\beta} = p \mathbf{g}_{\alpha\beta} + v_\alpha v_\beta,
\end{align}
where $p$ is a scalar function, and the velocity $\mathbf{v}$ is either future null or zero at each point of $\mathbf{N}$. We derive general properties of null perfect fluids.

  \begin{lemma}\label{lemma:null-perfect-fluid}
Let  $(\mathbf{N}, \mathbf{g})$ be a null perfect fluid spacetime with  velocity $\mathbf{v}$ and pressure $p$, and $\mathbf{g}\in C^3_{\mathrm{loc}}(\mathbf{N})$.  Then the following properties hold:
\begin{enumerate}
\item \label{item:spacetime-DEC} $(\mathbf{N}, \mathbf{g})$ satisfies the spacetime DEC if and only if $p\le0$.
\item \label{item:perfect-fluid}  Let $U$ be a spacelike hypersurface in $\mathbf{N}$ with induced initial data $(g, \pi)$. Denote by $\mathbf{n}$ the future unit normal to $U$ and by  $(\mu, J)$ the energy and current densities of $(g, \pi)$. Then along~$U$, 
\begin{align}\label{equation:v+p}
\begin{split}
\mathbf{v} &= \left\{ \begin{array}{ll} \frac{1}{\sqrt{|J|_g} }(|J|_g \mathbf{n} -J) &\mbox{where } J\neq 0\\
0 &\mbox{where } J=0 \end{array}\right.\\
p&= |J|_g -\mu.
\end{split}
\end{align}
Or equivalently, with respect to an orthonormal basis $e_0, e_1, \dots, e_n$ along $U$, where $e_0 = \mathbf{n}$,  the Einstein tensor takes the form: 
\begin{align}	\label{equation:perfect-fluid}
\begin{split}
G_{00} &=\mu\\
G_{0i}& = J_i\\
G_{ij} &=\left\{  \begin{array}{ll}(|J|_g - \mu) g_{ij}+\frac{J_i J_j}{|J|_g} & \mbox{ where } J\neq 0\\
-\mu g_{ij} & \mbox{ where } J=0.\end{array}\right.
\end{split}
\end{align}
\item If $(\mathbf{N}, \mathbf{g})$ admits a Killing vector field $\mathbf{Y}$ and $\mathbf{v} = \eta \mathbf{Y}$ for some function $\eta$, then the pressure $p$ is constant on $\mathbf{N}$. \label{item:pressure}

\end{enumerate}
 \end{lemma}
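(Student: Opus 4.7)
The plan is to prove the three items separately, with item (3) being the only one that requires real work beyond unpacking definitions.

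For item (1), I would expand the Einstein tensor's contraction: for any two future causal vectors $\mathbf{u},\mathbf{w}$,
\[ G(\mathbf{u},\mathbf{w}) = p\,\mathbf{g}(\mathbf{u},\mathbf{w}) + \mathbf{g}(\mathbf{v},\mathbf{u})\,\mathbf{g}(\mathbf{v},\mathbf{w}). \]
Since $\mathbf{g}(\mathbf{u},\mathbf{w})\le 0$ for future causal vectors, and $\mathbf{g}(\mathbf{v},\mathbf{u}),\mathbf{g}(\mathbf{v},\mathbf{w})\le 0$ because $\mathbf{v}$ is future null or zero, the sufficiency $p\le 0 \Rightarrow$ spacetime DEC is immediate (both summands are nonnegative). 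For necessity, I would fix a point and choose a future timelike $\mathbf{u}=\lambda\mathbf{n}_1+\mu\mathbf{e}$ where $\mathbf{n}_1$ is a unit timelike vector, $\mathbf{e}$ is a unit spacelike vector with $\mathbf{v}=\mathbf{n}_1+\mathbf{e}$ in the null case (or any $\mathbf{n}_1$ if $\mathbf{v}=0$), and set $\mathbf{w}=\mathbf{u}$. Letting $\mu\to\lambda^-$ forces $p\le 0$.

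For item (2), the Gauss--Codazzi equations give $G_{00}=\mu$ and $G_{0i}=J_i$ in any adapted orthonormal frame. I would write $\mathbf{v}=v^0\mathbf{n}+v^\perp$ along $U$; the null condition forces $|v^\perp|_g=v^0$. Plugging into $G_{00}=-p+(v^0)^2=\mu$ and $G_{0i}=-v^0 v^\perp_i=J_i$, I can solve: squaring the second gives $(v^0)^2=|J|_g$, so $v^0=\sqrt{|J|_g}$ and $v^\perp=-J/\sqrt{|J|_g}$ wherever $J\ne 0$, yielding the formula for $\mathbf{v}$, and then $p=|J|_g-\mu$. The spatial components $G_{ij}=p\,g_{ij}+v_iv_j$ then reduce to the claimed expression. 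Where $J=0$, the null condition forces $\mathbf{v}=0$.

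Item (3) is the substantive step. The key inputs are the twice-contracted Bianchi identity $\nabla^\alpha G_{\alpha\beta}=0$ and the Lie-derivative identity $L_\mathbf{Y}G=0$ (automatic because $\mathbf{Y}$ is Killing). Expanding Bianchi with $\mathbf{v}=\eta\mathbf{Y}$ and using $\nabla\cdot\mathbf{Y}=0$ (from tracing Killing) gives
\[ \nabla_\beta p + Y(\eta)\eta Y_\beta + \eta^2 Y^\alpha\nabla_\alpha Y_\beta = 0. \]
On the open set where $\eta\ne 0$, $\mathbf{Y}$ is null, so $\nabla_\beta(Y^\alpha Y_\alpha)=0$, and the Killing identity $Y^\alpha\nabla_\alpha Y_\beta=-Y^\alpha\nabla_\beta Y_\alpha=-\tfrac12\nabla_\beta(Y^\alpha Y_\alpha)=0$ collapses Bianchi to $\nabla_\beta p = -2\eta Y(\eta) Y_\beta$. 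Separately, expanding $L_\mathbf{Y}G=0$ with $L_\mathbf{Y}\mathbf{g}=0$ and $L_\mathbf{Y}\mathbf{v}=[\mathbf{Y},\eta\mathbf{Y}]=Y(\eta)\mathbf{Y}$ yields
\[ Y(p)\mathbf{g}_{\alpha\beta} + 2\eta Y(\eta)Y_\alpha Y_\beta=0. \]
Since $\mathbf{g}$ has full rank while $Y\otimes Y$ has rank at most one, both coefficients must vanish pointwise: in particular $\eta Y(\eta)Y_\alpha Y_\beta=0$, so wherever $\mathbf{Y}\ne 0$ and $\eta\ne 0$ we get $Y(\eta)=0$. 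Feeding this back into Bianchi gives $\nabla_\beta p=0$ on the region $\{\eta\ne 0\}$. On the interior of $\{\eta=0\}$, $\mathbf{v}=0$ so Bianchi already reads $\nabla_\beta p=0$, and continuity of $\nabla p$ (valid since $\mathbf{g}\in C^3_{\mathrm{loc}}$ makes $p$ at least $C^1$) propagates this to the closure. Hence $p$ is locally constant, and thus constant on $\mathbf{N}$ by connectedness. The delicate point I expect to verify carefully is the rank comparison argument that extracts $Y(\eta)=0$ from $L_\mathbf{Y}G=0$ without assuming $\mathbf{Y}$ is nonzero everywhere; the trivial case $\mathbf{Y}\equiv 0$ on an open set would force $\mathbf{v}\equiv 0$ there and be handled as above.
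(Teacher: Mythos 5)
Your proposal is correct and takes essentially the same approach as the paper's: the same frame computation with Gauss--Codazzi for item (2), and the same combination of $\mathcal{L}_{\mathbf{Y}}G=0$ (to extract $\eta\,\mathbf{Y}(\eta)=0$ via the rank comparison) with the contracted Bianchi identity for item (3), merely applied in the opposite order. Two small remarks: you dropped a factor of $2$ in the Bianchi term $2\eta\,\mathbf{Y}(\eta)Y_\beta$ (though you used the correct coefficient afterward), and for item (1) the paper avoids the limiting argument by contracting $G$ directly with $\mathbf{v}$ and the spatially reflected future null vector $\bar{\mathbf{v}}=v^0e_0-\sum_i v^ie_i$, which gives $p\le 0$ in one line.
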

\begin{proof}
Let $e_0, e_1, \dots, e_n$ be an orthonormal frame such that $e_0$ is future unit timelike. Writing the velocity vector $\mathbf{v}=v^0 e_0 +\cdots+v^n e_n$ with respect to this frame, note that its dual covector has $v_0 = -v^0$ while $v_i=v^i$ for $i=1$ to $n$.

Proof of \eqref{item:spacetime-DEC}: Since $\mathbf{g}(\mathbf{u}, \mathbf{w}) \le 0$ for any pair of future causal vectors $\mathbf{u}$ and $\mathbf{w}$, it is easy to see that the spacetime DEC holds if $p\le 0$. Conversely, suppose the spacetime DEC holds. At the points where $\mathbf{v}=0$, the fact that $G(e_0, e_0)\ge 0$ implies $p\le 0$. At points where $\mathbf{v}$ is not zero, define the future null $\bar{\mathbf{v}}:= v^0 e_0 -(v^1 e_1+ \cdots +v^n e_n)$. The spacetime DEC tells us that $G(\bar{\mathbf{v}}, \mathbf{v})\ge 0$, which then implies that $p\le 0$.

Proof of \eqref{item:perfect-fluid}: Let  $e_0 = \mathbf{n}$ be the future unit normal to $U$. 
Combining the definitions of $\mu$ and $J$ (or more precisely,~\eqref{equation:Einstein-0i}) with equation~\eqref{equation:perfect-fluid-Einstein}, we obtain
\begin{align*}
\mu &= G_{00} =-p+v_0^2  \\
J_i & = G_{0i}=v_0  v_i.
\end{align*}
Since $\mathbf{v}$ is future null or zero, we can conclude that 
$|J|_g =v_0^2$, and  $ v_0 J_i = |J|_g v_i$. These equations easily imply the desired expressions for  $\mathbf{v}$ and $p$.  The desired expression for $G_{ij}$ follows by substituting these expressions for $\mathbf{v}$ and $p$ back into \eqref{equation:perfect-fluid-Einstein}.

Proof of \eqref{item:pressure}: Express the Einstein tensor as $G_{\alpha\beta} = p \mathbf{g}_{\alpha\beta} + \eta^2 Y_\alpha Y_\beta$. Taking the Lie derivative with respect to the Killing vector $\mathbf{Y}$,
\[
0= \mathcal{L}_{\mathbf{Y}} G_{\alpha\beta}
= (\mathcal{L}_{\mathbf{Y}}p) \mathbf{g}_{\alpha\beta} +  (\mathcal{L}_{\mathbf{Y}}\eta^2 ) Y_\alpha Y_\beta.
\]
At every point where $\mathbf{Y}$ is not zero, we can easily find vectors $\mathbf{u}$ and $\mathbf{w}$ such that  $\mathbf{g}(\mathbf{u}, \mathbf{w})=0$ while $\mathbf{g}(\mathbf{Y}, \mathbf{u})$ and $\mathbf{g}(\mathbf{Y}, \mathbf{w})$ are nonzero. Plugging these into the above equation shows that $\mathcal{L}_{\mathbf{Y}}\eta^2=0$ everywhere. Next, we
use the contracted second Bianchi identity to obtain
\begin{align*}
 0 = \bm{\nabla}^\beta G_{\alpha\beta}& =   \bm{\nabla}_\alpha p +  (\bm{\nabla}^{\beta} \eta^2) Y_\alpha Y_\beta+ 
\eta^2 (\bm{\nabla}^\beta Y_\alpha )Y_\beta
+
\eta^2 Y_\alpha (\Div_{\mathbf{g}} \mathbf{Y})\\
& =   \bm{\nabla}_\alpha p,
\end{align*}
where we used the facts that $\mathcal{L}_{\mathbf{Y}}\eta^2=0$, $\mathbf{Y}$ is Killing, and $\mathbf{Y}$ is null or zero wherever $\eta$ is not zero. Hence $p$ is constant.

\end{proof}
\begin{definition}[Beig-Chru\'sciel \cite{Beig-Chrusciel:1996}]
Let $(U, g)$ be an Riemannian manifold equipped with a lapse-shift pair $(f, X)$ such that $f$ is nonvanishing on $U$.  The \emph{Killing development of $(U,g, f, X)$} is a triple $(\mathbf{N}, \mathbf{g}, \mathbf{Y})$, where $(\mathbf{N}, \mathbf{g})$ is a Lorentzian spacetime equipped with a vector field $\mathbf{Y}$, defined as follows: 
\begin{enumerate}
\item 
$\mathbf{N}:=\mathbb{R}\times U$.
\item 	$\mathbf{g} := -4f^2 du^2 + g_{ij} (dx^i +X^i du) (dx^j + X^j du)$, where $u$ is the coordinate on the $\mathbb{R}$ factor, $(x^1, \dots, x^n)$ is a local coordinate chart  of $U$, and $f, X, g_{ij}$ are all extended to $\mathbf{N}$ to be independent of the coordinate $u$.
\item $\mathbf{Y}:=\frac{\partial}{\partial u}$.
\end{enumerate}
\end{definition}
 The triple $(\mathbf{N}, \mathbf{g}, \mathbf{Y})$ is called the Killing development of $(U, g, f, X)$  because 
 $\mathbf{Y}$ is a global Killing field (Lemma~\ref{lemma:Killing-independent-u}), which is related to the data $(f,X)$ via the equation
 \[ \mathbf{Y} = 2f \mathbf{n} +X,\]
 which holds along the constant $u$ slices, where $\mathbf{n}$ is a unit normal to those slices. (We declare that this  $\mathbf{n}$ defines the time-orientation on $\mathbf{N}$). Lemma~\ref{lemma:Einstein_tangential}  implies that  $(U, g, \pi)$ sits inside the Killing development of $(U, g, f, X)$ as the $\{u=c\}$ slice for every constant $c$,  if and only if the ``$k$'' corresponding to $\pi$ satisfies 
 \begin{equation}\label{eqn:sits-in-KD}
  k_{ij} = -\frac{1}{4f} (L_X g)_{ij}. 
  \end{equation}

The following proposition is a more precise version of the first half of  Theorem~\ref{theorem:DEC}.
 \begin{proposition}\label{proposition:fluid}
Let $(U, g, \pi)$ be an initial data set equipped with a lapse-shift pair $(f, X)$ such that $f$ is nonvanishing on $U$, and assume that $(f,X)$ solves the system~\eqref{equation:pair}. Then the following holds:
\begin{enumerate}
\item \label{item:constancy-proposition}
 The dominant energy scalar $\sigma(g, \pi)$ is constant in $U$.
\item  \label{item:spacetime-proposition} Let $(\mathbf{N}, \mathbf{g}, \mathbf{Y})$ be the Killing development of $(U,g, f, X)$. Then $(U, g, \pi)$ sits inside  $(\mathbf{N}, \mathbf{g})$, which is a null perfect fluid spacetime with velocity $\mathbf{v} = \frac{\sqrt{|J|_g}}{ 2f}\mathbf{Y}$ and pressure $p = -\tfrac{1}{2}\sigma(g, \pi)$. In particular, the Einstein tensor of $\mathbf{g}$ satisfies~\eqref{equation:perfect-fluid} along every constant $u$ slice of $\mathbf{N}$.
\item If $(U, g, \pi)$ satisfies the dominant energy condition, then $(\mathbf{N}, \mathbf{g})$ satisfies the spacetime  dominant energy condition. \label{item:DEC-proposition}
\end{enumerate}
\end{proposition}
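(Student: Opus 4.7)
The plan is to first set up the Killing development so that $(U, g, \pi)$ sits inside it, then identify the Einstein tensor on a slice, and finally deduce (1), (2), (3) from general properties of null perfect fluids established in Lemma~\ref{lemma:null-perfect-fluid}.

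\textbf{Embedding into the Killing development.} The momentum component \eqref{equation:momentum} of $D\overline{\Phi}_{(g,\pi)}^*(f,X)=0$ (see Lemma~\ref{lemma:Hessian}) reads
\[
\tfrac{1}{2}(L_X g)_{ij} = \Bigl(\tfrac{2}{n-1}(\tr_g \pi)\, g_{ij} - 2\pi_{ij}\Bigr) f.
\]
Substituting $\tr_g\pi = -(n-1)\tr_g k$ and $\pi_{ij} = k_{ij} - (\tr_g k)\, g_{ij}$ (both immediate from \eqref{equation:k}) converts this to $k_{ij} = -\tfrac{1}{4f}(L_X g)_{ij}$, which is exactly condition \eqref{eqn:sits-in-KD} for $(U,g,\pi)$ to sit inside the Killing development $(\mathbf{N},\mathbf{g},\mathbf{Y})$ of $(U,g,f,X)$ as the $u=0$ slice. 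Since $g,\pi,f,X$ are all $u$-independent in the Killing coordinates, every constant-$u$ slice carries the same initial data $(g,\pi)$.

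\textbf{Einstein tensor on a slice.} On any constant-$u$ slice, with $e_0 = \mathbf{n}$ the future unit normal, the Gauss and Codazzi equations give $G_{00} = \mu$ and $G_{0i} = J_i$. To identify the tangential components $G_{ij}$, I would expand the standard ADM formula for the tangential spacetime Ricci tensor in terms of the induced Ricci curvature $R_{ij}$, the Hessian of the lapse $2f$, the extrinsic curvature $k$, and the Lie derivative $L_X k$; the would-be $\partial_u k_{ij}$ term vanishes because $\mathbf{Y}$ is Killing. Then I substitute the Hamiltonian equation~\eqref{equation:Hamiltonian} (with $\varphi=0$) from Lemma~\ref{lemma:Hessian}, re-express $\pi$ in terms of $k$, and invoke the $J$-null-vector equation in the form $(X\odot J)_{ij} = -\tfrac{2f}{|J|_g} J_i J_j$, valid wherever $J\ne 0$. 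The expected outcome is
\[
G_{ij} = (|J|_g - \mu)\, g_{ij} + \tfrac{J_i J_j}{|J|_g} \quad\text{where } J\ne 0,
\]
with $G_{ij} = -\mu\, g_{ij}$ where $J = 0$ (handled by continuity or by observing that the $J$-null-vector equation forces the $X\odot J$ terms to drop out). By Lemma~\ref{lemma:null-perfect-fluid}(2), this identifies $(\mathbf{N},\mathbf{g})$ as a null perfect fluid spacetime with velocity $\mathbf{v} = \tfrac{\sqrt{|J|_g}}{2f}\mathbf{Y}$ and pressure $p = |J|_g - \mu = -\tfrac{1}{2}\sigma(g,\pi)$, which is precisely~(2).

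\textbf{Constancy of $\sigma$ and spacetime DEC.} Because $\mathbf{v} = \eta\, \mathbf{Y}$ with $\eta = \sqrt{|J|_g}/(2f)$, Lemma~\ref{lemma:null-perfect-fluid}(3) gives that the pressure $p$ is constant on $\mathbf{N}$, so $\sigma(g,\pi) = -2p$ is constant on $U$, proving~(1). For~(3), the DEC on $(g,\pi)$ says $\sigma(g,\pi)\ge 0$, equivalently $p\le 0$; Lemma~\ref{lemma:null-perfect-fluid}(1) then yields the spacetime DEC. The main obstacle is the explicit computation of $G_{ij}$: expanding the ADM formula in the Killing development, feeding in \eqref{equation:Hamiltonian}, and tracking the rearrangements that convert the $X\odot J$ and $L_X\pi$ terms into the $J\otimes J/|J|_g$ form via the $J$-null-vector equation. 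The delicate bookkeeping in that substitution, including correctly handling the $J=0$ locus, is where the argument is most likely to require care.
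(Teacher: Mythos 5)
Your proposal is correct and follows essentially the same route as the paper's own proof: embed via the momentum equation (which is exactly \eqref{eqn:sits-in-KD}), combine the Killing-development formula for $G_{ij}$ with the Hamiltonian equation \eqref{equation:Hamiltonian} (at $\varphi=Z=0$) and the $J$-null-vector relation to get \eqref{equation:perfect-fluid}, then read off all three items from Lemma~\ref{lemma:null-perfect-fluid}. The "delicate bookkeeping" you flag is carried out in the paper by adding \eqref{equation:Einstein_pi} to $f^{-1}\cdot$\eqref{equation:Hamiltonian}, which collapses to \eqref{equation:tangential-lapse-shift2} and then to \eqref{equation:perfect-fluid} after substituting $-X/(2f)=J/|J|_g$; your outline matches this exactly.
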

Note that conclusion \eqref{item:constancy-proposition} may fail if we remove the nonvanishing assumption on $f$. As mentioned in Section~\ref{section:introduction}, if $f$ vanishes on all of $U$, then  the system \eqref{equation:pair} is equivalent to that $L_X g=0$ and $L_X\pi=0$. In particular, these examples need not  have constant $\sigma(g,\pi)$ (as can be seen by considering explicit time-symmetric examples).


\begin{proof}
We focus on proving Item \eqref{item:spacetime-proposition}, and then Items~\eqref{item:constancy-proposition} and~\eqref{item:DEC-proposition} follow immediately from Lemma~\ref{lemma:null-perfect-fluid}.

First observe that equation~\eqref{equation:momentum} in the system~\eqref{equation:pair}, together with~\eqref{equation:k}, tells us that the ``$k$'' corresponding to our given $\pi$ satisfies~\eqref{eqn:sits-in-KD}, and hence $(U, g, \pi)$ sits inside $(\mathbf{N}, \mathbf{g})$.

Next, we wish to show that the Einstein tensor of $\mathbf{g}$ takes the form of a null perfect fluid. Specifically, we will show that it satisfies~\eqref{equation:perfect-fluid} along~$U$ (and similarly, along any constant $u$ slice). Choose an orthonormal basis $e_0, e_1,\ldots, e_n$ with $e_0=\mathbf{n}$. The equations $G_{00}=\mu$ and $G_{0i}=J_i$ in~\eqref{equation:perfect-fluid} are essentially the definitions of $\mu$ and $J$ (or more precisely, see~\eqref{equation:Einstein-0i}). We have the following general formula for $G_{ij}$ in a Killing development~\eqref{equation:Einstein_pi}: 
\begin{align}\tag{\ref{equation:Einstein_pi}}
\begin{split}
G_{ij}&=  \left[R_{ij} -\tfrac{1}{2}R_g g_{ij}\right]+
 \left[- \tfrac{3}{n-1}(\tr_g \pi)\pi_{ij} + 2\pi_{i\ell}\pi^\ell_j
  \right]\\
  &\quad  + \left[   \tfrac{1}{2(n-1)}(\tr_g \pi)^2 - \tfrac{1}{2}|\pi|_g^2 \right] g_{ij} \\
&\quad+f^{-1}\left[-\tfrac{1}{2}(L_X \pi)_{ij}  -  f_{;ij} +(\Delta_g f)g_{ij}\right]. 
\end{split}
\end{align}
We would like to compare this to~\eqref{equation:Hamiltonian} in the system~\eqref{equation:pair}, in which $\phi$ and $Z$ are both zero. Dividing \eqref{equation:Hamiltonian} by~$f$ gives
\begin{align}\label{equation:Hamiltonian-zero}
\begin{split}
0 &=  -R_{ij}  +  \left[ \tfrac{3}{n-1}(\tr_g \pi)\pi_{ij} - 2\pi_{i\ell}\pi^\ell_j
  \right]
 + \left[   -\tfrac{1}{n-1}(\tr_g \pi)^2 + |\pi|_g^2 \right] g_{ij} \\
&\quad +f^{-1}\left[ \tfrac{1}{2}(L_X \pi)_{ij}  +  f_{;ij} - (\Delta_g f)g_{ij}
 -\tfrac{1}{2}\langle X, J\rangle_g g_{ij} -\tfrac{1}{2}(X\odot J)_{ij}\right].
 \end{split}
\end{align}
Adding together the two previous equations, we get
\begin{align}
\begin{split} \label{equation:tangential-lapse-shift1}
G_{ij}  &= -\tfrac{1}{2}\left[ R_g +\tfrac{1}{n-1}(\tr_g\pi)^2 -|\pi|_g^2\right]g_{ij} \\
&\quad 
 -\tfrac{1}{2f}\langle X, J\rangle_g g_{ij} -\tfrac{1}{2f}(X\odot J)_{ij} 
 \end{split}\\
&= \left(-\mu  -\tfrac{1}{2f}\langle X, J\rangle_g\right) g_{ij} -\tfrac{1}{2f}(X\odot J)_{ij}.\label{equation:tangential-lapse-shift2}
\end{align}
At points where $J=0$, the expression for $G_{ij}$ in \eqref{equation:perfect-fluid} follows immediately, and when $J\neq0$, we can see it by substituting 
in the $J$-null-vector equation, $\frac{-X}{2f}=\frac{J}{|J|_g}$. From our work in Lemma~\ref{lemma:null-perfect-fluid}, this implies that $(\mathbf{N}, \mathbf{g})$ is a null perfect fluid spacetime with velocity $\mathbf{v}$ and pressure $p$ satisfying~\eqref{equation:v+p}. Specifically, $p=|J|_g-\mu = -\tfrac{1}{2}\sigma(g,\pi)$, and the $J$-null vector equation implies that $\mathbf{v} = \frac{\sqrt{|J|_g}}{2f} \mathbf{Y}$, completing the proof of Item~ \eqref{item:spacetime-proposition}.

\end{proof}

Next, we prove the second half of Theorem~\ref{theorem:DEC}, which is a sort of converse to Item~\eqref{item:spacetime-proposition} of Proposition~\ref{proposition:fluid}.
\begin{proposition}
Let  $(\mathbf{N}, \mathbf{g})$ be a null perfect fluid spacetime with  velocity $\mathbf{v}$ and pressure $p$, and $\mathbf{g}\in C^3_{\mathrm{loc}}(\mathbf{N})$.  Assume there is a global $C^2_{\mathrm{loc}}$ Killing vector field $\mathbf{Y}$ such that $\mathbf{v} = \eta \mathbf{Y}$ for some scalar function $\eta$. If $U$ is a spacelike hypersurface of $\mathbf{N}$ with induced initial data $(g, \pi)$ and future unit normal $\mathbf{n}$, and if we decompose $\mathbf{Y}=2f \mathbf{n} + X$ along~$U$, then the lapse-shift pair $(f, X)$ satisfies the system~\eqref{equation:pair}. 

Moreover, if $\mathbf{Y}$ is transverse to $U$, then $(\mathbf{N}, \mathbf{g}, \mathbf{Y})$ must agree with the Killing development of $(U, g,  f, X)$ in a neighborhood of $U$.
\end{proposition}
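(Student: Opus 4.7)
The first equation to derive, the $J$-null-vector equation, falls out immediately from matching components of $\mathbf{v} = \eta\mathbf{Y}$. From the decomposition $\mathbf{Y} = 2f\mathbf{n} + X$ we have $\eta\mathbf{Y} = 2\eta f\,\mathbf{n} + \eta X$, while Lemma~\ref{lemma:null-perfect-fluid}(\ref{item:perfect-fluid}) gives $\mathbf{v} = \sqrt{|J|_g}\,\mathbf{n} - J/\sqrt{|J|_g}$ wherever $J \neq 0$. Matching normal and tangential parts forces $\eta = \sqrt{|J|_g}/(2f)$ (so in particular $f \neq 0$ whenever $J \neq 0$) and $\eta X = -J/\sqrt{|J|_g}$, which rearranges to $|J|_g X + 2fJ = 0$. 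At points where $J = 0$ the equation is trivial, so it holds on all of $U$.

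For the ``Moreover'' statement, I would argue that if $\mathbf{Y}$ is transverse to $U$, the flow of $\mathbf{Y}$ yields a local diffeomorphism from a neighborhood of $U$ in $\mathbf{N}$ onto $(-\epsilon,\epsilon) \times U$ that sends $\mathbf{Y}$ to $\partial_u$. Because $\mathbf{Y}$ is Killing, every component of $\mathbf{g}$ in the induced coordinates is independent of $u$; decomposing $\mathbf{g}$ on $\{u=0\}$ according to $\mathbf{Y} = 2f\mathbf{n} + X$ shows that $\mathbf{g}$ has the Killing development form $-4f^2 du^2 + g_{ij}(dx^i + X^i du)(dx^j + X^j du)$. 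This establishes the ``Moreover'' claim and, on the open set $\{f \neq 0\}$, justifies invoking formula~\eqref{equation:Einstein_pi} for $G_{ij}$.

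It remains to verify the two equations comprising $D\overline{\Phi}^*_{(g,\pi)}(f,X) = 0$, namely \eqref{equation:momentum} and \eqref{equation:Hamiltonian} with $\varphi = Z = 0$; by Lemma~\ref{lemma:Hessian} these together are equivalent to $D\overline{\Phi}^*_{(g,\pi)}(f,X) = 0$. The momentum constraint comes directly from projecting the Killing equation $\bm{\nabla}_\alpha Y_\beta + \bm{\nabla}_\beta Y_\alpha = 0$ onto tangent vectors $v,w$ along $U$: using $\mathbf{Y} = 2f\mathbf{n} + X$ and recalling that the tangential projection of $\bm{\nabla}_v \mathbf{n}$ reads off the second fundamental form $k$, the normal $f$-derivative terms drop out and this collapses to $(L_X g)(v,w) + 4f\,k(v,w) = 0$, which via~\eqref{equation:k} is exactly \eqref{equation:momentum} (and this argument requires no transversality). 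For the Hamiltonian constraint on $\{f \neq 0\}$, I would equate the Killing development formula~\eqref{equation:Einstein_pi} for $G_{ij}$ with the null perfect fluid form $G_{ij} = p\,g_{ij} + v_i v_j$; substituting $p = -\tfrac{1}{2}\sigma(g,\pi)$, $v_i v_j = J_i J_j/|J|_g$, and then using the $J$-null-vector equation to trade $J$ for $X$, this exactly reverses the calculation of Proposition~\ref{proposition:fluid} running from~\eqref{equation:Hamiltonian-zero} to~\eqref{equation:tangential-lapse-shift2} and recovers~\eqref{equation:Hamiltonian}.

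The main remaining subtlety is extending the Hamiltonian equation to points where $\mathbf{Y}$ is tangent to $U$, where the Killing development argument does not apply. At such a point $f=0$, and $\mathbf{v} = \eta\mathbf{Y}$ becomes tangent to the spacelike hypersurface while being null or zero, forcing $\mathbf{v} = 0$ and hence $J = 0$ by Lemma~\ref{lemma:null-perfect-fluid}(\ref{item:perfect-fluid}). Every point of $U$ lies either in the interior of $\{f = 0\}$ or in the closure of $\{f \neq 0\}$. On the closure of $\{f \neq 0\}$, the equation extends by continuity from the open set where it has already been verified. On the interior of $\{f = 0\}$, $X = \mathbf{Y}|_U$ is a Killing field of $(U,g)$ that also preserves $\pi$ (since the flow of $\mathbf{Y}$ preserves $U$, $g$, and $k$ in this region), so \eqref{equation:Hamiltonian} with $\varphi = Z = f = 0$ reduces to $\langle X, J\rangle_g g_{ij} + (X \odot J)_{ij} = 0$, which holds trivially because $J \equiv 0$ there.
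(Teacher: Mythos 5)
Your proof is correct and follows essentially the same strategy as the paper's: extract the $J$-null-vector equation by matching the normal and tangential parts of $\mathbf{v}=\eta\mathbf{Y}$ against \eqref{equation:v+p}, run the computation of Proposition~\ref{proposition:fluid} in reverse on $\{f\neq 0\}$ to recover $D\overline{\Phi}^*_{(g,\pi)}(f,X)=0$, and handle the interior of $\{f=0\}$ separately by observing that there $\mathbf{Y}=X$ is a spacelike Killing field forcing $J\equiv 0$ and $L_Xg=L_X\pi=0$, with the remaining boundary points covered by continuity. The only cosmetic difference is that you spell out the derivation of the momentum constraint directly from the Killing equation rather than citing Appendix~\ref{section:spacetime}, but the content is identical.
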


\begin{proof}
We decompose $U= V_1 \cup V_2$ so that $f\neq 0$ on $V_1$ and $f\equiv 0$ on~$V_2$. In particular, $\mathbf{Y}$ is transverse to $V_1$, and $\mathbf{Y}$ is tangent to $\Int V_2$.

On $V_1$, together with \eqref{equation:v+p}, the assumption that $\mathbf{v}=\eta \mathbf{Y}$  implies that $(f, X)$ satisfies the $J$-null-vector equation $2fJ +|J|_g X=0$ and  $\eta =\frac{\sqrt{|J|_g}}{2f}$. Once the $J$-null-vector equation is established,  the same computations in the proof of Proposition~\ref{proposition:fluid}, when viewed ``backwards,'' will imply that $D\overline{\Phi}^*_{(g, \pi)}(f, X)=0$. More specifically, the $J$-null-vector equation and \eqref{equation:perfect-fluid} imply \eqref{equation:tangential-lapse-shift2} and \eqref{equation:tangential-lapse-shift1}. Using  \eqref{equation:Einstein_pi}, we see that \eqref{equation:Hamiltonian-zero} holds, which is the same as   \eqref{equation:Hamiltonian} (in the case where $\phi$ and $Z$ are both zero), and \eqref{equation:Killing} is the same as \eqref{equation:momentum}. Taken together, this gives us~\eqref{equation:pair}, as desired.

The second paragraph of the Proposition then follows easily from the discussion in Appendix~\ref{section:spacetime}, together with Proposition~\ref{proposition:fluid}.

On $\Int V_2$, since $\mathbf{Y}= X$ is spacelike, $\mathbf{v}$ must vanish, and so does~$J$. Thus, the $J$-null-vector equation trivially holds  on $\Int V_2$.  Since $\mathbf{Y}$ is  Killing,  it follows that $L_X g=0$ and $L_X \pi=0$ on $\Int V_2$. Observe that these two equations are equivalent to $D\overline{\Phi}_{(g,\pi)}^*(0,X)=0$. Thus, we conclude that both equations~\eqref{equation:pair} hold on $V_1$ and $\Int V_2$, and then by continuity, they hold everywhere in $U$.
\end{proof}

\subsection{Removing the nonvanishing assumption on $f$}
The nonvanishing assumption of $f$ is essential in the proof of Proposition~\ref{proposition:fluid}. Thankfully, there are some situations in which we can show that $f$ is nonvanishing, and this allows us to expand the applicability of  Proposition~\ref{proposition:fluid}.  (See Theorem~\ref{theorem:f_not_zero} below.)

\begin{lemma}
Let $(U, g, \pi)$ be an initial data set, and let $(f, X)$ be a lapse-shift pair  satisfying the following equations on $U$: 
\begin{align}
	\tfrac{1}{2} (X_{i;j} + X_{j;i} ) &= \left( \tfrac{2}{n-1} (\tr_g \pi)g_{ij} - 2\pi_{ij} \right)f \tag{\ref{equation:momentum}}\\
	2fJ+|J|_g X&=0\notag.
\end{align}
Suppose that $J$ is nonvanishing on $U$ and we set $\hat{J} := \frac{J}{|J|_g}$. Then, $f$ satisfies the following equations everywhere in $U$:
\begin{align}
	\langle \nabla f,  \hat{J}\rangle_g &=- \left( \tfrac{1}{n-1} (\tr_g \pi )- \pi_{ij} \hat{J}^i\hat{J}^j\right) f \label{equation:gradf}\\
	(\nabla f)^j  &=-  \left(\hat{J}^j_{;i} \hat{J}^i + \tfrac{1}{n-1} (\tr_g \pi) \hat{J}^j -2 \pi^{ij} \hat{J}_i + \pi_{k\ell } \hat{J}^k \hat{J}^\ell \hat{J}^j\right) f.\label{equation:gradf-2}
\end{align}
\end{lemma}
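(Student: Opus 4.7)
The plan is to exploit the $J$-null-vector equation to express $X$ directly in terms of $f$ and $\hat J$, substitute into equation~\eqref{equation:momentum}, and then perform two successive contractions with $\hat J$. Since $J\ne 0$ everywhere on $U$, the equation $2fJ+|J|_g X=0$ rearranges to
\[X=-2f\hat J.\]
Differentiating and symmetrizing gives $\tfrac{1}{2}(X_{i;j}+X_{j;i})=-f_{;i}\hat J_j-f_{;j}\hat J_i-f(\hat J_{i;j}+\hat J_{j;i})$, so~\eqref{equation:momentum} becomes the key symmetric relation
\begin{equation}\label{eq:plan-key}
f_{;i}\hat J_j+f_{;j}\hat J_i=-f(\hat J_{i;j}+\hat J_{j;i})-\left(\tfrac{2}{n-1}(\tr_g\pi)g_{ij}-2\pi_{ij}\right)f.
\end{equation}

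For equation~\eqref{equation:gradf}, I would contract~\eqref{eq:plan-key} with $\hat J^i\hat J^j$. The left-hand side becomes $2\langle\nabla f,\hat J\rangle_g$ since $|\hat J|_g^2=1$. On the right-hand side, the identity $\hat J_i\hat J^i=1$ differentiates to $\hat J_{i;j}\hat J^i=0$, which kills both symmetrized derivative terms, leaving only the algebraic terms. Dividing by $2$ yields~\eqref{equation:gradf} immediately.

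For equation~\eqref{equation:gradf-2}, I would contract~\eqref{eq:plan-key} with $\hat J^i$ alone. The left-hand side becomes $f_{;j}+\langle\nabla f,\hat J\rangle_g\hat J_j$, and again $\hat J_{i;j}\hat J^i=0$ eliminates one of the $\hat J$-derivative terms on the right. Substituting the already-proven formula~\eqref{equation:gradf} for $\langle\nabla f,\hat J\rangle_g$ and solving for $f_{;j}$ gives
\[f_{;j}=-f\left[\hat J_{j;i}\hat J^i+\tfrac{1}{n-1}(\tr_g\pi)\hat J_j-2\pi_{ij}\hat J^i+\pi_{k\ell}\hat J^k\hat J^\ell\hat J_j\right],\]
after which raising the $j$ index produces~\eqref{equation:gradf-2}.

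There is no real obstacle here; the argument is a direct tensorial computation, and the only subtlety is recognizing that $|\hat J|_g^2=1$ forces $\hat J_{i;j}\hat J^i=0$, which is what makes the two contractions collapse to the clean forms stated in the lemma.
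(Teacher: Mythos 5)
Your proposal is correct and follows essentially the same route as the paper: exploit the $J$-null-vector equation in the form $X=-2f\hat J$ (the paper phrases this as $W:=f\hat J+\tfrac12 X=0$), substitute into \eqref{equation:momentum}, and contract with $\hat J$ once and twice. The one mild simplification in your version is that by symmetrizing from the start you never have to carry the antisymmetric part $T_{ij}=\tfrac12(X_{i;j}-X_{j;i})$; the paper works with the unsymmetrized derivative of $W$, so its proof needs an extra step of adding two contracted identities to cancel the $T$-terms by antisymmetry, whereas in your computation $T$ simply never appears.
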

\begin{proof}
Define $T_{ij} := \tfrac{1}{2} (X_{i;j} - X_{j;i})$ and $W:=f\hat{J}+\tfrac{1}{2}X$ as in the proof of Lemma~\ref{lemma:matrixQ}, and choose $Z=J$.
 Our assumption~\eqref{equation:momentum} is the same as~\eqref{equation:DX}, and therefore equation~\eqref{equation:DW} is valid. On the other hand, the $J$-null-vector equation says that $W=0$, and hence \eqref{equation:DW} says
\begin{align}\label{equation:DW2}
	0&=\hat{J}_i f_j + \tfrac{1}{2} T_{ij} + \left(\hat{J}_{i;j} +\tfrac{1}{n-1} (\tr_g \pi) g_{ij} - \pi_{ij} \right)f.
\end{align} 
Contracting \eqref{equation:DW2} with $\hat{J}^i$ and then with $\hat{J}^j$ gives 
\begin{align}
	0 &= f_j + \tfrac{1}{2} T_{ij} \hat{J}^i + \left( \tfrac{1}{n-1} (\tr_g \pi ) \hat{J}_j - \pi_{ij} \hat{J}^i\right) f\label{equation:J_i}\\
	0&= f_j \hat{J}^j + \left( \tfrac{1}{n-1} (\tr_g \pi )- \pi_{ij} \hat{J}^i\hat{J}^j\right) f,\notag
\end{align}
where we use  $|\hat{J}|_g=1$ and the anti-symmetry of $T$. Thus equation~\eqref{equation:gradf} holds. Contracting \eqref{equation:DW2} with $\hat{J}^j$ and then swapping the $i$ and $j$ indices gives 
\begin{align}
	0&=\hat{J}_j  (f_i \hat{J}^i )+ \tfrac{1}{2} T_{ji} \hat{J}^i+ \left( \hat{J}_{j;i} \hat{J}^i + \tfrac{1}{n-1} (\tr_g \pi) \hat{J}_j - \pi_{ji} \hat{J}^i \right) f\notag\\
	&=\tfrac{1}{2} T_{ji} \hat{J}^i + (\hat{J}_{j;i} \hat{J}^i - \pi_{ji} \hat{J}^i  + \pi_{k\ell } \hat{J}^k \hat{J}^\ell \hat{J}_j)f,\label{equation:J_j}
\end{align}
where we use \eqref{equation:gradf} to substitute $f_j \hat{J}^j $ in the last equation. Adding \eqref{equation:J_i} and \eqref{equation:J_j} to cancel out the terms of $T$ by anti-symmetry gives \eqref{equation:gradf-2}:
\[
	0 = f_j +  \left(\hat{J}_{j;i} \hat{J}^i + \tfrac{1}{n-1} (\tr_g \pi) \hat{J}_j -2 \pi_{ij} \hat{J}^i + \pi_{k\ell } \hat{J}^k \hat{J}^\ell \hat{J}_j\right) f.
\]

\end{proof}

\begin{corollary}\label{corollary:one-dimensional}
Let $(U, g, \pi)$ be an initial data set such that $J$ is not identically zero on~$U$.
Define $\mathcal{N}$ to be the space of all lapse-shift pairs $(f, X)\in C^2_{\mathrm{loc}}\times C^1_{\mathrm{loc}}$ solving the system~\eqref{equation:pair}. Then the vector space $\mathcal{N}$ is at most one-dimensional. Furthermore, if $(f, X)\in \mathcal{N}$ is nontrivial, then $f$ must be nonzero everywhere that $J$ is nonzero.
\end{corollary}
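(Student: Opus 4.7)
The plan is to use the first-order ODE for $f$ derived in the preceding lemma, together with the unique continuation built into the Hessian-type system of Lemma~\ref{lemma:Hessian}. Let $V := \{ p \in U : J(p) \neq 0\}$, which is open and nonempty by hypothesis. First I would observe that on $V$, the $J$-null-vector equation $2fJ + |J|_g X = 0$ forces $X = -\frac{2f}{|J|_g} J$, so $X|_V$ is completely determined by $f|_V$. Moreover, equation~\eqref{equation:gradf-2} of the preceding lemma reads $\nabla f = \alpha f$ for a vector field $\alpha$ on $V$ depending only on $g$, $\pi$, and $\hat J$. Along any smooth path $\gamma \subset V$, the function $f \circ \gamma$ satisfies a homogeneous linear ODE, hence vanishes on all of $\gamma$ as soon as it vanishes at one point.

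Next I would establish the nonvanishing statement. Assume $(f,X) \in \mathcal{N}$ is nontrivial and suppose toward contradiction that $f(p) = 0$ for some $p$ with $J(p) \neq 0$. By the ODE observation, $f \equiv 0$ on the connected component $V_p$ of $V$ containing $p$, and then $X = -\frac{2f}{|J|_g} J \equiv 0$ on $V_p$ as well. Consequently both $(f, X)$ and their first covariant derivatives vanish throughout the open set $V_p$. Now invoke Lemma~\ref{lemma:Hessian} with $\varphi = 0$ and $Z = 0$: since $D\overline{\Phi}^*_{(g,\pi)}(f,X)=0$ is equivalent to the Hessian-type system \eqref{equation:Hessian}--\eqref{equation:second-derivative}, the full 2-jet of $(f,X)$ is expressible in its 1-jet $(f, df, X, \nabla X)$. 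A standard propagation-along-paths argument (the 1-jet satisfies a linear first-order ODE along any curve) combined with connectedness of $U$ then forces $(f,X) \equiv 0$ on $U$, contradicting nontriviality.

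For one-dimensionality, given two nontrivial elements $(f_1, X_1), (f_2, X_2) \in \mathcal{N}$, pick any $p \in V$; by the nonvanishing statement just proved, $f_1(p) \neq 0$ and $f_2(p) \neq 0$. Setting $c := f_2(p)/f_1(p)$, the pair $(\tilde f, \tilde X) := (f_2 - c f_1, \, X_2 - c X_1)$ lies in $\mathcal{N}$ and has $\tilde f(p) = 0$, so the previous paragraph applied to $(\tilde f, \tilde X)$ gives $(\tilde f, \tilde X) \equiv 0$, i.e.\ $(f_2, X_2) = c(f_1, X_1)$.

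The main obstacle is the unique continuation step in the proof of nonvanishing: converting ``vanishing of the 1-jet on an open set'' into ``identically zero on $U$.'' This is where the full strength of the Hessian-type equations from Lemma~\ref{lemma:Hessian} enters, expressing $f_{;ij}$ and $X_{i;jk}$ in terms of $(f, df, X, \nabla X)$; everything else reduces to a routine linear ODE along the flow of $\alpha$ on each component of $V$.
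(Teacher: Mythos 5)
Your argument is correct and takes essentially the same approach as the paper: it rests on the same two ingredients, namely equation~\eqref{equation:gradf-2} (which forces $\nabla f$ to be a scalar multiple of $f$), and the fact that the Hessian-type system~\eqref{equation:Hessian}--\eqref{equation:second-derivative} makes a kernel element of $D\overline{\Phi}^*_{(g,\pi)}$ determined by its $1$-jet at a single point. The only organizational difference is that you propagate vanishing of $f$ along paths to show $(f,X)\equiv 0$ on an entire connected component of $\{J\neq 0\}$ before invoking the $1$-jet uniqueness, whereas the paper's proof works entirely at the single point $p$: from \eqref{equation:gradf-2} one gets $\nabla f(p)$ from $f(p)$, and from the $J$-null-vector equation $X = -\tfrac{2f}{|J|_g}J$ one gets $X(p)$ and $\nabla X(p)$ from $f(p)$ as well, so the full $1$-jet at $p$ is a scalar multiple of a fixed one; this gives one-dimensionality and the nonvanishing claim simultaneously. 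Your extra intermediate step is harmless, just slightly less economical.
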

\begin{proof}
Because any solution $(f, X)$ to $D\overline{\Phi}_{(g, \pi)}^*(f, X)=0$ satisfies the Hessian-type equations \eqref{equation:Hessian} and \eqref{equation:second-derivative}, $(f, X)$ is determined by its 1-jet $(f, \nabla f, X, \nabla X)$ at an arbitrary point $p\in M$. (Cf. \cite[Proposition 2.1]{Corvino-Huang:2020}.) Choose any $p\in U$ so that $J\ne 0$ at $p$. By equation~\eqref{equation:gradf-2}, $\nabla f(p)$ is determined by $f(p)$. Together with the $J$-null vector equation $2f J + |J|_g X=0$, the 1-jet of $X$ at $p$ is also determined by $f(p)$. Thus, $\mathcal{N}$ is at most one-dimensional, and if $f(p)=0$, then $(f, X)$ is identically zero in~$U$.  

\end{proof}

\begin{proposition}\label{proposition:f_not_zero}
Let $(U, g, \pi)$ be an initial data set, and suppose $(f, X)$ is a nontrivial lapse-shift pair on $U$ solving the system \eqref{equation:pair}. 
If $V$ is the set of points where $J\ne0$, then $f$ is nonvanishing on~$\overline{V}$.
\end{proposition}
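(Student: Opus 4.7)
I would argue by contradiction. Suppose $f(p) = 0$ for some $p \in \overline{V}$. By Corollary~\ref{corollary:one-dimensional}, $f$ is nonvanishing on $V$, so necessarily $p \in \partial V = \overline{V}\setminus V$. The strategy is to show that the full $1$-jet $(f(p), \nabla f(p), X(p), \nabla X(p))$ vanishes, whereupon the Hessian-type equations \eqref{equation:Hessian}--\eqref{equation:second-derivative} (with $\varphi = Z = 0$), combined with the uniqueness argument used in the proof of Corollary~\ref{corollary:one-dimensional}, force $(f, X) \equiv 0$ on $U$, contradicting nontriviality.

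Two pieces of the $1$-jet are straightforward. The $J$-null-vector equation gives $X = -2 f \hat{J}$ on $V$ and hence the pointwise identity $|X|_g^2 = 4f^2$ on $V$; by continuity this extends to $\overline{V}$, so $X(p) = 0$. Then \eqref{equation:momentum} evaluated at $p$, together with $f(p) = 0$, shows that the symmetric part of $\nabla X(p)$ vanishes; denote the antisymmetric part by $A$.

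The main step is to show $A = 0$ and $\nabla f(p) = 0$ by exploiting the scalar function $\Psi := |X|_g^2 - 4 f^2$. Equation~\eqref{equation:second-derivative} bootstraps $X$ from $C^1_{\mathrm{loc}}$ to $C^2_{\mathrm{loc}}$ (its right-hand side, expressed in terms of $f, \nabla f, X$ with smooth coefficients, is $C^0$), so $\Psi \in C^2_{\mathrm{loc}}$. Since $\Psi \equiv 0$ on the open set $V$, all its partial derivatives of order $\le 2$ vanish on $V$, and by continuity of $\nabla^2 \Psi$ they vanish on $\overline{V}$ as well; in particular $\nabla^2 \Psi(p) = 0$. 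A direct computation using $X(p) = 0$, $f(p) = 0$, and antisymmetry of $A$ then yields, in an orthonormal frame at $p$,
\[
0 \,=\, \tfrac{1}{2} \nabla_k \nabla_\ell \Psi(p) \,=\, -(A^2)_{k\ell} \,-\, 4\, \partial_k f(p) \, \partial_\ell f(p),
\]
or equivalently, $A^T A = -A^2 = 4 \, \nabla f(p) \otimes \nabla f(p)$. The right-hand side is positive semidefinite of rank at most one, so $\mathrm{rank}\, A = \mathrm{rank}\, A^T A \le 1$. Since any antisymmetric matrix has even rank, $A = 0$, and then substituting back gives $\nabla f(p) = 0$.

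The hardest part will be the algebraic step in the last display: it takes the single identity $\nabla^2 \Psi(p) = 0$ and extracts from it two separate vanishing statements by exploiting both the positive semidefiniteness of $-A^2 = A^T A$ and the parity of the rank of an antisymmetric matrix. Everything else is continuity, bootstrapping, or direct calculation from Lemma~\ref{lemma:Hessian}.
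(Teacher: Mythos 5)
Your proof is correct, and it takes a genuinely different route from the paper's. Both arguments begin the same way: show $X(p)=0$ by continuity of $|X|_g^2 - 4f^2 = 0$ on $V$, and then aim to kill the rest of the $1$-jet. The paper takes the \emph{Laplacian} of $4f^2 - |X|_g^2$, which at $p$ only yields the scalar identity $|\nabla X|^2 = 4|\nabla f|^2$; to close the argument it then needs a separate analytic estimate $|\nabla f|^2 \le C|f|$ in $V\cap B$, derived from equation~\eqref{equation:gradf-2} together with a delicate bound showing that the a priori unbounded quantity $\hat{J}^j_{;i}f_j\hat{J}^i$ stays controlled near $\partial V$. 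You instead take the full \emph{Hessian} of $\Psi = |X|_g^2 - 4f^2$, obtaining the matrix identity $-A^2 = 4\,\nabla f(p)\otimes\nabla f(p)$, and then close the argument purely by linear algebra: the left side is $A^{T}A$, which is positive semidefinite with rank equal to $\operatorname{rank}A$, the right side has rank at most one, and antisymmetric matrices have even rank, so $A=0$ and then $\nabla f(p)=0$. Your approach avoids entirely the gradient-of-$\hat J$ estimate (and equations~\eqref{equation:gradf}--\eqref{equation:gradf-2}), replacing it with the rank-parity observation, and is arguably cleaner; the paper's approach is the more ``analytic'' one and reuses the gradient formulas that it also needs elsewhere (e.g.\ in Corollary~\ref{corollary:one-dimensional} and Lemma~\ref{lemma:unbounded}). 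One small point in your favor: you note explicitly that \eqref{equation:second-derivative} is needed to bootstrap $X$ to $C^2_{\mathrm{loc}}$ so that $\nabla^2\Psi$ is continuous on $\overline V$; the paper tacitly uses the same fact when it asserts that $\Delta(4f^2-|X|^2)$ ``still holds at $p$'' by continuity.
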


\begin{proof} 
In this proof, we compute all lengths, products, traces, and covariant derivatives using $g$.
Let $p\in \overline{V}$, and suppose that $f(p)=0$. We will show that the entire 1-jet of $(f, X)$ vanishes at $p$, and thus $(f, X)$ must vanish everywhere in $U$, giving the desired contradiction. By the $J$-null-vector equation, we know that $4f^2=|X|^2$ in $V$, so by continuity, $X(p)=0$. Taking the Laplacian gives
\[ 0 = \Delta(4f^2-|X|^2) = 8f\Delta f - 2\langle X, \Delta X\rangle+8 |\nabla f|^2 -2|\nabla X|^2\quad \mbox{ in } V.\]
By continuity, this equation still holds at $p$, and thus $|\nabla X|=4|\nabla f|$ at $p$.  It now suffices to prove that  $\nabla f(p)=0$.

To do this we will show that $|\nabla f|^2 \le C|f|$  in $V\cap B$ for some constant $C$ and some ball $B$ around~$p$.  Note $g$, $\pi$, and $\hat{J}$ must be bounded on $V\cap B$. The only potentially unbounded quantity appearing in~\eqref{equation:gradf-2} is $\hat{J}^j_i$.  Multiplying~\eqref{equation:gradf-2} with $f_j$, we obtain, in $V\cap B$,
\[ |\nabla f|^2 = (-\hat{J}^{j}_{;i}f_j \hat{J}^i  + \text{bounded terms})  f.\]
We claim the first term in the coefficient of $f$ above is also bounded:
\begin{align*}
\hat{J}^{j}_{;i}f_j  \hat{J}^i   &=\left[( \hat{J}^{j} f_j)_{i} - \hat{J}^{j}f_{;ji}\right] \hat{J}^i     \\
& =\left[ -\tfrac{1}{n-1}[(\tr \pi) f]_i + (\pi_{jk} f)_{;i} \hat{J}^j\hat{J}^k + 2\pi_{jk} \hat{J}^j_{;i} \hat{J}^k f - \hat{J}^{j}f_{;ji} \right] \hat{J}^i  \\
&= 2\pi_{jk} \left(\hat{J}^j_{;i}\hat{J}^i f\right) \hat{J}^k   + \text{bounded terms},
\end{align*}
where we use \eqref{equation:gradf} and symmetry of $\pi$ in the second equality. Looking at equation~\eqref{equation:gradf-2}, we see that $\hat{J}^{j}_{;i}\hat{J}^i f$ must be bounded since every other term is bounded. This completes the proof.

\end{proof}

In the following theorem, we remove  the nonvanishing assumption of $f$ in Proposition~\ref{proposition:fluid}  in the important special case that $\sigma(g, \pi)$ is identically zero. 

\begin{theorem}\label{theorem:f_not_zero}
Let $(U, g, \pi)$ be an initial data set. Assume there exists a nontrivial lapse-shift pair $(f,X)$ on $U$ solving the system~\eqref{equation:pair}, and assume that $\sigma(g, \pi)\equiv 0$ on $U$. Then  $(U, g, \pi)$ sits inside a null dust spacetime $(\mathbf{N},\mathbf{g})$ satisfying the spacetime dominant energy condition and admitting a global Killing vector field $\mathbf{Y}$.
 Moreover, $\mathbf{g}$ is vacuum on the domain of dependence of the set where $(g, \pi)$ is vacuum, and $\mathbf{Y}$  is null wherever $\mathbf{g}$ is not vacuum. 
\end{theorem}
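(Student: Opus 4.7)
The plan is to decompose $U$ into two open regions where different construction tools apply, build a spacetime on each, and glue them. Let $V := \{J \neq 0\}$, $W := U \setminus \overline V$, and $U_+ := \{f \neq 0\}$; Proposition~\ref{proposition:f_not_zero} yields $\overline V \subset U_+$, so $\{f = 0\} \subset W$ and $U = U_+ \cup W$. On $W$ we have $J \equiv 0$, hence $\mu = \tfrac{1}{2}\sigma(g,\pi) + |J|_g \equiv 0$ under the hypothesis $\sigma \equiv 0$, so $(g,\pi)|_W$ is vacuum and $D\overline{\Phi}^*_{(g,\pi)}$ agrees with $D\Phi^*_{(g,\pi)}$ on $W$. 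Since the Hessian-type equations~\eqref{equation:Hessian}--\eqref{equation:second-derivative} determine any kernel element of $D\overline{\Phi}^*_{(g,\pi)}$ from its $1$-jet at a single point, a nontrivial $(f, X)$ cannot vanish on any open subset of the connected manifold $U$; in particular its restriction to each connected component of $W$ is a nontrivial vacuum KID.

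On $U_+$, Proposition~\ref{proposition:fluid} applies directly and gives the Killing development $(\mathbf{N}_+, \mathbf{g}_+, \mathbf{Y})$ of $(U_+, g, f, X)$; since $\sigma(g,\pi) \equiv 0$, the pressure is $p = -\tfrac{1}{2}\sigma = 0$, so $\mathbf{N}_+$ is null dust with velocity $\mathbf{v} = \tfrac{\sqrt{|J|_g}}{2f}\mathbf{Y}$, and Lemma~\ref{lemma:null-perfect-fluid}(\ref{item:spacetime-DEC}) provides the spacetime DEC. On $W$, Theorem~\ref{theorem:Moncrief} produces a vacuum spacetime $(\mathbf{N}_W, \mathbf{g}_W)$ containing $(W, g, \pi)$ and carrying a Killing vector field equal to $2f\mathbf{n} + X$ along $W$. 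To assemble a single ambient spacetime, observe that on a neighborhood of the overlap $U_+ \cap W$ the function $f$ is nonzero, so the converse direction of Theorem~\ref{theorem:Moncrief} identifies $\mathbf{N}_W$ near this overlap with the explicit Killing development of $(U_+ \cap W, g, f, X)$, which is also the restriction of $\mathbf{N}_+$; gluing yields a single spacetime $(\mathbf{N}, \mathbf{g}, \mathbf{Y})$ into which $U$ embeds as a spacelike hypersurface, proving the first sentence of the theorem.

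It remains to identify the non-vacuum region of $\mathbf{N}$ and verify the last two assertions. On $\mathbf{N}_+$ the velocity $\mathbf{v}$ is invariant under the Killing flow and vanishes exactly where $J = 0$, while $\mathbf{N}_W$ is vacuum; hence the non-vacuum set of $\mathbf{N}$ is precisely $\mathbb{R} \times V \subset \mathbf{N}_+$. On $V$ the $J$-null-vector equation gives $|X|_g^2 = 4f^2$, so $\mathbf{g}(\mathbf{Y}, \mathbf{Y}) = -4f^2 + |X|_g^2 = 0$ along $V$, and by Killing-invariance $\mathbf{Y}$ is null throughout $\mathbb{R} \times V$. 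For the domain-of-dependence assertion, take any $q = (u_0, x_0) \in \mathbb{R} \times V$; the Killing orbit $u \mapsto (u, x_0)$ is an inextendible null causal curve through $q$, contained in $\mathbb{R} \times \{x_0\} \subset \mathbb{R} \times V$, and never meeting $U \setminus V$. Hence $q \notin D(U \setminus V)$, so $D(U \setminus V)$ lies entirely in the vacuum region, as desired. The main technical delicacy is the gluing across $U_+ \cap W$, which could alternatively be justified by invoking uniqueness for the vacuum Einstein Cauchy problem with prescribed Killing initial data.
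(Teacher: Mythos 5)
Your proof takes essentially the same route as the paper's: the decomposition $U_+ \cup W$ matches the paper's $V_1 \cup V_2$, each half is handled by Proposition~\ref{proposition:fluid} and Theorem~\ref{theorem:Moncrief} respectively, and the assembly rests on uniqueness of the vacuum development. Two small notes. The ``converse direction of Theorem~\ref{theorem:Moncrief}'' only says that the decomposed lapse-shift pair is a vacuum KID; the further fact that a vacuum spacetime with a Killing field transverse to a slice locally \emph{is} the Killing development of that slice is a separate (standard) point --- essentially the second paragraph of the converse proposition in Section~\ref{section:null_perfect} together with Appendix~\ref{section:spacetime} --- and the paper reaches the same identification by appealing directly to geometric uniqueness of the vacuum development, the substitute your final sentence already anticipates. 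Your causality argument for the domain-of-dependence assertion (the null $\mathbf{Y}$-orbits through $\mathbb{R}\times V$ are inextendible causal curves missing $U\setminus V=\{J=0\}$) is a welcome explicit unpacking of the paper's terse ``from the expression of the Einstein tensor''; one should just add the observation that these orbits remain inextendible after the gluing, which holds because the gluing region sits over $\{J=0\}$ and is therefore disjoint from $\mathbb{R}\times V$.
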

\begin{proof}
Let  $V_1\subset U$ be the open set where $f$ is nonzero, and let  $V_2\subset U$ be  the interior of the set where $J=0$. By Proposition~\ref{proposition:f_not_zero}, we can write $U = V_1\cup V_2$. Note that the assumption $\sigma(g, \pi) \equiv 0$ implies that $(V_2, g, \pi)$ is vacuum, that is,  $\mu=|J|_g =0$. 

Let $(\mathbb{R}\times V_1, \mathbf{g}_1, \mathbf{Y}_1 )$ be the Killing development of $(V_1, g,  f, X)$.
 By Proposition~\ref{proposition:fluid}, $(V_1, g, \pi)$ sits inside $(\mathbb{R}\times V_1, \mathbf{g}_1)$, which is a null perfect fluid spacetime with velocity $\mathbf{v}=\frac{\sqrt{|J|_g}}{2f}\mathbf{Y}_1$ and $p=-\tfrac{1}{2}\sigma(g, \pi)$. Moreover, since $\sigma(g, \pi)\equiv0$ on $V_1$, it also follows that $\mathbf{g}_1$ satisfies the DEC, $p\equiv0$, and $\mathbf{Y}_1$ is null wherever $J\neq 0$. 
 From the expression of the Einstein tensor \eqref{equation:perfect-fluid},  we know that the domain of dependence of the subset $V_1\cap V_2$ in $(\mathbb{R}\times V_1, \mathbf{g}_1)$ is vacuum. Recall that  $\mathbf{Y}_1= 2f\mathbf{n}+X$ along $V_1$, where $\mathbf{n}$ is the future unit normal to $V_1$.

On the other hand, Theorem~\ref{theorem:Moncrief} says the vacuum initial data set $(V_2, g, \pi)$ sits inside a vacuum spacetime $(\mathbf{N}_2, \mathbf{g}_2)$ admitting a unique global Killing vector field $\mathbf{Y}_2$ which is equal to $2f\mathbf{n}+X$ along the hypersurface $V_2$, where $\mathbf{n}$ is the future unit normal to $V_2$.

By uniqueness of the vacuum development,   the domain of dependence on $V_1\cap V_2$  for the spacetime metric $\mathbf{g}_1$ must be isometric to the corresponding domain of dependence in $\mathbf{N}_2$ for $\mathbf{g}_2$ where both are defined. The two developments have compatible overlap, giving rise to a single spacetime $(\mathbf{N}, \mathbf{g})$ in which $(U, g, \pi)$ sits. By construction, this $(\mathbf{N}, \mathbf{g})$ is a null dust spacetime satisfying the spacetime DEC and is vacuum on the domain of dependence of  $V_2$. 

To patch the two Killing vectors $\mathbf{Y}_1$ and $\mathbf{Y}_2$ on $V_1\cap V_2$, we use the fact that a Killing vector in a vacuum spacetime is uniquely determined by the lapse-shift pair $(f, X)$ solving $D\Phi^*(f, X)=0$ on an initial data set (see \cite[p.496]{Moncrief:1975} and \cite[Lemma 2.2]{Fischer-Marsden-Moncrief:1980}). On $V_1 \cap V_2$,  $\mathbf{Y}_1$ and $\mathbf{Y}_2$ both equal $2f\mathbf{n}+X$ with $(f,X)$ solving $D\Phi^*(f, X) = D\overline{\Phi}^*(f, X)=0$ as $J=0$ there.  We conclude that $\mathbf{Y}_1$ and $\mathbf{Y}_2$ must coincide on the domain of dependence of $V_1\cap V_2$, and therefore they gives rise to a single global Killing vector field in $(\mathbf{N}, \mathbf{g})$. This completes the proof.

\end{proof}

\subsection{Vanishing of $J$ in an asymptotically flat end}
In the previous subsections, we have derived several strong consequences of an initial data set that has a nontrivial lapse-shift pair $(f, X)$ solving the system~\eqref{equation:pair}. If the lapse-shift pair solves the system over an asymptotically flat end, we are able to use the asymptotics of $(f, X)$  to say more about the initial data set. 

We first note the following ``harmonic'' asymptotics of $(f, X)$ in an asymptotically flat end. The result is well-known for the usual adjoint linearized  constraint, see \cite[Proposition 2.1]{Beig-Chrusciel:1996}. The basic idea is to use the Hessian type equations \eqref{equation:Hessian} and \eqref{equation:second-derivative} of $(f, X)$ to show that $(f, X)$ has at most linear growth along a geodesic ray. See \cite[Appendix C]{Beig-Chrusciel:1996} (Cf. \cite[Proposition B.4]{Huang-Martin-Miao:2018}). Then the desired harmonic expansions of $(f, X)$ follows from the fact that $\Delta_g f$ and each $\Delta_g X_i$ are in $C^{2,\alpha}_1$ together with a  bootstrapping argument. The relation \eqref{equation:ADM-asymptotics} below follows from \cite[Proposition 3.1]{Beig-Chrusciel:1996} (see, also \cite[Corollary A.9]{Huang-Lee:2020}).

\begin{lemma}\label{lemma:asymptotics}
Let $(M, g, \pi)$ be an $n$-dimensional asymptotically flat initial data set of type $(q, \alpha)$  with the ADM energy-momentum $(E, P)$. Let $s = \mathrm{max}\{0, 1-q\}$. Suppose that $(f, X)$ is a lapse-shift pair on $\Int M$ such that $D\overline\Phi_{(g, \pi)}^*(f, X) =0$. Then the following holds:
\begin{enumerate}
\item For $i, j=1,\ldots n$, there exist constants $c_i, d_{ij}\in \mathbb{R}$ with $d_{ji}=-d_{ij}$, such that
\begin{align}\label{equation:asymptotics-linear}
\begin{split}
f&= \sum_{i=1}^n c_i x^i + O_{2,\alpha}(|x|^{s})\\
X_i &= \sum_{j=1}^n d_{ij}x^j + O_{2,\alpha}(|x|^{s }).
\end{split}
\end{align}
\item For $i, j=1,\ldots n$, if the $c_i$ and $d_{ij}$ above are all zero, then there exist constants $a, b_{i}\in \mathbb{R}$  such that
\begin{align}\label{equation:asymptotics-constant}
\begin{split}
f&= a + O_{2,\alpha}(|x|^{ -q})\\
X_i &= b_i + O_{2,\alpha}(|x|^{ -q}).
\end{split}
\end{align}
We also have, for each $i$, 
\begin{align} \label{equation:ADM-asymptotics}
	b_i E = -2aP_i.
\end{align} 
\item For $i, j=1,\ldots n$, if the $c_i$, $d_{ij}$, $a$, $b_i$ above are all zero, then $(f, X)$ vanishes identically.
\end{enumerate}
\end{lemma}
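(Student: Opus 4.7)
The plan is to build the three parts successively from the Hessian-type system \eqref{equation:Hessian}--\eqref{equation:second-derivative} of Lemma~\ref{lemma:Hessian} together with standard weighted elliptic theory on asymptotically Euclidean ends. Two structural facts drive everything: first, the system expresses the second covariant derivatives of $(f,X)$ algebraically in the 1-jet $\Psi:=(f, df, X, \nabla X)$ with coefficients built from $g$, $\pi$ and finitely many derivatives, all decaying on the end; second, tracing \eqref{equation:Hessian} and contracting \eqref{equation:second-derivative} shows that $(f,X)$ solves an elliptic system with principal part $\Delta_g\oplus\Delta_g$. This mirrors the vacuum case $D\Phi^*_{(g,\pi)}(f,X)=0$ treated in \cite{Beig-Chrusciel:1996}, and the only new feature of $D\overline\Phi^*_{(g,\pi)}$ is the zero-order term $\tfrac{1}{2}X\odot J$, which is a strictly lower-order perturbation since $J\in C^{0,\alpha}_{-q-1}$.

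For part~(1), I will first propagate the Hessian-type equations along Euclidean rays to obtain the growth bound $|f|,|X|=O(|x|)$ and $|df|,|\nabla X|=O(1)$, following \cite[Appendix~C]{Beig-Chrusciel:1996} (cf.\ \cite[Proposition~B.4]{Huang-Martin-Miao:2018}). Substituting this back places $\Delta_g f$ and $\Delta_g X_i$ in a decaying weighted H\"{o}lder class, and the classical harmonic-expansion bootstrap for the weighted Laplacian then extracts the leading linear parts of \eqref{equation:asymptotics-linear} with remainder in $C^{2,\alpha}_s$, $s=\max\{0,1-q\}$. For the antisymmetry $d_{ji}=-d_{ij}$, I plug $X_i=d_{ij}x^j+O(|x|^s)$ into \eqref{equation:momentum}: the right-hand side $\bigl(\tfrac{2}{n-1}(\tr_g\pi)g_{ij}-2\pi_{ij}\bigr)f$ is strictly sublinear at infinity (because $\pi$ decays), whereas the left-hand side has constant leading term $\tfrac{1}{2}(d_{ij}+d_{ji})$, so this constant must vanish.

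Part~(2) is a second pass of the same bootstrap: with $(f,X)$ now bounded, the trace equations place $\Delta_g f, \Delta_g X_i$ in $C^{0,\alpha}_{-q-2}$, and the harmonic expansion produces constants $a, b_i$ with residuals in $C^{2,\alpha}_{-q}$. The identity $b_i E = -2aP_i$ is obtained by writing $D\overline\Phi^*_{(g,\pi)}(f,X)=0$ in divergence form, integrating against a large coordinate ball on the end, and identifying the flux at infinity with a linear combination of $E$ and $P_i$ weighted by $a, b_i$; this is the calculation of \cite[Proposition~3.1]{Beig-Chrusciel:1996} (cf.\ \cite[Corollary~A.9]{Huang-Lee:2020}), and the $\tfrac12 X\odot J$ correction contributes nothing at infinity because $|X\cdot J|=O(|x|^{-q-1})$ is below the flux threshold set by $q>\tfrac{n-2}{2}$.

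For part~(3), the vanishing of all four asymptotic constants combined with parts~(1)--(2) forces $(f,X)\to 0$ at infinity with all derivatives also decaying; to conclude $(f,X)\equiv 0$, I would use integration by parts: $\int_M \langle (f,X), D\overline\Phi_{(g,\pi)}(h,w)\rangle\,dV_g=0$ for every compactly supported $(h,w)$ because $D\overline\Phi^*_{(g,\pi)}(f,X)=0$, and a surjectivity/density argument for $D\overline\Phi_{(g,\pi)}$ in the appropriate weighted setting then gives $(f,X)\equiv 0$. The main technical obstacle is verifying that the boundary terms at infinity arising when one enlarges the test space to $(h,w)$ of suitable decay (needed to run the density argument) really do vanish once the asymptotic constants are set to zero; this is precisely what the sharp expansions of parts~(1)--(2), plus the lower-order decay of the $\tfrac{1}{2}X\odot J$ correction, are designed to ensure.
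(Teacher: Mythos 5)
Parts (1) and (2) of your proposal follow essentially the same route as the paper (which cites Beig--Chru\'sciel \cite{Beig-Chrusciel:1996}): propagate the Hessian system \eqref{equation:Hessian}--\eqref{equation:second-derivative} along rays to get linear growth, bootstrap the resulting elliptic information on $\Delta_g f$, $\Delta_g X_i$ to extract harmonic expansions, and read off antisymmetry of $d_{ij}$ from \eqref{equation:momentum} by comparing leading orders. The flux integral for \eqref{equation:ADM-asymptotics}, with the observation that $\tfrac12 X\odot J$ is too low order to contribute at infinity, is likewise correct and matches \cite[Proposition 3.1]{Beig-Chrusciel:1996}, \cite[Corollary A.9]{Huang-Lee:2020}.

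Part (3) has a genuine gap. The duality route you propose --- pair $(f,X)$ against $D\overline\Phi_{(g,\pi)}(h,w)$ for compactly supported $(h,w)$ and then invoke a surjectivity/density statement for $D\overline\Phi_{(g,\pi)}$ in a weighted setting --- is circular as stated. In the weighted Fredholm picture, density of the range of $D\overline\Phi_{(g,\pi)}$ (in the dual of a space containing $(f,X)$) is \emph{equivalent} to triviality of the kernel of $D\overline\Phi^*_{(g,\pi)}$ in that space, and given the decay established in parts (1)--(2), that kernel triviality is exactly what part (3) asks you to prove. Without an independent proof of surjectivity you have assumed the conclusion. A secondary concern: for $q\in(\tfrac{n-2}{2}, n-2)$ the decay $(f,X)=O(|x|^{-q})$ need not even put $(f,X)$ in $L^2(M)$, which complicates the integration-by-parts identity from the start.

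The paper's intended argument --- explicit in the proof of Corollary~\ref{corollary:one-dimensional} and borrowed from \cite{Beig-Chrusciel:1996} --- is simpler and avoids the issue entirely: the Hessian-type equations \eqref{equation:Hessian} and \eqref{equation:second-derivative} express all second covariant derivatives of $(f,X)$ algebraically in the 1-jet $(f, df, X, \nabla X)$, so the 1-jet satisfies a linear first-order ODE system along any curve and $(f,X)$ is determined by its 1-jet at a single point. Once all the asymptotic constants vanish, parts (1)--(2) give $f, X = O(|x|^{-q})$ and $df, \nabla X = O(|x|^{-q-1})$ on the end. Propagating the ODE backward along a ray from far out, the transition matrix has at most linear growth (its constant part is nilpotent, its variable part decays integrably since $q > \tfrac12$); pairing this against the decay rates of the 1-jet components shows the 1-jet at the base point is $O(T^{-q})\to 0$, hence zero. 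By ODE uniqueness along paths, $(f,X)\equiv 0$ on $\Int M$, with no duality, no surjectivity, and no test functions required.
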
  


\begin{lemma}\label{lemma:unbounded}
Let $(M, g, \pi)$ be an asymptotically flat initial data set of type $(q, \alpha)$ with the ADM energy-momentum $(E, P)$. Assume there exists a nontrivial lapse-shift pair $(f, X)$ on $\Int M$ solving the system~\eqref{equation:pair}. If $|J|_g>0$ in an unbounded subset $V\subset M$,  then $|E|=|P|$.
\end{lemma}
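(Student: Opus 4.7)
The strategy is to use the $J$-null-vector equation to constrain the leading asymptotic behavior of $(f, X)$ given by Lemma~\ref{lemma:asymptotics}, and then apply the ADM relation there.

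I would first introduce $Q := 4f^2 - |X|_g^2$. Taking norms in $2fJ + |J|_g X = 0$ on $V$ yields $2|f| = |X|_g$, so $Q \equiv 0$ on the open set $V$ (openness follows from continuity of $|J|_g$); in particular, all partial derivatives of $Q$ vanish on $V$. Using Lemma~\ref{lemma:asymptotics}(1), write $f = c_i x^i + O_{2,\alpha}(|x|^s)$ and $X_i = d_{ij} x^j + O_{2,\alpha}(|x|^s)$ with $D := (d_{ij})$ antisymmetric and $s < 1$. Combining these expansions with the decay of $g$ and $\pi$ and with the Hessian-type equations \eqref{equation:Hessian}--\eqref{equation:second-derivative} (which force $\partial^2 f, \partial^2 X \to 0$ at infinity despite the possible linear growth of $(f, X)$), I would derive
\[
\partial_l \partial_k Q(x) = 8 c_l c_k - 2(D^T D)_{lk} + o(1), \qquad |x|\to\infty.
\]
Picking a sequence $p_k \in V$ with $|p_k|\to\infty$ and using $\partial^2 Q(p_k) = 0$ produces the matrix identity $4\, c c^T = D^T D$.

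Since $D$ is antisymmetric, its nonzero singular values occur in pairs, so $\mathrm{rank}(D^T D)$ is even; as $\mathrm{rank}(c c^T) \le 1$, the identity forces $c = 0$, which in turn gives $D^T D = 0$ and hence $D = 0$. We are therefore in the regime of Lemma~\ref{lemma:asymptotics}(2): there exist constants $a, b_i$ with $f \to a$ and $X_i \to b_i$ at infinity, subject to $b_i E = -2 a P_i$. Taking the limit of $Q = 0$ along $p_k \in V$ yields $4 a^2 = |b|^2$. If $a = 0$ then $b = 0$, and Lemma~\ref{lemma:asymptotics}(3) forces $(f, X) \equiv 0$, contradicting nontriviality. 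Hence $a \neq 0$, and solving $P_i = -b_i E/(2a)$ gives
\[
|P|^2 = \frac{E^2 |b|^2}{4 a^2} = E^2,
\]
so $|E| = |P|$.

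The main obstacle will be justifying the $o(1)$ error in the expansion of $\partial^2 Q$: one must track decay rates through the Hessian-type system to ensure that the decaying curvature, momentum, and source coefficients compensate for the linear growth of $(f, X)$ and yield $\partial^2 f, \partial^2 X = o(|x|^{-1})$. This relies essentially on the asymptotic flatness of $(g, \pi)$, which makes $R_{ij}$, $\pi$, and their derivatives decay at the appropriate rates so that the leading quadratic piece of $Q$ really is $4(c\cdot x)^2 - |Dx|^2$.
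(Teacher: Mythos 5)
Your argument is correct, but it takes a genuinely different route from the paper's. The paper argues by contradiction using the pointwise gradient identities~\eqref{equation:gradf}--\eqref{equation:gradf-2}: after asserting that some $c_i \neq 0$, it normalizes $f = x^1 + O_2(|x|^s)$, extracts $\hat{J}_1 = O_1(|x|^{s-1})$ from~\eqref{equation:gradf}, and then feeds this into~\eqref{equation:gradf-2} to contradict $\partial f/\partial x^1 \to 1$. You instead observe that the algebraic consequence $4f^2 - |X|_g^2 \equiv 0$ holds on the open set $V = \{|J|_g > 0\}$, differentiate it twice, let $|x|\to\infty$ along a sequence in $V$, and read off the constant-matrix identity $4\,cc^T = D^TD$; the rank-parity of real antisymmetric matrices then forces $c = 0$ and $D = 0$, and the rest is the same use of Lemma~\ref{lemma:asymptotics}(2)--(3) and~\eqref{equation:ADM-asymptotics}. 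Your version is arguably cleaner: it treats the cases $c \neq 0$ and $c = 0,\ D \neq 0$ on exactly the same footing via linear algebra (the paper dispatches the latter with a brief assertion ``thus $c_i \neq 0$ for some $i$'' that deserves more justification), and it never needs to differentiate $\hat{J}$, sidestepping a regularity point. Also, the ``main obstacle'' you flag at the end is not actually an obstacle: the error terms in Lemma~\ref{lemma:asymptotics}(1) are $O_{2,\alpha}(|x|^s)$ with $s < 1$, and by the paper's weighted-H\"older convention this \emph{already} means two derivatives of the remainder decay like $|x|^{s-2} = o(1)$; combined with $g^{ij} - \delta^{ij} \in O_{2,\alpha}(|x|^{-q})$, the expansion $\partial_l\partial_k Q = 8c_lc_k - 2(D^TD)_{lk} + o(1)$ follows by direct estimation, with no separate appeal to~\eqref{equation:Hessian}--\eqref{equation:second-derivative} needed (those go into proving Lemma~\ref{lemma:asymptotics}, but you may simply cite that lemma).
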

\begin{proof}
By Lemma~\ref{lemma:asymptotics}, $(f, X)$ satisfies the asymptotics \eqref{equation:asymptotics-linear}. We claim  $c_i=0, d_{ij}=0$ for all~$i,j$. To get a contradiction, suppose  $c_i, d_{ij}$ are not all zero. Since $V$ is unbounded, the equation $2fJ+|J|_g X=0$ implies $|X|_g=2|f|$ in $V$, and thus $c_i\neq 0$ for some $i$. By rotating the coordinates and rescaling $(f, X)$, we may assume
 $f =x^1 + O_2(|x|^{s})$. Recall $s=\max \{ 0, 1-q\}$, and note $s-1\in [-q, 0)$. Then $\tfrac{\partial f}{\partial x^1} = 1+O_1(|x|^{s-1})$, and equation \eqref{equation:gradf} and asymptotical flatness of $(g, \pi)$ imply that $\tfrac{\partial f}{\partial x^1} \hat{J}_1 = O_1(|x|^{-q})$. Together with the  asymptotics of $\tfrac{\partial f}{\partial x^1}$, we see that $\hat{J}_1 = O_1(|x|^{s-1})$ in $V$, and then equation \eqref{equation:gradf-2} implies that 
\[
	\tfrac{\partial f}{\partial x^1} = -\sum_k \left(\hat{J}_k \tfrac{\partial \hat{J}_{1}}{\partial x^k}  \right) f + O(|x|^{-q}) = O(|x|^{s-1})
\] 
in $V$, which contradicts the fact that $\tfrac{\partial f}{\partial x^1}$ is asymptotic to~$1$. 

By Lemma~\ref{lemma:asymptotics}, $(f, X)$ satisfies the asymptotics \eqref{equation:asymptotics-constant} with  $2|a| = |b|>0$, and hence $|E|=|P|$ by \eqref{equation:ADM-asymptotics}.

\end{proof}

Next, we combine Theorem~\ref{theorem:improvable} with Lemma~\ref{lemma:unbounded} to prove the following theorem. 

\begin{theorem}\label{theorem:AF}
Let $(M, g, \pi)$ be an asymptotically flat initial data set, possibly with boundary, such that 
$(g,\pi) \in C^5_{\mathrm{loc}}(\Int M)\times C^4_{\mathrm{loc}}(\Int M)$. Suppose there exists a sequence of compact sets $\Omega_j$ with smooth boundary such that the sequence exhausts $\Int M$, and for all $j$, the dominant energy scalar is NOT improvable in $\Int\Omega_j$. 

Then there exists a nontrivial lapse-shift pair $(f, X)$ on $\Int M$ solving the system \eqref{equation:pair}. If we further assume  that the ADM energy-momentum $(E, P)$ satisfies $|E|\neq |P|$, then the current density $J$ vanishes outside some compact subset.
\end{theorem}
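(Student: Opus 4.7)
The plan is to apply Theorem~\ref{theorem:improvable} on each $\Omega_j$ to produce nontrivial local solutions $(f_j, X_j)$ of the system~\eqref{equation:pair} on $\Int\Omega_j$, extract a subsequential limit on $\Int M$ to obtain the desired global $(f, X)$, and then derive the second assertion directly from Lemma~\ref{lemma:unbounded}.

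Fix a reference point $p_0 \in \Int M$. By passing to a subsequence and replacing each $\Omega_j$ by an appropriate connected component, I may assume each $\Omega_j$ is connected and contains $p_0$, with the failure-of-improvability hypothesis still in force (using that $\{\Omega_j\}$ exhausts the connected manifold $\Int M$, and that perturbations supported in a subdomain extend trivially to the ambient set). Theorem~\ref{theorem:improvable} then furnishes, for each $j$, a nontrivial pair $(f_j, X_j)\in C^2_{\mathrm{loc}}(\Int\Omega_j)\times C^1_{\mathrm{loc}}(\Int\Omega_j)$ solving~\eqref{equation:pair}. By Lemma~\ref{lemma:Hessian} and \cite[Proposition~2.1]{Corvino-Huang:2020}, any element of $\ker D\overline{\Phi}^*_{(g,\pi)}$ is uniquely determined on a connected domain by its 1-jet at a single point, so the 1-jet $J^1_{p_0}(f_j, X_j) := (f_j, \nabla f_j, X_j, \nabla X_j)|_{p_0}$ is nonzero, and I rescale so that $|J^1_{p_0}(f_j, X_j)| = 1$.

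The heart of the proof will be the compactness step. Viewing the Hessian-type equations \eqref{equation:Hessian}, \eqref{equation:second-derivative}, and \eqref{equation:momentum} as a first-order linear ODE system in the 1-jet $(f_j, \nabla f_j, X_j, \nabla X_j)$ along geodesics emanating from $p_0$, with coefficients bounded on each compact subset of $\Int M$ (thanks to the $C^5\times C^4$ regularity of $(g,\pi)$), I expect the 1-jet to grow at most exponentially in geodesic distance (cf.~\cite[Appendix~C]{Beig-Chrusciel:1996}). This yields uniform $C^2$-bounds (and by bootstrapping, $C^3$-bounds) for $(f_j, X_j)$ on each compact $K\subset\Int M$, for $j$ large enough that $K\subset\Int\Omega_j$. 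Arzelà--Ascoli then produces a subsequence converging in $C^2_{\mathrm{loc}}$ to a lapse-shift pair $(f, X)$ on $\Int M$. Both equations of~\eqref{equation:pair} pass to the limit: the first depends only on first and second derivatives of $(f, X)$, which converge uniformly on compacta, and the $J$-null-vector equation is continuous in the pointwise values. Continuity of jet evaluation forces $|J^1_{p_0}(f, X)| = 1$, so $(f, X)$ is nontrivial.

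Finally, assuming $|E|\neq|P|$, set $V:=\{p\in\Int M : J(p)\neq 0\}$; if $V$ were unbounded, Lemma~\ref{lemma:unbounded} applied to the nontrivial $(f,X)$ just produced would yield $|E|=|P|$, contradicting the hypothesis. Hence $V$ is bounded, so $J\equiv 0$ outside a compact subset of $M$. The main technical obstacle is the compactness step, where the Hessian-type structure of $\ker D\overline{\Phi}^*_{(g,\pi)}$ must play a triple role: furnishing the jet-determinacy underpinning the normalization, the a priori growth estimates enabling Arzelà--Ascoli, and the regularity needed for~\eqref{equation:pair} to pass to the $C^2_{\mathrm{loc}}$-limit.
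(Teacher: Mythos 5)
Your proposal is correct and follows essentially the same route as the paper: apply Theorem~\ref{theorem:improvable} on each $\Omega_j$ to get local solutions of~\eqref{equation:pair}, normalize the $1$-jet at a fixed basepoint (using the jet-determinacy that comes from the Hessian-type equations of Lemma~\ref{lemma:Hessian}), extract a subsequential limit on $\Int M$, and finish with Lemma~\ref{lemma:unbounded}. Where the paper compresses the compactness step to ``standard elliptic theory,'' you spell out the underlying mechanism—viewing the Hessian/second-derivative identities as a first-order linear ODE system for the $1$-jet along geodesics, giving locally uniform growth bounds and then Arzel\`a--Ascoli—which is exactly the Beig--Chru\'sciel-style argument the paper implicitly relies on; your added caution about connected components of $\Omega_j$ is a harmless refinement the paper leaves implicit.
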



\begin{proof}
Suppose there exists a sequence of compact sets $\Omega_j$ exhausting $\Int M$ such that for all $j$, the dominant energy scalar is \emph{not} improvable in $\Int\Omega_j$.  By Theorem~\ref{theorem:improvable},  there exists a nontrivial lapse-shift pair $(f^{(j)}, X^{(j)})$ on $\Int\Omega_j$ solving the system \eqref{equation:pair}. Since the $1$-jet of $(f^{(j)}, X^{(j)})$ at any point is nonzero, we rescale it so that $|f^{(j)}| +|\nabla f^{(j)}| + |X^{(j)}| +|\nabla X^{(j)}|=1$ at a fixed point $p$ in all $\Omega_j$. Standard elliptic theory shows that, as $j\to\infty$, a subsequence of $(f^{(j)}, X^{(j)})$ converges to a nontrivial lapse-shift pair $(f, X)$ defined on $\Int M$ solving the system \eqref{equation:pair}. This proves the first part of Theorem~\ref{theorem:AF}.

If we assume $|E| \neq |P|$ and that there is a nontrivial lapse-shift pair $(f, X)$  on $\Int M$ solving the system \eqref{equation:pair}, by Lemma~\ref{lemma:unbounded}, $J$ must vanish outside a compact subset. 

\end{proof}

\section{Bartnik's stationary conjecture}\label{section:Bartnik}

In Section~\ref{section:decrease-energy}, we construct a family of deformations by conformal change  to decrease the ADM energy without changing the ADM linear momentum, while the deformation slightly breaks the DEC in a fixed compact set. Then we use the improvability and almost-improvability results to reinstate the DEC (Theorem~\ref{theorem:Bartnik2}).  In Section~\ref{section:admissibility}, we define an admissible extension for the Bartnik mass and combine Theorem~\ref{theorem:Bartnik2} with Theorem~\ref{theorem:AF} to prove our main result on the Bartnik stationary conjecture, Theorem~\ref{theorem:Bartnik}. In Section~\ref{section:energy}, we include some results of independent interest about the Bartnik \emph{energy}. 

\subsection{Deformations that decrease the ADM energy}\label{section:decrease-energy}

The following notations will frequently appear in this section: $(M, g, \pi)$ denotes an asymptotically flat initial data set of type $(q,\alpha)$, $(E, P)$ is the ADM energy-momentum,   $(\mu, J)$ is the energy and current density, and $\Omega_r$ denotes the compact part of $M$ enclosed by the sphere~$|x|=r$.

\begin{proposition}\label{proposition:conformal}
Let $(M, g, \pi)$ be an asymptotically flat initial data set satisfying the dominant energy condition. For each $r_0\gg 1$, there is a one-parameter family of asymptotically flat initial data sets $(g_t, \pi_t)$ for $t\in (-\epsilon, \epsilon)$ with $(g_0, \pi_0) = (g, \pi)$ such that 
\begin{enumerate}
 \item $(g_t, \pi_t)= (g,\pi)$ in $\Omega_{r_0}$ for all $t$. \label{item:B}
\item \label{item:E} $E_t  = E- t$ and $P_t = P$, where $(E_t, P_t)$ is the ADM energy-momentum of $(g_t, \pi_t)$. 
\item \label{item:convergence}
Let $(\mu_t, J_t)$ denote  the energy and current densities of $(g_t, \pi_t)$.  As $t\to 0$, 
\begin{align*}
\| (g_t, \pi_t) - (g,\pi)\|_{C^{2,\alpha}_{-q}(M)\times C^{1,\alpha}_{-1-q}(M)} &\to 0\\
\|(\mu_t, J_t) - (\mu, J)\|_{L^1(M)}&\to 0,
\end{align*}
and on each compact subset $(\mu_t, J_t)\to (\mu, J)$ in $C^{0,\alpha}$.

\item \label{item:annulus} For sufficiently large $r_1>r_0$, $\Sc(g_t, \pi_t) > \Sc(g, \pi)$  in $M\smallsetminus \Omega_{r_1}$.
\end{enumerate}
\end{proposition}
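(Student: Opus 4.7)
The plan is to use a conformal-type deformation. First I choose a smooth function $v$ on $M$ with: (i) $v \equiv 0$ on $\Omega_{r_0}$; (ii) at infinity, $v(x) = -\tfrac{1}{2}|x|^{-(n-2)} - a|x|^{-n} + O(|x|^{-(n-1)})$ for a fixed constant $a>0$; and (iii) $\Delta_g v < 0$ strictly on $M \setminus \Omega_{r_1}$ for some $r_1 > r_0$ sufficiently large. Such $v$ is obtained by multiplying $-\tfrac{1}{2}|x|^{-(n-2)} - a|x|^{-n}$ by a smooth cutoff that vanishes on $\Omega_{r_0}$: since $|x|^{-(n-2)}$ is Euclidean-harmonic while $\Delta_{\mathbb{E}}(-a|x|^{-n}) = -2na|x|^{-n-2}$ is strictly negative, and $g$ is asymptotically Euclidean, $\Delta_g v < 0$ holds for all sufficiently large $|x|$.

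Next, define
\[
g_t := (1+tv)^{4/(n-2)}\, g, \qquad \pi_t^{ij} := (1+tv)^{-8/(n-2)}\, \pi^{ij}.
\]
The exponent $-8/(n-2)$ on $\pi$ is the unique one making both $(\pi_t)^\flat = \pi^\flat$ and $(\tr_{g_t}\pi_t)\,g_t = (\tr_g\pi)\,g$; these two identities are precisely what is needed to preserve the ADM momentum integrand. Item (1) is immediate from $v \equiv 0$ on $\Omega_{r_0}$. For item (2), the leading expansion $u_t \sim 1 - t/(2|x|^{n-2})$, combined with a direct ADM energy computation, yields $E_t - E = -t$, while $P_t = P$ holds exactly by the preservation identities above. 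Item (3) follows from the explicit formulas: smooth dependence of $(g_t,\pi_t)$ on $t$ in the weighted H\"older norms is immediate; $(\mu_t, J_t)$ depend polynomially on $(g_t, g_t^{-1}, \partial g_t, \partial^2 g_t, \pi_t, \partial \pi_t)$ giving local $C^{0,\alpha}$ convergence, and $L^1$ convergence follows from the uniform asymptotic decay of the differences.

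For item (4), the conformal change formula gives
\[
R_{g_t} = (1+tv)^{-(n+2)/(n-2)} \bigl[ -\tfrac{4(n-1)}{n-2}\, t\, \Delta_g v + R_g (1+tv) \bigr],
\]
hence $R_{g_t} - R_g = -\tfrac{4(n-1)}{n-2}\, t\, \Delta_g v + O(t|v||R_g|) + O(t^2)$. On $M \setminus \Omega_{r_1}$ we have $\Delta_g v < 0$ strictly, so this leading contribution is strictly positive for small $t>0$. Under the chosen scaling, the remaining contributions to $\Sc$, namely $\tfrac{1}{n-1}(\tr_g \pi)^2 - |\pi|_g^2$ and $2|\Div_g \pi|_g$, change by quantities of strictly faster decay in the asymptotic region than $-t\Delta_g v \sim 2na\,t\,|x|^{-n-2}$, so the $R$ contribution dominates. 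This gives $\Sc(g_t, \pi_t) > \Sc(g, \pi)$ strictly on $M \setminus \Omega_{r_1}$ for small $t > 0$.

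The main obstacle I expect is the precise verification of item (4): one must rigorously track the asymptotic decay rates of the changes in $|\pi_t|_{g_t}^2$, $(\tr_{g_t}\pi_t)^2$, and especially the nonlinear term $|J_t|_{g_t}$, to confirm that these are all subdominant to the strictly positive gain from $R_{g_t} - R_g$ in the far annulus. A secondary issue is that item (4), under this construction, holds naturally only for $t>0$ (the leading $t$-dependence is linear); if strict improvement on all of $(-\epsilon,\epsilon)\setminus\{0\}$ is required, one would need to modify the construction so that the leading $t$-dependence of $\Sc_t - \Sc$ becomes quadratic with a positive coefficient, for instance by combining the above conformal deformation with a second one whose linear contribution cancels.
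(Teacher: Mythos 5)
Your proposal follows the paper's conformal-deformation strategy, but the construction of $v$ underlying Item (4) has a genuine gap. You obtain $v$ by cutting off the fixed function $-\tfrac{1}{2}|x|^{2-n}-a|x|^{-n}$ and argue that $\Delta_g v<0$ at large $|x|$ because $|x|^{2-n}$ is Euclidean-harmonic and $\Delta_{\mathbb{E}}\!\left(-a|x|^{-n}\right)=-2na|x|^{-n-2}$. This does not work: since $g-g_{\mathbb{E}}=O(|x|^{-q})$, one only has $\Delta_g(|x|^{2-n})=O(|x|^{-n-q})$, an error of uncontrolled sign, and for $q<2$ (which is forced for $n\le 4$ and allowed for $n=5,6$) it decays slower than the intended $|x|^{-n-2}$; so $\Delta_g v<0$ is not guaranteed. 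Worse, even if you solved $\Delta_g v=-2na|x|^{-n-2}$ exactly in the far region, the positive contribution to $\sigma$ from $R_{g_t}-R_g$ would decay at rate $n+2$, while the change in $|J_t|_{g_t}$, which enters $\sigma$ with the opposite sign, is $O(t\,|\nabla v|\,|\pi|)=O(t|x|^{-n-q})$. For $q<2$ this decays slower, so it is \emph{not} true that the remaining contributions ``change by quantities of strictly faster decay''; the $R$-contribution does not dominate and the sign of $\Sc(g_t,\pi_t)-\Sc(g,\pi)$ is not controlled in the far annulus.

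The paper's key idea, which your proposal is missing, is to choose $\delta\in(0,\min(q,1))$ and to solve $\Delta_g v=-|x|^{-n-\delta}$ exactly outside a compact set, extending the right-hand side inward to a function $\rho$ normalized so that $\int_M\rho\,d\mu_g=\tfrac{n-2}{2}\omega_{n-1}$; the harmonic expansion then forces the leading coefficient of $v$ to be $-\tfrac{1}{2}$, recovering $E_t=E-t$. Since $\delta<q$, the positive contribution to $\sigma$ decays at rate $n+\delta$, which dominates the $O(|x|^{-n-q})$ change coming from $|J_t|_{g_t}$ as well as the $O(|x|^{-n-2q})$ changes from $|\pi|_g^2$ and $(\tr_g\pi)^2$. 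Deliberately prescribing $\Delta_g v$ to decay \emph{slower} than the natural decay $|x|^{-n-q}$ of the initial data is the crucial step. Two secondary remarks: the paper uses the exponent $-6/(n-2)$ on $\pi^{ij}$ rather than your $-8/(n-2)$, which makes $|\pi_t|^2_{g_t}=u_t^{-4/(n-2)}|\pi|^2_g$ and $(\tr_{g_t}\pi_t)^2=u_t^{-4/(n-2)}(\tr_g\pi)^2$ scale exactly like the $R_g$ part of $2\mu_t$ and hence avoids extra error terms (though your choice also works, with slightly messier bookkeeping); and your concern that Item (4) only holds for $t>0$ is shared by the paper's argument, which is harmless since only $t>0$ is used downstream.
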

\begin{proof}
We will  select a conformal factor $u_t\to 1$ such that we have exact knowledge of $\Delta_g u_t$ outside $\Omega_{2r_0}$ and exact knowledge of how $u_t$ affects the energy, and then we cut it off arbitrarily to make it~$1$ on $\Omega_{r_0}$. To do this, choose $\delta>0$ smaller than both $q$ and $1$. 
We claim that there exists a function $v\in C^{2,\alpha}(M)$ such that on $M\smallsetminus \Omega_{r_0}$,
\begin{align*}
	\Delta_g v &= -|x|^{-n-\delta} \\
		v &=-\tfrac{1}{2} |x|^{2-n} + O_{2,\alpha}(|x|^{2-n-\delta}).
\end{align*}	
To do this, extend $-|x|^{-n-\delta}$ on $M\smallsetminus  \Omega_{r_0}$ to some function $\rho$ on all of $M$
 such that 
  $\int_M \rho \, d\mu_g = \tfrac{n-2}{2}\omega_{n-1}$. Since $\Delta_g: C^{2,\alpha}_{-q}(M)\to C^{0,\alpha}_{-2-q}(M)$ is an isomorphism, we can solve the Poisson equation
 $\Delta_g v=\rho$ for some $v\in C^{2,\alpha}_{-q}(M)$. (If $M$ has a boundary, we can either fill it in arbitrarily or solve for $v$ with Neumann conditions.) By harmonic expansion, we know that $v=a|x|^{2-n} + O_{2,\alpha}(|x|^{2-n-\delta})$, for some constant $a$. (See~\cite[Remark A.39]{Lee:book}, for example.)  Integrating the Poisson equation, we see that
\[
	a = \tfrac{-1}{(n-2)\omega_{n-1}}  \int_M \Delta_g v\, d\mu_g = \tfrac{-1}{(n-2)\omega_{n-1}} \int_M \rho\, d\mu_g =-\tfrac{1}{2}.
\]

Let $\chi$ be a smooth nonnegative cut-off function such that $\chi \equiv 0$ on $\Omega_{r_0}$ and $\chi\equiv1$ outside $\Omega_{2r_0}$. Define $u_t = 1+t\chi v$. For small $\epsilon>0$, we have $u_t>0$ for all $t\in(-\epsilon, \epsilon)$. We define the one-parameter  family of  initial data sets:
\begin{align*}
	(g_t)_{ij} = u_t^{\frac{4}{n-2}} g_{ij} \quad \mbox{and} \quad (\pi_t)^{ij} = u_t^{-\frac{6}{n-2}} \pi^{ij}.
\end{align*}
Note that because $\chi\equiv 0$ in $\Omega_{r_0}$, $(g_t, \pi_t) = (g, \pi)$ in $\Omega_{r_0}$ which implies Item~\eqref{item:B}. 

By a standard computation, Item~\eqref{item:E} follows:
\[
	E_t = E +2ta =E-t \quad \mbox{and}\quad P_t = P.
\]

To verify  Item \eqref{item:convergence}, we see that   $\| (g_t, \pi_t) - (g, \pi)\|_{C^{2,\alpha}_{-q}(M)\times C^{1,\alpha}_{-1-q}(M)} \to 0$ as $t\to 0$ because $\| u_t - 1\|_{C^{2,\alpha}_{-q}(M)} \to 0$. This directly implies that on each compact subset $(\mu_t, J_t)\to (\mu, J)$ in $C^{0,\alpha}$. By direct computation (Cf. \cite[Equation (37)]{Eichmair-Huang-Lee-Schoen:2016}, in which  $\pi$ is a (0,2)-tensor), $\mu_t$ and $J_t$ satisfy  
\begin{align}\label{equation:mu_t}
\begin{split}
	2 \mu_t &= u_t^{-\frac{4}{n-2}} \left(-\tfrac{4(n-1)}{n-2} u_t^{-1} \Delta_g u_t + R_g - |\pi|_g^2 + \tfrac{1}{n-1} (\tr_g \pi)^2\right)\\
	&= u_t^{-\frac{4}{n-2}} \left(2\mu + 2t\phi\right)\\
	J_t^i&= u_t^{-\frac{6}{n-2}} \left((\mathrm{div}_{g} \pi )^i+ \tfrac{2(n-1)}{n-2} u_t^{-1}(u_t)_{,j} \pi^{ij} - \tfrac{2}{n-2} g^{ij}u_t^{-1}(u_t)_{,j} \tr_g \pi\right)\\
	&=u_t^{-\frac{6}{n-2}} \left(J^i+ t\Upsilon^i\right)
\end{split}
\end{align}
where  $\phi, \Upsilon$ denote 
\begin{align*}
	\phi &=- \tfrac{2(n-1)}{n-2} u_t^{-1} \Delta_g (\chi v)\\
	\Upsilon^i &= \tfrac{2(n-1)}{n-2} u_t^{-1}(\chi v)_{,j} \pi^{ij} - \tfrac{2}{n-2} g^{ij}u_t^{-1}(\chi v)_{,j} \tr_g \pi.
\end{align*}
From this it is simple to check that  $(\mu_t, J_t)$ converges to $(\mu, J)$ in $L^1$ as $t\to 0$.

Last, we prove Item \eqref{item:annulus}. By \eqref{equation:mu_t},  the dominant energy scalar satisfies
\begin{align*}
	\Sc(g_t, \pi_t)  \ge u_t^{-\frac{4}{n-2}}\left( \Sc(g,\pi) + 2t(\phi- |\Upsilon|_g)\right)\quad \mbox{ in } M.
\end{align*}
In $M\smallsetminus \Omega_{2r_0}$ we have $\chi\equiv1$ and $ \Delta_g v=-|x|^{-n-\delta}$, so   
\begin{align*}
	\phi & = \tfrac{2(n-1)}{n-2} u_t^{-1} |x|^{-n-\delta}\\
	\Upsilon^i &=\tfrac{2(n-1)}{n-2} u_t^{-1} v_{,j} \pi^{ij} - \tfrac{2}{n-2} g^{ij}u_t^{-1}v_{,j} \tr_g \pi=O(|x|^{-n-q}).
\end{align*}
Since $\delta<q$, we see that for 
 $r_1$ sufficiently large, we have $\phi- |\Upsilon|_g>0$ in $M\smallsetminus \Omega_{r_1}$. Also, for $r_1$ sufficiently large, we have $v<0$ on  $M\smallsetminus \Omega_{r_1}$, and consequently, 
  we can guarantee that  $u_t=1+t\chi v <1$ there.
 Therefore
\[
\Sc(g_t, \pi_t) > u_t^{-\frac{4}{n-2}}\Sc(g, \pi) > \Sc(g, \pi) \quad \mbox{ in } M\smallsetminus \Omega_{r_1}.
\] 


\end{proof}

\begin{theorem}\label{theorem:Bartnik2}
Let $(M, g, \pi)$ be an asymptotically flat initial data set with non-empty smooth boundary $\partial M$. Suppose that the dominant energy condition holds and that $(g,\pi)$ is $C^5_{\mathrm{loc}}\times C^4_{\mathrm{loc}}$ on $\Int M$. Then  one of the two following statements must be true:
\begin{enumeratei}
\item  \label{item:non-improvable} $\sigma(g,\pi)\equiv 0$ on $\Int M$, and there exists a nontrivial solution $(f, X)$ on all of $\Int M$ solving the system \eqref{equation:pair}.  
\item \label{item:deformation}
There exists a bounded  neighborhood of $V_0$ of $\partial M$ such that for any $\epsilon>0$, there exists a perturbation $(\bar{g}, \bar{\pi})$ of $(g,\pi)$  such that 
\begin{itemize}
\item $(\bar{g}, \bar{\pi})$ satisfies the dominant energy condition.
\item $(\bar{g}, \bar{\pi})=(g,\pi)$ in $V_0$.
\item $\overline{E}< E$ and $\overline{P}=P$. 
\item  $\| (\bar{g}, \bar{\pi}) - (g,\pi)\|_{C^{2,\alpha}_{-q}(M)\times C^{1,\alpha}_{-1-q}(M)}  <\epsilon$ and $\|(\bar{\mu}, \bar{J}) - (\mu, J)\|_{L^1(M)}<\epsilon$,
\end{itemize}
where $(\overline{E}, \overline{P})$ is the ADM energy-momentum  and $(\bar{\mu}, \bar{J})$ is the energy and current density of $(\bar{g}, \bar{\pi})$.
\end{enumeratei}
\end{theorem}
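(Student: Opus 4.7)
The argument splits according to whether $\sigma(g,\pi)\equiv 0$ on $\Int M$ or not. In both cases the starting point is the conformal family $(g_t,\pi_t)$ from Proposition~\ref{proposition:conformal}: choose $r_0$ so large that $\partial M$ sits inside a relatively compact open neighborhood $V_0\subset\subset\Omega_{r_0}$ with smooth boundary (and, in Case~A below, so that $\Omega_{r_0}\setminus\overline{V_0}$ also contains a preselected ball $B$). Then $(g_t,\pi_t)=(g,\pi)$ on $\Omega_{r_0}$, $E_t=E-t$, $P_t=P$, and $\sigma(g_t,\pi_t)\ge\sigma(g,\pi)$ outside $\Omega_{r_1}$ for some $r_1>r_0$. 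The only region where $\sigma$ could have decreased is the ``bad annulus'' $\Omega_{r_1}\setminus\Omega_{r_0}$, and by Item~\eqref{item:convergence} of the proposition this decrease is $O(t)$ in $C^{0,\alpha}$ on compact sets. Fix $R>r_1$ and set $\Omega:=\Omega_R\setminus V_0$, which is compact with smooth boundary and whose interior compactly contains the bad annulus. Any corrector $(h,w)\in C^{2,\alpha}_c(\Int\Omega)$ will vanish on $V_0$ and leave the ADM energy-momentum of $(g_t,\pi_t)$ undisturbed, so~\eqref{item:deformation} reduces to producing such a corrector that restores the DEC.

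\textbf{Case A: $\sigma(g,\pi)\not\equiv 0$.} Pick an open ball $B\subset\Int M$ on which $\sigma(g,\pi)\ge c>0$ and enlarge $r_0$ so that $B\subset\Omega_{r_0}\setminus\overline{V_0}$; then $(g_t,\pi_t)\equiv(g,\pi)$ on $B$ for every $t$, so $\sigma(g_t,\pi_t)\ge c$ on $B$. Apply Theorem~\ref{theorem:improvability-error} to $(\Omega,g,\pi)$ with this ball $B$ and any fixed $\delta\in(0,c)$, obtaining constants $\epsilon,C>0$ for a chosen $V\subset\subset\Int\Omega$ that covers the bad annulus and is disjoint from $\overline{B}$. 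Take $u\in C^{0,\alpha}_c(V)$ nonnegative, of size $O(t)$, with $u\ge-\sigma(g_t,\pi_t)$ on the bad annulus and $u\equiv 0$ near $B$; then $\|u\|_{C^{0,\alpha}}<\epsilon$ for small $t$, and the resulting $(h,w)$ gives
\[
\sigma(g_t+h,\pi_t+w)\ge\sigma(g_t,\pi_t)+u-\delta\mathbf{1}_B\ge 0,
\]
by cases: on $B$ one has $\sigma(g_t,\pi_t)\ge c>\delta$ and $u=0$; on the bad annulus $u+\sigma(g_t,\pi_t)\ge 0$ and $\mathbf{1}_B=0$; elsewhere $\sigma(g_t,\pi_t)\ge 0$ already. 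Setting $(\bar g,\bar\pi)=(g_t+h,\pi_t+w)$ yields~\eqref{item:deformation}.

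\textbf{Case B: $\sigma(g,\pi)\equiv 0$.} For $\Omega$ as above, either the dominant energy scalar is improvable in $\Int\Omega$---in which case Theorem~\ref{theorem:improvable} combined with the corrector construction of Case~A (now with $\delta=0$ and no auxiliary ball) produces~\eqref{item:deformation}---or it is not, in which case Theorem~\ref{theorem:improvable} supplies a nontrivial lapse-shift pair $(f_\Omega,X_\Omega)$ on $\Int\Omega$ solving~\eqref{equation:pair}. Assuming~\eqref{item:deformation} fails outright, the improvability alternative is excluded for every admissible $\Omega=\Omega_R\setminus V_0$; exhausting $\Int M$ by such $\Omega_j$ (with $R_j\to\infty$ and $V_{0,j}$ shrinking to $\partial M$) and invoking Theorem~\ref{theorem:AF} yields a nontrivial lapse-shift pair $(f,X)$ on all of $\Int M$ solving~\eqref{equation:pair}, establishing~\eqref{item:non-improvable}.

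\textbf{Main obstacle.} The substantive issue is the geometric choreography of the three nested regions---the boundary collar $V_0$, the bad annulus $\Omega_{r_1}\setminus\Omega_{r_0}$, and (in Case~A) the positivity reservoir $B$---so that the single corrector $(h,w)$ simultaneously vanishes on $V_0$, restores DEC on the bad annulus, and does not erode the positive lower bound on $B$ by more than $\delta<c$. Recognizing that Case~A requires the almost-improvability of Theorem~\ref{theorem:improvability-error} (with the $\delta\mathbf{1}_B$ error absorbed by the reservoir on $B$) while Case~B requires the exact improvability of Theorem~\ref{theorem:improvable} is the main conceptual input. Once this staging is in place, the norm-level checks for~\eqref{item:deformation}---convergence of $(\bar g,\bar\pi)$ to $(g,\pi)$ in $C^{2,\alpha}_{-q}\times C^{1,\alpha}_{-1-q}$, $L^1$-smallness of $(\bar\mu,\bar J)-(\mu,J)$, and invariance of $(E,P)$ under a compactly supported corrector---follow routinely from Item~\eqref{item:convergence} of Proposition~\ref{proposition:conformal} together with the bound $\|(h,w)\|_{C^{2,\alpha}}\le C\|u\|_{C^{0,\alpha}}=O(t)$.
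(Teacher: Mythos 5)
Your proposal is correct and follows essentially the same strategy as the paper's proof: Proposition~\ref{proposition:conformal} to drop the energy, Theorem~\ref{theorem:improvability-error} (with the positivity reservoir $B$ absorbing the $\delta\mathbf{1}_B$ error) when $\sigma\not\equiv 0$, exact improvability plus Theorem~\ref{theorem:AF} when $\sigma\equiv 0$, and the same corrector construction on $\Omega_R\smallsetminus V_0$. The only differences are cosmetic: you organize the dichotomy as ``$\sigma\equiv 0$ or not'' rather than the paper's overlapping Cases 1/2, and you choose $u$ to dominate $\sigma(g_t,\pi_t)^-$ rather than equal it, which is if anything slightly cleaner.
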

\begin{remark} If $(\mu, J)\in C^{0,\alpha}_{-n-\delta}$ for some $\delta\in(0,1)$, then we can arrange for $(\bar\mu, \bar{J})$ to be $\epsilon$-close in this space as well.
\end{remark}

\begin{proof}
 We will consider two cases. In Case 1, we assume $\Sc(g, \pi)>0$ at some point in $\Int M$. In Case 2, we assume that there exists a bounded neighborhood $V_0$ of $\partial M$ with smooth boundary such that the dominant energy scalar is improvable in~$\Omega_r \smallsetminus V_0$ for sufficiently large $r$. We will show that in either case, Item \eqref{item:deformation} holds.  We will then show that  the negations of both cases imply Item~\eqref{item:non-improvable}.

\vspace{6pt}
\noindent{\bf Case 1}: We assume $\Sc(g,\pi)>0$ at some point in $\Int M$.  

Let $V_0$ be an open neighborhood of $\partial M$ with smooth boundary such that $\Sc(g,\pi)>0$ somewhere outside~$V_0$. 
 There exists an open ball $B$ in $M \smallsetminus V_0$ and a $\delta>0$ such that  $\Sc(g,\pi)>2\delta$ on $B$. Choose $r_0>1$ large enough so that $B\cup V_0\subset \Omega_{r_0}$.   Let $(g_t, \pi_t)$ be the one-parameter family of initial data sets constructed in Proposition~\ref{proposition:conformal}. Thanks to the properties of $(g_t,\pi_t)$, in order to establish Item~\eqref{item:deformation}, it suffices to deform $(g_t, \pi_t)$ within
 $\Omega := \Omega_{r_1}\smallsetminus V_0$ to obtain the DEC, where $r_1$ is from  Proposition~\ref{proposition:conformal}.

To do this, we apply Theorem~\ref{theorem:improvability-error} to  $(\Omega, g, \pi)$ with $\delta$ and $B$ as chosen above. Recall that  Theorem~\ref{theorem:improvability-error} says that there exists a neighborhood $\mathcal{U}$ of $(g, \pi)$ in $C^{4,\alpha}({\Omega})\times C^{3,\alpha}({\Omega})$ such that for any $V\subset\subset\Int\Omega$, there exist
 constants $\epsilon, C>0$ such that for all 
  $(\gamma, \tau)\in \mathcal{U}$ and   $u\in C^{0,\alpha}_c(V)$ with $\| u \|_{C^{0,\alpha}(\Omega)}< \epsilon$, there exists $(h, w)\in C^{2,\alpha}_c(\Int\Omega)$ with $\|(h,w)\|_{C^{2,\alpha}(\Omega)}\le C\| u\|_{C^{0,\alpha}(\Omega)}$ such that the initial data set  $(\gamma+h, \tau+w)$ satisfies
\[
	\Sc(\gamma+h, \tau+w) \ge \Sc(\gamma,\tau)+u -\delta {\bf 1}_B.
\]

For $t$ sufficiently small, we set $(\gamma,\tau) = (g_t, \pi_t)$ and $u=\Sc(g_t, \pi_t)^-$.  (Recall $v= v^+ -v^-$ is the decomposition of a function $v$ into its positive and negative parts.) Then by the choice of $B$ and Proposition~\ref{proposition:conformal}, for $t$ sufficiently small, 
\begin{align*}
	&\Sc(g_t, \pi_t) > \delta \mbox{ in } B\\
	&\Sc(g_t, \pi_t)^- \mbox{ is compactly supported in } \Int\Omega\\
	&\|\Sc(g_t, \pi_t)^- \|_{C^{0,\alpha}({\Omega})}\to 0 \mbox{ as } t\to \infty.
\end{align*}
There exists $(h_t, w_t)\in C^{2,\alpha}_c(\Int\Omega)$ such that 
\[
\| (h_t, w_t)\|_{C^{2,\alpha}(\Omega)}\le C\|\Sc(g_t, \pi_t)^- \|_{C^{0,\alpha}({\Omega})}\to 0
\]
 and 
\begin{align*}
	\Sc(g_t+ h_t, \pi_t+w_t) &\ge \Sc(g_t, \pi_t) +\Sc(g_t, \pi_t)^- - \delta {\bf 1}_B\\
&= \Sc(g_t, \pi_t)^+ - \delta {\bf 1}_B > 0.
\end{align*}
It follows that for sufficiently small $t>0$, the initial data $(\bar{g}, \bar{\pi}):= (g_t + h_t, \pi_t + w_t)$  satisfies  Item~\eqref{item:deformation}.
 
 \vspace{6pt}
 \noindent {\bf Case 2:} Assume that there exists a bounded neighborhood $V_0$ of $\partial M$ such that $V_0$ has smooth boundary and that the dominant energy scalar is improvable in $\Omega_{r}\smallsetminus V_0$ for all sufficiently large $r$.

Choose $r_0>1$ such that $V_0 \subset \Omega_{r_0}$, and let $(g_t, \pi_t)$ be the one-parameter family of initial data sets constructed in Proposition~\ref{proposition:conformal}, 
and choose $r_1>r_0$ large enough so that Proposition~\ref{proposition:conformal} holds and the dominant energy scalar is improvable in $\Omega:=\Omega_{r_1}\smallsetminus V_0$. Once again, it suffices to deform $(g_t, \pi_t)$ within  $\Omega$ to obtain the DEC.  
 
 By definition of improvability on $\Omega$, 
 there exists a neighborhood $\mathcal{U}$   of $(g, \pi)$ in $C^{4,\alpha}({\Omega})\times C^{3,\alpha}({\Omega})$ 
 such that for any $V\subset\subset\Int\Omega$, there exist
   constants $\epsilon, C>0$ such that for all 
  $(\gamma, \tau)\in \mathcal{U}$ and   $u\in C^{0,\alpha}_c(V)$ with $\| u \|_{C^{0,\alpha}(\Omega)}< \epsilon$, there exists $(h, w)\in C^{2,\alpha}_c(\Int\Omega)$ with $\|(h,w)\|_{C^{2,\alpha}(\Omega)}\le C\| u\|_{C^{0,\alpha}(\Omega)}$ such that the initial data set  $(\gamma+h, \tau+w)$ satisfies
\[
	\Sc(\gamma+h, \tau+w) \ge \Sc(\gamma,\tau)+u.
\]
Once again, for small $t>0$, we can choose $(\gamma, \tau)=(g_t,\pi_t)$ and $u=\Sc(g_t, \pi_t)^-$, and the argument is the same as in Case 1, except now there is no $- \delta {\bf 1}_B$ term to worry about.
\vspace{6pt}

We have shown that the two cases above imply Item \eqref{item:deformation} to hold. Now, suppose  we have the negations of both Case 1 and Case 2. The negation of Case 1 states that that $\Sc\equiv 0$ in all of $\Int M$.  Let $V_k$ be an open neighborhood of $\partial M$ with smooth boundary and lying within a distance $\tfrac{1}{k}$ from $\partial M$. By the negation of Case 2, there exists a sequence $r_k\to\infty$ such that for all $k$, 
the dominant energy scalar is not improvable in $\Omega_{r_k}\smallsetminus V_k$. Since $\Omega_{r_k}\smallsetminus V_k$ exhausts $\Int M$, we can invoke Theorem~\ref{theorem:AF} to obtain the desired lapse-shift pair $(f, X)$ satisfying the system \eqref{equation:pair}.

\end{proof}

\subsection{Bartnik mass and admissible extensions}\label{section:admissibility}

Before we present the definition of Bartnik mass, we will discuss admissibility of an asymptotically flat extension. As already addressed in \cite{Bartnik:1997}, it is necessary to impose some no-horizon or non-degeneracy condition in defining an admissible extension, but there is no clear consensus for what exactly it should be. (Even in the time-symmetric case, there are potentially inequivalent choices, which can lead to different desirable properties of Bartnik mass, see~\cite{Jauregui:2019}.) We define a no-horizon condition in the following.

For a closed, two-sided, immersed hypersurface $\Sigma$ in an initial data set $(M, g, \pi)$, we define the \emph{null expansion} $\theta^+ := H_\Sigma+ \mathrm{tr}_\Sigma k$ with respect to the unit normal $\nu$, where  $H_\Sigma$ is the tangential divergence of $\nu$ and $\mathrm{tr}_\Sigma k$ is the trace of $k$ over the tangent space of $\Sigma$. (Recall $k$ is related to $\pi$ by \eqref{equation:k}.) We say an immersed hypersurface $\Sigma$ is a \emph{marginally outer trapped hypersurface} (or \emph{MOTS} for short) if $\theta^+$ vanishes  everywhere on $\Sigma$.

Separately, in asymptotically flat $(M, g, \pi)$, we say that an  immersed closed hypersurface $\Sigma$ in $M$ is an \emph{outer embedded boundary} if $\Sigma = \partial U$, where $U$ is a connected open set that contains the infinity (that is, the asymptotically flat end). The definition implies  that $\Sigma$ cannot have transversal self-intersection and that the only way that $\Sigma$ can fail to be embedded is when locally two sheets of $\Sigma$ touch tangentially from inside, where ``inside'' refers to the complement of $U$. 

\begin{definition}
We say that an asymptotically flat initial data set $(M, g, \pi)$, possibly with boundary, satisfies $\mathscr{N}_1$ if it contains no smooth MOTS that is an outer embedded boundary, except possibly $\partial M$ itself.  
\end{definition}

Note the condition $\mathscr{N}_1$ says that $(M, g, \pi)$ contains neither embedded MOTS homologous to $\partial M$ (except possibly $\partial M$ itself) nor MOTS that is immersed and touches itself tangentially from inside. We remark that in the time-symmetric case, a MOTS is a minimal surface, and a minimal surface that is an outer embedded boundary is necessarily embedded because of a comparison principle. However, the same reasoning does not hold true for general MOTS.

In the next result, we show that the condition $\mathscr{N}_1$ is ``open'' among certain small deformations of $(g, \pi)$. This is  the only part of the overall proof of Theorem~\ref{theorem:Bartnik} where we have to assume that $3\le n \le 7$.  

\begin{proposition}\label{proposition:openness}
Let $3\le n \le 7$ and let $(M, g, \pi)$ be an $n$-dimensional asymptotically flat initial data set  with nonempty smooth boundary $\partial M$. Suppose $(M, g, \pi)$  satisfies the condition~$\mathscr{N}_1$. For each bounded neighborhood $V_0$ of $\partial M$, there exists $\epsilon>0$ such that if $(\bar{g}, \bar{\pi})$ is an initial data set on $M$ with $(\bar{g}, \bar{\pi})=(g, \pi)$ everywhere in $V_0$ and $\|(\bar{g}, \bar{\pi})- (g, \pi) \|_{C^{2}_{-q}(M)\times C^{1}_{-1-q}(M)} <\epsilon$, then $(M, \bar{g}, \bar{\pi})$ also satisfies the condition~$\mathscr{N}_1$. 
\end{proposition}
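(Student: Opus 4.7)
I plan to argue by contradiction, extracting a limit MOTS that violates $\mathscr{N}_1$ on $(M,g,\pi)$. Suppose the conclusion fails: for some $V_0$, there exist $(\bar g_k,\bar\pi_k)\to(g,\pi)$ in $C^2_{-q}(M)\times C^1_{-1-q}(M)$, each coinciding with $(g,\pi)$ on $V_0$, and each $(M,\bar g_k,\bar\pi_k)$ admits a smooth outer embedded boundary MOTS $\Sigma_k=\partial U_k\neq\partial M$. The goal is to produce a limit $\Sigma_\infty$ that is an outer embedded boundary MOTS of $(g,\pi)$ distinct from $\partial M$.

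First I would rule out that $\Sigma_k\subset V_0$ along a subsequence: on $V_0$ the data is unchanged, so any such $\Sigma_k$ would already be an outer embedded boundary MOTS of $(M,g,\pi)$ distinct from $\partial M$, contradicting $\mathscr{N}_1$ for $(g,\pi)$. Hence each $\Sigma_k$ contains a point $q_k\in M\setminus V_0$. Second, I would bound $\bigcup_k\Sigma_k$ inside a fixed compact set $K\subset M$. Uniform closeness of $(\bar g_k,\bar\pi_k)$ to $(\delta,0)$ at infinity yields a uniform $R_0$ so that for $R\ge R_0$ the coordinate spheres $S_R$ are outer barriers, $\theta^+_{(\bar g_k,\bar\pi_k)}(S_R)>0$. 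A standard argument (either using an outermost MOTS envelope for each $\bar g_k$ dominating our $\Sigma_k$ from outside, or by noting that data sufficiently close to Euclidean in $\{|x|\ge R_0\}$ cannot admit a closed MOTS bounding a compact region there, via perturbation of the nonexistence of closed minimal hypersurfaces in $\mathbb{R}^n$) then confines $\Sigma_k$ to a fixed $K$.

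The key analytic input is MOTS compactness. In dimensions $3\le n\le 7$, the Andersson--Metzger curvature estimate for stable MOTS, the direct analog of Schoen--Simon's estimate for stable minimal hypersurfaces, yields uniform bounds on the second fundamental form $|A_{\Sigma_k}|$ and higher derivatives from the uniform $C^2\times C^1$ control on $(\bar g_k,\bar\pi_k)|_K$. Passing to a subsequence, $\Sigma_k\to\Sigma_\infty$ in $C^{2,\beta}$, possibly with tangential self-touching from the inside (allowed by the outer embedded boundary definition). Continuity of the null expansion in $C^2$ gives $\theta^+_{(g,\pi)}(\Sigma_\infty)=0$; defining $U_\infty$ as the connected component of $M\setminus\Sigma_\infty$ containing infinity gives $\Sigma_\infty=\partial U_\infty$; and a subsequential limit $q_\infty$ of $q_k$ lies in $\Sigma_\infty\setminus\overline{V_0}$, so $\Sigma_\infty\neq\partial M$. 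This contradicts $\mathscr{N}_1$ for $(g,\pi)$.

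The main obstacle is the MOTS compactness invoked above: it is exactly here that the hypothesis $n\le 7$ is used, since in higher dimensions stable MOTS can develop singularities in the limit, just as stable minimal hypersurfaces do.
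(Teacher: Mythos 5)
Your contradiction set-up, the confinement of the MOTS to a fixed compact set via barrier spheres, the identification of where $n\le 7$ enters, and the final argument that the limit is distinct from $\partial M$ all match the paper's proof. However, there is a genuine gap at the compactness step.

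You invoke the Andersson--Metzger curvature estimates for \emph{stable} MOTS directly on the surfaces $\Sigma_k$, but nothing in the hypothesis asserts that $\Sigma_k$ is stable: the definition of an ``outer embedded boundary MOTS'' in $\mathscr{N}_1$ carries no stability requirement, and there is no a priori reason an outer embedded boundary MOTS should be stable (just as an embedded minimal hypersurface bounding a region need not be stable). Without stability you also have no uniform area bound on $\Sigma_k$, so the Schoen--Simon compactness machinery does not apply. Finally, even if you extracted a $C^2$ limit $\Sigma_\infty$, you have no mechanism to rule out two sheets of $\Sigma_\infty$ touching from the \emph{outside} (i.e.\ from the side of the asymptotic region), which would destroy the ``outer embedded boundary'' property of the limit.

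The paper bridges all three gaps with a single additional idea: perturb $\pi_i$ to $\pi_i' = \pi_i + \delta_i\phi g_i$ so that $\theta^+<0$ on $\Sigma_i$ while keeping $\theta^+>0$ on the large coordinate sphere $S_r$. The barrier theory of Andersson--Metzger/Eichmair (\cite[Theorem 3.3]{Andersson-Eichmair-Metzger:2011}) then produces, between the two barriers, a new MOTS $\Sigma_i'$ that is embedded, \emph{stable}, and \emph{$C$-almost minimizing} with a uniform $C$. The $C$-almost minimizing property gives the uniform area bound and, after taking limits, precludes tangential touching from the outside via a catenoid-neck argument, so the limit is still an outer embedded boundary. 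Your argument should be modified to pass to these auxiliary $\Sigma_i'$ rather than taking limits of the given $\Sigma_k$ directly. (Your parenthetical remark about using an ``outermost MOTS envelope'' could in principle be developed into an alternative route to a stable surface, since outermost MOTS are stable and outer area-minimizing, but as written you only use that envelope for confinement, not to repair the stability hypothesis.)
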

\begin{proof}
In the following argument,  we use the results of Andersson and Metzger~\cite{Andersson-Metzger:2009, Andersson-Metzger:2010}, Eichmair~\cite{Eichmair:2009, Eichmair:2010}, and the survey paper of Andersson, Eichmair, and Metzger~\cite{Andersson-Eichmair-Metzger:2011}.  

Suppose, on the contrary, that no such $\epsilon$ exists. Then there exists a sequence $(g_i, \pi_i)$ of initial data sets with $(g_i, \pi_i)=(g, \pi)$ everywhere in $V_0$ and $\|(g_i, \pi_i)- (g, \pi) \|_{C^{2}_{-q}(M)\times C^{1}_{-1-q}(M)} \to 0$ so that $\mathscr{N}_1$ fails for each $(g_i, \pi_i)$. Namely, each $(M, g_i, \pi_i)$ contains a closed MOTS $\Sigma_i$ that is an outer embedded boundary. Ideally, we would like to extract a subsequence of $\Sigma_i$ converging to some $\Sigma$ in $(M, g, \pi)$, but this cannot be done directly. First we will produce a sequence of \emph{stable embedded} MOTS $\Sigma_i'$ in $(M, g_i, \pi_i)$.

There is a sufficiently large $r$ so that the coordinate spheres of radius greater than or equal to $r$ all have positive $\theta^+$  (with respect to the unit normal pointing to infinity) in $(M, g_i, \pi_i)$ for all $i$. By a MOTS comparison principle~(see, for example, \cite[Proposition 4]{Eichmair-Huang-Lee-Schoen:2016}), it follows that $S_r$ must enclose $\Sigma_i$. 
Let $U_i$  be the part of $\Int \Omega_r$ lying outside of $\Sigma_i$. That is, $\partial U_i$ is  the disjoint union of $\Sigma_i$ and $S_r$, and we have $\theta^+=0$ on $\Sigma_i$ and $\theta^+>0$ on $S_r$, both with respect to the unit normal pointing toward infinity in $(g_i, \pi_i)$. 

In order to obtain a stable embedded MOTS in $U_i$, we follow the idea of \cite[Theorem 5.1]{Andersson-Metzger:2009} to slightly modify the initial data near $\Sigma_i$.  Let $\phi \ge 0$ be a smooth scalar function on $M$ so that $\phi\equiv 0$ in a neighborhood of $S_r$ and $\phi>0$ on all $\Sigma_i$. Let $\delta_i\to 0$ be a sequence of positive numbers, and let $\pi_{i}' = \pi_i +\delta_{i} \phi g_i$. It is easy to see that with respect to $(g_i, \pi_i')$, the null expansion $\theta^+$ becomes strictly negative everywhere on $\Sigma_i$ while $S_r$ has the same positive $\theta^+$. By \cite[Theorem 3.3]{Andersson-Eichmair-Metzger:2011}, there exists a closed \emph{embedded} MOTS $\Sigma_i'$ in $(U_i, g_i, \pi_i')$ that is  also \emph{stable} in the sense of MOTS,\footnote{We refer to \cite[Section 3.6]{Andersson-Eichmair-Metzger:2011} for stability of MOTS, and we note $\Sigma_i$ also satisfies a stability inequality in the sense of \cite[(5)]{Eichmair:2010} that is sufficient to apply Schoen-Simon's regularity theory in \cite{Schoen-Simon:1981}.} and is $C$-almost minimizing\footnote{Here, $C$-almost minimizing is in the sense of~\cite{Eichmair:2009}.}
  in $U_i$, where $C$ can be chosen independent of~$i$ (since $C$ depends only on $\sup_M |\pi_i'|_{g_i}$). Also, $\Sigma_i'$ is homologous to $S_r$, and we define $U_i'\subset U_i$ to be the open set bounded between $\Sigma_i'$ and~$S_r$.

The $C$-almost minimizing property of $\Sigma_i'$ in $U_i$ implies that $\Sigma_i'$ has a uniform area bound, and the equation $\theta^+=0$  implies $\Sigma_i'$ has uniformly bounded mean curvature. Together with the stability of~$\Sigma_i'$, we can apply the compactness result of \cite{Schoen-Simon:1981} (see also \cite[Theorem 1.3]{Andersson-Metzger:2010} and \cite[Theorem A.2]{Eichmair:2010}) to obtain a subsequential limit $\Sigma$, which is a closed immersed  MOTS in $(M, g, \pi)$. By passing to a further subsequence if necessary, $U_i'$ converges to some $U$ as currents, where $\partial U= S_r - \Sigma$ as currents.  

We show that the $C$-almost minimizing property of the sequence $\Sigma_i'$ in $U_i'$ guarantees that $\Sigma$ is a smooth boundary component of $U$, and thus $\Sigma$ is an outer embedded boundary. Suppose, to get a contradiction, $\Sigma$ has no collar neighborhood in $U$, which implies that two sheets of $\Sigma$ must touch at some point from inside of $U$. We can argue as in \cite[p. 23]{Andersson-Eichmair-Metzger:2011} that in a neighborhood of that point, for sufficiently large $i$, two sheets of $\Sigma_i'$ would be close enough in $U_i'$ so that the area of $\Sigma'$ can be reduced by adding a catenoid neck to violate the $C$-almost minimizing property in $U_i'$. (Note that the above argument does not give ``inner'' embeddedness of $\Sigma$  because  two sheets of $\Sigma$ can possibly touch from the complement of $U$ and $U_i$, where the $C$-almost minimizing property of $\Sigma_i'$ is not known to hold.)

Finally, we check that $\Sigma$ is not equal to $\partial M$.  Since $(M, g, \pi)$ satisfies $\mathscr{N}_1$ and $(g_i, \pi_i)$ is identical to $(g, \pi)$ on $V_0$, each $\Sigma_i$ must intersect the complement of $V_0$, and consequently, so must $\Sigma_i'$.  Hence $\Sigma$ also intersects the complement of $V_0$. In summary, $\Sigma$
is a MOTS that is an outer embedded boundary and is not $\partial M$. This contradicts the assumption that $(M, g, \pi)$ satisfies~$\mathscr{N}_1$. 

Note that if $\partial M$ is not a MOTS, then we do not need the argument in the previous paragraph to see that the MOTS limit $\Sigma$ cannot equal $\partial M$. So in that case, the assumption that $(\bar{g}, \bar{\pi})=(g, \pi)$ in~$V_0$ in Proposition~\ref{proposition:openness} is not needed.

\end{proof}

\begin{definition}\label{definition:admissible}
Let $(\Omega_0, g_0, \pi_0)$ be a $n$-dimensional compact initial data set with nonempty smooth boundary.
We say that $(M, g, \pi)$ is an \emph{admissible extension of $(\Omega_0, g_0, \pi_0)$} if the following holds:
\begin{enumeratei}
\item $(M, g, \pi)$ is an $n$-dimensional asymptotically flat initial data set (as defined in Appendix~\ref{section:asymp_flat}) with boundary $\partial M$, satisfying the dominant energy condition. 
\item  There exists an identification of the boundaries $\partial M$ and  $\partial \Omega_0$ via diffeomorphism, and under this identification, the following equalities hold along $\partial M \cong \partial \Omega_0$:
\begin{align*}
g_0|_{\partial \Omega_0} &= g|_{\partial M} \\
H_{\partial \Omega_0} &= H_{\partial M} \\
\pi_0\cdot \nu_0 &= \pi\cdot\nu.
\end{align*}
Here, $\nu_0$ and $\nu$ denote the unit normals with respect to $g_0$ and $g$, respectively (both of which point into $M$ and out of $\Omega_0$). The first equation is between the induced metrics, the second equation is between the mean curvatures (computed with respect to $g_0$ and $g$ and the normals $\nu_0$ and $\nu$, respectively), and the third equation is between the $g_0$-contraction of $\pi_0$ with $\nu_0$ and the $g$-contraction of $\pi$ with $\nu$.\footnote{The third identity is equivalent to asking for a matching condition on the tangential trace of $k$ and on the one-form $k(\nu, \cdot)$ restricted on the tangent space of the boundary as  in \cite[Definition 2]{Bartnik:1997}. }

\item $(M, g, \pi)$ satisfies the no-horizon condition $\mathscr{N}_1$.

\end{enumeratei}
\end{definition}

\begin{remark}\label{remark:admissible}
We include two other conditions that can replace $\mathscr{N}_1$ in Definition~\ref{definition:admissible}. 

The condition $\mathscr{N}_2$ requires an asymptotically flat initial data set $(M, g, \pi)$ to satisfy $\mathscr{N}_1$, and  $\partial M$ itself is not a MOTS. The last sentence of the proof of Proposition~\ref{proposition:openness}
implies that $\mathscr{N}_2$ is an open condition with respect to deformations of initial data in $C^{2}_{-q}(M)\times C^{1}_{-1-q}(M)$ that fix the induced data on the boundary. 

The condition $\mathscr{N}_3$ says that $\partial M$ is strictly outward-minimizing in $(M, g, \pi)$, in the sense that it has volume strictly less than any hypersurface enclosing it. As discussed in \cite[Section 6]{Jauregui:2019}, the condition $\mathscr{N}_3$ is an open condition with respect to deformation of metrics in $C^{1}_{-q}$ that fix the induced metric on the boundary. 

All results in this paper will hold if we replace the condition $\mathscr{N}_1$ in Definition~\ref{definition:admissible} by $\mathscr{N}_2$, or $\mathscr{N}_3$, or any combination of these conditions.
\end{remark}

Let $(M, g, \pi)$ be an asymptotically flat initial data set with the ADM energy-momentum $(E, P)$. If  $E\ge |P|$, we define the ADM mass to be $m_{\mathrm{ADM} }:= \sqrt{E^2 - |P|^2}$. If $E<|P|$, then we define $m_{\mathrm{ADM}}=-\infty$, purely for the sake of convenience. 
\begin{definition}
Let $(\Omega_0, g_0, \pi_0)$ be a compact initial data set with nonempty smooth boundary, satisfying the dominant energy condition. Define
$\mathcal{B}$ to be the set of all admissible extensions $(M, g, \pi)$ of $(\Omega_0, g_0, \pi_0)$. If this set is nonempty, then we define the 
\emph{Bartnik mass} of $(\Omega_0, g_0, \pi_0)$ to be
\[ 
m_B(\Omega_0, g_0, \pi_0):=\inf_{(M, g, \pi)\in\mathcal{B}}  m_{\mathrm{ADM}}(g, \pi).
\]
We say that $(M, g, \pi)\in \mathcal{B}$ is a \emph{Bartnik mass minimizer} for $(\Omega_0, g_0, \pi_0)$ if it achieves this infimum. 
\end{definition}
\begin{remark}\label{remark:pmt_corners}
As long as an appropriate spacetime positive mass theorem ``with corners'' holds, each $(M, g, \pi)$ in the definition above has $E\ge|P|$, and consequently, $m_B(\Omega_0, g_0, \pi_0)\ge0$. Although we cannot find such a result stated in the literature, we observe that such a theorem holds for \emph{spin} manifolds, by essentially the same reasoning as in the proof of the time-symmetric case in~\cite[Theorem 3.1]{Shi-Tam:2002}.
The only difference in the spacetime case is that the appropriate Dirac operator leads to an extra boundary term on p.~104, which precisely corresponds to the condition $\pi_0\cdot\nu_0=\pi\cdot\nu$ in Definition~\ref{definition:admissible}. Consequently, if one adjusts the definition of admissibility to demand that $\Omega_0\cup M$ glued along their common boundaries is spin, then the Bartnik mass is always nonnegative.

\end{remark}

\begin{proof}[Proof of Theorem~\ref{theorem:Bartnik}]
We need to establish Items (1) through(4) of Theorem~\ref{theorem:Bartnik}.
By Proposition~\ref{proposition:openness} and the definition of admissibility, it is clear that the deformed initial data set $(\bar{g}, \bar{\pi})$ in Item \eqref{item:deformation} of Theorem~\ref{theorem:Bartnik2} is an admissible extension, but it would contradict the ADM mass minimizing property of $(M, g, \pi)$. Therefore Item~\eqref{item:non-improvable} of Theorem~\ref{theorem:Bartnik2} must hold. That is, $\sigma(g,\pi)=0$ everywhere in $\Int M$ (which is Item~\eqref{item:sigma_vanish}),  and there exists a nontrivial solution $(f, X)$ on all of $\Int M$ solving the system~\eqref{equation:pair}. We apply  Theorem~\ref{theorem:f_not_zero} to conclude Items~\eqref{item:Bartnik-spacetime} and~\eqref{item:spacetime_vac}. Item~\eqref{item:vanishing-J} follows from  Lemma~\ref{lemma:unbounded}.

\end{proof}
\begin{remark}
In the time-symmetric case, given $(\Omega_0, g_0, 0)$, it is an open question whether a time-symmetric Bartnik mass minimizer $(M, g, 0)$ must be unique. For an initial data set $(\Omega_0, g_0, \pi_0)$ it is clear that a Bartnik mass minimizer $(M, g, \pi)$ is \emph{not} unique because one can take different asymptotically flat spacelike hypersurfaces in the same spacetime development to obtain different minimizing extensions. However, it is still interesting to ask whether the spacetime development of a Bartnik mass minimizer is unique. 
\end{remark}

\subsection{Bartnik energy}\label{section:energy}

We define the quasi-local energy of a compact initial data set $(\Omega_0, g_0, \pi_0)$ with nonempty smooth boundary in a similar fashion as the Bartnik mass.

\begin{definition}
Let $(\Omega_0, g_0, \pi_0)$ be a compact initial data set with nonempty smooth boundary, satisfying the dominant energy condition. Define $\mathcal{B}$ to be the set of all admissible extensions $(M, g, \pi)$ of $(\Omega_0, g_0, \pi_0)$ as in Definition~\ref{definition:admissible}. If this set is nonempty, then we define the \emph{Bartnik energy} to be
\[ 
E_B(\Omega_0, g_0, \pi_0):=\inf_{(M, g, \pi)\in\mathcal{B}}  E( g, \pi).
\]
We say that $(M, g, \pi)\in \mathcal{B}$ is a \emph{Bartnik energy minimizer} for $(\Omega_0, g_0, \pi_0)$ if it achieves this infimum.
\end{definition}

In the next result, we observe that Bartnik mass and Bartnik energy should be equal. It may be physically interpreted  that a natural definition of the Bartnik quasi-local linear momentum by $|P_B|:=\sqrt{E_B^2-m_B^2}$ is always zero, and thus the quasi-local quantities defined by minimizing the corresponding quantities of asymptotically flat extensions do not capture  ``dynamics'' information. Unfortunately, our proof requires $C^\infty$ smoothness.

\begin{theorem}
For this theorem only, we re-define admissibility to require admissible extensions $(g,\pi)$ to be $C^\infty_{\mathrm{loc}}$ on $\Int M$, which then slightly changes the formal definitions of $m_B$ and $E_B$.

Let $(\Omega_0, g_0, \pi_0)$ be a three-dimensional compact initial data set with nonempty smooth boundary, satisfying the dominant energy condition. If an admissible extension exists and $m_B(\Omega_0, g_0, \pi_0)>0$,
 then 
\[ m_B(\Omega_0, g_0, \pi_0) = E_B(\Omega_0, g_0, \pi_0).\]
Moreover, $(M, g, \pi)$ is a Bartnik energy minimizer for $(\Omega_0, g_0, \pi_0)$ if and only if it is a Bartnik mass minimizer with ADM momentum equal to zero. 
\end{theorem}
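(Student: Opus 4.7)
The inequality $m_B \le E_B$ is immediate from $m_{\mathrm{ADM}}(g,\pi) = \sqrt{E^2 - |P|^2} \le E$, and the ``moreover'' assertion will follow formally once equality is established. The plan for $E_B \le m_B$ is to show that every admissible extension $(M, g, \pi)$ with ADM mass $m > 0$ admits a counterpart admissible extension whose ADM energy equals $m$ (and whose ADM momentum therefore vanishes); taking the infimum then yields $E_B \le m_B$.

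Given $(M,g,\pi)$, I would apply Theorem~\ref{theorem:Bartnik2}. Its case (ii), combined with Proposition~\ref{proposition:conformal}, yields an admissibility-preserving deformation with strictly decreased $E$ and unchanged $P$. Iterating produces a sequence of admissible extensions on $M$ with decreasing energies and fixed momentum. Either this sequence reaches an extension in case (i) of Theorem~\ref{theorem:Bartnik2}, or (using uniform elliptic bounds on the deformations) a subsequence converges to a limit admissible extension that must itself satisfy case (i)---otherwise case (ii) at the limit would permit a further decrease. In case (i), $\sigma(g,\pi) \equiv 0$ on $\Int M$ and a nontrivial lapse-shift pair $(f, X)$ solves~\eqref{equation:pair}. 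Since $m > 0$ forces $E > |P|$, Theorem~\ref{theorem:f_not_zero} and Theorem~\ref{theorem:Bartnik} place $(\Int M, g, \pi)$ inside a null dust spacetime $(\mathbf{N}, \mathbf{g})$ satisfying the spacetime DEC, carrying a global Killing vector $\mathbf{Y} = 2f\mathbf{n} + X$, and vacuum outside a compact subset. Lemma~\ref{lemma:asymptotics} then gives $(f, X) \to (a, b)$ at infinity with $b E = -2a P$, hence $|b|^2/(4a^2) = |P|^2/E^2 < 1$, so $\mathbf{Y}$ is uniformly timelike near infinity and the spacetime is stationary there.

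A Corvino-Schoen-type interpolation in the asymptotic vacuum region of $\mathbf{N}$ will then produce a spacelike hypersurface $\tilde M \subset \mathbf{N}$ that agrees with $M$ near $\partial M$ and coincides with a Killing-adapted slice near infinity. The induced data $(\tilde g, \tilde \pi)$ inherit the DEC from the spacetime DEC (via Lemma~\ref{lemma:null-perfect-fluid}), match $(\Omega_0, g_0, \pi_0)$ along the identified boundary, and have ADM data $(m, 0)$ by construction. The no-horizon condition $\mathscr{N}_1$ transfers from $M$ to $\tilde M$ since $\tilde M$ coincides with $M$ near $\partial M$ and MOTS in $\tilde M$ can be compared to those in $M$ via their trapped tubes in $\mathbf{N}$. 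Thus $E_B \le E(\tilde g, \tilde \pi) = m$, and taking the infimum over admissible $(M, g, \pi)$ yields $E_B \le m_B$. For the moreover statement: if $(M, g, \pi)$ is a Bartnik energy minimizer, then $E = E_B = m_B$, and combining $m \le E = m_B$ with $m \ge m_B$ forces $m = m_B$ and $P = 0$; conversely, a Bartnik mass minimizer with $P = 0$ satisfies $E = m = m_B = E_B$, so it is a Bartnik energy minimizer. The hard part will be the construction of $\tilde M$ and verification of $\mathscr{N}_1$ through the interpolation region, which is presumably where the $C^\infty$ regularity assumption is invoked to ensure smoothness of the gluing and control of the MOTS comparison.
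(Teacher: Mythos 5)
Your high-level goal---given an admissible extension of ADM mass $m$, produce an admissible extension with ADM energy $m$ and momentum zero, then take infima---agrees with the paper's, but the route you take to realize it has two substantial gaps.

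First, the iterative use of Theorem~\ref{theorem:Bartnik2}(ii) to ``reach'' case (i) does not work as stated. The perturbation produced by case (ii) is built from a compactly supported correction $(h,w)\in C^{2,\alpha}_c(\Int\Omega)$, so the deformed data is only $C^{2,\alpha}$; both Theorem~\ref{theorem:Bartnik2} and the re-defined admissibility for this theorem require $(g,\pi)\in C^{5}_{\mathrm{loc}}\times C^{4}_{\mathrm{loc}}$ (respectively $C^\infty_{\mathrm{loc}}$), so after one step you can no longer re-apply the theorem or stay in the admissible class. Even setting regularity aside, the claim that a subsequential limit of the iterates must fall in case (i) ``because otherwise case (ii) at the limit would permit a further decrease'' is not an argument: nothing forces the decreasing energies to converge to the infimum, the per-step decrease can shrink to zero, and a limit can perfectly well still satisfy case (ii). Second, the ``Corvino--Schoen-type interpolation in the asymptotic vacuum region of $\mathbf{N}$'' is exactly the nontrivial step, which you leave unjustified and flag as ``the hard part.''

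The paper bypasses both issues by working directly at the initial-data level, with no reference to case (i) or to any spacetime development of the interior. Given an admissible extension with mass within $\epsilon$ of $m_B$, it applies the Corvino--Huang gluing theorem to replace the data outside a large compact set by exact data on a boosted slice of a Kerr spacetime (this is the step that consumes and preserves the $C^\infty_{\mathrm{loc}}$ assumption and costs at most $\epsilon$ in mass), and then ``bends'' that boosted slice, inside the explicit Kerr spacetime, to the $t=0$ slice via $t=\eta(r)x^1$ with $\eta$ interpolating from the boost parameter $\beta$ to $0$. Distinct slices of the same Kerr spacetime share the same ADM mass, while the $t=0$ slice has zero linear momentum; the transition region is foliated by hypersurfaces of positive null expansion, so the no-horizon condition $\mathscr{N}_1$ is preserved by the comparison principle. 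This concrete bend-to-$t=0$ construction is precisely the missing content in your interpolation step, and it requires no structural information (null dust, global Killing field, asymptotic stationarity) about the extension one starts from.
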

\begin{remark}
The dimension assumption is used to perform gluing to an exact Kerr initial data set in the exterior because the spacelike slices in 4-dimensional Kerr spacetimes form an ``admissible family'' for gluing construction. See \cite[Appendix F]{Chrusciel-Delay:2003} and  Definition 4.5 and Example 4.6 in \cite{Corvino-Huang:2020}. We expect the analogous result to hold in higher dimensions. 
\end{remark}

To prepare for the proof, we review basic facts about an $(n+1)$-dimensional asymptotically flat spacetime. Given a coordinate chart $(t, x^1, \ldots, x^n)$ in the asymptotically flat region of spacetime,  where $(x^1, \ldots, x^n)$ is a spatial asymptotically flat coordinate chart, we define a \emph{boosted slice of angle $\beta$} to be a spacelike hypersurface defined $t = \beta x^1$ for some number $ \beta \in (-1, 1 )$.  Those boosted slices define a family of $(n-1)$-dimensional submanifolds $\Sigma_{\beta, r}$ by intersecting  the hyperplane $\{ t = \beta x^1\}$ with the cylinder of radius $\{ |x| = r\}$. With respect to the Minkowski metric, those $\Sigma_{\beta, r}$ have  null expansion $\theta^+>0$, as well as positive mean curvature, because they are the isometric images of ellipsoids in the $t=0$ slice. Therefore, with respect to a Lorentzian metric that is asymptotically flat,  those submanifolds $\Sigma_{\beta, r}$ have positive $\theta^+$ and positive mean curvature for all $\beta$, for all sufficiently large $r$. We will restrict our attention to  Kerr spacetimes and consider boosted slices with respect to a Boyer-Lindquist coordinate chart $(t, r,\theta, \phi)$ by letting $(x^1, x^2, x^3)$ be the Cartesian coordinates corresponding to $(r, \theta, \phi)$. 
 
\begin{proof}
We clearly have $m_B(\Omega_0, g_0, \pi_0)\le E_B(\Omega_0, g_0, \pi_0)$ by definition, so we need only prove the reverse inequality.

For any $\epsilon>0$, our hypotheses imply that there exists an admissible extension $(M, g, \pi)$ of $(\Omega_0, g_0, \pi_0)$ such that $0<  m_{\mathrm{ADM}}(g, \pi) < m_B(\Omega_0, g_0, \pi_0)+\epsilon$. We will perform a two-step process to construct another admissible extension $(M, \tilde{g}, \tilde{\pi})$ such that 
\[ E(\tilde{g}, \tilde{\pi}) < m_{\mathrm{ADM}}(g, \pi) +\epsilon,\] 
which implies the desired inequality $E_B(\Omega_0, g_0, \pi_0)\le m_B(\Omega_0, g_0, \pi_0)$.

The first step is to apply a gluing theorem of Corvino and the first author~\cite[Theorem~1.4]{Corvino-Huang:2020} to construct initial data $(\bar{g}, \bar{\pi})$ on $M$ with the following properties:
 For $R>0$ sufficiently large, 
\begin{itemize}
\item $(\bar{g}, \bar{\pi})= (g, \pi)$ on $\Omega_R$, where $\Omega_R$ is the compact subset enclosed by $|x|=R$,
\item $(\bar{g}, \bar{\pi})$ is equal to the initial data on a boosted slice of the Kerr spacetime outside $\Omega_{2R}$, 
\item $(\bar{g}, \bar{\pi})$ satisfies the DEC everywhere,
\item   $0<  m_{\mathrm{ADM}}(\bar{g}, \bar{\pi}) < m_{\mathrm{ADM}}(g, \pi) +\epsilon$,
\item   $(\bar{g}, \bar{\pi})\in C^\infty_{\mathrm{loc}}(\Int M)$\footnote{This is the step of the argument that requires $C^\infty_{\mathrm{loc}}$ smoothness. In general, applying this gluing theorem causes a loss of derivatives.}
 and
$\| (\bar{g}, \bar{\pi})- (g, \pi)\|_{C^{2,\alpha}_{-q'}(M)\times C^{1,\alpha}_{-1-q'}(M)}<\epsilon$ for some $q' \in (\tfrac{1}{2}, q)$,
\end{itemize}
 We note that the $\epsilon$-closeness in $C^{2,\alpha}_{-q'}(M)\times C^{1,\alpha}_{-1-q'}(M)$ statement is implicitly contained in the proof of \cite[Theorem 4.9]{Corvino-Huang:2020}: It is clear that the Kerr initial data is $\epsilon$-close to $(g, \pi)$ outside $\Omega_{2R}$ for $R$ large by asymptotical flatness. It suffices to verify such an estimate  in the transition region $A_R:=\Omega_{2R} \smallsetminus \Omega_R$ of the deformation, denoted by $(h, w)$. The estimate of the rescaled $(h^R, w^R)$ in the unit annulus is obtained in \cite[Proposition 4.4]{Corvino-Huang:2020}, which implies $\|(h, w)\|_{C^{2,\alpha}_{-q}(A_R)\times C_{-1-q}^{1,\alpha}(A_R)}\le C$.  Hence, the estimate of $(h, w)$ in the weighted norm with a slower fall-off rate $q'$ can be made arbitrary small for large $R$. Together with Proposition~\ref{proposition:openness} and Remark~\ref{remark:admissible}, we see that $(M, \bar{g}, \bar{\pi})$ is admissible, as long as $\epsilon$ is small enough. 
 
 For the second step, we look at the portion of $(M, \bar{g}, \bar{\pi})$ outside $\Omega_{2R}$ that is exactly a boosted slice in the Kerr spacetime, which we may express as $t = \beta x^1$ for some number $\beta\in (-1, 1)$. We now ``bend'' this slice to the $t=0$ slice by letting $t = \eta(r) x^1$, where $\eta(r)$ is a smooth scalar function depending only on $r$ such that $\eta(2R) = \beta$ and $\eta(r) = 0$ for $r\ge 3R$. We define a new $(\tilde{g}, \tilde{\pi})$ as follows:
\begin{itemize}
\item $(\tilde{g}, \tilde{\pi}) = (\bar{g}, \bar{\pi})$ on  $\Omega_{2R}$.
\item Outside $\Omega_{2R}$, $(\tilde{g}, \tilde{\pi})$ equals the induced data on $\{ t = \eta(r) x^1\}$ in the Kerr spacetime.
\end{itemize}
 One feature of this bending process is that it ensures that for $R$ sufficiently large, the portion of $(M, \tilde{g}, \tilde{\pi})$ outside of $\Omega_{2R}$ is foliated by hypersurfaces of positive null expansion and positive mean curvature. 
Since the boosted slice $t = \beta x^1$ has the same ADM mass as the $t=0$ slice, and the latter has zero linear momentum, we have \[ E(\tilde{g}, \tilde{\pi})=m_{\mathrm{ADM}}(\tilde{g}, \tilde{\pi})=m_{\mathrm{ADM}}(\bar{g}, \bar{\pi})< m_{\mathrm{ADM}}(g, \pi) +\epsilon.\]

The only thing left to verify is admissibility of $(M, \tilde{g}, \tilde{\pi})$. The first two properties in Definition~\ref{definition:admissible} are clear, which only leaves the no-horizon condition. 
 The part of $(\tilde{g}, \tilde{\pi})$ that is different from $(\bar{g}, \bar{\pi})$ can be foliated hypersurfaces of positive null expansion, so the comparison principle for $\theta^+$ implies that condition $\mathscr{N}_1$ on $(\bar{g}, \bar{\pi})$ implies condition $\mathscr{N}_1$ on $(\tilde{g}, \tilde{\pi})$ as well. (The same is true for the conditions $\mathscr{N}_2$ and $\mathscr{N}_3$.)
 
Finally, the last sentence of the theorem is an immediate consequence of the equality $m_B(\Omega_0, g_0, \pi_0) = E_B(\Omega_0, g_0, \pi_0)$.

\end{proof}
\appendix
\section{Asymptotically flat manifolds}\label{section:asymp_flat}

Let $M$ be a connected, $n$-dimensional manifold, possibly with boundary. For $q\in\left(\tfrac{n-2}{2}, n-2\right)$ and $\alpha\in(0,1)$, 
we say that an initial data set $(M, g, \pi)$ is \emph{asymptotically flat} of type $(q, \alpha)$ if  there exists  a compact subset $K \subset M$ and a diffeomorphism $M\smallsetminus K  \cong \mathbb{R}^n \smallsetminus B$ such that 
\[
	(g - g_{\mathbb{E}}, \pi) \in C^{2,\alpha}_{-q} (M)\times C^{1,\alpha}_{-1-q} (M)
\]
and
\[
	(\mu, J) \in L^1 (M),
\]
where $g_{\mathbb{E}}$ is a Riemannian background metric on $M$ that is equal to the Euclidean metric in the coordinate chart $M\smallsetminus K  \cong \mathbb{R}^n \smallsetminus B$. Note that with this definition, $(M, g)$ is necessarily complete.  The function spaces above, $C^{k,\alpha}_{-q}$, refer to \emph{weighted H\"{o}lder spaces} (as defined in~\cite{Huang-Lee:2020}, for example). Note that our convention is  such that $f\in C^{k,\alpha}_{-q}(M)$ if and only if $f\in C^{k,\alpha}_{\mathrm{loc}}(M)$ and there is a positive constant $C$ such that, for any multi-indices $I$ with $|I|\le k$,
\begin{align*}
	|(\partial^I f)(x)|\le C|x|^{-|I|-q} \quad \mbox{ and } \quad [f]_{k,\alpha; B_1(x)} \le C|x|^{-k-\alpha-q}
\end{align*}
on $M\smallsetminus K$.  We use the notation $O_{k,\alpha}(|x|^{-q})$ to denote an arbitrary function that lies in $C^{k,\alpha}_{-q}$, and we also simply write $O(|x|^{-q})$ for $O_0(|x|^{-q})$.

 Let $\Omega_k$ be a sequence of compact subsets with smooth boundary that exhausts $\mathbb{R}^n$, and define $\Sigma_k := \partial \Omega_k$.
The \emph{ADM energy} $E$ and the \emph{ADM linear momentum} $P=(P_1, \dots, P_n)$ of an asymptotically flat initial data set $(M, g, \pi)$ are defined as 
\begin{align*}
	E&:= \tfrac{1}{2(n-1)\omega_{n-1}} \lim_{k\to \infty} \int_{\Sigma_k}\sum_{i,j=1}^n (g_{ij,i}-g_{ii,j})\nu^j \, d\mu\\
	P_i &:= \tfrac{1}{(n-1)\omega_{n-1}} \lim_{k\to \infty} \int_{\Sigma_k} \sum_{i,j=1}^n \pi_{ij} \nu^j \, d\mu
\end{align*}	
where the integrals are computed on $\Sigma_k$ in $M\smallsetminus K \cong \mathbb{R}^n \smallsetminus B$, $\nu^j$ is the outward unit normal to~$\Sigma_k$, $d\mu$ is the measure on  $\Sigma_k$ induced by the Euclidean metric,  $\omega_{n-1}$ is the volume of the standard $(n-1)$-dimensional unit sphere, and the commas denote partial differentiation in the coordinate directions. The condition $q>\frac{n-2}{2}$ and integrability of $(\mu, J)$ imply that the limits in the definition of ADM energy-momentum exist and are independent of the choice of exhaustion $\Sigma_k$.

\section{Spacetime with a Killing vector field}\label{section:spacetime}

For a spacetime admitting a Killing vector field, the Einstein tensor along a spacelike hypersurface can be expressed in terms of the lapse-shift pair of the Killing vector. While these formulas have appeared in the literature, because of different sign and normalization conventions, we include a self-contained discussion of the curvature formulas that will be used elsewhere in this paper. 

Let $(\mathbf{N}, \mathbf{g})$ be an $(n+1)$-dimensional, time-oriented spacetime equipped with a spacelike hypersurface $U$, and let $G:= \mathrm{Ric}_{\mathbf{g}} - \tfrac{1}{2} R_{\mathbf{g}} \mathbf{g}$ denote the Einstein tensor of $\mathbf{g}$. Consider any local frame $e_0, e_1\ldots, e_n$ of~$\mathbf{N}$ such that $e_0=\mathbf{n}$ is the future unit normal of a spacelike hypersurface $U$ while $e_1,\ldots, e_n$ are tangent to $U$.

 As a simple consequence of the Gauss and Codazzi equations:
\begin{align*}
G_{00}  &= \tfrac{1}{2} (R_g - |k|_g^2 + (\tr_g k)^2) \\
G_{0i} & = (\Div_g k)_i - \nabla_i (\tr_g k), 
\end{align*}
where $g$ is the induced metric on the hypersurface, and we define the second fundamental form $k$ of a spacelike hypersurface to be the tangential component of $\bm{\nabla} \mathbf{n}$. This is the same as saying
\begin{align}
\begin{split}\label{equation:Einstein-0i}
G_{00}  &= \mu \\
G_{0i} & = J_i, 
\end{split}
\end{align}
where $\mu$ and $J$ are the energy and current densities of the induced initial data $(g,\pi)$. Of course, this was
the original motivation for the definitions of $\mu$ and $J$. 

Let $\mathbf{Y}$ be a vector field on $\mathbf{N}$ which is transverse to $U$ everywhere. Given coordinates  $(x^1, \ldots, x^n)$  on $U$, we extend these functions to a neighborhood of $U$  by making them constant on the flow lines of $\mathbf{Y}$. We also define a function $u$ via integrating $\mathbf{Y}$ from $u=0$ at~$U$ so that $\frac{\partial}{\partial u}=\mathbf{Y}$. Then $(u, x^1, \ldots, x^n)$ defines coordinates in a neighborhood of $U$ in $\mathbf{N}$, and it is straightforward to see that the metric must take the form
\begin{align}\label{equation:Killing-development}
\mathbf{g} = -4f^2 du^2 + g_{ij} (dx^i + X^i du) (dx^j + X^j du),
\end{align}
where $g_{ij}$ is the induced metric on each constant $u$ slice, and the decomposition $\mathbf{Y} = 2f \mathbf{n} +X$ holds along each one of these slices, where $\mathbf{n}$ is the future unit normal of the slice.  

\begin{lemma}\label{lemma:Einstein_tangential}
Suppose the spacetime $(\mathbf{N}, \mathbf{g})$ takes the form \eqref{equation:Killing-development}.
Then the following equations hold, where the $i,j$ indices run from $1$ to $n$: 
\begin{enumerate}
\item The second fundamental form of the constant $u$-slices is expressed as:
\begin{equation}\label{equation:second_fund}
 k_{ij} = \frac{1}{4f}\left[(L_{\mathbf{Y}} g)_{ij}- (L_X g)_{ij}\right].
 \end{equation}
\item  The Einstein tensor along  the constant~$u$ slices takes the form: 
\begin{align}
\begin{split}\label{equation:Einstein-ij}
G_{ij} &= \left[R_{ij} -\tfrac{1}{2}R_g g_{ij}\right] 
+ \left[(\tr_g k)k_{ij} - 2k_{i\ell}k^\ell_j \right]
 + \left[ -\tfrac{1}{2}(\tr_g k)^2 +\tfrac{3}{2}|k|_g^2 \right]g_{ij}\\
&\quad+ f^{-1}\left[ -\tfrac{1}{2}(L_X k)_{ij} + \tfrac{1}{2}\tr_g (L_X k)g_{ij} -  f_{;ij} +(\Delta_g f) g_{ij} \right]\\
&\quad+ (2f)^{-1}\left[ (L_{\mathbf{Y}} k)_{ij} -  \tr_g (L_{\mathbf{Y}} k)g_{ij} \right].
\end{split}
\end{align}

\end{enumerate}
\end{lemma}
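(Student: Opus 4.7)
The plan is to perform the standard ADM-type decomposition adapted to the lapse $N=2f$ and shift $X$ that are visible from the metric form~\eqref{equation:Killing-development}.

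First I would read off the future unit normal $\mathbf{n}$ of the constant-$u$ slices directly from~\eqref{equation:Killing-development}. A short calculation with the inverse metric gives $\mathbf{n}=\frac{1}{2f}(\partial_u-X)$, which verifies $\mathbf{Y}=2f\mathbf{n}+X$ and fixes conventions.

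For part (1), I would use the identity $(L_\mathbf{n} g)(V,W)=2k(V,W)$ for $V,W$ tangent to the slice, which follows from defining $k$ as the symmetric tangential part of $\bm\nabla\mathbf{n}$. Combining the scaling identity $L_{\phi Z}g=\phi\,L_Z g+d\phi\otimes (Z\cdot g)+(Z\cdot g)\otimes d\phi$ with the fact that $g(\mathbf{n},\,\cdot\,)$ kills tangential inputs, I obtain $(L_{2f\mathbf{n}}g)_{ij}=4f\,k_{ij}$. Since $\mathbf{Y}=2f\mathbf{n}+X$ and Lie derivatives are additive, this yields $(L_\mathbf{Y} g)_{ij}=4f\,k_{ij}+(L_X g)_{ij}$, which rearranges to~\eqref{equation:second_fund}.

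For part (2), the key inputs are the Gauss equation (Lorentzian form)
\[
R^\mathbf{N}_{ikjl}\big|_{\mathrm{tang}}=R^g_{ikjl}-k_{ij}k_{kl}+k_{il}k_{kj},
\]
and the Ricatti equation for the evolution of $k$ along $\mathbf{n}$, which in terms of the lapse $N=2f$ and shift $X$ (so that $\partial_u=L_\mathbf{Y}$) takes the form
\[
R^\mathbf{N}_{i\mathbf{n}j\mathbf{n}}=-\tfrac{1}{2f}\bigl(L_\mathbf{Y} k-L_X k\bigr)_{ij}+k_{i\ell}k^\ell_j-\tfrac{1}{2f}(2f)_{;ij}.
\]
Substituting these into $R^\mathbf{N}_{ij}=g^{kl}R^\mathbf{N}_{ikjl}-R^\mathbf{N}_{i\mathbf{n}j\mathbf{n}}$ (the $-1$ coming from $\mathbf{g}^{\mathbf{n}\mathbf{n}}=-1$) produces an explicit formula for the tangential Ricci in terms of $R^g_{ij}$, quadratic expressions in $k$, the Hessian of $2f$, and $(L_\mathbf{Y} k-L_X k)_{ij}/(2f)$. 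Taking the trace $g^{ij}R^\mathbf{N}_{ij}$ and combining with the Hamiltonian contribution $R^\mathbf{N}_{\mathbf{n}\mathbf{n}}$ (extracted from~\eqref{equation:Einstein-0i} and the Gauss trace) gives the spacetime scalar curvature $R^\mathbf{N}$.

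Finally I would assemble $G_{ij}=R^\mathbf{N}_{ij}-\tfrac12 R^\mathbf{N} g_{ij}$. The Gauss-theoretic pieces collect into the first line of~\eqref{equation:Einstein-ij}; the Hessian-of-lapse and shift terms, after using $(2f)_{;ij}=2f_{;ij}$ and writing $\Delta_g(2f)=2\Delta_g f$, collect into the $f^{-1}$ block; and the $L_\mathbf{Y} k$ contributions (with their trace from $\tfrac12 R^\mathbf{N}g_{ij}$) collect into the $(2f)^{-1}$ block. The main obstacle is purely bookkeeping: keeping the Lorentzian signs, the factor of $2$ hidden in $N=2f$, and the split $L_\mathbf{n} k=\tfrac{1}{2f}(L_\mathbf{Y} k-L_X k)$ consistent so that all coefficients match those in~\eqref{equation:Einstein-ij}. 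No genuinely new geometric input is required beyond the Gauss, Codazzi, and Ricatti identities.
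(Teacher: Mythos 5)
Your high-level plan is the same as the paper's: derive the second fundamental form from the coordinate form of the metric, use the Gauss equation for the tangential Riemann components, use a Riccati-type identity for the normal-tangential-tangential-normal component, and assemble $G_{ij}$. Your part~(1) argument via the scaling identity for $L_{\phi Z}g$ is a slightly more explicit version of what the paper does and is correct.

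However, the intermediate formulas you cite as ``key inputs'' for part~(2) contain sign errors that are not mere bookkeeping. The paper's direct computation gives
\[
\mathrm{Rm}_{\mathbf{g}}(\mathbf{n}, e_i, e_j, \mathbf{n}) \;=\; k_{i\ell}k^\ell_{j} + f^{-1}\bigl[f_{;ij} - (L_{f\mathbf{n}}k)_{ij}\bigr],
\]
and writing $L_{f\mathbf{n}}=\tfrac12(L_{\mathbf{Y}}-L_X)$ and using the Riemann antisymmetry $R^{\mathbf{N}}_{i\mathbf{n}j\mathbf{n}}=-\mathrm{Rm}_{\mathbf{g}}(\mathbf{n}, e_i, e_j, \mathbf{n})$ yields
\[
R^{\mathbf{N}}_{i\mathbf{n}j\mathbf{n}} \;=\; -k_{i\ell}k^\ell_{j} - \tfrac{1}{2f}(2f)_{;ij} + \tfrac{1}{2f}(L_{\mathbf{Y}}k - L_X k)_{ij},
\]
which differs from your stated ``Ricatti equation'' in the signs of both the $k^2$ and the Lie-derivative terms. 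Your Gauss equation also carries the opposite sign on the quadratic-in-$k$ terms compared with the paper's, and the two errors do not cancel: inserting your two identities into $R^{\mathbf{N}}_{ij}=g^{k\ell}R^{\mathbf{N}}_{ikj\ell}-R^{\mathbf{N}}_{i\mathbf{n}j\mathbf{n}}$ yields $R_{ij} - (\tr_g k)k_{ij} + f^{-1}f_{;ij} + f^{-1}(L_{f\mathbf{n}}k)_{ij}$, which has no $k_{i\ell}k^\ell_j$ term and the wrong signs on $(\tr_g k)k_{ij}$ and the Hessian, so it cannot lead to~\eqref{equation:Einstein-ij}. The fix is to derive the normal-normal Riemann identity directly, as the paper does, by computing $\bm{\nabla}\mathbf{n}$ (in particular $\bm{\nabla}_{\mathbf{n}}\mathbf{n}=f^{-1}\nabla f$) and expanding the curvature operator on $(f\mathbf{n},e_i)$, rather than citing a version of the Riccati equation from memory; once the identity is correct, the rest of your assembly of $G_{ij}$ proceeds as you outline.
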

\begin{proof}

The first equation is just a re-statement of a standard computation of $\frac{\partial }{\partial u} g_{ij}$. 

For the second equation, the Gauss equation implies that 
\[ \mathrm{Ric}_{\mathbf{g}}(e_i, e_j) = R_{ij}  + (\tr_g k) k_{ij} - k_{i\ell} k^\ell_{ j} 
-\mathrm{Rm}_{\mathbf{g}}(\mathbf{n}, e_i, e_j, \mathbf{n}),\]
and the trace gives
\[ 	
R_{\mathbf{g}} = R_g +  (\tr_g k)^2 - |k|_g^2  -2 \mathrm{Ric}_{\mathbf{g}}(\mathbf{n},\mathbf{n}).
\]

To compute $\mathrm{Rm}_{\mathbf{g}}(\mathbf{n}, e_i, e_j, \mathbf{n})$, we must understand 
$\bm{\nabla} \mathbf{n}$ better. While the tangential part is $k$, we can show that $\bm{\nabla}_{\mathbf{n}} \mathbf{n} =f^{-1}\nabla f$. Using the fact that $\mathbf{n}=-2f \bm{\nabla} u$, we have, for  $e_i$ tangential to the constant $u$-slices, 
\begin{align*}
	\mathbf{g} (\bm{\nabla}_{\mathbf{n}} \mathbf{n}, e_i) &=  -2 \bm{\nabla}^2 u (f\mathbf{n}, e_i)\\
	&=-2 \bm{\nabla}_{e_i} \bm{\nabla}_{f\mathbf{n}}u + 2\bm{\nabla}_{\bm{\nabla}_{e_i} (f\mathbf{n})} u\\
	&= 2f^{-1} e_i(f) \bm{\nabla}_{f\mathbf{n}} u\\
	&= f^{-1} e_i(f).
\end{align*}
Using our knowledge of $\bm{\nabla} \mathbf{n}$, we can show that 
\begin{align*}
 \mathrm{Rm}_{\mathbf{g}}(\mathbf{n}, e_i, e_j, \mathbf{n}) &= -f^{-1} \mathbf{g}(\bm{\nabla}_{f\mathbf{n}} \bm{\nabla}_{e_i} \mathbf{n} -  \bm{\nabla}_{e_i} \bm{\nabla}_{f\mathbf{n}} \mathbf{n} - \bm{\nabla}_{[f\mathbf{n}, e_i]} \mathbf{n}, e_j)\\ 
  &=f^{-1} \mathbf{g} \left(\bm{\nabla}_{e_i} \mathbf{n}, \bm{\nabla}_{e_j} (f\mathbf{n})\right) + f^{-1} \mathbf{g} (\bm{\nabla}_{e_i} \bm{\nabla}_{f\mathbf{n}} \mathbf{n}, e_j) \\
  &\quad - f^{-1} \left[ \mathbf{g}(\bm{\nabla}_{f\mathbf{n}} \bm{\nabla}_{e_i} \mathbf{n} - \bm{\nabla}_{[f\mathbf{n}, e_i]} \mathbf{n}, e_j) +\mathbf{g} \left(\bm{\nabla}_{e_i} \mathbf{n}, \bm{\nabla}_{e_j} (f\mathbf{n})\right)  \right]\\
&=  k_{i\ell} k^\ell_{ j} + f^{-1}\left[f_{;ij} 
-  (L_{f\mathbf{n}} k)_{ij}\right],
\end{align*}
and the trace gives
\[
 \mathrm{Ric}_{\mathbf{g}}(\mathbf{n},\mathbf{n}) = |k|_g^2 +f^{-1}[\Delta_g f 
-  \tr_g(L_{f\mathbf{n}} k)].
\]
Plugging these in to our equations for $\mathrm{Ric}_{\mathbf{g}}$ and $R_\mathbf{g}$, we obtain
\[ \mathrm{Ric}_{\mathbf{g}}(e_i, e_j) = R_{ij}  + (\tr_g k) k_{ij} - 2k_{i\ell} k^\ell_{ j} + f^{-1}[  (L_{f\mathbf{n}} k)_{ij}  -f_{;ij}],\]
and 
\[ 	
R_{\mathbf{g}} = R_g +  (\tr_g k)^2 - 3|k|_g^2 +2f^{-1}[
  \tr_g(L_{f\mathbf{n}} k)-\Delta_g f ].
\]
Combing these two equations and using the fact that $\mathbf{Y} = 2f\mathbf{n}+X$ yields the desired result.

\end{proof}

Now let us consider what happens when $\mathbf{Y}$ is Killing. We record the following easy fact.

\begin{lemma}\label{lemma:Killing-independent-u}
Suppose the spacetime $(\mathbf{N}, \mathbf{g})$ takes the form \eqref{equation:Killing-development}. Then $\mathbf{Y} := \frac{\partial}{\partial u}$ is Killing if and only if the functions  $g_{ij}$, $f$ and $X^i$ are all independent of~$u$. In particular, if $\mathbf{Y}$ is Killing, then $L_{\mathbf{Y}} g$ and $L_{\mathbf{Y}} k$ both vanish.
\end{lemma}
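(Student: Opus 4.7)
The plan is to reduce everything to the observation that, in the coordinates $(u, x^1, \ldots, x^n)$ in which $\mathbf{Y} = \partial/\partial u$, the Lie derivative $L_{\mathbf{Y}}$ acts on tensor components simply by $\partial/\partial u$. First I would read off the coordinate components of $\mathbf{g}$ from \eqref{equation:Killing-development}:
\begin{equation*}
\mathbf{g}_{uu} = -4f^{2} + g_{ij} X^{i} X^{j}, \qquad \mathbf{g}_{ui} = g_{ij} X^{j}, \qquad \mathbf{g}_{ij} = g_{ij}.
\end{equation*}
Then $\mathbf{Y}$ is Killing iff $L_{\mathbf{Y}} \mathbf{g} = 0$, i.e.\ iff $\partial_{u}$ annihilates each of these three expressions.

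The ``if'' direction is immediate: if $g_{ij}$, $X^{i}$, and $f$ are all $u$-independent, then all three components above are $u$-independent. For the converse, I would argue in order: $\partial_{u} \mathbf{g}_{ij} = 0$ gives $\partial_{u} g_{ij} = 0$; substituting this into $\partial_{u} \mathbf{g}_{ui} = 0$ gives $g_{ij} \partial_{u} X^{j} = 0$, and nondegeneracy of $g_{ij}$ forces $\partial_{u} X^{j} = 0$; substituting both into $\partial_{u} \mathbf{g}_{uu} = 0$ then collapses that equation to $\partial_{u} f^{2} = 0$. Since the coordinate form \eqref{equation:Killing-development} requires $\mathbf{g}$ to be Lorentzian (so $\det \mathbf{g} = -4 f^{2} \det g_{ij} < 0$), we have $f \neq 0$ throughout, and therefore $\partial_{u} f = 0$.

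For the ``in particular'' clause, I interpret $g$ and $k$ on $U$ as tensor fields extended to a neighborhood by declaring their coordinate components to be the same functions $g_{ij}(x)$ and $k_{ij}(x)$ on every constant-$u$ slice (which is the natural extension along the flow of $\mathbf{Y}$). Under this interpretation, $L_{\mathbf{Y}} g$ and $L_{\mathbf{Y}} k$ are just $\partial_{u}$ of these components. The equality $L_{\mathbf{Y}} g = 0$ is precisely $\partial_{u} g_{ij} = 0$, which was already established. For $L_{\mathbf{Y}} k = 0$, I would invoke the formula \eqref{equation:second_fund}, which expresses $k_{ij}$ purely in terms of $f$, $X$, $g_{ij}$, and their spatial derivatives; since $f$, $X^{i}$, and $g_{ij}$ are all $u$-independent by the first part of the lemma, so is $k_{ij}$.

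There is essentially no obstacle here beyond bookkeeping; the only mild subtlety is justifying that $f \neq 0$, which follows from the Lorentzian signature encoded in the form \eqref{equation:Killing-development}, and clarifying the (harmless) ambiguity in interpreting Lie derivatives of objects originally defined only on $U$.
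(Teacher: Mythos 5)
Your proof is correct, and since the paper records this lemma as an ``easy fact'' without supplying a proof, there is no internal argument to compare against; the direct coordinate computation you give is exactly what one would write. The reduction to $L_{\mathbf{Y}}\mathbf{g}_{\alpha\beta}=\partial_u\mathbf{g}_{\alpha\beta}$ is right, and the triangular structure of the system $\partial_u\mathbf{g}_{ij}=0$, $\partial_u\mathbf{g}_{ui}=0$, $\partial_u\mathbf{g}_{uu}=0$ correctly yields $\partial_u g_{ij}=0$, then $\partial_u X^i=0$ (via nondegeneracy of $g_{ij}$), then $\partial_u(f^2)=0$. Your observation that $f\neq0$ follows from nondegeneracy of the Lorentzian metric, via $\det\mathbf{g}=-4f^2\det(g_{ij})$ (a standard Schur-complement computation), is a clean, self-contained way to justify the last step; one could alternatively note that the paper's construction has $2f=-\mathbf{g}(\mathbf{Y},\mathbf{n})$ with $\mathbf{Y}$ transverse to the slice, which forces $f\neq0$ directly. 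For the ``in particular'' clause, your reading of $L_{\mathbf{Y}}g$ and $L_{\mathbf{Y}}k$ as $\partial_u$ of the slice-wise components is the intended one, and the appeal to \eqref{equation:second_fund} to show $u$-independence of $k_{ij}$ is fine; an even shorter route is to note that the flow of a Killing field carries each constant-$u$ slice isometrically onto another, hence preserves the second fundamental form. No gaps.
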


\begin{corollary}
Let $(\mathbf{N}, \mathbf{g})$ be a spacetime admitting a Killing vector field $\mathbf{Y}$. Let $U$ be a spacelike hypersurface with the induced data $(g, \pi)$ and future unit normal $\mathbf{n}$. Suppose $Y$ is transverse to $U$ and  $\mathbf{Y} = 2f \mathbf{n} + X$ along $U$. Then along $U$
\begin{align}\label{equation:Killing}
	\tfrac{1}{2} (L_X g)_{ij} =\left(\tfrac{2}{n-1} (\tr_g \pi) g_{ij} - 2\pi_{ij} \right)f,
\end{align}
and the tangential components of the Einstein tensor  take the form: 
\begin{align}
\begin{split}\label{equation:Einstein_pi}
G_{ij} &=  \left[R_{ij} -\tfrac{1}{2}R_g g_{ij}\right]+
 \left[- \tfrac{3}{n-1}(\tr_g \pi)\pi_{ij} + 2\pi_{i\ell}\pi^\ell_j
  \right]\\
&\quad  + \left[   \tfrac{1}{2(n-1)}(\tr_g \pi)^2 - \tfrac{1}{2}|\pi|_g^2 \right] g_{ij} \\
&\quad+f^{-1}\left[-\tfrac{1}{2}(L_X \pi)_{ij}  -  f_{;ij} +(\Delta_g f)g_{ij}\right]. 
\end{split}
\end{align}
\end{corollary}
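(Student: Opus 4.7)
The plan is to derive both \eqref{equation:Killing} and \eqref{equation:Einstein_pi} as direct algebraic consequences of the structural results already proved in Appendix~\ref{section:spacetime}, together with the relation between $k$ and $\pi$ from \eqref{equation:k}. Since $\mathbf{Y}$ is transverse to $U$, I would first invoke the coordinate setup preceding Lemma~\ref{lemma:Einstein_tangential}: extend $(x^1,\ldots,x^n)$ by flowing along $\mathbf{Y}$ and introduce $u$ with $\partial_u = \mathbf{Y}$, so that $\mathbf{g}$ takes the form \eqref{equation:Killing-development} on a neighborhood of $U$. Because $\mathbf{Y}$ is Killing, Lemma~\ref{lemma:Killing-independent-u} then gives $L_\mathbf{Y} g = 0$ and $L_\mathbf{Y} k = 0$ along every constant-$u$ slice, in particular along~$U$. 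The recurring algebraic input will be
\[
 k_{ij} = \pi_{ij} - \tfrac{1}{n-1}(\tr_g \pi)\, g_{ij}, \qquad \tr_g k = -\tfrac{1}{n-1}\tr_g \pi.
\]

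For \eqref{equation:Killing}, I would plug $L_\mathbf{Y} g = 0$ into the second-fundamental-form formula \eqref{equation:second_fund} to obtain $k_{ij} = -\tfrac{1}{4f}(L_X g)_{ij}$, and then substitute the display above and solve for $\tfrac{1}{2}(L_X g)_{ij}$. This produces \eqref{equation:Killing} on the nose, with the factor $f$ and the coefficients $\tfrac{2}{n-1}(\tr_g\pi)$ and $-2\pi_{ij}$ appearing exactly as claimed.

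For \eqref{equation:Einstein_pi}, I would start from \eqref{equation:Einstein-ij} with the last $L_\mathbf{Y} k$ line dropped and systematically replace every occurrence of $k$ by $\pi$. The purely algebraic combination $[(\tr_g k)k_{ij} - 2 k_{i\ell}k^\ell_j] + [-\tfrac12(\tr_g k)^2 + \tfrac32|k|_g^2]\,g_{ij}$ rewrites as a polynomial in $\pi_{ij}$, $\pi_{i\ell}\pi^\ell_j$, $\tr_g\pi$, $(\tr_g\pi)^2$, and $|\pi|_g^2$ by straightforward substitution. The nontrivial step is converting $-\tfrac{1}{2}(L_X k)_{ij} + \tfrac{1}{2}\tr_g(L_X k)\,g_{ij}$ into $-\tfrac{1}{2}(L_X\pi)_{ij}$: here I would use the identity $(L_X g)_{ij} = -4fk_{ij}$ (which is just \eqref{equation:Killing} already proved) together with the convention $(L_X\pi)_{ij}:=g_{i\ell}g_{jm}(L_X\pi)^{\ell m}$ fixed after \eqref{equation:adjoint} to expand $L_X k$ in terms of $L_X \pi$, $\tr_g(L_X\pi)$, and curvature-free correction terms proportional to $f\,k\cdot k$ and $f\,(\tr k)k$. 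The trace $\tr_g(L_X k)$ piece cancels against $\tfrac{1}{2}\tr_g(L_X k)\,g_{ij}$, while the algebraic corrections merge with the quadratic bracket above to produce precisely the $\pi$-polynomial appearing in \eqref{equation:Einstein_pi}.

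The main obstacle is purely bookkeeping — tracking signs and the various $\tfrac{1}{n-1}$ and $\tfrac{1}{(n-1)^2}$ factors, and being attentive to the distinction between Lie derivatives of $(0,2)$ and $(2,0)$ tensors. As a reassuring consistency check, one may verify that in the vacuum case $\mu = |J|_g = 0$, combining the resulting \eqref{equation:Einstein_pi} with \eqref{equation:Hamiltonian} (with $\varphi, Z = 0$) together with the vacuum scalar-curvature identity $R_g = |\pi|_g^2 - \tfrac{1}{n-1}(\tr_g\pi)^2$ forces $G_{ij}=0$ along $U$, as it must by Theorem~\ref{theorem:Moncrief}.
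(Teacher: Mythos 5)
Your proof is correct and takes essentially the same route as the paper: drop the $L_{\mathbf{Y}}$ terms in \eqref{equation:Einstein-ij} using Lemma~\ref{lemma:Killing-independent-u}, read off \eqref{equation:Killing} from \eqref{equation:second_fund}, expand $L_X k$ in terms of $L_X\pi$ and $L_X g$ via $k_{ij}=\pi_{ij}-\tfrac{1}{n-1}(\tr_g\pi)g_{ij}$, and then eliminate $L_X g$ and $\Div X$ through \eqref{equation:Killing} and its trace. Your observation that the $\tr_g(L_X\pi)$ contributions from $-\tfrac{1}{2}(L_X k)_{ij}$ and $+\tfrac{1}{2}\tr_g(L_X k)\,g_{ij}$ cancel is indeed the mechanism that leaves only $-\tfrac{1}{2}(L_X\pi)_{ij}$ in the final $f^{-1}$ bracket, exactly as in the paper's computation.
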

\begin{proof}
As mentioned above, since $\mathbf{Y}$ is Killing, $L_{\mathbf{Y}} g$ and $L_{\mathbf{Y}} k$ are both zero, and then we can see that the Corollary is a direct (but tedious) consequence of Lemma~\ref{lemma:Einstein_tangential} after expressing involving $k$ in terms of $\pi$.

In more detail, we use the equations
\begin{align*}
k_{ij} &= g_{i\ell} g_{jm} \pi^{\ell m}  -\tfrac{1}{n-1}(\tr \pi) g_{ij}\\
(L_X k)_{ij} &= (L_X \pi)_{ij} + 2 \pi^\ell_j (L_X g)_{i\ell} -\tfrac{1}{n-1}(\tr \pi)(L_X g)_{ij} \\
&\quad 
 -\tfrac{1}{n-1} \left[\tr \, (L_X \pi) + \pi^{\ell m} (L_X g)_{\ell m}\right] g_{ij},
 \end{align*}
where  $(L_X \pi)_{ij} = g_{i\ell} g_{jm} (L_X\pi)^{\ell m}$. Here and below, the covariant derivative, trace, and norm are all with respect to $g$.

 Equation~\eqref{equation:Killing} follows immediately
from~\eqref{equation:second_fund}, and substituting into~\eqref{equation:Einstein-ij} yields:
\begin{align*}
G_{ij} &=  \left[R_{ij} -\tfrac{1}{2}R g_{ij}\right]+
 \left[ \tfrac{3}{n-1}(\tr \pi)\pi_{ij} - 2\pi_{i\ell}\pi^\ell_j
  \right]
 + \left[   -\tfrac{3}{2(n-1)}(\tr \pi)^2 +\tfrac{3}{2}|\pi|^2 \right] g_{ij} \\
&\quad+ f^{-1} \left[-\tfrac{1}{2}(L_X \pi)_{ij} - \pi^\ell_j (L_X g)_{i\ell} + \tfrac{1}{2(n-1)}(\tr \pi)(L_X g)_{ij}\right] \\
&\quad+f^{-1} \left[ \tfrac{1}{2}\pi^{\ell m} (L_X g)_{\ell m} - \tfrac{1}{n-1}(\tr\pi)(\Div X)\right] g_{ij} \\
&\quad +f^{-1} \left[-  f_{;ij} +(\Delta f)g_{ij}\right].
\end{align*}
Using equation~\eqref{equation:Killing} and its trace, we can eliminate the  $L_X g$ and $\Div X$ terms to obtain \eqref{equation:Einstein_pi}.

\end{proof}

\section{Initial data in pp-waves}\label{se:pp}

We prove Lemma~\ref{lemma:pp-initial}. Recall that a pp-wave spacetime metric is defined by 
\begin{equation} \label{eqn:pp}
\mathbf{g} = 2 du\,dx^n + S\, (dx^n)^2 +\sum_{a=1}^{n-1} (dx^a)^2
\end{equation}
where $S$ is a function independent of $u$. Observe that when $S>0$, the  metric $\mathbf g$ takes the form \eqref{equation:Killing-development} with the induced metric on the $u$-slices  
\begin{equation} \label{eqn:pp-slice}
	g =  S(dx^n)^2 +\sum_{a=1}^{n-1} (dx^a)^2,
\end{equation}
with $(f, X)$ given by 
 \begin{align*}
 f = \tfrac{1}{2} S^{-\frac{1}{2}} \quad \mbox{ and } \quad X= S^{-1} \tfrac{\partial}{\partial x^n}. 
\end{align*} 
Note that the orientation of $\mathbf n$ is chosen so that  $\mathbf{Y}=\tfrac{\partial}{\partial u}$ is future-pointing, that is, $-\mathbf g(\tfrac{\partial}{\partial u}, \mathbf n) = 2f >0$.

In the computations below, we will use commas to denote partial differentiation. We first compute the Christoffel symbols of $g$. For convenience, define $\Gamma_{ijk} := g(\nabla_{\partial_i} \partial_j, \partial_k) = \frac{1}{2} (g_{ik,j}+ g_{jk,i} - g_{ij,k})$. Then, for $a, b, c=1,\dots, n-1$,
\begin{align}\label{eqn:Christoffel-raised}
\begin{split}
	\Gamma_{abc} &= \Gamma_{nab} = \Gamma_{anb} = \Gamma_{abn}= 0, \quad \Gamma_{nna} = -\tfrac{1}{2} S_{,a}, \\
	\Gamma_{nan} &= \Gamma_{ann} = \tfrac{1}{2} S_{,a}, \quad \Gamma_{nnn} = \tfrac{1}{2} S_{,n} 
\end{split}
\end{align}
and the Christoffel symbols of $g$ are given by
\begin{align}\label{eqn:Christoffel}
\begin{split}
	\Gamma_{ab}^c &=\Gamma_{na}^b  = \Gamma_{an}^b= \Gamma_{ab}^n = 0,\quad \Gamma_{nn}^a = -\tfrac{1}{2} S_{,a},\\
	\Gamma_{na}^n&=\tfrac{1}{2} S^{-1} S_{,a}, \quad \Gamma_{nn}^n= \tfrac{1}{2} S^{-1} S_{,n}.
\end{split}
\end{align}

\begin{lemma}\label{le:pi}
 If $S>0$, then the conjugate momentum of the constant $u$-slices of the metric~\eqref{eqn:pp},  as a $(2, 0)$-tensor, is given by
\begin{align*}
	\pi^{nn} &= 0\\
	\pi^{na}&=\pi^{an} = \tfrac{1}{2} S^{-\frac{3}{2}}\tfrac{\partial S}{\partial x^a}\\
	\pi^{ab} &= -\tfrac{1}{2} S^{-\frac{3}{2}}\tfrac{\partial S}{\partial x^n}\, \delta^{ab}
\end{align*}
where the $a$  and $b$ indices run from $1$ to $n-1$.
\end{lemma}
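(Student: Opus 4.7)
\bigskip

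\noindent\textbf{Proof proposal.} The plan is to use the Killing development structure identified just above the lemma: since the pp-wave metric $\mathbf{g}$ takes the form \eqref{equation:Killing-development} with induced metric $g = S(dx^n)^2 + \sum_a (dx^a)^2$ and with $\mathbf{Y}=\partial/\partial u$ Killing, formula \eqref{equation:second_fund} together with $L_{\mathbf{Y}}g=0$ (Lemma~\ref{lemma:Killing-independent-u}) gives the clean expression
\[
 k_{ij} \;=\; -\frac{1}{4f}(L_X g)_{ij},
\]
where $f=\tfrac{1}{2}S^{-1/2}$ and $X = S^{-1}\partial/\partial x^n$. So step one is a direct computation of $L_X g$: using $X^n=S^{-1}$ and $X^a=0$ together with $(L_X g)_{ij} = X^k g_{ij,k} + g_{kj}X^k_{,i}+g_{ik}X^k_{,j}$, and the fact that $g_{ab}=\delta_{ab}$, $g_{an}=0$, $g_{nn}=S$, one finds $(L_Xg)_{ab}=0$, $(L_Xg)_{an}=-S^{-1}S_{,a}$, and $(L_Xg)_{nn}=-S^{-1}S_{,n}$. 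Multiplying by $-1/(4f)=-\tfrac12 S^{1/2}$ then yields
\[
 k_{ab}=0, \qquad k_{an}=\tfrac12 S^{-1/2}S_{,a}, \qquad k_{nn}=\tfrac12 S^{-1/2}S_{,n}.
\]

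Step two is to convert $k$ to $\pi$. The defining relation \eqref{equation:k} inverts to $\pi^{ij}=k^{ij}-(\tr_g k)g^{ij}$, so I raise indices with the inverse metric $g^{ab}=\delta^{ab}$, $g^{an}=0$, $g^{nn}=S^{-1}$. Because $k$ has no purely tangential $(a,b)$ components and $g^{an}=0$, one gets $k^{ab}=0$, while $k^{an}=S^{-1}k_{an}=\tfrac12 S^{-3/2}S_{,a}$ and $k^{nn}=S^{-2}k_{nn}=\tfrac12 S^{-5/2}S_{,n}$. The trace is $\tr_g k = g^{nn}k_{nn}=\tfrac12 S^{-3/2}S_{,n}$.

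Step three is to assemble $\pi^{ij}=k^{ij}-(\tr_g k)g^{ij}$: the $(n,n)$ entry cancels to give $\pi^{nn}=\tfrac12 S^{-5/2}S_{,n}-\tfrac12 S^{-3/2}S_{,n}\cdot S^{-1}=0$; the mixed entry is $\pi^{an}=\tfrac12 S^{-3/2}S_{,a}$ since $g^{an}=0$; and for the tangential entries $\pi^{ab}=0-\tfrac12 S^{-3/2}S_{,n}\cdot\delta^{ab}$. These are precisely the formulas in the lemma.

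No serious obstacle is expected: everything is a bookkeeping calculation using the Killing development formula \eqref{equation:second_fund} and the algebraic change of variable between $k$ and $\pi$. The only mild point to be careful about is orienting $\mathbf{n}$ so that $\mathbf{Y}=2f\mathbf{n}+X$ is future-pointing (as noted in the paragraph just before the lemma), which fixes the overall sign in the formula $k_{ij}=-\tfrac{1}{4f}(L_X g)_{ij}$.
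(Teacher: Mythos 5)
Your proposal is correct and follows essentially the same route as the paper: compute $k_{ij}=-\tfrac{1}{4f}(L_Xg)_{ij}$ from the Killing-development formula~\eqref{equation:second_fund} with $L_{\mathbf Y}g=0$, then convert to $\pi$ via $\pi^{ij}=k^{ij}-(\tr_g k)g^{ij}$. The only cosmetic difference is that the paper evaluates $X_{i;j}+X_{j;i}$ via the Christoffel symbols~\eqref{eqn:Christoffel} and arrives at $k_{ij}=S^{1/2}\Gamma^n_{ij}$, whereas you expand the Lie derivative directly in coordinates; the intermediate values of $k_{ab}$, $k_{an}$, $k_{nn}$, $\tr_g k$, and the final $\pi^{ij}$ all agree with the paper's.
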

\begin{proof}

Note that the covector of $X$ has the coefficients $X_n=1$ and $X_a=0$ for $a=1,\dots, n-1$. Since $L_{\mathbf Y} g=0$, we can use~\eqref{equation:second_fund} and~\eqref{eqn:Christoffel} to compute the second fundamental form $k$ to be
\begin{align*}
	k_{ij} &= - \tfrac{1}{4f} (X_{i;j} + X_{j;i} )= -\tfrac{1}{2} S^{\frac{1}{2}} \big(X_{i,j} + X_{j,i} - 2\Gamma_{ij}^n X_n\big) = S^{\frac{1}{2}} \, \Gamma_{ij}^n
\end{align*}
which gives 
\begin{align*}
	 k_{nn}&= \tfrac{1}{2} S^{-\tfrac{1}{2}} S_{,n}\\
	 k_{na}&=k_{an}= \tfrac{1}{2} S^{-\frac{1}{2}}S_{,a}\\
	k_{ab}&=0\\
	\tr_g k &= \tfrac{1}{2} S^{-\frac{3}{2}}  S_{,n}.
\end{align*}

Raising the indices of $k$ and using the relation $\pi^{ij} = k^{ij} - (\tr_g k) g^{ij}$, we obtain the desired result. 

\end{proof}

\begin{lemma}
 If $g$ is given by the formula~\eqref{eqn:pp-slice} and $\pi$ is given by the formulas in Lemma~\ref{le:pi}, then their  energy  and current densities $(\mu, J)$ are given by  
\begin{align*}
		\mu  =-\tfrac{1}{2} S^{-1} \Delta' S\quad \mbox{ and } \quad 	 J =\tfrac{1}{2} S^{-\frac{3}{2}} (\Delta' S) \tfrac{\partial}{\partial x^n}. 
\end{align*}
Consequently,  $|\mu| = |J|_g$, and $\mu\ge0$ so long as $\Delta' S \le0$.
\end{lemma}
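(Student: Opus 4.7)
The plan is to avoid a direct intrinsic computation of $R_g$, $\tr_g\pi$, $|\pi|_g^2$, and $\Div_g\pi$ by leveraging the ambient pp-wave spacetime. Recall that the text immediately before Lemma~\ref{lemma:pp-initial} records the Einstein tensor of the metric~\eqref{eqn:pp} as
\[
G_{\alpha\beta} = -\tfrac{1}{2}(\Delta' S)\, Y_\alpha Y_\beta,
\]
where $\mathbf{Y}=\partial/\partial u$, and its covector $\mathbf{Y}^\flat$ has components $Y_n=1$, $Y_u=Y_a=0$ (read off directly from~\eqref{eqn:pp}). Since the induced data on the constant $u$-slices is precisely the $(g,\pi)$ of Lemma~\ref{le:pi} (this is how that $\pi$ was produced in the first place), identity~\eqref{equation:Einstein-0i} will reduce $(\mu,J)$ to two contractions of this Einstein tensor with the unit normal.

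First I will identify the future unit normal. Using $\mathbf{Y}=2f\mathbf{n}+X$ together with the prescribed $f=\tfrac{1}{2}S^{-1/2}$ and $X=S^{-1}\partial/\partial x^n$, one obtains
\[
\mathbf{n} = S^{1/2}\,\tfrac{\partial}{\partial u} - S^{-1/2}\,\tfrac{\partial}{\partial x^n},
\]
and a one-line check verifies $\mathbf{g}(\mathbf{n},\mathbf{n})=-1$ and $-\mathbf{g}(\mathbf{n},\mathbf{Y})=S^{-1/2}>0$, so the time orientation of $\mathbf{n}$ agrees with that of $\mathbf{Y}$. In particular $\mathbf{g}(\mathbf{Y},\mathbf{n}) = Y_\mu n^\mu = -S^{-1/2}$, and for the tangential directions $Y(\partial/\partial x^n)=1$ and $Y(\partial/\partial x^a)=0$.

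Plugging these into the Einstein tensor immediately yields
\[
\mu = G(\mathbf{n},\mathbf{n}) = -\tfrac{1}{2}(\Delta' S)\,\bigl(\mathbf{g}(\mathbf{Y},\mathbf{n})\bigr)^2 = -\tfrac{1}{2}S^{-1}\Delta' S,
\]
while the covector $J_i=G(\mathbf{n},\partial_i)$ has $J_a=0$ for $a=1,\ldots,n-1$ and $J_n=\tfrac{1}{2}S^{-1/2}\Delta' S$. Raising with $g^{nn}=S^{-1}$ gives $J=\tfrac{1}{2}S^{-3/2}(\Delta' S)\,\partial/\partial x^n$, exactly as claimed.

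Finally, $|J|_g^2 = S\cdot (J^n)^2 = \tfrac{1}{4}S^{-2}(\Delta'S)^2 = \mu^2$, so $|J|_g=|\mu|$; and when $\Delta'S\le 0$ the scalar $\mu=-\tfrac{1}{2}S^{-1}\Delta'S$ is nonnegative, giving $\mu=|J|_g\ge 0$. No step is a genuine obstacle --- the main temptation to avoid is a brute-force intrinsic computation, and I will merely note as a consistency check that the formula for $\pi$ in Lemma~\ref{le:pi} was itself obtained from $k$ via the relation~\eqref{equation:k}, so the identification of $(g,\pi)$ as the induced data on the spacelike slice is automatic and does not need to be re-verified.
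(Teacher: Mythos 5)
Your argument is correct and is genuinely different from the proof the paper actually writes out. The paper's remark preceding the proof explicitly acknowledges that the result ``can be seen as a fairly direct consequence of Lemma~\ref{le:pi} combined with the fact that the Einstein tensor of $\mathbf{g}$ from~\eqref{eqn:pp} is $G_{\alpha\beta}=-\tfrac{1}{2}(\Delta'S)Y_\alpha Y_\beta$,'' but then chooses to ``provide a direct proof in terms of the initial data $(g,\pi)$.'' You carry out exactly the route the paper set aside: identify $\mathbf{n}=S^{1/2}\partial_u - S^{-1/2}\partial_n$ from the lapse-shift decomposition $\mathbf{Y}=2f\mathbf{n}+X$, contract the pp-wave Einstein tensor against $\mathbf{n}$ (getting $Y_\alpha n^\alpha = -S^{-1/2}$), and read off $\mu=G(\mathbf{n},\mathbf{n})$ and $J_i=G(\mathbf{n},\partial_i)$ via~\eqref{equation:Einstein-0i}. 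All the component arithmetic is right, including the raising $J^n = g^{nn}J_n$. What you buy is brevity and conceptual clarity --- the whole lemma becomes two contractions --- at the cost of importing the unproved (``standard'') formula for the pp-wave Einstein tensor; the paper's intrinsic computation of $R_g$, $|k|^2$, $(\tr_g k)^2$, and $\Div_g\pi$ is longer but self-contained within the appendix and incidentally independently re-derives that Einstein-tensor formula at the level of the slice. Both are valid; yours is the cleaner derivation, the paper's the more self-verifying one.
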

This can be seen as a fairly direct consequence of Lemma~\ref{le:pi} combined with fact that the Einstein tensor of $\mathbf{g}$ from~\eqref{eqn:pp} is $G_{\alpha\beta}=-\tfrac{1}{2} (\Delta' S) Y_\alpha Y_\beta$, but here we will provide a direct proof in terms of the initial data $(g,\pi)$.
\begin{proof}
The most complicated step is to compute the scalar curvature of $g$.
 Using the general formula 
\[
	\mathrm{Rm}(\partial_i,\partial_j, \partial_k, \partial_{\ell} ) = \Gamma_{jk\ell, i} - \Gamma_{ik\ell, j} - \Gamma_{jk}^m \Gamma_{i\ell m} + \Gamma_{ik}^m \Gamma_{j\ell m},
\]
  for $i,j,k,\ell, m$ ranging from $1$ to~$n$, it follows from~\eqref{eqn:Christoffel-raised} and~\eqref{eqn:Christoffel} that the only nonzero curvature components are 
\begin{align*}
	\mathrm{Rm}(\partial_n, \partial_a, \partial_b, \partial_n) &= \Gamma_{abn,n} - \Gamma_{nbn,a} - \Gamma_{ab}^m \Gamma_{nnm} + \Gamma_{nb}^m \Gamma_{anm}\\
	&=-\tfrac{1}{2} S_{,ab} + \Gamma_{nb}^n \Gamma_{ann}\\
	&=-\tfrac{1}{2} S_{,ab} + \tfrac{1}{4} S^{-1} S_{,a} S_{,b} \qquad \mbox{ for $a, b=1,\dots, n-1$}. 
\end{align*}
We then obtain
\begin{align*}
	\mathrm{Ric}(\partial_n, \partial_n )&= -\tfrac{1}{2} \Delta' S + \tfrac{1}{4} S^{-1} |\nabla' S|^2
\end{align*}
where  $\Delta'$ and $\nabla' $ represent the Euclidean Laplacian and gradient, respectively, in the $(x^1,\dots, x^{n-1})$ variables.  Therefore, the scalar curvature of $g$ is given by
\begin{align*}
	R_g &= g^{nn} \mathrm{Ric}(\partial_n, \partial_n) +\sum_{a=1}^n g^{aa} \mathrm{Ric}(\partial_a, \partial_a)\\
	&=  g^{nn} \mathrm{Ric}(\partial_n, \partial_n) + \sum_{a=1}^n g^{aa} g^{nn} \mathrm{Rm}(\partial_a,\partial_n, \partial_n,  \partial_a)\\
	&=2g^{nn} \mathrm{Ric}(\partial_n, \partial_n)\\
	&=-S^{-1} \Delta' S + \tfrac{1}{2} S^{-2} |\nabla' S|^2.
\end{align*}

Meanwhile, from the computations in Lemma~\ref{le:pi}, we have
\begin{align*}
	|k|^2 &= \tfrac{1}{4} S^{-3} (S_{,n})^2 + \tfrac{1}{2} S^{-2} |\nabla' S|^2\\
	(\tr_g k)^2 &= \tfrac{1}{4} S^{-3} (S_{,n})^2.
\end{align*}
Together with the formula for $R_g$  above, we obtain
\begin{align*}
	\mu &= \tfrac{1}{2} (R_g - |k|_g^2 + (\tr_gk)^2)=-\tfrac{1}{2} S^{-1} \Delta' S.
\end{align*}
For $J$, we compute
\begin{align*}
	J^j &= \pi^{ij}_{;i} =\pi^{ij}_{,i} + \Gamma^{i}_{\ell i} \pi^{\ell j} + \Gamma^j_{\ell i} \pi^{\ell i}.
\end{align*}
and insert~\eqref{eqn:Christoffel} to obtain
\begin{align*}
	J^a &= \pi^{ia}_{,i}  + \Gamma^{i}_{\ell i} \pi^{\ell a} + \Gamma^a_{\ell i} \pi^{\ell i}\\
	&=\pi^{na}_{,n} + \pi^{ba}_{,b} + \Gamma^n_{n n} \pi^{n a}  + \Gamma^n_{bn} \pi^{b a} + \Gamma^a_{nn} \pi^{nn}\\
	&=\tfrac{1}{2}\big( S^{-\tfrac{3}{2} }S_{,a} \big)_{,n} -\tfrac{1}{2} \big(S^{-\tfrac{3}{2}} S_{,n} \big)_{,a} + \tfrac{1}{4} S^{-\tfrac{5}{2}} S_{,a} S_{,n}-\tfrac{1}{4} S^{-\tfrac{5}{2}} S_{,a} S_{,n}\\
	&=0\\
	J^n &=\pi^{in}_{,i} + \Gamma^i_{\ell i} \pi^{\ell n} + \Gamma^n_{\ell i} \pi^{\ell i}\\
	&=\pi^{an}_{,a} + \Gamma^n_{an} \pi^{an} + 2\Gamma^n_{bn} \pi^{bn} \\
	&=\pi^{an}_{,a} +3 \Gamma^n_{an} \pi^{an}  \\
	&=\tfrac{1}{2} \big(S^{-\tfrac{3}{2}} S_{,a}\big)_{,a} + \tfrac{3}{4}  S^{-\tfrac{5}{2} } \sum_{a=1}^{n-1}(S_{,a} )^2\\
	&=\tfrac{1}{2} S^{-\tfrac{3}{2}} \Delta' S.
\end{align*}

\end{proof}


\end{document}